\let\emph\relax 
\DeclareTextFontCommand{\emph}{\bfseries\em}
\definecolor{FrenchPink}{RGB}{255,118,164}
\definecolor{VividMulberry}{RGB}{192,17,215}
\definecolor{DenimBlue}{RGB}{47,60,190}
\renewcommand{\thesection}{\Roman{section}}
\titleformat{\section}[block]{\large\scshape}{\thesection.}{0.5em}{}
\renewcommand{\thesubsection}{\Roman{section}.\alph{subsection}}
\titleformat{\subsection}[block]{\scshape}{\thesubsection.}{0.5em}{}
\titleformat{\subsubsection}[block]{\bfseries}{}{0.5em}{}
\newcommand{\pagetitre}[4]{
	\noindent\rule{\linewidth}{0.5pt}
	\begin{center}
		\begin{tabular}{c}
			{\LARGE\textmd{\scshape #1}}\\[2pt]
			{\LARGE\textmd{\scshape #2}}\\[15pt]   
			{\large #3 {\scshape #4}$^\dagger$}
		\end{tabular}
	\end{center}
	\vspace{-0.2cm}
	\noindent\rule{\linewidth}{0.5pt}}
\renewenvironment{abstract}{%
	\if@twocolumn
	\section*{\abstractname}%
	\else\small 
	\begin{center}%
		{\scshape \abstractname\vspace{-0.25cm}}
	\end{center}%
	\quotation
	\fi}
{\if@twocolumn\else\endquotation\fi}
\let \savenumberline \numberline
\def \numberline#1{\savenumberline{#1.}}
\renewcommand{\l@section}{\@dottedtocline{1}{1.5em}{2.6em}}
\renewcommand\tableofcontents{%
	\begin{center}%
		{\scshape \contentsname\vspace{-0.25cm}}
	\end{center}%
	\@mkboth{\MakeUppercase\contentsname}{\MakeUppercase\contentsname}%
	\@starttoc{toc}%
}
\def\namedlabel#1#2{\begingroup
	\def\@currentlabel{#2}%
	\label{#1}\endgroup
}
\newcommand{\rom}[1]{\uppercase\expandafter{\romannumeral #1\relax}}
\newtheoremstyle{lem}{}{}{\slshape}{}{\bfseries}{}{.5em}{}
\theoremstyle{lem}
\newtheorem{lem}{Lemma}
\newtheorem{prop}{Proposition}
\newtheorem{cor}{Corollary}
\newtheorem{thm}{Theorem}
\newtheorem*{defn*}{Definition}
\newtheorem{thmout}{Theorem} 
\newtheorem*{cor*}{Corollary}
\newtheoremstyle{ex}{}{}{}{}{\scshape}{{:}}{.5em}{}
\theoremstyle{ex}
\newtheorem*{ex*}{Example}
\newtheorem*{ack*}{Acknowledgements}
\newtheoremstyle{rem}{}{}{\slshape}{}{\itshape}{.}{.5em}{}
\theoremstyle{rem}
\newtheorem{rem}{Remarks}
\newtheoremstyle{pr}{}{}{}{}{\scshape}{.}{.5em}{}
\theoremstyle{pr}
\newtheorem*{pr*}{Proof}
\renewenvironment{proof}[1][\proofname]{{\noindent\scshape #1:}}{\null\hfill\qedsymbol}
\newcommand{\titre}{Lagrangian metric geometry} 
\newcommand{\titrep}{with Riemannian bounds} 
\newcommand{\titrepp}{Lagrangian metric geometry with Riemannian bounds}
\newcommand{\prenomauteur}{Jean-Philippe} 
\newcommand{\nomauteur}{Chass\'e} 
\newcommand{\support}{The author is partially supported by the Swiss National Science Foundation (200021\_204107)}
\newcommand{\Addresses}{{
		\bigskip
		\footnotesize
		
		\noindent J.-P.~Chass\'{e}, \textsc{Department of Mathematics,  ETH Z\"urich}\par\nopagebreak
		\noindent \textit{E-mail address}: \texttt{jeanphilippe.chasse@math.ethz.ch}
		
}}
\newcommand{\N}{\mathds{N}}
\newcommand{\Z}{\mathds{Z}}
\newcommand{\R}{\mathds{R}}
\newcommand{\C}{\mathds{C}}
\newcommand{\Id}{\mathds{1}}
\newcommand{\Ham}{\operatorname{Ham}}
\newcommand{\Symp}{\operatorname{Symp}}
\newcommand{\graph}{\operatorname{graph}}
\newcommand{\del}{\partial}
\let\phi\varphi
\let\epsilon\varepsilon
\let\emptyset\varnothing
\begin{document}
\thispagestyle{first}

\pagetitre{\titre}{\titrep}{\prenomauteur}{\nomauteur}

\begin{abstract}
	\noindent We study collections of exact Lagrangian submanifolds respecting some uniform Riemannian bounds, which we equip with a metric naturally arising in symplectic topology (e.g. the Lagrangian Hofer metric or the spectral metric). We exhibit many metric and symplectic properties of these spaces, such that they have compact completions and that they contain only finitely many Hamiltonian isotopy classes. We then use this to exclude many unusual phenomena from happening in these bounded spaces. Taking limits in the bounds, we also conclude that there are at most countably many Hamiltonian isotopy classes of exact Lagrangian submanifolds in a Liouville manifold. Under some mild topological assumptions, we get analogous results for monotone Lagrangian submanifolds with a fixed monotonicity constant. Finally, in the process of showing these results, we get new results on the Riemannian geometry of cotangent bundles and surfaces which might be of independent interest.
\end{abstract}

\noindent\rule{\linewidth}{0.5pt}
\tableofcontents
\noindent\rule{\linewidth}{0.5pt}

\section{Introduction and main results} \label{sec:intro}
In previous work~\cite{Chasse2023,Chasse2022}, the author showed that all the familiar metrics $d$ between Lagrangian submanifolds in symplectic topology behave like the classical Hausdorff metric when restricted to the space $\mathscr{L}_k^e$ of $\lambda$-exact Lagrangian submanifolds of a Liouville manifold $(M,\omega=d\lambda)$ which are ``geometrically bounded by k''. Broadly speaking, being geometrically bounded by $k$ ensures that all Lagrangian submanifolds considered are contained in the same compact and have curvature and volume uniformly bounded~---~we give the precise definition in Subsection~\ref{subsec:definitions}. In this paper, we continue the study of the metric spaces $(\mathscr{L}^e_k,d)$. Most notably, we will be concerned with issues of compacity and local connectedness. \par

Furthermore, we extend our results to the analogous space $\mathscr{L}^{m(\rho)}_k$ of $\rho$-monotone Lagrangian submanifolds which ``bounds enough disks''~---~the ambient symplectic manifold $M$ is either closed or convex at infinity in that case. We will not give right away the precise definition of what we mean here by ``bounding enough disks'', but we note that this is automatically satisfied if either the Lagrangian submanifolds or the symplectic manifold we consider are simply connected. \par

In what follows, $\mathscr{L}^\star_k$ will thus denote either the spaces $\mathscr{L}^e_k$ or $\mathscr{L}^{m(\rho)}_k$. We will also denote by $\mathscr{L}^\star_\infty$ the space of \textit{all} Lagrangian submanifolds respecting $\star$. Moreover, the metric $d$ that we consider will be a so-called Chekanov-type metric~\cite{Chasse2023} and be bounded from above by the Lagrangian Hofer metric. In practice, one can think of $d$ as one of the following metrics.
\begin{enumerate}[label=(\alph*)]
	\item ($d=d_H$): This is the case of the Lagrangian Hofer metric, which is due to Chekanov~\cite{Chekanov2000}. \par
	
	\item ($d=\gamma$): This is the case of the spectral metric, originally due to Viterbo~\cite{Viterbo1992} for Lagrangian submanifolds of $T^*L$ Hamiltonian isotopic to the zero-section. The metric may be extended to all exact Lagrangian submanifolds of $T^*L$ by work from Fukaya, Seidel, and Smith~\cite{FukayaSeidelSmith2008i, FukayaSeidelSmith2008ii}, Abouzaid~\cite{Abouzaid2012}, and Kragh~\cite{Kragh2013}. In general, it has also been defined for weakly exact Lagrangian submanifolds by Leclercq~\cite{Leclercq2008} and monotone ones with nonvanishing quantum homology by Kislev and Shelukhin~\cite{KislevShelukhin2022}, following work of Leclercq and Zapolsky~\cite{LeclercqZapolsky2018}. \par
	
	\item ($d=\gamma_\mathrm{ext}$): This is a variant of the usual spectral metric, as defined by Kislev and Shelukhin~\cite{KislevShelukhin2022}. \par
	
	\item ($d=\hat{d}^{\mathscr{F},\mathscr{F'}}_\mathcal{S}$): These are the shadow metrics appearing in work of Biran, Cornea, and Shelukhin~\cite{CorneaShelukhin2019, BiranCorneaShelukhin2021}. \par
	
	\item ($d=\hat{D}^{\mathscr{F},\mathscr{F'}}$): There are possibly many other weighted fragmentation pseudometrics ---~as defined by Biran, Cornea, and Zhang~\cite{BiranCorneaZhang2021}~--- that belong to this class. \par
\end{enumerate}
If $d=\gamma$ and $\star=m(\rho)$, we make a slight abuse of notation and still denote by $\mathscr{L}^{m(\rho)}_k$ the space of all $\rho$-monotone $k$-geometrically bounded Lagrangian submanifolds which both bounds enough disks and have nonvanishing quantum homology. Otherwise, we also take the convention that whenever $d$ is not properly defined between $L$ and $L'$, then $d(L,L')=+\infty$. \par

With this notation settled down, we enunciate the main principles that summarize our results.
\begin{enumerate}[label=(\arabic*)]
	\item In $(\mathscr{L}^\star_k,d)$, nothing symplectically unusual happens.
	\item Metric properties of $(\mathscr{L}^\star_k,d)$ induce topological properties on the limit space $(\mathscr{L}^\star_\infty,d)$.
\end{enumerate}
We now explain our main results and how their corollaries showcase these general ideas. Note that some of these corollaries are not as direct as others; we will properly prove all of them later in the paper. \par

\begin{thmout} \label{thmout:compact-comp}
	On $\mathscr{L}^\star_k$, all possible choices of $d$ in the above list induce the same topology and have homeomorphic completions $\widehat{\mathscr{L}}^\star_k$. Moreover, that completion $\widehat{\mathscr{L}}^\star_k$ is compact.
\end{thmout}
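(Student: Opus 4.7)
The plan is to bootstrap from the author's previous work~\cite{Chasse2023,Chasse2022}, which compares each metric $d$ in the list to the Hausdorff metric $d_H$ on $\mathscr{L}^\star_k$. I would first extract from those papers the statement that on $\mathscr{L}^\star_k$ each $d$ is \emph{uniformly} equivalent to $d_H$, in the sense that $d_H$ is bounded above by a function of $d$ that tends to $0$ as $d\to 0$, and conversely. This would imply that the identity map on $\mathscr{L}^\star_k$ is a uniform homeomorphism between $(\mathscr{L}^\star_k,d)$ and $(\mathscr{L}^\star_k,d_H)$, and hence extends to a homeomorphism of completions. This single reduction handles both the assertion that all choices of $d$ induce the same topology and the fact that their completions are canonically homeomorphic to $\widehat{\mathscr{L}}^\star_k := \widehat{(\mathscr{L}^\star_k, d_H)}$.

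After this reduction, it remains to show that $\widehat{\mathscr{L}}^\star_k$ is compact, equivalently that $(\mathscr{L}^\star_k, d_H)$ is totally bounded. Here I would feed in the content of $k$-geometric boundedness: all elements of $\mathscr{L}^\star_k$ lie in a common compact subset $K\subset M$, and they share uniform bounds on volume and on the second fundamental form with respect to some fixed compatible metric. A standard submanifold compactness argument (of Cheeger-Gromov type, or more concretely an $\epsilon$-net argument exploiting that Lagrangians with bounded second fundamental form are uniformly locally graphical over their tangent planes) then produces, from any sequence $(L_n)\subset\mathscr{L}^\star_k$, a subsequence whose underlying subsets of $M$ Hausdorff-converge to some compact subset of $K$. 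This yields total boundedness in $d_H$, hence compactness of the $d_H$-completion, hence compactness of $\widehat{\mathscr{L}}^\star_k$ via the uniform identification above.

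The main obstacle I expect is ensuring that the comparison $d\asymp d_H$ on $\mathscr{L}^\star_k$ is truly uniform rather than just topological, since for completions to agree we need Cauchy sequences in one metric to be Cauchy in the other. One direction is essentially automatic from $d\le d_H$ (which holds for shadow and Hofer-type metrics under standard normalizations), but the reverse — controlling $d_H$ by $d$ with a modulus that is independent of the point in $\mathscr{L}^\star_k$ — is where the bounded geometry is truly used, and this is the step that must be distilled carefully from~\cite{Chasse2023,Chasse2022}. A secondary technical point concerns the case $d=\gamma$ with $\star=m(\rho)$, where the metric is only defined on Lagrangians with nonvanishing quantum homology; here one must verify that this condition is preserved under the uniform comparison with $d_H$, so that the space $\mathscr{L}^{m(\rho)}_k$ is well-defined and its completion does not shrink or pick up spurious points. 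Modulo these checks, the rest of the argument is a direct assembly of the Riemannian compactness package with the metric equivalence.
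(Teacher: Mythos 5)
There is a genuine gap at the heart of your reduction: the \emph{uniform} two-sided equivalence of $d$ and the Hausdorff metric $\delta_H$ on $\mathscr{L}^\star_k$ is not something you can ``extract'' from the prior work --- it is essentially the content that has to be proved here. What the earlier papers give is asymmetric: Theorem~\ref{thm:first} does give a uniform bound $\delta_H(L,L')\leq C_1\sqrt{d(L,L')}$ for $d(L,L')<R_1$ with constants depending only on $k$ (and this is a nontrivial holomorphic-curve result, not ``essentially automatic''; the standing hypothesis $d\leq Cd_H$ concerns the Lagrangian Hofer metric, not the Hausdorff metric, and your proposal conflates the two); but the reverse comparison, Theorem~\ref{thm:second}, is only \emph{local} and \emph{pointwise}: the constants $C_2,R_2$ depend on the basepoint $L$, and the bound $d(L,L')\leq C_2\,\delta_H(L,L')$ holds only when $L'$ is the graph of an \emph{exact} $1$-form over $L$. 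Exactness is exactly where the ``bounds enough disks'' hypothesis and Lemma~\ref{lem:apply_thm2} enter, and without it no modulus controlling $d$ by $\delta_H$ can exist (e.g.\ $d_H$ is infinite between non-Hamiltonian-isotopic Lagrangians that can be Hausdorff-close). So your key step ``Cauchy in $\delta_H$ $\Rightarrow$ Cauchy in $d$'' is unjustified as stated: a basepoint-dependent local comparison does not yield a uniform modulus, and a compactness argument to upgrade it would be circular, since compactness of the completion is the goal.

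The paper bridges precisely this non-uniformity in Proposition~\ref{prop:homeo_post}: given a $\delta_H$-Cauchy sequence, its Hausdorff limit $N$ exists (Blaschke compactness of closed subsets of $W_k$) and, by the second half of Theorem~\ref{thm:second}, is an embedded $C^{1,\alpha}$ Lagrangian; one then perturbs by Hofer-small Hamiltonians to a sequence $\{L_i'\}$ Hausdorff-converging to a \emph{smooth} nearby $N'$, applies the local comparison around $N'$ to get $d$-Cauchyness of $\{L_i'\}$, and concludes by the triangle inequality together with $d\leq Cd_H$. That approximation-at-the-limit argument is the missing idea in your proposal. A secondary remark: your Cheeger--Gromov/$\epsilon$-net machinery is not needed for total boundedness --- since all elements of $\mathscr{L}^\star_k$ are compact subsets of the fixed compact $W_k$, Hausdorff precompactness is immediate, and the curvature/tameness bounds are needed only to ensure that Hausdorff limits are embedded Lagrangians, which is used in the metric-comparison step rather than in the compactness step.
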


In term of the first principle, we get the following. \par

\begin{cor*}
	The subspaces $\mathscr{L}^{L_0}_k:=(\Ham(M)\cdot L_0)\cap\mathscr{L}^\star_k$, where $L_0\in\mathscr{L}^\star_\infty$, have finite diameter in $d$. If $M=T^*L$ and $d=\gamma$, then the same holds on $\mathscr{L}^{e}_k$.
\end{cor*}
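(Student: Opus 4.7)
The plan is to extract this corollary almost directly from Theorem A, using that every metric $d$ on the author's list is bounded from above by the Lagrangian Hofer metric $d_H$.

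First, I would observe that by Theorem A the completion $\widehat{\mathscr{L}}^\star_k$ is compact, and a compact metric space has finite diameter. Since $\mathscr{L}^\star_k$ embeds isometrically into $\widehat{\mathscr{L}}^\star_k$ by construction of the completion, any subset $S\subseteq\mathscr{L}^\star_k$ on which $d$ takes only finite values inherits the finite-diameter property from the ambient compact space.

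The remaining task is then to verify that $d$ is finite on $\mathscr{L}^{L_0}_k$. But this is immediate: any two elements of $\mathscr{L}^{L_0}_k$ are Hamiltonian isotopic, both being images of $L_0$ under some element of $\Ham(M)$, so the Lagrangian Hofer distance $d_H$ between them is finite; and since $d\le d_H$ for each of the metrics (a)--(e) listed earlier, $d$ is finite as well. Combined with the previous paragraph this gives the first claim.

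For the second claim, with $M=T^*L$ and $d=\gamma$, the same compactness argument applies, but now to the whole space $\mathscr{L}^e_k$ instead of a single Hamiltonian orbit. The only new input required is that $\gamma(L,L')<+\infty$ for every pair of exact Lagrangians $L,L'\subseteq T^*L$, not only for pairs in the same Hamiltonian orbit; this is precisely what the extensions of the spectral metric to all exact Lagrangians of a cotangent bundle, due to Fukaya--Seidel--Smith, Abouzaid, and Kragh as cited in (b), provide. With finiteness of $\gamma$ on $\mathscr{L}^e_k$ in hand, compactness of $\widehat{\mathscr{L}}^e_k$ again yields finite diameter. No genuine obstacle appears here; the only delicate point is the bookkeeping required to confirm that $d$ is finite-valued on the relevant subspace before invoking compactness of $\widehat{\mathscr{L}}^\star_k$.
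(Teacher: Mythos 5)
Your overall route is the same as the paper's (this corollary is proved there as Corollary~\ref{cor:viterbo_conj}): finiteness of $d$ on a Hamiltonian orbit via the domination $d\leq Cd_H$, finiteness of $\gamma$ between arbitrary exact Lagrangian submanifolds of a cotangent bundle in the second case, and then the compactness part of Theorem~\ref{thmout:compact-comp} to upgrade finiteness to a uniform bound. One step is stated too loosely, though: the assertion that ``a compact metric space has finite diameter'' does not apply to $\widehat{\mathscr{L}}^\star_k$ as such, because $d$ is only an extended metric there. For instance, with $d=d_H$ and several Hamiltonian isotopy classes present in $\mathscr{L}^\star_k$, the completion is compact (Corollary~\ref{cor:cpt}) and yet has infinite diameter, so a finite-valued subset cannot simply ``inherit'' a finite-diameter property that the ambient compact space need not possess.

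The conclusion you want is nevertheless correct and the fix is short, and it is exactly how the paper phrases the argument: compactness of $\widehat{\mathscr{L}}^\star_k$ gives total boundedness of $\mathscr{L}^\star_k$ (Corollary~\ref{cor:totally_bnd}); cover $\mathscr{L}^{L_0}_k$ (resp.\ $\mathscr{L}^e_k$ in the cotangent-bundle case with $d=\gamma$) by finitely many $d$-balls of radius $1$ with centres \emph{in that subspace}, observe that the pairwise distances between these centres are finite by the finiteness you established, and conclude that the diameter is bounded by $2$ plus the maximum of these finitely many distances. (Equivalently, the finite-distance equivalence class of $L_0$ in $\widehat{\mathscr{L}}^\star_k$ is clopen, hence compact, hence of finite diameter as a genuine metric space.) With this adjustment your proof coincides with the paper's.
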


Note that there are many known cases where it is known that $\Ham(M)\cdot L_0=\mathscr{L}^{L_0}_\infty$ has infinite diameter in $d$, e.g.\ $\mathscr{L}^{S^1}_\infty(T^*S^1)=\mathscr{L}^{e}_\infty(T^*S^1)$ has infinite Hofer diameter~\cite{Khanevsky2009}. On the other hand, it is conjectured~---~and has been proven in a many cases~\cite{Shelukhin2018, Shelukhin2022, Viterbo2022ii, GuillermouVichery2022}~---~that $\mathscr{L}^{e}_\infty(T^*Q)$ has finite diameter in the spectral metric. Therefore, without any Riemannian bound, the finiteness is highly dependent on $L_0$, $M$, and $d$. Note that we previously proved such a boundedness result~\cite{Chasse2022} on a neighbourhood of some $L$ in $\mathscr{L}^{e}_k$. The improvement here is thus in going from a neighbourhood to the whole space. \par

As we shall see below, we also manage to extend our study to spaces of graphs of Hamiltonian diffeomorphisms of closed monotone manifolds, whether these graphs bound enough disks or not. Through this, we can rule out something like Ostrover's example~\cite{Ostrover2003} from happening in the corresponding $\mathscr{L}_k$ spaces. \par

\begin{cor*}
	Suppose that $M$ is closed and monotone. On the subspace of Hamiltonian diffeomorphisms $\phi\in\Ham(M)$ whose graph has geometry bounded by $k$ in $M\times M$, the Hofer norm $||\cdot||_H$ is bounded. Moreover, the Hofer norm on diffeomorphisms and the Lagrangian Hofer distance between their graphs induce the same topology on that space.
\end{cor*}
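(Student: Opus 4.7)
The plan is to use the graph embedding $\Phi\colon \phi \mapsto \graph(\phi)$ to identify the space $X$ of $\phi \in \Ham(M)$ with $k$-bounded graph with a subspace of $\mathscr{L}^{\Delta}_k(M\times\bar{M})$, where the first corollary applies. Indeed, since $\graph(\phi) \subset M\times\bar{M}$ is a $\rho$-monotone Lagrangian Hamiltonian isotopic to the diagonal $\Delta$, the image $\Phi(X)$ lies in $\mathscr{L}^{\Delta}_k(M\times\bar{M})$. The classical lift $H_t \mapsto \tilde H_t(x,y) := H_t(y)$ of a generating Hamiltonian satisfies $\|\tilde H\|_H = \|H\|_H$ and flows $\Delta$ to $\graph(\phi)$, showing $d_H(\graph(\phi),\Delta) \leq \|\phi\|_H$. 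Thus $\Phi$ is $1$-Lipschitz for the two Hofer metrics, which already gives one direction of the topological equivalence.

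For the reverse direction, suppose $d_H(\graph(\phi_n),\graph(\phi)) \to 0$ with $\phi_n, \phi \in X$. By Theorem~A, convergence also holds in the Hausdorff metric, and the uniform $C^1$-control on the graphs coming from the $k$-bound yields (via Arzel\`a--Ascoli) a $C^1$-convergence $\phi_n \to \phi$. Then for large $n$, $\graph(\phi_n\phi^{-1})$ lies in a fixed Weinstein tubular neighborhood of $\Delta$ and is the graph of an exact $1$-form $df_n$ with $\|f_n\|_{C^0} \to 0$; the corresponding Hamiltonian on $M$ generating $\phi_n\phi^{-1}$ can then be read off with Hofer norm going to zero. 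The key role of the $k$-bound here is to provide a uniform lower bound on the size of the relevant Weinstein neighborhood.

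For the boundedness of $\|\cdot\|_H$ on $X$, the first corollary only yields $d_H(\graph(\phi),\Delta) \leq C(k)$, which cannot be inverted in general --- this is precisely Ostrover's phenomenon. The plan is to use the compactness of $\widehat{\mathscr{L}}^{\Delta}_k$ from Theorem~A to cover $\Phi(X)$ by finitely many Weinstein tubular neighborhoods $U_i$ of representatives $\graph(\phi_i)$. Inside each $U_i$ the $1$-form trick gives $\|\phi\phi_i^{-1}\|_H \leq \delta_i$ whenever $\graph(\phi) \in U_i$; connecting $\Delta$ to each $\graph(\phi_i)$ by a chain of overlapping neighborhoods in the (compact) completion then bounds $\|\phi_i\|_H$, and hence $\|\phi\|_H$, uniformly in $\phi \in X$. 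The hard part is precisely this conversion from Lagrangian Hofer statements in $M\times\bar{M}$ to Hamiltonian Hofer statements on $M$: it is exactly where the $k$-bound precludes Ostrover's example, by forcing Lagrangians near any fixed graph to be themselves honest graphs of exact $1$-forms whose generating function controls the Hamiltonian $C^0$-norm.
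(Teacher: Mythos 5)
Your overall skeleton (graph trick for $d_H\le d'_H:=\|\phi_1\phi_2^{-1}\|_H$, Theorem~\ref{thm:first} plus graphicality near a graph for the converse, compactness/total boundedness for the uniform bound) is the same as the paper's, but there is a genuine gap at the crucial step: the exactness of the $1$-form. Being in a Weinstein tube over $\Delta$ and being a Lagrangian graph only makes the corresponding $1$-form $\sigma_n$ \emph{closed}; neither the curvature bound $k$ nor the fact that $\phi_n\phi^{-1}\in\Ham(M)$ gives $[\sigma_n]=0$ for free, and Lemma~\ref{lem:apply_thm2} is unavailable because graphs of Hamiltonian diffeomorphisms need not bound enough disks (this subsection of the paper explicitly drops that hypothesis). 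The paper's Proposition~\ref{prop:apply_thm2_graphs} obtains exactness by a flux argument: the radial isotopy $t\mapsto\Psi(\graph t\sigma_n)$ defines symplectomorphisms $\psi^t_n$ with $\mathrm{Flux}=[\sigma_n]$, concatenating with the reverse of a Hamiltonian isotopy to $\phi_n\phi^{-1}$ gives a loop in $\Symp(M)$, so $[\sigma_n]\in\Gamma_\omega$; only then does Ono's theorem (discreteness of the flux group) together with $\sigma_n\to 0$ force $[\sigma_n]=0$. Your closing sentence, which attributes exactness to the $k$-bound ``forcing Lagrangians near a graph to be graphs of exact $1$-forms,'' conflates closed with exact and skips exactly this ingredient; the same gap also undercuts your appeal to ``the first corollary'' via $\mathscr{L}^{m(\rho)}_k(M\times M)$, since that space is defined with the bounding-enough-disks condition.

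Your boundedness argument also needs repair, though it is fixable. The proposed cover of the completion by Weinstein tubes around graphs, followed by a ``chain of overlapping neighbourhoods'' down to $\Delta$, is not justified: the completion contains non-smooth and even non-graphical elements (this is the paper's own Corollary~\ref{cor:limit_graphs}), the radius $R_2$ in Theorem~\ref{thm:second} depends on the centre, and no connectivity of the completion by such tubes is established (a Hamiltonian isotopy from $\phi_i$ to $\mathrm{id}$ need not keep the graphs in $\mathscr{L}^\Gamma_k$). It is also unnecessary: each $\|\phi_i\|_H$ is finite simply because $\phi_i\in\Ham(M)$, so a finite cover by sets on which $\|\phi\phi_i^{-1}\|_H\le\delta_i$ already yields the bound. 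The paper instead runs the abstract route: with $d=d'_H$, Proposition~\ref{prop:homeo_post}, Corollary~\ref{cor:cpt} and Corollary~\ref{cor:totally_bnd} give total boundedness of $(\mathscr{L}^\Gamma_k,d'_H)$, and total boundedness together with finiteness of all pairwise $d'_H$-distances (there is a single Hamiltonian isotopy class) gives boundedness, while the equivalence of topologies follows from both $d_H$ and $d'_H$ being locally equivalent to $\delta_H$.
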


In term of the second principle, we get the following result from Theorem~\ref{thmout:compact-comp}.

\begin{cor*}
	The limit space $(\mathscr{L}^\star_\infty,d)$ is separable, i.e. it admits a countable dense subset.
\end{cor*}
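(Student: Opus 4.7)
The plan is to deduce separability of $(\mathscr{L}^\star_\infty,d)$ from Theorem~\ref{thmout:compact-comp} by exhibiting $\mathscr{L}^\star_\infty$ as a countable union of separable pieces.

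First, I would observe that the parameter $k$ appearing in the definition of $k$-geometric boundedness is monotone: any Lagrangian that is bounded by some constant is also bounded by any larger constant. Consequently, restricting $k$ to the positive integers costs nothing, and one obtains the exhaustion
\begin{equation*}
	\mathscr{L}^\star_\infty \;=\; \bigcup_{k\in\N}\mathscr{L}^\star_k.
\end{equation*}
(If $k$ is genuinely multi-parameter in Subsection~\ref{subsec:definitions}, one replaces $\N$ by $\N^r$, which is still countable.)

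Next, by Theorem~\ref{thmout:compact-comp} each $\widehat{\mathscr{L}}^\star_k$ is a compact metric space with respect to the (now unambiguous) topology induced by $d$; in particular $d$ is finite on $\mathscr{L}^\star_k$, so the issue of $d$ possibly taking the value $+\infty$ does not arise inside a single piece. A compact metric space is totally bounded, hence separable, and every subspace of a separable metric space is itself separable. Thus each $\mathscr{L}^\star_k \subset \widehat{\mathscr{L}}^\star_k$ admits a countable $d$-dense subset $D_k$.

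The set $D := \bigcup_{k\in\N} D_k$ is then a countable subset of $\mathscr{L}^\star_\infty$. To see that it is dense, pick $L\in\mathscr{L}^\star_\infty$ and $\epsilon>0$: by the exhaustion there exists $k$ with $L\in\mathscr{L}^\star_k$, and by density of $D_k$ in $\mathscr{L}^\star_k$ there is some $L'\in D_k\subset D$ with $d(L,L')<\epsilon$. Hence $D$ is dense in $(\mathscr{L}^\star_\infty,d)$, proving separability. The only non-routine ingredient is Theorem~\ref{thmout:compact-comp} itself; once that is in hand, the corollary is essentially the standard fact that a countable union of separable metric spaces is separable, and there is no real obstacle beyond confirming that $k$ can be chosen from a countable indexing set.
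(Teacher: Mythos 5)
Your proposal is correct and follows essentially the same route as the paper: the paper deduces from Theorem~\ref{thmout:compact-comp} (via Corollary~\ref{cor:totally_bnd}) that each $\mathscr{L}^\star_k$ is totally bounded, hence has a countable $d$-dense subset $\mathcal{B}_k$, and takes $\bigcup_k\mathcal{B}_k$ as the countable dense set in $\mathscr{L}^\star_\infty=\bigcup_k\mathscr{L}^\star_k$, exactly your decomposition. One small caveat: your parenthetical claim that $d$ is finite on each $\mathscr{L}^\star_k$ is not needed and is not quite accurate when $d=d_H$ and $\mathscr{L}^\star_k$ contains several Hamiltonian isotopy classes (the convention $d_H=+\infty$ between non-isotopic Lagrangians still allows total boundedness, since by Theorem~\ref{thmout:finite_classes} there are only finitely many classes), but nothing in your argument actually uses it.
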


Following separate discussions with Humili\`ere and with Shelukhin, it seems that this result was well-accepted folklore on $\mathscr{L}^{L_0}_\infty=\Ham(M)\cdot L_0$. Therefore, the innovation of the above corollary seems to be on the number of Hamiltonian isotopy classes of $\mathscr{L}^\star_\infty$; this is precisely what we explore below. \par

\begin{thmout} \label{thmout:finite_classes}
	The space $\mathscr{L}^\star_k$ contains only finitely many Hamiltonian isotopy classes. Furthermore, there is an $A=A(k)>0$ such that 
	\begin{align*}
		d(L,L')\geq A
	\end{align*}
	whenever $L, L'\subseteq \mathscr{L}^\star_k$ are \emph{not} Hamiltonian isotopic.
\end{thmout}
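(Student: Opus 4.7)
The plan is to prove the separation statement $d(L,L')\geq A$ first; the finiteness assertion then drops out of compactness (Theorem~\ref{thmout:compact-comp}) in one line. The proof of separation proceeds by contradiction, assuming sequences $L_n,L_n'\in\mathscr{L}^\star_k$ with $d(L_n,L_n')\to 0$ but $L_n$ not Hamiltonian isotopic to $L_n'$. By compactness of $\widehat{\mathscr{L}}^\star_k$, after passing to subsequences $L_n$ converges to some $L_\infty\in\widehat{\mathscr{L}}^\star_k$, and then automatically $L_n'\to L_\infty$ too. The contradiction will come from a local Hamiltonian-isotopy-triviality statement applied at some $L_m$ sitting close to $L_\infty$: if $r_{L_m}>0$ is the radius of triviality at $L_m$, choosing $m$ with $d(L_m,L_\infty)<r_{L_m}/4$ places both $L_n$ and $L_n'$ inside the $d$-ball of radius $r_{L_m}$ around $L_m$ for all $n$ large, forcing both to be Hamiltonian isotopic to $L_m$, hence to each other.

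The technical heart is thus the local triviality: for every $L\in\mathscr{L}^\star_k$ there exists $r_L>0$ such that any $L'\in\mathscr{L}^\star_k$ with $d(L,L')<r_L$ is Hamiltonian isotopic to $L$. By Theorem~\ref{thmout:compact-comp}, the $d$-topology on $\mathscr{L}^\star_k$ coincides with the Hausdorff topology, so $d$-closeness produces Hausdorff-closeness. The uniform $k$-bounds on second fundamental form and injectivity radius then upgrade this to $C^1$-closeness by an Arzel\`a-Ascoli compactness for submanifolds with bounded geometry. Weinstein's tubular neighborhood theorem next realizes $L'$ as the graph of a closed $1$-form $\alpha$ on $L$; in the exact case, exactness of both $L$ and $L'$ forces $\alpha$ to be exact, while in the $\rho$-monotone case, matching of monotonicity constants together with the ``bounds enough disks'' hypothesis forces $\alpha$ to have vanishing periods against all $\pi_1(L)$-classes that matter, yielding again the existence of a primitive. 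That primitive then generates a Hamiltonian isotopy from $L$ to $L'$. A version of this local step already appears in~\cite{Chasse2022}, and I would rely on it as a black box.

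Granted the uniform $A$, the finiteness assertion is immediate: $\widehat{\mathscr{L}}^\star_k$ being compact, it (and in particular $\mathscr{L}^\star_k$) is totally bounded, so $\mathscr{L}^\star_k$ can be covered by finitely many $d$-balls of radius less than $A/2$. By the separation bound, each such ball is contained in a single Hamiltonian isotopy class, so the number of classes is bounded by the size of the cover. The main obstacle I foresee is repackaging the local triviality of~\cite{Chasse2022} — which is naturally phrased in terms of $C^1$-neighborhoods — as a statement about $d$-balls; this requires carefully combining the topological equivalence of Theorem~\ref{thmout:compact-comp} with the regularity afforded by the $k$-bounds, and checking that the Hausdorff-to-$C^1$ upgrade survives when $d$ is a spectral or shadow-type metric rather than the Hausdorff distance itself.
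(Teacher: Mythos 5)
Your global architecture matches the paper's: produce the separation constant $A$ by a compactness/limit argument, then get finiteness from total boundedness (Corollary~\ref{cor:totally_bnd}), since any $d$-ball of radius less than $A/2$ lies in a single Hamiltonian isotopy class; that half of your proposal is correct and is essentially Proposition~\ref{prop:finitely_isotopy_classes}. The gap is in the contradiction step for the separation. You need an index $m$ with $d(L_m,L_\infty)<r_{L_m}/4$, where $r_{L_m}$ is the local-triviality radius at $L_m$. But the radius supplied by Theorem~\ref{thm:second} (the black box from~\cite{Chasse2022} you invoke) depends on the Lagrangian: the constants $C_2,R_2$ are only asserted to exist for each fixed $L\in\mathscr{L}_k$, not uniformly over $\mathscr{L}_k$. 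Along your sequence, $r_{L_m}$ may shrink to $0$ faster than $d(L_m,L_\infty)$ does, in which case no admissible $m$ exists. You also cannot patch this by assuming a uniform lower bound for $r_L$ on $\mathscr{L}^\star_k$: up to the Hausdorff-versus-$d$ comparison of Theorem~\ref{thm:first}, such a bound \emph{is} the inequality $d(L,L')\geq A$ you are trying to prove, and if you had it the whole compactness detour would be superfluous.

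The repair --- and what the paper actually does --- is to anchor at the limit rather than at a member of the sequence. Since $d(L_n,L'_n)\to 0$, after passing to a subsequence both sequences have a common Hausdorff limit $N$, and the last part of Theorem~\ref{thm:second} says $N$ is an embedded $C^{1,\alpha}$ Lagrangian with diffeomorphisms $f_n:N\to L_n$, $f'_n:N\to L'_n$ converging to the identity in $C^{1,\alpha'}$. Applying Theorem~\ref{thm:second} at one \emph{fixed} smooth Lagrangian close to $N$ (a Hofer-small smoothing, as in the proof of Proposition~\ref{prop:homeo_post}) exhibits $L_n$ and $L'_n$, for $n$ large, as graphs of closed one-forms over that fixed anchor; these forms are exact by exactness of the Weinstein chart in the exact case, by Lemma~\ref{lem:apply_thm2} in the monotone case, and by Proposition~\ref{prop:apply_thm2_graphs} for graphs, so $L_n$ and $L'_n$ are eventually Hamiltonian isotopic --- with all constants attached to the fixed anchor, so no uniformity is needed. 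This is exactly how Proposition~\ref{prop:isotopy_classes+connected_components} is proved in the text: $d$-equivalent sequences are equivalent in the Hausdorff/Hofer sense by Proposition~\ref{prop:homeo_post}, and finite Lagrangian Hofer distance already forces Hamiltonian isotopy. Your local-triviality lemma is true and rests on the same mechanism, but the point at which it is applied must be chosen independently of $n$.
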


Obviously, the $A$-bound is trivial when $d=d_H$~---~and potentially when $d=\gamma$~---~as we have taken the convention that $d_H(L,L')=\infty$ whenever $L$ and $L'$ are not Hamiltonian isotopic. However, there are examples of Lagrangian submanifolds which are not Hamiltonian isotopic but are a finite distance apart in a shadow metric~\cite{BiranCorneaShelukhin2021}. \par

We note that Theorem~\ref{thmout:finite_classes} already fits within the motif of the first principle. However, we can go even further in this direction, as the corollaries below show. \par

First, we get a result on the vanishing of entropy of symplectomorphisms preserving $\mathscr{L}^\star_k$ in some form. We refer the reader to Subsection~\ref{subsec:entropy} and the references therein for the definition of barcode and categorical entropy.

\begin{cor*}
	Let $\psi$ be a symplectomorphism of $M$ such that $\psi(\mathscr{L}^\star_\infty)=\mathscr{L}^\star_\infty$. If $L$ is such that the sequence $\{\psi^\nu(L)\}$ is fully contained in some $\mathscr{L}^\star_k$, then there is some $N$ such that $\psi^N(L)$ is Hamiltonian isotopic to $L$. Furthermore, for such $\psi$ and $L$, the barcode entropy $\hbar(\psi;L,L')$ vanishes for any $L'\in\mathscr{L}^\star_\infty$. \par
	
	More generally, if the Lagrangian submanifolds $L_1,\dots, L_\ell$ split-generate the derived Fukaya category $\mathrm{DFuk}^\star(M)$ of $M$ and each sequence $\{\psi^\nu(L_i)\}$ are contained in a single $\mathscr{L}^\star_k$, then the categorical entropy $h_\mathrm{cat}(\psi)$ of $\psi$ vanishes.
\end{cor*}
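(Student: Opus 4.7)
The plan is to exploit Theorem~\ref{thmout:finite_classes} together with the observation that $\psi$ descends to a bijection on the set of Hamiltonian isotopy classes contained in $\mathscr{L}^\star_\infty$. Indeed, if $\{\phi_t\} \subseteq \Ham(M)$ is generated by $H_t$, then the conjugated path $\{\psi \phi_t \psi^{-1}\}$ is generated by $H_t \circ \psi^{-1}$, so conjugation by the symplectomorphism $\psi$ sends Hamiltonian isotopies to Hamiltonian isotopies. Combined with the hypothesis $\psi(\mathscr{L}^\star_\infty) = \mathscr{L}^\star_\infty$ (which also yields the analogous property for $\psi^{-1}$), this produces a well-defined bijection on isotopy classes.

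For the existence of $N$, since $\{\psi^\nu(L)\}_{\nu \in \N}$ lies in $\mathscr{L}^\star_k$, which by Theorem~\ref{thmout:finite_classes} contains only finitely many Hamiltonian isotopy classes, there must exist $i < j$ such that $\psi^i(L)$ and $\psi^j(L)$ are Hamiltonian isotopic. Applying the inverse of the bijection above $i$ times then produces a Hamiltonian isotopy between $L$ and $\psi^{j-i}(L)$, so that $N := j-i$ works.

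For the vanishing of the barcode entropy $\hbar(\psi; L, L')$, one observes that the Floer barcode $\mathcal{B}(\psi^\nu(L), L')$ is an invariant of the pair of Hamiltonian isotopy classes $([\psi^\nu(L)], [L'])$ up to an overall shift that does not affect bar lengths. Since $[\psi^\nu(L)]$ takes only finitely many values as $\nu$ varies, the number $b_\epsilon$ of bars of length at least $\epsilon$ is bounded uniformly in $\nu$. Consequently $\limsup_{\nu \to \infty} \frac{1}{\nu} \log b_\epsilon(\psi^\nu(L), L') = 0$ for every $\epsilon > 0$, whence $\hbar(\psi; L, L') = 0$ after letting $\epsilon \to 0^+$.

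For the categorical entropy, Hamiltonian-isotopic Lagrangians represent isomorphic objects in $\mathrm{DFuk}^\star(M)$, so the very same finiteness argument shows that each sequence $\{\psi^\nu(L_i)\}$ represents only finitely many isomorphism classes in $\mathrm{DFuk}^\star(M)$. Taking $G := L_1 \oplus \cdots \oplus L_\ell$ as a split-generator, the complexity $\delta(G, \psi^\nu G)$ entering the definition of $h_{\mathrm{cat}}(\psi)$ is therefore uniformly bounded in $\nu$, which forces $h_{\mathrm{cat}}(\psi) = 0$. The main subtlety throughout this plan is to verify the Hamiltonian-invariance of Floer barcodes and of Fukaya-categorical objects in the generality spanned by $\star$ and by the admissible choices of $d$, but this is well established in each individual case and should be absorbed into a preliminary discussion rather than the proof itself.
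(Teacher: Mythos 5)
Your first claim (existence of $N$) and your categorical entropy argument follow the paper's route: the paper deduces Corollary~\ref{cor:order_symp} from the finiteness of Hamiltonian isotopy classes in $\mathscr{L}^\star_k$ (Proposition~\ref{prop:finitely_isotopy_classes}), with the pigeonhole-plus-conjugation step you spell out, and Corollary~\ref{cor:cat-entropy} is obtained exactly as you do, from the fact that Hamiltonian isotopic Lagrangians give isomorphic objects of $\mathrm{DFuk}^\star(M)$, so the complexity of $\psi^\nu(L_1\oplus\dots\oplus L_\ell)$ takes only finitely many values and $h_\mathrm{cat}(\psi)=0$.

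The barcode entropy step, however, has a genuine gap. You claim that the barcode $\mathcal{B}(\psi^\nu(L),L')$ is an invariant of the pair of Hamiltonian isotopy classes up to an overall shift, so that $b_\epsilon$ takes finitely many values in $\nu$. This is false: only the infinite bars (i.e.\ the Floer homology) are invariants of the class; the finite bars are merely stable in the bottleneck distance, with error controlled by the Hofer distance of the isotopy, and within a single Hamiltonian isotopy class the number of bars of length at least $\epsilon$ is unbounded. Concretely, in $T^*S^1$ the graphs of $df_m$ with $f_m(q)=\sin(2\pi m q)$ are all Hamiltonian isotopic to the zero section $L'$, yet the barcode of $CF(\operatorname{graph} df_m, L')$ is the Morse barcode of $f_m$ and contains on the order of $m$ bars of length close to $2$. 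So finiteness of isotopy classes alone cannot give the uniform bound on $b_\epsilon(\psi^\nu(L),L')$; the hypothesis that must be used is the geometric one. This is what the paper does in Corollary~\ref{cor:relative-barcode-entropy}: membership of all $\psi^\nu(L)$ in a single $\mathscr{L}^\star_k$ yields a uniform bound on their volumes (Proposition~4 of~\cite{Chasse2022}), and the proof of Theorem~2.4 of~\cite{CineliGinzburgGurel2022} (barcode entropy is dominated by the volume growth of $\psi^\nu(L)$, via the Lagrangian tomograph/Crofton-type estimate) then forces $\hbar(\psi;L,L')=0$, with~\cite{Dawid2023} handling the slow version in the exact case. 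Note also that the paper's statement of that corollary assumes $\psi$ Hamiltonian for the barcode part, a hypothesis your argument never engages with. To repair your proof you would have to replace the invariance claim by this volume-growth argument (or otherwise convert the curvature/tameness bounds into a bound on the number of long bars), at which point the finiteness of isotopy classes plays no role in that part.
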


The latter part on categorical entropy is maybe more telling when formulated as its contraposition: if $h_\mathrm{cat}(\psi)>0$, then there is some Lagrangian submanifold $L$ which is a factor of a split-generator $G$ of $\mathrm{DFuk}^\star(M)$ and such that the sequence $\{\psi^\nu(L)\}$ is not contained in any $\mathscr{L}^\star_k$. Note that we may suppose that such a Lagrangian submanifold $L$ induces a nontrivial object in $\mathrm{DFuk}^\star(M)$. Therefore, this means that symplectomorphisms which are symplectically nontrivial must also be geometrically nontrivial. \par

Secondly, we get a statement that relates the path-connected component of the completions $\widehat{\mathscr{L}}_k^\star$ to the Hamiltonian isotopy classes of $\mathscr{L}^\star_\infty$. \par

\begin{cor*}
	Let $L,L'\in\mathscr{L}^\star_\infty$. Suppose that there exists a $d$-continuous path $t\mapsto L_t$ from $L$ to $L'$ in some $\widehat{\mathscr{L}}_k^\star$. Then, $L$ is Hamiltonian isotopic to $L'$.
\end{cor*}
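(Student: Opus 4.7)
The plan is to apply Theorem~\ref{thmout:finite_classes} at two different levels: once at level $k$ to show the path stays in one Hamiltonian isotopy class in the completion, and once at a larger level to identify the endpoints with that class.

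First I would note that by Theorem~\ref{thmout:finite_classes}, $\mathscr{L}^\star_k$ decomposes as a disjoint union of finitely many Hamiltonian isotopy classes $C_1,\ldots,C_N$ with pairwise $d$-distance at least $A=A(k)>0$. Taking limits of the inequality $d(x,y)\geq A$ along Cauchy sequences, the closures $\overline{C_i}$ inside $\widehat{\mathscr{L}}^\star_k$ remain pairwise at distance $\geq A$. Since $\widehat{\mathscr{L}}^\star_k=\bigcup_i\overline{C_i}$ is then a finite disjoint union of closed subsets separated by positive distance, connectedness of $[0,1]$ forces $t\mapsto L_t$ to lie entirely in a single $\overline{C_{i_0}}$. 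In particular, both endpoints $L$ and $L'$ belong to $\overline{C_{i_0}}$.

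It then suffices to show that any $\tilde L\in\overline{C_{i_0}}\cap\mathscr{L}^\star_\infty$ is Hamiltonian isotopic to a fixed representative $L_0\in C_{i_0}$. Pick a sequence $L_n\in C_{i_0}$ with $d(L_n,\tilde L)\to 0$. Since $\tilde L\in\mathscr{L}^\star_\infty$, there is some $k'\geq k$ with $\tilde L\in\mathscr{L}^\star_{k'}$, and the containment $\mathscr{L}^\star_k\subseteq\mathscr{L}^\star_{k'}$ of geometrically bounded families places the $L_n$ in $\mathscr{L}^\star_{k'}$ as well. Applying Theorem~\ref{thmout:finite_classes} now at level $k'$ yields a constant $A'=A(k')>0$ separating non-isotopic elements of $\mathscr{L}^\star_{k'}$. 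For $n$ large enough that $d(L_n,\tilde L)<A'$, the Lagrangians $\tilde L$ and $L_n$ must be Hamiltonian isotopic; since $L_n$ is Hamiltonian isotopic to $L_0$, so is $\tilde L$. Applying this to $\tilde L=L$ and $\tilde L=L'$ shows $L$ and $L'$ are both Hamiltonian isotopic to $L_0$, and hence to each other.

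The only real subtlety I foresee is compatibility between the metric on $\widehat{\mathscr{L}}^\star_k$ and the metric $d$ on $\mathscr{L}^\star_\infty$, i.e.\ ensuring that ``$L_n\to\tilde L$ in the completion'' really does give $d(L_n,\tilde L)\to 0$ with $\tilde L$ viewed as an honest Lagrangian submanifold. This should be immediate from the identification of $\mathscr{L}^\star_\infty$-elements with points of $\widehat{\mathscr{L}}^\star_k$ as equivalence classes of Cauchy sequences modulo vanishing $d$-distance. Once this identification is in place, the argument is purely formal, with all symplectic content absorbed into Theorem~\ref{thmout:finite_classes}.
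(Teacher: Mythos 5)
Your argument is correct and follows essentially the same route as the paper's: the $A$-separation of Theorem~\ref{thmout:finite_classes} makes the (closures of the) Hamiltonian isotopy classes clopen in $\widehat{\mathscr{L}}^\star_k$, so the connected image of the path lies in one of them, and a second application of the separation at a larger level $k'$ identifies the smooth endpoints with that class. The one caveat is that the compatibility you call ``immediate''~---~that a smooth $L\in\mathscr{L}^\star_{k'}$, viewed as a point of $\widehat{\mathscr{L}}^\star_k$, satisfies $d(L_n,L)\to 0$ for approximating $L_n\in\mathscr{L}^\star_k$~---~is not a formality of the Cauchy-sequence construction when $L\notin\mathscr{L}^\star_k$ (so $L$ is only identified with a completion point through its Hausdorff limit), but it is supplied by Proposition~\ref{prop:homeo_post} together with Theorem~\ref{thm:second} (and Lemma~\ref{lem:apply_thm2} for exactness of the graphing form), so the argument goes through.
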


We contrast this with a recent result from Arnaud, Humili\'ere, and Viterbo~\cite{ArnaudHumiliereViterbo2024}, which shows that such a path always exists in the completion of $\mathscr{L}^e_\infty(T^*N)$ if $d=\gamma$. Therefore, our result can be seen as very slightly reducing the gap between that result and the nearby Lagrangian conjecture. \par

As for the second principle, we get the following new result. \par

\begin{cor*}
	There are at most countably many Hamiltonian isotopy classes in $\mathscr{L}^\star_\infty$.
\end{cor*}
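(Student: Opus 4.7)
The plan is to realize $\mathscr{L}^\star_\infty$ as a countable increasing union of the bounded subspaces $\mathscr{L}^\star_k$ and then apply Theorem~\ref{thmout:finite_classes} termwise. Concretely, I would prove that
\begin{equation*}
	\mathscr{L}^\star_\infty \;=\; \bigcup_{n\in\N} \mathscr{L}^\star_n,
\end{equation*}
after which the result is immediate: each piece on the right contains only finitely many Hamiltonian isotopy classes by Theorem~\ref{thmout:finite_classes}, and a countable union of finite sets is countable.

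The exhaustion decomposes into two observations, both following from the precise definition of $k$-geometric boundedness laid out in Subsection~\ref{subsec:definitions}. First, this notion is clearly monotone in $k$ (a Lagrangian bounded by $k$ is also bounded by any $k'\geq k$), so the union above is indeed increasing and it suffices to let $n$ range over the naturals. Second, every individual $L\in\mathscr{L}^\star_\infty$ lies in $\mathscr{L}^\star_n$ for some finite $n$: as a closed submanifold of $M$ with its induced Riemannian metric, $L$ has finite volume and diameter and bounded sectional curvature and second fundamental form; moreover $L$ is compact, hence contained in some compact subset of $M$. Picking $n$ large enough that all these finitely many quantities are dominated by $n$ places $L$ in $\mathscr{L}^\star_n$.

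The only mild subtlety is the second observation in the monotone case, where one must also check that the ``bounds enough disks'' condition, and in the case $d=\gamma$ the nonvanishing of quantum homology, are automatic from $L\in\mathscr{L}^\star_\infty$~---~but this is built into the definition of $\mathscr{L}^\star_\infty$ itself, so nothing new is required. With the exhaustion in place, summing the finite cardinalities provided by Theorem~\ref{thmout:finite_classes} over $n\in\N$ yields the conclusion. No genuine obstacle is expected here; the entire content of the corollary lies in Theorem~\ref{thmout:finite_classes} together with the fact that any given compact Lagrangian enjoys \emph{some} finite Riemannian bound.
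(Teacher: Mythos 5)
Your argument is correct and is essentially the paper's own proof: Proposition~\ref{prop:countably_isotopy_classes} likewise writes $\mathscr{L}^\star_\infty=\bigcup_k\mathscr{L}^\star_k$ and enumerates finitely many representatives in each $\mathscr{L}^\star_k$ via Proposition~\ref{prop:finitely_isotopy_classes} (Theorem~\ref{thmout:finite_classes}). The only cosmetic difference is that the paper takes the exhaustion as the definition of $\mathscr{L}^\star_\infty$, whereas you additionally justify that any compact Lagrangian satisfies some finite bound; this is harmless (and the tameness bound you leave implicit is automatic for a compact embedded submanifold).
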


Note that a Liouville manifold can have infinitely many Hamiltonian isotopy classes of exact Lagrangian submanifolds. For example, if $M$ is the plumbing $T^*S^1\# T^*S^1$ of two copies of $T^*S^1$, $L$ is the zero-section in the first copy of $T^*S^1$, and $\tau:M\to M$ is the Dehn twist along the second copy $L'$, then $\tau^\nu(L)$ is clearly in a different Hamiltonian isotopy class for each $\nu\geq 0$. \par

\begin{figure}[ht]
	\centering
	\begin{tikzpicture}
		\node[anchor=south west,inner sep=0] (image) at (0,0) {\includegraphics[width=0.5\textwidth]{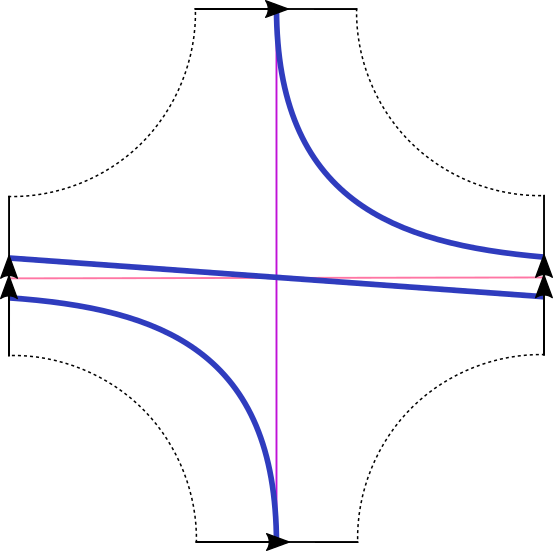}};
		\begin{scope}[x={(image.south east)},y={(image.north west)}]
			\node[Mulberry] at (0.5,1.04) {$L$};
			\node[FrenchPink] at (-0.05,0.5) {$L'$};
			\node[DenimBlue] at (1.08,0.5) {$\mathbf{\tau^2(L)}$};
		\end{scope}
	\end{tikzpicture}
	\vspace*{8pt}
	\caption{The circle $L$ (thin, purple) and its image under two Dehn twists (thick, blue) along $L'$ (thin, pink) inside some Liouville subdomain of $T^*S^1\# T^*S^1$.\label{fig:plumbing}}
\end{figure}

Likewise, given $\rho>0$, there are infinitely many Hamiltonian isotopy classes of $\rho$-monotone tori in $\C^3$~\cite{Auroux2015}. These automatically bound enough disks since $\pi_1(\C^3)=0$. Therefore, as our approach does not perceive the topology of $M$, we cannot expect a better bound. \par

\begin{rem} \label{rem:GPS}
	The countability of the number of exact isotopy classes of exact Lagrangian submannifolds in a Liouville sector appears as an hypothesis in the construction by Ganatra, Pardon, and Shende~\cite{GanatraPardonShende2020} of the wrapped Fukaya category. They work however with cylindrical exact Lagrangian submanifolds, which are not in general closed. That being said, we expect the techniques here to adapt by working in a fixed sector, with exact Lagrangian submanifolds which extend to cylindrical ones in the completion, and with Riemannian metrics which have some standard form near the boundary. This implies that the wrapped Fukaya category of a Liouville sector can always be constructed in such a manner as to include all exact isotopy classes.
\end{rem}

We also get a result on the local structure of the $\mathscr{L}^\star_k$ spaces in some cases. \par

\begin{thmout} \label{thmout:local_cont}
	Given an exact or monotone Lagrangian submanifold $L$ of $M$, there is a Riemannian metric $g$ on $M$ making $L$ totally geodesic and such that, for every $k>0$, it admits a system of contractible neighbourhoods in $\mathscr{L}^\star_k$. If $\dim M=2$, the latter part holds for every metric and every $k>0$ such that $L\in\mathscr{L}^\star_k$.
\end{thmout}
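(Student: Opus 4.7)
The plan is to produce a Weinstein-type chart around $L$, identify a neighborhood of $L$ in $\mathscr{L}_k^\star$ with a convex set of functions on $L$, and contract along graphs. For the metric, pick any Riemannian metric on $L$, extend it to the associated Sasaki metric on a star-shaped disk bundle $W \subseteq T^*L$ of the zero-section---for which the zero-section is totally geodesic---and transfer it through a Weinstein chart $\Phi: U \to W$ to a neighborhood of $L$ in $M$, extending arbitrarily to obtain $g$ on $M$. Any Lagrangian $L' \subseteq U$ sufficiently $C^1$-close to $L$ is $\Phi^{-1}(\mathrm{graph}(\alpha_{L'}))$ for a closed 1-form $\alpha_{L'}$ on $L$. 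In the exact case, $\alpha_{L'} = df_{L'}$; in the monotone case the same holds provided $L'$ is Hamiltonian isotopic to $L$, which by Theorem B is automatic once $L'$ is $d$-close enough to $L$.

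The next step upgrades $d$-smallness to $C^2$-smallness within $\mathscr{L}_k^\star$. Suppose $L_\nu \in \mathscr{L}_k^\star$ with $d(L,L_\nu)\to 0$. Since $d_H \leq d$ up to a constant, Hausdorff convergence follows, and the uniform bounds on second fundamental form, volume, and injectivity radius built into $\mathscr{L}_k^\star$ yield---via Arzel\`a--Ascoli and the compactness of $\widehat{\mathscr{L}}_k^\star$ from Theorem A---convergence in $C^{1,\alpha}$, hence in $C^2$ given the curvature bound. Thus, for every $\epsilon>0$ there is a $\delta>0$ such that every $L' \in \mathscr{L}_k^\star$ with $d(L,L')<\delta$ sits in $U$ as $\Phi^{-1}(\mathrm{graph}(df_{L'}))$ with $\|f_{L'}\|_{C^2}<\epsilon$, normalized by $\int_L f_{L'}\,\mathrm{dvol}_g = 0$.

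Setting $V_\epsilon = \{L' \in \mathscr{L}_k^\star \cap U : \|f_{L'}\|_{C^2}<\epsilon\}$, I define $H:[0,1]\times V_\epsilon \to \mathscr{L}_\infty^\star$ by $H(t,L')=\Phi^{-1}(\mathrm{graph}(t\,df_{L'}))$. Then $H(0,\cdot)\equiv L$ and $H(1,L')=L'$, and the remaining checks are: (i) $H(t,L')\in \mathscr{L}_k^\star$ for $\epsilon$ sufficiently small, since the geometric quantities of $\mathrm{graph}(t\,df_{L'})$ depend continuously on $\|t f_{L'}\|_{C^2}$ and reduce to those of $L$ as this norm vanishes; (ii) $H$ is $d$-continuous, because the path $t\mapsto \mathrm{graph}(t\,df_{L'})$ is Hamiltonian, generated in $T^*L$ by $(x,p)\mapsto -f_{L'}(x)$ extended compactly to $M$, yielding a Hofer---hence $d$---estimate of order $\|f_{L'}-f_{L'_0}\|_{C^0}+|t-t_0|\,\mathrm{osc}(f_{L'})$; and (iii) $V_\epsilon$ is a $d$-neighborhood of $L$ by the previous step. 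The family $\{V_\epsilon\}_{\epsilon>0}$ is the desired system.

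The main obstacle is step two---bridging the weak, global datum $d(L,L')<\delta$ with the strong, local datum $\|f_{L'}\|_{C^2}<\epsilon$---which is exactly where the compactness of $\widehat{\mathscr{L}}_k^\star$ from Theorem A combines with the Riemannian bounds defining $\mathscr{L}_k^\star$. In the case $\dim M=2$, the Sasaki construction is unnecessary since an embedded 1-dimensional Lagrangian admits an annular symplectic tubular neighborhood for every $g$, and the contraction above runs verbatim for any $g$ with $L\in \mathscr{L}_k^\star$, dropping the totally-geodesic requirement.
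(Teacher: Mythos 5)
Your outline follows the paper's general strategy (reduce to graphs of exact forms in a Weinstein chart, contract radially), but the step you label (i) is precisely where the real content lies, and your justification for it does not work. Membership of $\Phi^{-1}(\operatorname{graph}(t\,df_{L'}))$ in $\mathscr{L}^\star_k$ is not a soft continuity statement in $\|t f_{L'}\|_{C^2}$, for two reasons. First, the second fundamental form of $\operatorname{graph}(t\,df_{L'})$ involves third derivatives of $f_{L'}$, which are not controlled by $\|f_{L'}\|_{C^2}$; and no $C^3$-control is available from $L'\in\mathscr{L}^\star_k$ (the curvature bound only yields the $C^{1,\alpha'}$-smallness of $df_{L'}$ that you already extracted, cf.\ Theorem~\ref{thm:second}). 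Second, even granting such control, the target bound is the \emph{same} $k$ that $L'$ may nearly saturate: $\|B_{L'}\|$ can be arbitrarily close to $k$, so showing the interpolated graphs have invariants ``close to those of $L$'' is not the right statement; what is needed is that along the path the curvature and tameness never exceed (essentially) those of the endpoint $L'$. This is exactly what the paper proves, and only for the Sasaki metric: for $C^1$-small gradient graphs, $t\mapsto\|B_{t\xi}\|$ is nondecreasing (Lemma~\ref{lem:curvature}), the tameness constant tends to $1$ (Lemma~\ref{lem:tame_TL}), and, since tameness is measured with the ambient distance of $M$ rather than that of the disk bundle, one also needs $d_M=d_{TL}$ near $L$, which rests on the geodesic analysis of the Sasaki metric (Lemmata~\ref{lem:complete_sasaki}--\ref{lem:dM-to-dTL}). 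The fact that this monotonicity is special to the Sasaki metric is the very reason the theorem only asserts the existence of \emph{some} metric and treats $\dim M=2$ separately; your argument, which never uses any property of the Sasaki metric beyond total geodesy of the zero-section, cannot be complete.

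The two-dimensional claim inherits the same gap and adds another: if you keep the given metric $g$ and work in a Weinstein chart, the restriction of $g$ is no longer the Sasaki metric of $g|_L$, so the curvature control along $t\mapsto\operatorname{graph}(t\,df)$ is missing; the paper instead works in Fermi coordinates for $g$, where the straight-line interpolation $\alpha\xi$ does \emph{not} preserve exactness (the zero-area condition~(\ref{eqn:condn-exact_2d})), so it must be corrected by the constant $c(\alpha)$ of Lemma~\ref{lem:good-c_2d}, and the curvature and tameness bounds along the corrected path require the separate estimates of Lemmata~\ref{lem:curvature_2d} and~\ref{lem:tameness_2d} (a genuine parabola analysis, not continuity). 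So ``runs verbatim for any $g$'' is exactly what fails. A smaller point: in the monotone case, exactness of the $1$-form is not a consequence of Hamiltonian isotopy via Theorem~\ref{thmout:finite_classes}; it follows from the equality of monotonicity constants together with the bounding-enough-disks hypothesis, which is the content of Lemma~\ref{lem:apply_thm2} and is what you should invoke.
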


Though this result does not yield as many direct symplectic applications as the ones before, we can still use it to estimate the Hofer and spectral distances in some cases. More precisely, it is known~\cite{Milinkovic2001} that, for graphs of exact 1-forms in $T^*L$, the Hofer and spectral distances agree and are given by
\begin{align*}
	d_H(\graph df,\graph dg)=\gamma (\graph df,\graph dg)=\max |f-g|-\min |f-g|.
\end{align*}
In particular, for every $f\in C^\infty(L)$, $t\mapsto \graph(tdf)$ is a minimal geodesic in these metrics. However, when one embeds a neighbourhood of $L$ in $T^*L$ in $M$ via a Weinstein neighbourhood, there could be a shorter path going through $M$. Nonetheless, through Theorem~\ref{thmout:local_cont} and some estimates on Hausdorff-geodesics, we get the following estimate on how far from a minimal geodesic $t\mapsto \graph(tdf)$ can be. \par

\begin{cor*}
	Let $\Psi:D^*_r L\to M$ be a Weinstein neighbourhood of some $L\in\mathscr{L}^\star_\infty(M)$. Suppose either that the Riemannian metric $g$ on $M$ is as in Theorem~\ref{thmout:local_cont} or that $\dim M=2$. For every $k\geq 1$, there are constants $C>0$ and $r'\in(0,r]$ with the following property. Whenever $f:L\to\R$ is such that $\Psi(\operatorname{graph} df)\in\mathscr{L}^\star_k$ and $|df|\leq r'$, we have that
	\begin{align*}
		d(\Psi(t\operatorname{graph} df),\Psi(s\operatorname{graph} df))\geq C (t-s)^2 \max|df|^2
	\end{align*}
	for every $t,s\in [0,1]$.
\end{cor*}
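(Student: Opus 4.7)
The plan is to reduce the desired lower bound on $d$ to a lower bound on the Hausdorff distance via a Chekanov-type quadratic energy-capacity inequality valid on $\mathscr{L}^\star_k$. The exponent $2$ in the statement will come entirely from this quadratic comparison.

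First, I would establish a local Riemannian estimate on the Hausdorff distance: for $r'$ small enough (depending on $\Psi$, $g$, and $L$), and for every $f$ with $|df|\leq r'$ and every $t,s\in[0,1]$,
\[
d_{\mathrm{Haus}}\bigl(\Psi(t\,\graph df),\,\Psi(s\,\graph df)\bigr)\;\geq\; c_1\,|t-s|\,\max|df|
\]
for some constant $c_1>0$. Pick $r'$ so that $\Psi$ is bi-Lipschitz between the Sasaki metric on $D^*_{r'}L$ and $g|_{\Psi(D^*_{r'}L)}$. Under either the totally geodesic assumption on $L$ (which forces the product structure along $L$ to persist to leading order) or the extra flexibility of the two-dimensional case, a direct computation shows that for any point $\Psi(x,t\,df(x))$ the nearest point on $\Psi(s\,\graph df)$ sits essentially above $x$ and is at distance $\approx|t-s|\,|df(x)|$; taking the supremum over $x$ gives the bound.

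Second, I would invoke a uniform quadratic comparison on $\mathscr{L}^\star_k$: there exist $C_1=C_1(k)>0$ and $\delta>0$ such that
\[
d(L_1,L_2)\;\geq\; C_1\, d_{\mathrm{Haus}}(L_1,L_2)^2
\]
whenever $L_1,L_2\in\mathscr{L}^\star_k$ satisfy $d_{\mathrm{Haus}}(L_1,L_2)<\delta$. For $d=d_H$ this is essentially the classical Chekanov energy-capacity inequality: the Riemannian bounds built into $\mathscr{L}^\star_k$ produce a uniform lower bound on the symplectic size of Weinstein tubes around any $L_1\in\mathscr{L}^\star_k$, which in turn bounds from below the Hofer energy needed to push $L_1$ past $L_2$. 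For the other metrics in the list, one bootstraps from the Hofer case using the comparability of all these metrics on $\mathscr{L}^\star_k$ (Theorem~\ref{thmout:compact-comp}) together with the Hausdorff-geodesic technology alluded to in the statement of the corollary.

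Applying both estimates to $L_t=\Psi(t\,\graph df)$ and $L_s=\Psi(s\,\graph df)$, both of which lie in $\mathscr{L}^\star_k$ by hypothesis and are at Hausdorff distance less than $\delta$ once $r'$ is small enough, yields the desired inequality with $C=C_1\,c_1^2$. The main obstacle is the second step: making the quadratic comparison uniform in $\mathscr{L}^\star_k$ and uniform across the entire list (a)--(e) of admissible metrics $d$. The Hofer case is classical, but transferring the Chekanov-type inequality to the spectral and shadow-type metrics requires precisely the Hausdorff-geodesic machinery built up in this paper; in contrast, the local Hausdorff estimate and the final combination should reduce to routine Weinstein-chart and Riemannian computations once the thresholds $r'$ and $\delta$ are fixed.
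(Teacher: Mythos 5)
Your overall skeleton is the paper's: a lower bound on the Hausdorff distance between the radial slices that is linear in $|t-s|\max|df|$, combined with the quadratic reverse inequality $d\geq C\,\delta_H^2$ obtained by inverting the Chekanov-type estimate of Theorem~\ref{thm:first}, with the exponent $2$ coming entirely from the latter. However, there is a genuine gap in how you apply it. You assert that $L_t=\Psi(t\,\graph df)$ and $L_s=\Psi(s\,\graph df)$ ``lie in $\mathscr{L}^\star_k$ by hypothesis''; they do not. The hypothesis only places the slice $t=1$ in $\mathscr{L}^\star_k$, while the constants in Theorem~\ref{thm:first} (and in your ``uniform quadratic comparison'') depend on $k$ and require \emph{both} Lagrangians to satisfy the geometric bounds. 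This is precisely where the assumptions ``$g$ Sasaki near $L$'' or ``$\dim M=2$'' are needed: Proposition~\ref{prop:path_connect_sasaki} (via Lemmata~\ref{lem:curvature}, \ref{lem:tame_TL} and~\ref{lem:dM-to-dTL}), respectively Lemmata~\ref{lem:curvature_2d} and~\ref{lem:tameness_2d}, guarantee that the whole radial family $t\mapsto\Psi(t\,\graph df)$ stays in $\mathscr{L}^\star_k$. In your write-up those hypotheses are instead spent on the Hausdorff estimate, which in fact needs neither of them: Proposition~\ref{prop:radial_distance} is pure normal-exponential-map geometry in any tubular neighbourhood and gives the exact equality $\delta_H(\Psi(t\,\graph df),\Psi(s\,\graph df))=|t-s|\max|df|$, with only a factor $2$ lost in dimension~$2$ (Lemma~\ref{lem:radial-distance_2D}).

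A second problem is your plan for the metrics other than $d_H$: ``bootstrapping from the Hofer case using the comparability of the metrics (Theorem~\ref{thmout:compact-comp})'' cannot work, because Theorem~\ref{thmout:compact-comp} only asserts equality of topologies, and the quantitative relation available is $d\leq Cd_H$, which goes the wrong way --- a lower bound on $d_H$ yields no lower bound on $d$ (the spectral metric, for instance, can be much smaller than the Hofer metric). No bootstrapping is needed: Theorem~\ref{thm:first} is proved in~\cite{Chasse2023} directly for every Chekanov-type metric dominated by $d_H$, i.e.\ for the whole list (a)--(e), via holomorphic-polygon arguments, and should simply be invoked as such. With these two corrections --- membership of all radial slices in $\mathscr{L}^\star_k$ via Proposition~\ref{prop:path_connect_sasaki} or the two-dimensional lemmata, and direct use of Theorem~\ref{thm:first} together with Proposition~\ref{prop:radial_distance} --- your argument becomes exactly the paper's proof.
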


\subsection*{Organization of the paper}
The rest of the paper is divided into four main parts and an appendix. \par

In Section~\ref{sec:setup}, we first define the objects that we will be working with in the paper and set down some notation. More precisely, we define what is meant by ``geometrically bounded by $k$'', ``bounding enough disks'', and ``being of Chekanov type''. We then move on to recall some prior results of the author which are central in proving the main results. Finally, we explain how these results cover the case of monotone Lagrangian submanifolds bounding enough disks and of monotone graphs, as these cases were not directly covered in the prior papers. \par

In Section~\ref{sec:topo}, we move to prove (pre)compactness and local contractibility of the $\mathscr{L}^\star_k$ spaces, that is, Theorems~\ref{thmout:compact-comp} and~\ref{thmout:local_cont}. These correspond, respectively, to Subsections~\ref{subsec:cpt} and~\ref{subsec:connect}. We end this part with Subsection~\ref{subsec:geodesics}, which aims to understand local Hausdorff-geodesics in $\mathscr{L}^\star_k$~---~this will be an essential step in proving our local Hofer/spectral estimate above. This also allows us to consider some variations of the Hausdorff metric on $\mathscr{L}^\star_k$ and to conclude that they also induce the same topology on these spaces. \par

In Section~\ref{sec:symp}, we finally prove symplectic properties of the $\mathscr{L}^\star_k$ spaces. More precisely, we prove Theorem~\ref{thmout:finite_classes} and all corollaries following the first principle appearing in the introduction. \par

In Section~\ref{sec:limit}, we study the limit space $\mathscr{L}^\star_\infty$ and prove the corollaries following the second principle in the introduction. We conclude that section with an analysis of an alternate limit space: $\varinjlim\mathscr{L}^\star_k$, the inductive limit of the $\mathscr{L}^\star_k$. This highlights the shortcomings of trying to study $\mathscr{L}^\star_\infty$ through the $\mathscr{L}^\star_k$ spaces. \par

Finally, we conclude the paper with an appendix which includes all the results in Riemannian geometry that we will need throughout the paper. As these results do not seem to have appeared in the literature before~---~and Lemma~\ref{lem:complete_sasaki} even appeared as a conjecture~---~we believe that they could be of independent interest. This is also why they have been compiled as an independent appendix. More precisely, Subsection~\ref{subsec:geo_sasaki} covers the new results on the Sasaki metric on $TN$, Subsection~\ref{subsec:geo_2d} those on Riemann surfaces, and Subsection~\ref{subsec:geo_comparison} those on comparison of Riemannian invariants of Lagrangian submanifolds. \par

\subsection*{Acknowledgements}
I would like to express gratitude to Octav Cornea for pushing me to write this paper. I would also like to thank both him and Paul Biran for uncountably many interesting discussions during the research process that led to this paper. Finally, I would like to thank Jean-Fran\c{c}ois Barraud for some interesting exchanges on entropy and the nearby Lagrangian conjecture, and Daniil Mamaev for pointing out to me the countability assumption in the construction of the wrapped Fukaya category. \par

\section{Preliminaries} \label{sec:setup}
We now lay down the foundations upon which the rest of the paper will be built. More precisely, in Subsection~\ref{subsec:definitions}, we give the main definitions that will be used throughout the paper. In Subsection~\ref{subsec:previous_results}, we enunciate previous results from the author that will be essential for the rest of the paper. We hope that this will improve the readability of this paper, as the notation here is slightly different than in \cite{Chasse2023} and \cite{Chasse2022}. We close things with Subsection~\ref{subsec:apply_thm2}, where it is shown that both monotone Lagrangian submanifolds bounding enough disks and monotone graphs fit within the formalism of the prior results. \par

\subsection{Definitions and notation} \label{subsec:definitions}
In this paper, we will be working with manifolds which are sometimes not smooth or not closed, i.e.\ they might have a boundary or be noncompact. However, whenever we work with such manifolds, we will make it explicitly clear. That is, when there are no mentions of it, the manifolds are assumed to be smooth, connected, and closed. \par

Let $(M,\omega)$ be either a convex-at-infinity (noncompact) or a closed symplectic manifold, and let $J$ be an $\omega$-compatible almost complex structure. When $M$ is noncompact, we assume that $J$ is convex at infinity and making $g_J=\omega(\cdot,J\cdot)$ complete. We also pick an exhaustion by compacts $W_1\subsetneq W_2\subsetneq\dots$ of $M$. When $M$ is compact, we take the convention $W_k=M$ for all $k$. \par 

In order to precisely define what was meant in the introduction by ``geometrically bounded by $k$'', we first need two definitions from Riemannian geometry. \par

\begin{defn*}
	Let $L$ be a submanifold of a Riemannian manifold $(M,g)$. Its \emph{second fundamental form} $B_L$ is given at $x\in L$ by
	\begin{center}
		\begin{tikzcd}[row sep=0pt,column sep=1pc]
			(B_L)_x\colon T_xL\otimes T_xL\otimes T_xL^\perp \arrow{r} & \R \\
			{\hphantom{(B_L)_x\colon{}}} (X,Y,N) \arrow[mapsto]{r} & {g\big(\nabla_X Y,N\big)},
		\end{tikzcd}
	\end{center}
	where $\perp$ denotes the orthogonal complement in $T_x M$ with respect to $g$ and $\nabla$ is the Levi-Civita connection $(M,g)$. We thus define the \emph{norm of the second fundamental form} to be
	\begin{align*}
		||B_L||:=\sup_{x\in L} |(B_L)_x|^{op}.
	\end{align*}
\end{defn*}

\begin{defn*}
	Let $(M,g)$ be a Riemannian manifold, and let $L$ be a submanifold. Let $\epsilon\in(0,1]$. We say that $L$ is \emph{strictly $\epsilon$-tame} if
	\begin{align*}
		\inf_{x\neq y}\frac{d_M(x,y)}{\min\{1,d_L(x,y)\}}>\epsilon,
	\end{align*}
	where $d_M$ is the distance function on $M$ induced by $g$, and $d_L$ is the distance function on $L$ induced by $g|_L$.
\end{defn*}

Then, for every $k\in\N$, we set
\begin{align*}
	\mathscr{L}_k:=\Big\{\text{Lagrangians }L\subseteq \mathrm{Int}(W_k)\ \Big|\ ||B_L||< k,\ L\text{ is strictly $(k+1)^{-1}$-tame}\Big\}.
\end{align*}
In general, this space is too big for our approach, and thus we will instead be studying the following subspaces. If $M$ is exact with $\omega=d\lambda$, we can consider
\begin{align*}
\mathscr{L}^e_k
&:=\Big\{L\in\mathscr{L}_k\ \Big|\ L\text{ is $\lambda$-exact}\Big\}
\intertext{and, if $M$ is monotone,}
\mathscr{L}^{m(\rho)}_k
&:=\Big\{L\in\mathscr{L}_k\ \Big|\ L\text{ is $\rho$-monotone \& bounds enough disks}\ \Big\},
\end{align*}
where $\rho\geq 0$. By $\rho$-monotone, we mean that $\omega=\rho\mu$ on $\pi_2(M,L)$, where $\mu$ is the Maslov index, and that the minimal Maslov number of $L$ is at least 2. Finally, we introduce the class of Lagrangian submanifolds bounding enough disks as follows.

\begin{defn*}
	We say that a Lagrangian submanifold $L$ of $M$ \emph{bounds enough disks} if the image of the composition
	\begin{center}
		\begin{tikzcd}
			\pi_2(M,L) \arrow[r,"\del"] & \pi_1(L) \arrow[r,"h"] & H_1(L;\Z)^\mathrm{free}
		\end{tikzcd}
	\end{center}
	has finite cokernel. Here, $H_1(L;\Z)^\mathrm{free}$ is the free part of $H_1(L;\Z)$, and $h$ is the composition of the abelianization homomorphism with the natural projection of $H_1(L;\Z)$ onto $H_1(L;\Z)^\mathrm{free}$.
\end{defn*}

\begin{rem} \label{rem:bounds-enough_cohomology}
	Using the fact that $H^1(L;\R)=\mathrm{Hom}(\pi_1(L),\R)=\mathrm{Hom}(H_1(L;\Z)^\mathrm{free},\R)$, we see that $L$ bounds enough disks if and only if $\del^*:H^1(L;\R)\to \mathrm{Hom}(\pi_2(M,L),\R)$ is injective. In particular, this condition is automatically satisfied if either $H^1(L;\R)=0$ or $H^1(M;\R)=0$.
\end{rem}

\begin{ex*}
	Every contractible loop in an oriented closed surface bounds enough disks. On the other hand, graphs of closed 1-form in $T^*N$ bound enough disks if and only if $H_1(N;\R)=0$.
\end{ex*}

Moreover, it will later be of interest to study graphs of Hamiltonian diffeomorphisms. Therefore, when the symplectic manifold is a product $(M\times M,-\omega\oplus\omega)$, where $(M,\omega)$ is closed and monotone, we define
\begin{align*}
	\mathscr{L}^\Gamma_k:=\left\{L\in\mathscr{L}_k(M\times M)\ \middle|\ L=\graph\phi,\ \phi\in\Ham(M,\omega)\right\}.
\end{align*}
If a result is applicable to $\mathscr{L}^e_k$, $\mathscr{L}^{m(\rho)}_k$, and $\mathscr{L}^\Gamma_k$, we will say that is true for $\mathscr{L}^{\star}_k$. We will denote the space without Riemannian bounds by $\mathscr{L}^\star_\infty$, i.e.\ $\mathscr{L}^\star_\infty=\cup_k \mathscr{L}^\star_k$. We will also sometimes write $\mathscr{L}^\star_k(M)$ when we want to make the ambient manifold $M$ more apparent. \par

Finally, we need to give a precise definition of the family of metrics in which $d$ is allowed to be. Explicitly, we will ask that $d$ be of Chekanov type and dominated by the Lagrangian Hofer metric. The former notion was defined in the author's previous work~\cite{Chasse2022}, but we recall here the definition. We however refer to said work for the proof that the metrics enunciated in the introduction are indeed of Chekanov type. \par

\begin{defn*}
	Let $\mathscr{F}\subseteq \mathscr{L}^\star(M)$. We say that a pseudometric $d^\mathscr{F}$ on $\mathscr{L}^\star(M)$ is of \emph{Chekanov type} if for all compatible almost complex structure $J$, all $\delta>0$, and all $L, L'\in\mathscr{L}^\star(M)$, there exist Lagrangian submanifolds $F_1,\dots,F_k\in\mathscr{F}$ with the following property. \par
	
	For any $C^0$- and Hofer-small Hamiltonian perturbations $\widetilde{L},\widetilde{L}',\widetilde{F}_1,\dots,\widetilde{F}_k$ of the Lagrangian submanifolds above making them pairwise transverse, and for any $x\in \widetilde{L}\cup\widetilde{L}'$, there exists a nonconstant $J$-holomorphic polygon $u:S_r\to M$ such that
	\begin{enumerate}[label=(\roman*)]
		\item has boundary along $\widetilde{L}$, $\widetilde{L}'$ and $\widetilde{F}_1,\dots,\widetilde{F}_k$; \par
		\item passes through $x$; \par
		\item respects the bound
		\begin{align*}
			\omega(u)\leq d^\mathscr{F}(L,L')+\delta.
		\end{align*}
	\end{enumerate}
	
	Let $\mathscr{F}'\subseteq \mathscr{L}^\star(M)$ be such that
	\begin{align*}
		\left(\overline{\bigcup_{F\in\mathscr{F}}F}\right)\bigcap \left(\overline{\bigcup_{F'\in\mathscr{F'}}F'}\right)
	\end{align*}
	is discrete. We will call $\hat{d}^{\mathscr{F},\mathscr{F}'}:=\max\{d^\mathscr{F},d^\mathscr{F'}\}$ a \textit{Chekanov-type metric} if $d^\mathscr{F}$ and $d^{\mathscr{F}'}$ are both Chekanov-type pseudometrics.
\end{defn*}

For the rest of the paper, we thus set $d=\hat{d}^{\mathscr{F},\mathscr{F}'}$, a metric of Chekanov type associated with families $(\mathscr{F},\mathscr{F}')$ and which is dominated by the Lagrangian Hofer metric $d_H$, i.e. there exists $C>0$ such that $d\leq Cd_H$. \par

\subsection{Useful results from previous work} \label{subsec:previous_results}
We now recall some results from previous work for ease of reference later on. We will not write said results in full generality, but only in the setting which is required for this paper. \par

Below, we make use of the Hausdorff metric $\delta_H$, defined between two compact subsets $A$ and $B$ of $M$ as
\begin{align*}
	\delta_H(A,B):=\inf\left\{\epsilon>0\ \middle|\ A\subseteq B_\epsilon(B),\ B\subseteq B_\epsilon(A)\right\},
\end{align*}
where $B_\epsilon(A):=\cup_{x\in A}B_\epsilon(x)$, i.e.\ it is the $\epsilon$-neighbourhood of $A$. \par

\begin{thm}[\cite{Chasse2023}] \label{thm:first}
	There exist constants $C_1,R_1>0$ with the following property. For all $L$ and $L'$ in $\mathscr{L}^\star_k$ such that $d(L,L')< R_1$, we have that
	\begin{align*}
	\delta_H(L,L')\leq C_1\sqrt{d(L,L')},
	\end{align*}
	where $\delta_H$ denotes the classical Hausdorff distance.
\end{thm}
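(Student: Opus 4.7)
The plan is to combine the Chekanov-type property of $d$, which furnishes $J$-holomorphic polygons of small symplectic area passing through any prescribed point of $L \cup L'$, with a uniform version of the classical monotonicity inequality for pseudoholomorphic curves with Lagrangian boundary. The slogan is that small $d$-distance forces small-area polygons, while monotonicity forces any such polygon traversing a ball of radius $r$ around a point to have area bounded below by a constant times $r^2$; combining these opposing bounds yields the $\sqrt{\cdot}$ estimate.

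More precisely, by symmetry we fix $x \in L$ and set $r := d_M(x, L')$, aiming to show $r \leq C_1 \sqrt{d(L, L')}$. First, we invoke the Chekanov-type definition of $d$ with some small $\delta > 0$: this yields reference Lagrangians $F_1, \ldots, F_m$ drawn from the families $\mathscr{F}, \mathscr{F}'$ attached to $d$, together with a nonconstant $J$-holomorphic polygon $u$ through $x$, with boundary on small perturbations of $L, L', F_1, \ldots, F_m$, satisfying $\omega(u) \leq d(L, L') + \delta$. Since every Lagrangian in $\mathscr{L}_k$ has a uniform bound on its second fundamental form and a uniform strict tameness constant, one obtains constants $C = C(k) > 0$ and $r_0 = r_0(k) > 0$ such that any nonconstant $J$-holomorphic curve through $x$, with boundary on such Lagrangians and not contained in $B_\rho(x)$, satisfies $\omega(u \cap B_\rho(x)) \geq C \rho^2$ for every $\rho \leq r_0$. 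Since $u$ has a boundary arc on $L'$, which lies at distance $\geq r$ from $x$, the polygon must exit $B_\rho(x)$ whenever $\rho < r$; letting $\rho \to r$, then $\delta \to 0$, and choosing $R_1$ small enough that $\sqrt{R_1/C} \leq r_0$, yields $r \leq \sqrt{d(L, L')/C}$, so one may take $C_1 = 1/\sqrt{C}$.

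The main obstacle is setting up a uniform monotonicity inequality for $J$-holomorphic polygons whose boundary is spread across several Lagrangians, each controlled only through the $\mathscr{L}_k$-bounds. One must also assume, or arrange by construction, that the reference families $\mathscr{F}, \mathscr{F}'$ satisfy their own uniform geometric bounds (for the spectral metric, for instance, one can take $\mathscr{F}$ to consist of a fixed zero section, and analogous remarks apply to the other metrics listed). A secondary difficulty is ensuring that $u$ actually carries a boundary arc on $L'$ rather than being built only from arcs on $L$ and the $F_i$'s; this is secured by the non-degeneracy properties of each concrete Chekanov-type metric, but requires careful bookkeeping metric by metric.
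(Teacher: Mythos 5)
Your skeleton --- extract from the Chekanov-type property a small-area $J$-holomorphic polygon through a prescribed point and play it against a uniform monotonicity lower bound for curves with boundary on Lagrangians with bounded second fundamental form and tameness constant --- is indeed the engine of the argument in \cite{Chasse2023}. But there is a genuine gap at the step ``since $u$ has a boundary arc on $L'$, the polygon must exit $B_\rho(x)$ for $\rho<r$''. The definition of a Chekanov-type pseudometric only constrains the boundary of $u$ to lie on $\widetilde{L}\cup\widetilde{L}'\cup\widetilde{F}_1\cup\dots\cup\widetilde{F}_k$; it does not guarantee that $u$ touches $\widetilde{L}'$ at all, and nonconstancy alone forces nothing since the polygon may be arbitrarily small. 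This is not a metric-by-metric bookkeeping issue that can be ``secured by non-degeneracy'': for the shadow and fragmentation metrics the polygons come from cone decompositions, and the polygon through a given $x\in\widetilde{L}$ may have all of its sides on $\widetilde{L}$ and the auxiliary $\widetilde{F}_i$'s. Tellingly, your argument never uses the two defining features of $\hat{d}^{\mathscr{F},\mathscr{F}'}=\max\{d^{\mathscr{F}},d^{\mathscr{F}'}\}$: the max over two families and the hypothesis that $\overline{\cup_{F\in\mathscr{F}}F}\cap\overline{\cup_{F'\in\mathscr{F}'}F'}$ is discrete. Without them the ``one-family'' statement you are implicitly proving is false: for a sufficiently rich single family (e.g. containing small Hamiltonian pushoffs of every Lagrangian), even the identically vanishing pseudometric satisfies the one-family Chekanov condition, so no Hausdorff estimate can follow. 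What the monotonicity step genuinely yields is weaker: every point of $L\cup L'$ lies within $C\sqrt{d(L,L')+\delta}$ of the union of the \emph{other} Lagrangians of the tuple, the point being that a polygon confined to a ball meeting only $\widetilde{L}$ would be a disk with boundary on a single exact or monotone Lagrangian, hence of area $0$ or bounded below by the monotonicity constant. The proof of the theorem then has a second step that your proposal is missing: run this estimate for both families and use the discreteness of the intersection of their closures (together with the fact that everything sits in the compact $W_k$ and that $L$ is uniformly tame) to rule out points of $L$ that are far from $L'$ yet close to both $\overline{\cup\mathscr{F}}$ and $\overline{\cup\mathscr{F}'}$.

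A secondary but related problem is your constant $C(k)$: the auxiliary Lagrangians $F_1,\dots,F_k$ depend on $L$, $L'$, $J$ and $\delta$, and nothing in the definition gives a uniform bound on their second fundamental forms or tameness, so a monotonicity constant depending only on $k$ is not justified as stated; your suggested fix (choosing particularly nice families for each concrete metric) changes the statement, which is asserted for all Chekanov-type metrics dominated by $d_H$. The natural way around this is to keep the radius of the monotonicity ball below $\min\{r_0,d(x,\widetilde{L}'\cup\bigcup_i\widetilde{F}_i)\}$, so that only the $\mathscr{L}_k$-bounds of $L$ (and $L'$) enter --- but that again delivers only the ``close to the union of the others'' estimate above and funnels you into the two-family argument, rather than the direct estimate on $d_M(x,L')$ that your proposal claims.
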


\begin{thm}[\cite{Chasse2022}] \label{thm:second}
	For all $L$ in $\mathscr{L}_k$, there exist constants $C_2,R_2>0$ with the following property. Whenever $L'\in\mathscr{L}_k$ is such that $\delta_H(L,L')<R_2$, there exists a $C^{1,\alpha'}$-small closed 1-form $\sigma$ of $L$ such that $L'=\operatorname{graph}\sigma$ in a Weinstein neighbourhood of $L$. Furthermore, if $\sigma$ is exact, then
	\begin{align*}
		d(L,L')\leq C_2\delta_H(L,L').
	\end{align*}
	Moreover, if a sequence $\{L_i\}\subseteq\mathscr{L}_k$ has Hausdorff limit $N$, then $N$ is an embedded $C^{1,\alpha}$-Lagrangian submanifold, and there exist diffeomorphisms $f_i:N\xrightarrow{\sim} L_i$ for $i$ large such that $f_i\to\Id$ in the $C^{1,\alpha'}$-topology, $0<\alpha'<\alpha<1$.
\end{thm}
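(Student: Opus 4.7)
The plan is to combine uniform tubular neighbourhood estimates with Riemannian compactness arguments, using only the invariants encoded in the definition of $\mathscr{L}_k$. The first step is to show that every $L\in\mathscr{L}_k$ admits a Weinstein embedding $\Psi:D^*_{r_k}L\hookrightarrow M$ with $r_k>0$ depending only on $k$. This reduces to controlling the normal injectivity radius of $L$: the bound $\|B_L\|<k$ forces the normal exponential map to be a local diffeomorphism on a tube of uniform radius (by standard Jacobi field comparison), while the strict $(k+1)^{-1}$-tameness prevents distant branches of $L$ from re-entering this tube. Combined with a standard construction of an $\omega$-compatible almost complex structure near $L$~---~whose relevant derivatives can be bounded in terms of those of $g_J$ and $\|B_L\|$~---~this produces $\Psi$ with uniform size.

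Next, if $\delta_H(L,L')<R_2$ for $R_2$ a sufficiently small fraction of $r_k$, then $L'\subseteq \Psi(D^*_{r_k}L)$; I would argue that $L'$ is a graph by showing that the bundle projection $\pi\circ\Psi^{-1}:L'\to L$ is a diffeomorphism. The bounds on $\|B_L\|$ and $\|B_{L'}\|$ together with the Hausdorff closeness force the tangent plane $T_xL'$ at every $x\in L'$ to be close in angle to $T_yL$ at the nearest $y\in L$, a classical consequence of bounded second fundamental form. With tangent planes nearly horizontal, the implicit function theorem gives $L'=\graph\sigma$ for some section $\sigma$ of $T^*L$; the Lagrangian condition forces $\sigma$ to be closed, and interpolating the $C^{1,1}$-type bound on $\sigma$ coming from $\|B_{L'}\|<k$ with its $C^0$-smallness coming from $\delta_H(L,L')$ yields the announced $C^{1,\alpha'}$-smallness.

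When $\sigma=df$, I would run the linear isotopy $L_t=\Psi(\graph(t\,df))$ from $L$ to $L'$; this is Hamiltonian with a generator extending $F(q,p)=f(q)$ suitably cut off outside the Weinstein neighbourhood, so its Hofer length is $\max f-\min f$. Since $d\leq Cd_H$ by hypothesis and $d_H(L,L')$ is at most this length, it suffices to observe $\max f-\min f\leq \operatorname{diam}(L)\cdot \max|df|$. Both $\operatorname{diam}(L)$~---~since $L\subseteq W_k$~---~and the comparison $\max|df|\leq C'\delta_H(L,L')$ depend only on $k$, giving $d(L,L')\leq C_2\delta_H(L,L')$.

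For a Hausdorff-convergent sequence $\{L_i\}\subseteq\mathscr{L}_k$ with limit $N$, the uniform bound $\|B_{L_i}\|<k$ gives, around any accumulation point, uniform $C^{1,1}$-parametrisations of the $L_i$ in a common normal chart, with tameness preventing folding. An Arzel\`{a}--Ascoli argument extracts a $C^{1,\alpha'}$-convergent subsequence with $C^{1,\alpha}$-limit, which must coincide with $N$; the whole sequence then converges, and the graph parametrisations produced in the second step supply the announced diffeomorphisms $f_i:N\to L_i$ with $f_i\to\Id$ in $C^{1,\alpha'}$. The main obstacle is in the first step: the Weinstein radius $r_k$ must be extracted solely from the two Riemannian invariants defining $\mathscr{L}_k$, with no dependence on $L$ itself, which requires decoupling every constant appearing in the tubular neighbourhood and the almost complex structure construction from the particular geometry of $L$.
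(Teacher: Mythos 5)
The paper itself contains no proof of this statement: it is imported verbatim (with the modifications explained in Remarks~\ref{rem:second}) from~\cite{Chasse2022}, so the only fair comparison is with that source's strategy, which your plan essentially reproduces. The ingredients you list are the standard ones and they do fit together: graphical control over a tubular neighbourhood coming from $||B_{L'}||<k$ (the tilting estimate giving tangent-plane angles of order $\sqrt{k\,\delta_H}$), interpolation of the resulting $C^{1,1}$-bound with $C^0$-smallness to get $C^{1,\alpha'}$-smallness of the closed form $\sigma$, the fiberwise isotopy $t\mapsto\graph(t\,df)$ generated by a cut-off lift of $f$ with Hofer length at most $\operatorname{osc}(f)\leq\operatorname{diam}(L)\max|df|\leq C'\operatorname{diam}(L)\,\delta_H(L,L')$ combined with $d\leq Cd_H$, and an Arzel\`a--Ascoli argument in uniform local graph charts for the convergence statement. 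Two caveats are worth recording. First, the uniformity you flag as ``the main obstacle'' is self-imposed: the statement allows $C_2$ and $R_2$ to depend on $L$, so for the graph and distance estimates a Weinstein neighbourhood of the fixed $L$ of $L$-dependent size suffices; genuine $k$-only uniformity is needed only for the sequence statement, and there only through local graphical parametrisations over balls of $k$-dependent size, which is exactly what your last paragraph uses~---~no global uniform Weinstein radius $r_k$ is required.

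Second, the step ``with tangent planes nearly horizontal, the implicit function theorem gives $L'=\graph\sigma$'' is too quick: what you get is that $\pi\circ\Psi^{-1}|_{L'}$ is a proper local diffeomorphism, i.e.\ a covering map, and you must separately exclude degree $>1$ (multi-sheeted ``Lagrangian multi-sections''). This is precisely where the strict $(k+1)^{-1}$-tameness of $L'$ enters: two sheets over the same base point are at $M$-distance at most twice the tube radius, while any path in $L'$ joining them projects to a non-contractible loop in $L$ and hence has intrinsic length bounded below by a constant depending on $L$, so for $R_2$ small the tameness inequality is violated. The same mechanism is what upgrades the Hausdorff limit $N$ from immersed to embedded; as Remarks~\ref{rem:second} explains, this is exactly the role the tameness hypothesis plays here in place of the volume bound and Betti-number assumption of~\cite{Chasse2022}. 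You invoke tameness for $L$ (injectivity of the normal exponential) and for ``folding'' in the limit, but the single-valuedness of $\sigma$ needs it too; with that point made explicit, your outline matches the intended argument.
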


\begin{rem} \label{rem:second}
	In~\cite{Chasse2022}, the limit $N$ was possibly immersed, even in the exact case. However, this was because we were working with the weaker condition of having bounded volume, instead of being $\epsilon$-tame. The latter condition ensures that Hausdorff limits are indeed embedded. \par
	
	In the nonexact case, this is also why the additional condition that $L'$ and $L$ have the same first Betti number is no longer required here: if a sequence $\{L_i\}$ with Betti number $b_1(L_i)=b$ were to Hausdorff-converge to a Lagrangian submanifold $L$ with $b_1(L)<b$, then we would have a sequence of embeddings $\{N\hookrightarrow L_i\}$ which $C^{1,\alpha'}$-converges to a nontrivial cover $N\to L$, which is not possible when each $L_i$ is $\epsilon$-tame for the same $\epsilon>0$.
\end{rem}

\subsection{Applying Theorem~\ref{thm:second}} \label{subsec:apply_thm2}
To study $\mathscr{L}^\star_k$, it will be important to know that the 1-form $\sigma$ appearing in Theorem~\ref{thm:second} is exact. When $L$ is exact and its Weinstein neighbourhood $\Psi:D^*_r L\to M$ is exact, i.e.\ $\Psi^*\lambda=\lambda_0+dF$ for some $F:T^*L\to \R$, this is self-evident. We now extend this to monotone Lagrangian submanifolds bounding enough disks. \par

\begin{lem} \label{lem:apply_thm2}
	Let $L,L'\in\mathscr{L}^{m(\rho)}_k$. Suppose that $L'=\operatorname{graph}\sigma$ in a Weinstein neighbourhood $\Psi$ of $L$. Then, $\sigma$ is exact.
\end{lem}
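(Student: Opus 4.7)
The strategy is to reduce the exactness of $\sigma$ to a vanishing condition for integrals over boundary circles of topological disks, which can then be verified using monotonicity. Explicitly, by Remark~\ref{rem:bounds-enough_cohomology}, the hypothesis that $L$ bounds enough disks means that the dual map $\del^*:H^1(L;\R)\to\Hom(\pi_2(M,L),\R)$ is injective. It therefore suffices to prove that $\int_{\del u}\sigma=0$ for every smooth $u:(D^2,\del D^2)\to(M,L)$.

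Given such a disk $u$, the plan is to produce a companion disk $u'\in\pi_2(M,L')$ and then compare the two via their Maslov indices and symplectic areas. Since $\sigma$ is closed, the linear interpolation $L_t:=\Psi(\operatorname{graph}(t\sigma))$, $t\in[0,1]$, is a Lagrangian isotopy from $L$ to $L'$. Tracing $\del u$ along this isotopy yields an annulus $A\subseteq\Psi(D^*_rL)$ with one boundary component $\del u$ on $L$ and the other $\phi(\del u)$ on $L'$, where $\phi:L\to L'$ is the diffeomorphism induced by the isotopy. Setting $u':=u\cup A$ then produces the desired class in $\pi_2(M,L')$.

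The core of the argument reduces to two identities. First, $\mu(u')=\mu(u)$: I would pick a symplectic trivialization of $u^*TM$ and extend it over $A$ using the Lagrangian isotopy itself, so that the boundary Lagrangians along $A$ appear constant in the trivialization; this makes the Maslov contribution of $A$ vanish. Second, $\omega(u')-\omega(u)=\int_{\del u}\sigma$: this follows by Stokes' theorem inside the Weinstein neighbourhood, where $\Psi^*\omega=d\lambda_{\mathrm{can}}$, the canonical $1$-form $\lambda_{\mathrm{can}}$ vanishes on the zero-section $L$, and its pullback via $\operatorname{graph}\sigma$ equals $\sigma$. Combining both identities with the $\rho$-monotonicity of $L$ and $L'$, namely $\omega(u)=\rho\mu(u)$ and $\omega(u')=\rho\mu(u')$, forces $\int_{\del u}\sigma=0$, as required.

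I expect the main obstacle to be the careful bookkeeping of symplectic trivializations in the Maslov index comparison, since the consistency of the trivialization over $u$ and its extension over $A$ must be tracked precisely. Once this is in place, the area comparison is a direct application of Stokes' theorem inside $T^*L$, and the rest is algebra.
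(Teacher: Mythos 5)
Your proof is correct and follows essentially the same route as the paper: compare the symplectic area and Maslov index of a disk on $L$ with those of the companion disk on $L'$ obtained by gluing on the annulus swept out by the boundary under the graph isotopy $t\mapsto\Psi(\operatorname{graph}(t\sigma))$, compute the annulus area via $\lambda_{\mathrm{can}}$ and Stokes, and use $\rho$-monotonicity of both $L$ and $L'$ to force the flux term to vanish. The only differences are organizational: the paper argues by contradiction with a single loop and an iterate of it (using the finite-cokernel form of ``bounds enough disks''), while you argue directly over all classes of $\pi_2(M,L)$ via Remark~\ref{rem:bounds-enough_cohomology}, and you make explicit the Maslov-invariance step that the paper leaves implicit.
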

\begin{pr*}
	Suppose that $\sigma$ is not exact. Then, there exists a loop $\gamma:S^1\to L$ such that $\langle\sigma,\gamma\rangle\neq 0$. In particular, it must be so that $\gamma$ represents a nonzero class in $H_1(L;\Z)^\mathrm{free}$. Since $L$ bounds enough disks, there is thus a disk $D$ in $M$ whose boundary lies on $L$ and is some iterate $\gamma^m$ of $\gamma$. \par
	
	Let $u:S^1\times [0,1]\to T^*L$ be the cylinder given by $(t,s)\mapsto (\gamma(mt),s\sigma(\gamma(mt)))$. Note that $u$ has nonzero area:
	\begin{align*}
		\int_{S^1\times [0,1]}u^*\omega_0=\int_{S^1}(\sigma\circ\gamma^m)^*\lambda_0=\int_{S^1}(\gamma^m)^*\sigma=m\int_{S^1}\gamma^*\sigma\neq 0,
	\end{align*}
	where we have made use of the fact that the zero-section is $\lambda_0$-exact and of the tautological property of $\lambda_0$, i.e.\  $\sigma^*\lambda_0=\sigma$. Since $\Psi$ preserves the symplectic form, $C=\Psi(u(S^1\times [0,1]))$ has nonzero area in $M$. \par
	
	Then, we have that
	\begin{align*}
		\omega(D)=\rho \mu(D),
	\end{align*}
	but also
	\begin{align*}
		\omega(D)+\omega(C)=\omega(D\# C)=\rho\mu(D\# C)=\rho \mu(D),
	\end{align*}
	which is obviously a contradiction. Therefore, $\sigma$ must be exact.
\end{pr*}

We now show that Theorem~\ref{thm:second} also applies to the space of graphs $\mathscr{L}^\Gamma_k=\mathscr{L}^\Gamma_k(M\times M)$, even when that space is equipped with the metric
\begin{align*}
	d'_H(\graph\phi_1,\graph\phi_2):=||\phi_1\phi_2^{-1}||_H,
\end{align*}
where $||\cdot||_H$ is the Hofer norm on $M$. This is not clear at all that this is the case since Theorem~\ref{thm:second} requires that $d\leq d_H$, but we have here that $d=d'_H\geq d_H$. \par

\begin{prop} \label{prop:apply_thm2_graphs}
	For all $L$ in $\mathscr{L}^\Gamma_k$, there exist constants $C_2,R_2>0$ with the following property. Whenever $L'\in\mathscr{L}^\Gamma_k$ is such that $\delta_H(L,L')<R_2$, there exists a $C^{2,\alpha'}$-small function $f$ of $L$ such that $L'=\operatorname{graph} df$ in a Weinstein neighbourhood of $L$. \par
	
	Furthermore, if a sequence of graphs $\{L_i\}\subseteq\mathscr{L}^\Gamma_k$ Hausdorff-converges to another graph $L$, then $L_i\to L$ in $d'_H$.
\end{prop}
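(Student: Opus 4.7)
The plan is to upgrade the conclusion of Theorem~\ref{thm:second} by exploiting that $L, L' \in \mathscr{L}^\Gamma_k$ are graphs of Hamiltonian diffeomorphisms. I first reduce to the case $L = \Delta_M$ by composing with the Hamiltonian symplectomorphism $(\mathrm{id}, \phi_0^{-1})$ of $(M\times M, -\omega \oplus \omega)$. This maps $L = \graph \phi_0$ to $\Delta_M$ and $L' = \graph\phi_1$ to $\graph(\phi_0^{-1}\phi_1)$; while it does not quite preserve the Riemannian bounds defining $\mathscr{L}^\Gamma_k$, it is a fixed symplectic diffeomorphism that transforms them by constants depending only on $\phi_0$, so Theorem~\ref{thm:second} applies after adjusting $k$. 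This produces a closed, $C^{1,\alpha'}$-small $1$-form $\sigma$ on $\Delta_M \cong M$ with $\graph(\phi_0^{-1}\phi_1) = \Psi(\graph \sigma)$ inside a fixed Weinstein neighbourhood $\Psi$ of $\Delta_M$.

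Next, I show that $\sigma$ is exact. Write $\phi := \phi_0^{-1}\phi_1 \in \Ham(M, \omega)$. Since the $C^{1,\alpha'}$-smallness of $\sigma$ forces $\phi$ to be $C^{1,\alpha'}$-close to $\mathrm{id}$, I pick a Hamiltonian $H_s$ on $M$ generating $\phi$ with sufficiently small $C^2$-norm, so that the entire isotopy $\phi_H^s$ remains $C^1$-close to $\mathrm{id}$ for every $s \in [0,1]$. Then the ambient Hamiltonian isotopy $(\mathrm{id}, \phi_H^s)$ on $M\times M$ stays inside the Weinstein neighbourhood of $\Delta_M$, and we may write $\graph\phi_H^s = \Psi(\graph\sigma_s)$ for a smooth family of closed $1$-forms with $\sigma_0 = 0$ and $\sigma_1 = \sigma$. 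This is a Hamiltonian Lagrangian isotopy inside the Weinstein chart, and it is a standard feature of such neighbourhoods that this forces each $\sigma_s$ to be exact; in particular $\sigma = df$. Normalising $f$ by $\int_M f\,\omega^n = 0$, the $C^{1,\alpha'}$-bound on $\sigma$ transfers to a $C^{2,\alpha'}$-bound on $f$.

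For the convergence statement, assume $\graph\phi_i \to \graph\phi_\infty$ in Hausdorff. The first part of the proposition applied to each pair yields functions $f_i$ on $\graph\phi_\infty$ with $\graph\phi_i = \Psi(\graph df_i)$ and $f_i \to 0$ in $C^{2,\alpha'}$. Reducing again via $(\mathrm{id}, \phi_\infty^{-1})$, this gives $\graph(\phi_\infty^{-1}\phi_i) = \Psi'(\graph d\tilde f_i)$ near $\Delta_M$ with $\tilde f_i \to 0$ in $C^{2,\alpha'}$. The Hamiltonian isotopy $s\mapsto \psi_i^s$ on $M$ defined by $\graph\psi_i^s = \Psi'(\graph sd\tilde f_i)$ is generated by a time-dependent Hamiltonian $K_s^{(i)}$ that depends smoothly on $\tilde f_i$ (via the chart $\Psi'$) and vanishes identically when $\tilde f_i = 0$. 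Hence $\|K_s^{(i)}\|_{C^0}\leq C\|\tilde f_i\|_{C^{2, \alpha'}}$ for some constant $C$ depending only on the chart, giving $\|\phi_\infty^{-1}\phi_i\|_H \leq C\|\tilde f_i\|_{C^{2,\alpha'}} \to 0$, which is the desired convergence in $d'_H$.

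The main difficulty is the exactness step: guaranteeing a Hamiltonian isotopy from $\mathrm{id}$ to $\phi$ that stays inside the chosen Weinstein neighbourhood of $\Delta_M$. For this one needs $\phi$ itself, not merely $\graph\phi$, to be quantitatively $C^1$-close to $\mathrm{id}$; the requisite smallness is provided by combining Theorems~\ref{thm:first} and~\ref{thm:second}, which together control $\phi - \mathrm{id}$ in $C^{1,\alpha'}$ by the Hausdorff distance $\delta_H(L, L')$. A parallel subtlety arises in the convergence step, where the smoothness and pointwise vanishing of the map $\tilde f \mapsto K^{(i)}_s$ must be controlled uniformly, but this is standard for the generating-function correspondence in Weinstein neighbourhoods.
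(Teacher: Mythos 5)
Your reduction to $L=\Delta$ and your treatment of the convergence statement match the paper, but the heart of the proposition is the exactness step, and there your argument is circular. You assert that because $\phi=\phi_0^{-1}\phi_1$ is $C^{1,\alpha'}$-close to $\mathrm{id}$, you may ``pick a Hamiltonian $H_s$ generating $\phi$ with sufficiently small $C^2$-norm, so that the entire isotopy stays $C^1$-close to $\mathrm{id}$''. Nothing gives you this: the hypothesis that $\phi\in\Ham(M)$ only provides \emph{some} Hamiltonian isotopy from $\mathrm{id}$ to $\phi$, which may wander arbitrarily far from the identity and have arbitrarily large Hamiltonian, no matter how $C^1$-small $\phi-\mathrm{id}$ is; the geometric bounds defining $\mathscr{L}^\Gamma_k$ and Theorems~\ref{thm:first}--\ref{thm:second} control $\graph\phi$, not the generating isotopy. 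In fact, the existence of a Hamiltonian isotopy from $\mathrm{id}$ to $\phi$ whose graphs stay in the Weinstein neighbourhood of $\Delta$ is locally \emph{equivalent} to the exactness of $\sigma$ (once $\sigma=df$ is exact, the path $\graph(t\,df)$ gives such an isotopy, and conversely), so your ``main difficulty'' paragraph assumes exactly what has to be proved. This is not a routine technicality: a symplectomorphism $C^1$-close to $\mathrm{id}$ corresponds to a small \emph{closed} form $\sigma$, and deciding that a Hamiltonian one corresponds to an \emph{exact} form is a flux-type question.

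The paper closes this gap differently: it defines $\psi^t$ by $\graph\psi^t=\Psi(\graph t\sigma)$, computes $\mathrm{Flux}(\{\psi^t\})=[\sigma]$, concatenates with the reverse of an arbitrary (possibly large) Hamiltonian isotopy to $\phi$ to obtain a loop in $\Symp(M)$ with flux $[\sigma]$, and concludes $[\sigma]\in\Gamma_\omega$; since $\Gamma_\omega$ is discrete by Ono's theorem (the flux conjecture) and $\sigma$ can be taken arbitrarily $C^1$-small, $[\sigma]=0$. Without invoking the discreteness of the flux group (or an equivalent input), your argument cannot go through --- indeed, were your step valid as stated, it would show directly that every $C^1$-small Hamiltonian diffeomorphism is the time-one map of a $C^2$-small Hamiltonian, a statement essentially equivalent to the flux conjecture itself. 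The remainder of your proposal (the Hofer estimate $\|K^{(i)}_s\|_{C^0}\leq C\|\tilde f_i\|_{C^{2,\alpha'}}$ and hence $d'_H$-convergence) is fine once exactness is established, and agrees with the paper's argument.
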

\begin{pr*}
	Since $L=(\Id\times\phi)(\Delta)$ for some $\phi\in\Ham(M)$ and $(\Id\times\phi)(\mathscr{L}^\Gamma_k)\subseteq\mathscr{L}^\Gamma_{k'}$ for some $k'$, it suffices to prove the statement for $L=\Delta$ and $R_2$ such that $B_{R_2}(\Delta)$ is contained in a small enough Weinstein neighbourhood of $\Delta$. The proof follows from the typical flux argument, but we give here the details. \par
	
	Fix a Weinstein neighbourhood $\Psi:D_R^*M\to M\times M$ of the diagonal, and let $\{L_i=\graph\phi_i\}\subseteq\mathscr{L}^\Gamma_k$ be such that $L_i\subseteq \Psi(D_{r_i}^*L)$, where $\{r_i\}\subseteq (0,R]$ is a decreasing sequence converging to 0. Then, $L_i\to\Delta$ in the Hausdorff metric (see Lemma~1 of~\cite{Chasse2022}), and there are 1-forms $\sigma_i$ on $M$ such that $L_i=\Psi(\graph\sigma_i)$ for $i$ large by Theorem~\ref{thm:second}. \par
	
	In fact, the theorem implies that $\{\sigma_i\}$ must $C^1$-converge to 0. Therefore, for $i$ large, $\Psi(\graph t\sigma_i)$ is a graph for all $t\in [0,1]$, and we can define $\{\psi^t_i\}_{t\in [0,1]}\subseteq \Symp(M)$ via $\graph\psi^t_i=\Psi(\graph t\sigma_i)$. A direct computation gives $\mathrm{Flux}(\{\psi^t_i\})=[\sigma_i]$. But fix a Hamiltonian isotopy $\{\phi^t_i\}$ with $\phi^1_i=\phi_i$, then the concatenation $\Psi_i=\psi_i\#\overline{\phi_i}$ of $\{\psi^t_i\}$ with $\{\phi^{1-t}_i\}$ is a loop in $\Symp(M)$. Therefore,
	\begin{align*}
		\mathrm{Flux}(\Psi_i)=\mathrm{Flux}(\{\psi^t_i\})-\mathrm{Flux}(\{\phi^t_i\})=[\sigma_i]
	\end{align*}
	is in the flux group $\Gamma_\omega$ of $M$. By the flux conjecture (proved by Ono~\cite{Ono2006}), $\Gamma_\omega$ is discrete. Since $\sigma_i\to 0$, we must thus have that $[\sigma_i]=0$, i.e.\ $\sigma_i=df_i$, for $i$ large. \par
	
	To get the neighbourhood, just note that if such a neighbourhood did not exist, we could construct a sequence $\{L_i\}$ converging to the diagonal, but such that every $\sigma_i$ is nonexact, which would be a contradiction. \par
	
	The last statement follows directly from what we have already said: $L_i=\graph\phi_i\to L=\graph\phi$ in the Hausdorff metric, then it must be that $\phi_i\to\phi$ in the $C^1$-topology, and thus also in the Hofer norm.
\end{pr*}

\section{Topological and metric properties of the geometrically bounded spaces} \label{sec:topo}
We now prove some topological and metric properties of $\mathscr{L}^\star_k$ and their various metric completions. More precisely, in Subsection~\ref{subsec:cpt}, we prove Theorem~\ref{thmout:compact-comp} and some of its direct consequences. In Subsection~\ref{subsec:connect}, we then prove Theorem~\ref{thmout:local_cont}. We also use some comparison results to get a weaker version of that theorem which holds in any Riemannian metric. We conclude in Subsection~\ref{subsec:geodesics} with an analysis of local Hausdorff geodesics. We also use this to conclude that many natural variations of the Hausdorff on $\mathscr{L}^\star_k$ are in fact equivalent on those spaces. \par

\subsection{Compactness and metric completions} \label{subsec:cpt}
We now move to prove results about the compactness of the various metric completions of $\mathscr{L}^\star_k$. \par

It follows directly from Theorems~\ref{thm:first} and~\ref{thm:second} that every $L\in \mathscr{L}^\star_k$ has a neighbourhood where both metrics are equivalent. We thus get directly the following result. \par

\begin{cor} \label{cor:homeo_pre}
	The topology on $\mathscr{L}^\star_k$ induced by $\delta_H$ is equivalent to the one induced by $d$.
\end{cor}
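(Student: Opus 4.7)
The plan is to combine Theorems~\ref{thm:first} and~\ref{thm:second} with the exactness results of Subsection~\ref{subsec:apply_thm2} to show that the identity map between $(\mathscr{L}^\star_k,\delta_H)$ and $(\mathscr{L}^\star_k,d)$ is a homeomorphism. Since both metrics are first-countable, it will be enough to check that a sequence converges in one metric if and only if it converges to the same limit in the other.

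The direction ``$d$-convergence implies $\delta_H$-convergence'' is immediate from Theorem~\ref{thm:first}: if $d(L_i,L)\to 0$, then eventually $d(L_i,L)<R_1$, and the bound $\delta_H(L_i,L)\leq C_1\sqrt{d(L_i,L)}$ forces $\delta_H$-convergence.

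For the reverse direction, I would fix $L\in\mathscr{L}^\star_k$ and suppose $L_i\to L$ in $\delta_H$. Theorem~\ref{thm:second} then supplies, for $i$ large, closed $1$-forms $\sigma_i$ on $L$ with $L_i=\operatorname{graph}\sigma_i$ in a fixed Weinstein neighbourhood, together with the estimate $d(L,L_i)\leq C_2\delta_H(L,L_i)$ \emph{provided each $\sigma_i$ is exact}. The only substantive step is therefore to verify this exactness in each of the cases encoded by $\star$: for $\star=e$, I would pick the Weinstein neighbourhood to be exact (so that $\Psi^*\lambda=\lambda_0+dF$), and then $\lambda$-exactness of $L_i$ forces $\sigma_i$ to be exact; for $\star=m(\rho)$, this is precisely the content of Lemma~\ref{lem:apply_thm2}; for $\star=\Gamma$, Proposition~\ref{prop:apply_thm2_graphs} already produces $\sigma_i=df_i$ in its statement. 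Once exactness is in place, the inequality $d(L,L_i)\leq C_2\delta_H(L,L_i)\to 0$ closes the argument. I do not anticipate any real obstacle here: the corollary is essentially a packaging of Theorems~\ref{thm:first} and~\ref{thm:second} with the case-by-case exactness verification of Subsection~\ref{subsec:apply_thm2}.
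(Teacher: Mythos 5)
Your argument is correct and is essentially the paper's: the corollary is obtained directly from Theorem~\ref{thm:first} (giving $\delta_H\leq C_1\sqrt{d}$ locally) and Theorem~\ref{thm:second} together with the exactness verifications of Subsection~\ref{subsec:apply_thm2} (giving $d\leq C_2\delta_H$ locally), exactly as you lay out. The sequence formulation versus the local bi-Lipschitz formulation is only a cosmetic difference.
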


The new observation is that we can actually extend this equivalence to the completions. \par

\begin{prop} \label{prop:homeo_post}
	The metric completions of $\mathscr{L}^\star_k$ in $\delta_H$ and $d$ are homeomorphic.
\end{prop}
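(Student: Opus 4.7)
The plan is to extend the identity map on $\mathscr{L}^\star_k$ to a homeomorphism between the two completions, by showing that the two metrics produce the same Cauchy and equivalence structure on the space. The forward extension is immediate: Theorem~\ref{thm:first} gives $\delta_H \leq C_1\sqrt{d}$ whenever $d<R_1$, so the identity is uniformly continuous from $d$ to $\delta_H$ and extends to a continuous map $\Phi$ from the $d$-completion of $\mathscr{L}^\star_k$ to the $\delta_H$-completion. The whole difficulty lies in producing a continuous inverse.

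For that, I would show that every $\delta_H$-Cauchy sequence $\{L_i\}\subseteq \mathscr{L}^\star_k$ is also $d$-Cauchy. By the last part of Theorem~\ref{thm:second}, $\{L_i\}$ Hausdorff-converges to an embedded $C^{1,\alpha}$-Lagrangian $N$, with $C^{1,\alpha'}$-convergent diffeomorphisms onto $N$. Hence, for $i,j$ sufficiently large, $L_j$ lies in a Weinstein neighbourhood of $L_i$ and is the graph of a $C^{1,\alpha'}$-small closed 1-form $\sigma_{ij}$ on $L_i$. The crucial point is that $\sigma_{ij}$ is exact: for $\mathscr{L}^e_k$ this is automatic, since both $L_i$ and $L_j$ are $\lambda$-exact and the Weinstein embedding can be taken exact; for $\mathscr{L}^{m(\rho)}_k$ it is Lemma~\ref{lem:apply_thm2}; and for $\mathscr{L}^\Gamma_k$ it is contained in Proposition~\ref{prop:apply_thm2_graphs}. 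The second half of Theorem~\ref{thm:second} then gives $d(L_i,L_j)\leq C_2\,\delta_H(L_i,L_j)$.

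The main obstacle is that $C_2$ and $R_2$ in Theorem~\ref{thm:second} are phrased as depending on the reference Lagrangian, while here the reference $L_i$ is moving. I would handle this by observing that those constants arise from Weinstein neighbourhood sizes and Hölder estimates controlled entirely by the uniform Riemannian data defining $\mathscr{L}_k$ (curvature $<k$, $(k+1)^{-1}$-tameness, containment in $W_k$), and can therefore be chosen uniformly on $\mathscr{L}^\star_k$; alternatively, one obtains the same conclusion by a compactness-and-contradiction argument exploiting the $C^{1,\alpha'}$-convergence to $N$. Granted this uniformity, $d(L_i,L_j)\to 0$ as $i,j\to\infty$, so $\{L_i\}$ is $d$-Cauchy. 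Applying the same reasoning to an interleaving shows that $\delta_H$-equivalent Cauchy sequences are $d$-equivalent, while the reverse implication is immediate from Theorem~\ref{thm:first}. Thus $\Phi$ is a continuous bijection whose inverse is continuous by the very same uniform estimate, and the two completions are homeomorphic.
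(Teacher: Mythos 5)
Your overall plan (show the two metrics have the same Cauchy sequences and the same equivalence relation between them; get the easy direction from Theorem~\ref{thm:first}; reduce the hard direction to an estimate of the form $d(L_i,L_j)\leq C_2\,\delta_H(L_i,L_j)$ for a $\delta_H$-Cauchy sequence, using exactness of the comparison $1$-forms via Lemma~\ref{lem:apply_thm2} and Proposition~\ref{prop:apply_thm2_graphs}) is the right skeleton, but the step you yourself flag as the main obstacle is where the proof actually lives, and you resolve it by assertion. Theorem~\ref{thm:second} produces $C_2$ and $R_2$ only after fixing the reference Lagrangian, and you apply it with the moving reference $L_i$. The claim that these constants ``can be chosen uniformly on $\mathscr{L}^\star_k$'' is precisely the missing input: it is not contained in the statement being quoted (note the contrast with Theorem~\ref{thm:first}, which is deliberately stated with constants uniform over $\mathscr{L}^\star_k$), and nothing in your sketch proves it. The fallback ``compactness-and-contradiction argument exploiting the $C^{1,\alpha'}$-convergence to $N$'' is not an argument yet: starting from $\delta_H(L_{i_m},L_{j_m})\to 0$ together with $d(L_{i_m},L_{j_m})\geq\epsilon_0$, to reach a contradiction you still need a quantitative statement along the sequence --- a uniform Weinstein radius for the $L_i$, or uniform $C_2,R_2$, or at least that Hausdorff-smallness of an exact graph over a \emph{moving} element of $\mathscr{L}^\star_k$ forces Hofer-smallness --- which is exactly what was to be shown. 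Even your preliminary claim that $L_j$ is a graph over $L_i$ for $i,j$ large already invokes $\delta_H(L_i,L_j)<R_2(L_i)$ with the non-uniform $R_2$. Also beware that the limit $N$ is only a $C^{1,\alpha}$-submanifold, so it cannot itself serve as the fixed reference in Theorem~\ref{thm:second}.

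The paper's proof is engineered to avoid any such uniformity. Given a $\delta_H$-Cauchy sequence $\{L_i\}$ with (possibly non-smooth) Hausdorff limit $N$, it fixes $\epsilon>0$ and perturbs by generic, $C^1$-Cauchy Hamiltonians of Hofer norm at most $\epsilon$ to obtain a nearby sequence $\{L'_i\}$ with $d_H(L_i,L'_i)\leq\epsilon$ which Hausdorff-converges to a \emph{smooth} Lagrangian $N'$ (a graph when $\star=\Gamma$). Theorem~\ref{thm:second}, respectively Proposition~\ref{prop:apply_thm2_graphs}, is then applied with the single fixed reference $N'$, so its constants are fixed once and for all; this gives $L'_i\to N'$ in $d$, hence $\{L'_i\}$ is $d$-Cauchy, and the domination $d\leq C d_H$ plus the triangle inequality yields $d(L_i,L_j)\leq (2C+1)\epsilon$ for $i,j$ large. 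If you want to keep your moving-reference strategy, you would have to actually prove the uniform version of Theorem~\ref{thm:second} over $\mathscr{L}^\star_k$ (uniform Weinstein size and uniform $C_2,R_2$ in terms of $k$ and the geometry of $W_k$), which is a genuine additional piece of work; the perturbation trick makes the proposition follow from the stated results as they stand.
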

\begin{pr*}
	Recall that the metric completion $\widehat{\mathscr{L}}^\star_k$ of $\mathscr{L}^\star_k$ in $d$ is defined as the space of Cauchy sequences in $\mathscr{L}^\star_k$ up to equivalence. Two Cauchy sequences $\{L_i\}$ and $\{L'_j\}$ are called equivalent if for all $\epsilon>0$ there exist $I\in\N$ such that $d(L_i,L'_j)<\epsilon$ for all $i,j\geq I$. For our result, it thus suffices to show that $d$ and $\delta_H$ have the same Cauchy sequences and the same notion of equivalence between them. \par
	
	By Theorem~\ref{thm:first}, $d$-Cauchy sequences are also $\delta_H$-Cauchy sequences. Likewise, when two $d$-Cauchy sequences are $d$-equivalent, they are also $\delta_H$-equivalent. \par
	
	Suppose that $\{L_i\}$ is a $\delta_H$-Cauchy sequence. Denote by $N$ its Hausdorff limit, and fix $\epsilon>0$. By Theorem~\ref{thm:second}, we know that $N$ is actually an embedded Lagrangian $C^{1,\alpha}$-submanifold. Therefore, we can take a sequence $\{L'_i\}$ such that
	\begin{enumerate}[label=(\roman*)]
		\item $d_H(L_i,L'_i)\leq \epsilon$;
		\item $\{L'_i\}$ Hausdorff-converges to a smooth submanifold $N'$;
		\item $\delta_H(N,N')\leq\epsilon$.
	\end{enumerate}
	For example, this can be done by taking a sequence of generic Hamiltonians $\{H_i\}$ with $||H_i||\leq\epsilon$ which is also $C^1$-Cauchy and then taking $L'_i=\phi^{H_i}(L_i)$. If $\star=\Gamma$, we also can suppose that $N'$ is a graph. \par
	
	But since $N'$ is smooth, then $L'_i\xrightarrow{\delta_H}N'$ implies that $L'_i\xrightarrow{d}N'$ by Theorem~\ref{thm:second}~---~or by Proposition~\ref{prop:apply_thm2_graphs} when working with graphs and $d=d'_H$. Therefore, $\{L'_i\}$ is also $d$-Cauchy, and we get that
	\begin{align*}
		d(L_i,L_j)\leq d_H(L_i,L'_i)+d(L'_i,L'_j)+d_H(L'_j,L_j)\leq 3\epsilon
	\end{align*}
	for $i$ and $j$ large. The sequence $\{L_i\}$ is thus itself $d$-Cauchy. The proof that both metrics have the same notion of equivalence between Cauchy sequences is analogous.
\end{pr*}

Noting that the space of closed subsets of $W_k$ is compact in the Hausdorff metric and that the completion of $\mathscr{L}^\star_k$ in the Hausdorff metric naturally embeds in this space as a closed subspace, we get directly the following corollary. \par

\begin{cor} \label{cor:cpt}
	The metric completion $\widehat{\mathscr{L}}^\star_k$ of $\mathscr{L}^\star_k$ in $d$ is compact.
\end{cor}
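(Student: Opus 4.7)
The plan is to reduce everything to the Blaschke-type compactness of the hyperspace of closed subsets of the compact set $W_k$, using the homeomorphism of completions established in Proposition~\ref{prop:homeo_post}.

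First, I would invoke Proposition~\ref{prop:homeo_post} to replace the $d$-completion by the $\delta_H$-completion: since the two completions are homeomorphic, it suffices to show that the completion $(\widehat{\mathscr{L}}^\star_k,\delta_H)$ is compact. Next, I would consider the ambient hyperspace $\mathcal{K}(W_k)$ of nonempty compact subsets of $W_k$, equipped with the Hausdorff metric. Since $W_k$ is a compact metric space, a classical result (Blaschke's selection theorem) gives that $\mathcal{K}(W_k)$ is itself compact in $\delta_H$. Every $L\in\mathscr{L}^\star_k$ is, by definition, a closed submanifold contained in $\mathrm{Int}(W_k)$, hence a point of $\mathcal{K}(W_k)$, so there is a natural isometric inclusion $\mathscr{L}^\star_k\hookrightarrow\mathcal{K}(W_k)$.

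The key observation is then that $\mathscr{L}^\star_k$ is totally bounded as a subset of the compact space $\mathcal{K}(W_k)$. Its $\delta_H$-completion therefore coincides, as a metric space, with its closure inside $\mathcal{K}(W_k)$, which is a closed subset of a compact space and hence compact. Combining this with the homeomorphism between the $d$- and $\delta_H$-completions yields the result.

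The steps here are all essentially immediate given what precedes the corollary, so I do not anticipate a genuine obstacle. The only subtlety worth writing out carefully is the identification of the abstract metric completion $\widehat{\mathscr{L}}^\star_k$ with the closure of $\mathscr{L}^\star_k$ inside $\mathcal{K}(W_k)$; this follows from the fact that, by Theorem~\ref{thm:second}, every $\delta_H$-Cauchy sequence in $\mathscr{L}^\star_k$ has a well-defined Hausdorff limit in $\mathcal{K}(W_k)$ (actually an embedded $C^{1,\alpha}$-Lagrangian), so the inclusion into $\mathcal{K}(W_k)$ extends continuously and injectively to the completion, and its image is exactly the closure.
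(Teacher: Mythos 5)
Your proposal is correct and follows essentially the same route as the paper: reduce to the Hausdorff completion via Proposition~\ref{prop:homeo_post}, then identify it with the closure of $\mathscr{L}^\star_k$ inside the compact hyperspace of closed subsets of $W_k$ (Blaschke), using Theorem~\ref{thm:second} to justify the identification. No gaps.
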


This thus completes the proof of Theorem~\ref{thmout:compact-comp}. Note that the compactness result also implies that the uncompleted space $\mathscr{L}^\star_k$ is precompact in $\widehat{\mathscr{L}}^\star_k$, so that we get from the generalized Heine-Borel theorem the following. \par

\begin{cor} \label{cor:totally_bnd}
	The space $\mathscr{L}^\star_k$ is totally bounded in $d$, i.e.\ for every $\epsilon>0$, every cover of $\mathscr{L}^\star_k$ by $d$-balls of radius $\epsilon$ admits a finite subcover.
\end{cor}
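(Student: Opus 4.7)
The plan is to read this off directly from Corollary~\ref{cor:cpt}, which already established compactness of the completion $\widehat{\mathscr{L}}^\star_k$. Since $\mathscr{L}^\star_k$ embeds isometrically (and densely, by construction of the metric completion) into the compact metric space $\widehat{\mathscr{L}}^\star_k$, total boundedness is immediate from the standard metric-space fact that any subset of a compact metric space admits a finite $\epsilon$-net for every $\epsilon>0$. This is the substantive content of the generalized Heine-Borel theorem invoked in the preceding paragraph.

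Concretely, the steps I would carry out are the following. Fix $\epsilon>0$. First, I would use compactness of $\widehat{\mathscr{L}}^\star_k$ to extract a finite subcover $\{B_d(\widehat{L}_i,\epsilon/2)\}_{i=1}^N$ from the open cover of $\widehat{\mathscr{L}}^\star_k$ by all open $\epsilon/2$-balls. Next, I would exploit the density of $\mathscr{L}^\star_k$ in $\widehat{\mathscr{L}}^\star_k$ to select, for each $i$, a point $L_i\in\mathscr{L}^\star_k$ with $d(L_i,\widehat{L}_i)<\epsilon/2$. The triangle inequality then gives $B_d(\widehat{L}_i,\epsilon/2)\subseteq B_d(L_i,\epsilon)$, so that $\{B_d(L_i,\epsilon)\}_{i=1}^N$ covers $\widehat{\mathscr{L}}^\star_k$, and in particular covers $\mathscr{L}^\star_k$ by $\epsilon$-balls centred in $\mathscr{L}^\star_k$. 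This produces the desired finite $\epsilon$-net; from it, a finite subcover of any prescribed cover by $\epsilon$-balls can be extracted by the standard subordination argument.

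There is no real obstacle here: all of the genuine work has been done in Corollary~\ref{cor:cpt}, and this statement is a formal consequence of the compactness of the completion together with the fact that $\mathscr{L}^\star_k$ is a dense subspace of it.
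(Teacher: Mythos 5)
Your core argument is exactly the paper's: the paper deduces the corollary in one line from Corollary~\ref{cor:cpt}, observing that $\mathscr{L}^\star_k$ is precompact in its compact completion $\widehat{\mathscr{L}}^\star_k$ and invoking the generalized Heine--Borel theorem, and your explicit construction (finite $\epsilon/2$-ball cover of the compact completion, then moving the centres into $\mathscr{L}^\star_k$ by density and the triangle inequality) is just that argument written out. It correctly produces, for every $\epsilon>0$, a finite cover of $\mathscr{L}^\star_k$ by $\epsilon$-balls centred in $\mathscr{L}^\star_k$, which is the property the paper actually uses later (e.g.\ in Corollary~\ref{cor:viterbo_conj} and Proposition~\ref{prop:L_infty-separable}).

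One caveat about your closing sentence: extracting a finite subcover from an \emph{arbitrary} prescribed cover by $\epsilon$-balls is not a ``standard subordination argument''. If $\{y_1,\dots,y_N\}$ is a finite $\epsilon/2$-net and $y_j\in B(x_{a_j},\epsilon)$, then a point $x$ with $d(x,y_j)<\epsilon/2$ only satisfies $d(x,x_{a_j})<3\epsilon/2$, so the selected balls need not cover; in fact, for an incomplete totally bounded space (and $\mathscr{L}^\star_k$ is completed precisely because it need not be complete) the literal statement ``every cover by $\epsilon$-balls admits a finite subcover'' is strictly stronger than total boundedness and is not a formal consequence of having finite $\epsilon$-nets. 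This is really an artifact of the corollary's loosely worded ``i.e.'': the intended meaning, and the one used throughout the paper, is the standard one (existence of finite $\epsilon$-nets), which your argument, like the paper's, does establish. I would simply drop the last sentence rather than try to repair it.
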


As we shall see below, this is the statement that will be useful for the corollaries appearing in the introduction. \par

\subsection{Local contractibility} \label{subsec:connect}
We finally show some local path-connectedness properties for $\mathscr{L}^\star_k$ and $\widehat{\mathscr{L}}^\star_k$ with some additional hypotheses on $M$. In fact, in each case, we end up proving something stronger: the spaces are locally contractible. \par

Note that this subsection makes heavy use of Riemannian-geometric results, which we have decided to keep for a dedicated appendix at the end of this paper. This thus makes this part much more readable if one is willing to accept those technical results. \par

\subsubsection*{The Sasaki metric case}
We first show that given $L$, the metric may be chosen so that we have a system of contractible neighbourhoods, i.e.\ we prove the first part of Theorem~\ref{thmout:local_cont}. \par

\begin{prop} \label{prop:path_connect_sasaki}
	Suppose that $g=g_J$ is such that, in a Weinstein neighbourhood of $L\in\mathscr{L}^\star_k$, it corresponds to the Sasaki metric of $g|_L$. Then, $L$ possesses a system of contractible neighbourhoods in $\mathscr{L}^\star_k$.
\end{prop}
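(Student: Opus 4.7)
The plan is to build an explicit deformation retraction of a small Hausdorff-ball around $L$ onto $\{L\}$ via fibrewise rescaling in a Weinstein chart, using the Sasaki hypothesis to keep the Riemannian bounds under control throughout.

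First, fix a Weinstein neighbourhood $\Psi: D^*_R L \hookrightarrow M$ of $L$. By Theorem~\ref{thm:second}, combined with Lemma~\ref{lem:apply_thm2} in the monotone case and Proposition~\ref{prop:apply_thm2_graphs} in the graph case, there is some $r_0 > 0$ such that every $L' \in \mathscr{L}^\star_k$ with $\delta_H(L, L') < r_0$ is of the form $L' = \Psi(\operatorname{graph} df_{L'})$ for some $C^{2, \alpha'}$-small function $f_{L'}$ on $L$, unique up to a constant, with $\|df_{L'}\|_{C^{1, \alpha'}} \to 0$ as $\delta_H(L, L') \to 0$ and with $L' \mapsto df_{L'}$ continuous in that topology. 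By Corollary~\ref{cor:homeo_pre}, the Hausdorff-balls $U_r := \{L' \in \mathscr{L}^\star_k : \delta_H(L, L') < r\}$ with $r \in (0, r_0]$ form a $d$-neighbourhood basis of $L$. On these I would propose the deformation retraction
\[
H: U_r \times [0, 1] \to \mathscr{L}^\star_\infty, \qquad H(L', t) := \Psi(\operatorname{graph}((1-t) df_{L'})),
\]
which satisfies $H(\cdot, 0) = \mathrm{id}$, $H(\cdot, 1) \equiv L$, and $H(L, \cdot) \equiv L$, and is continuous in the Hausdorff metric (hence in $d$, by Corollary~\ref{cor:homeo_pre}).

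It remains to show that $H$ actually lands in $\mathscr{L}^\star_k$. The $\star$-property is the easy part: since each $s \cdot df_{L'}$ is exact, each $\Psi(\operatorname{graph}(s \cdot df_{L'}))$ is Hamiltonian isotopic to $L$ through an isotopy supported in the Weinstein chart, which immediately yields $\lambda$-exactness for $\star = e$, preserves $\rho$-monotonicity and the bounding-enough-disks condition for $\star = m(\rho)$ (both being invariants of the Hamiltonian isotopy class), and for $\star = \Gamma$ produces a graph of a Hamiltonian symplectomorphism by the argument in the proof of Proposition~\ref{prop:apply_thm2_graphs}. Containment in $\mathrm{Int}(W_k)$ also follows by further shrinking $r$.

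The main obstacle, then, is the Riemannian part, namely the bounds $\|B_{L_t}\| < k$ and strict $(k+1)^{-1}$-tameness along the entire path $s \mapsto \Psi(\operatorname{graph}(s \cdot df_{L'}))$, $s \in [0, 1]$. This is precisely where the Sasaki hypothesis enters: by the results of Subsection~\ref{subsec:geo_sasaki}, in a Sasaki metric the zero section is totally geodesic, and both the norm of the second fundamental form of $\operatorname{graph}(\sigma)$ and the deviation from strict tameness can be bounded by quantities depending continuously on the $C^{1,\alpha'}$-jet of $\sigma$ and vanishing at $\sigma = 0$. Combined with the monotone estimate $\|s \cdot df_{L'}\|_{C^{1, \alpha'}} \leq \|df_{L'}\|_{C^{1, \alpha'}}$ for $s \in [0, 1]$, and with the strict inequalities already satisfied by $L$ itself, shrinking $r$ ensures that these Riemannian bounds remain strictly below $k$ and strictly above $(k+1)^{-1}$ uniformly in $s \in [0, 1]$ and $L' \in U_r$. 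This places all of $H(U_r \times [0, 1])$ inside $\mathscr{L}^\star_k$, so $U_r$ is contractible, and the family $\{U_r\}_{r \in (0, r_0]}$ provides the desired system of contractible neighbourhoods.
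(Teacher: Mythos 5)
Your overall strategy is the same as the paper's: identify a small Hausdorff-ball around $L$ with a set of $C^{1,\alpha'}$-small exact forms via Theorem~\ref{thm:second} (plus Lemma~\ref{lem:apply_thm2}, resp.\ Proposition~\ref{prop:apply_thm2_graphs}), and contract it by the fibrewise rescaling $\sigma\mapsto s\sigma$; the treatment of the $\star$-property via the Hamiltonian isotopy inside the chart is fine. The problem is in how you justify that the rescaled graphs stay in $\mathscr{L}^\star_k$, specifically the curvature bound. You assert that the norm of the second fundamental form of $\operatorname{graph}(\sigma)$ is ``bounded by quantities depending continuously on the $C^{1,\alpha'}$-jet of $\sigma$ and vanishing at $\sigma=0$,'' so that shrinking $r$ keeps $\|B\|<k$ along the whole path because $L$ itself satisfies the strict inequality. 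This is false: $\|B_{\operatorname{graph}\sigma}\|$ involves second derivatives of $\sigma$, which are not controlled by the $C^{1,\alpha'}$-smallness provided by Theorem~\ref{thm:second}. A highly oscillatory, $C^{1,\alpha'}$-small exact form can have arbitrarily large curvature, and elements $L'\in\mathscr{L}^\star_k$ arbitrarily Hausdorff-close to $L$ can have $\|B_{L'}\|$ arbitrarily close to $k$, not close to $\|B_L\|$. So no amount of shrinking the neighbourhood makes the curvature of the intermediate graphs a small perturbation of that of $L$; the paper points out exactly this trap in Remark~\ref{rem:curvature-vs-tame}.

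What actually closes this gap is the monotonicity statement of Lemma~\ref{lem:curvature}: for $\xi=\operatorname{grad}H$ with $|\xi|$ and $|\nabla\xi|$ small (and these \emph{are} controlled by shrinking the neighbourhood), the map $t\mapsto\|B_{t\xi}\|$ is nondecreasing on $[0,1]$, so the intermediate graphs inherit the bound $\|B_{t\xi}\|\leq\|B_{L'}\|<k$ from the endpoint $L'$, not from $L$. Your tameness argument is essentially correct in spirit (tameness only involves first derivatives, cf.\ Lemma~\ref{lem:tame_TL}), but you also gloss over a second point: the metric is only \emph{locally} Sasaki, so ambient minimal geodesics between points of the rescaled graphs could a priori leave the Weinstein chart and shortcut through $M$, ruining the tameness estimate computed in $TL$; this is the content of Lemma~\ref{lem:dM-to-dTL} ($d_M=d_{TL}$ near $L$), which your argument needs to invoke.
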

\begin{pr*}
	Given Theorem~\ref{thm:second}, we can identify a sufficiently small neighbourhood of $L$ to a set of $C^{1,\alpha'}$-small exact forms. Furthermore, as the neighbourhood gets smaller, the $C^{1,\alpha'}$-norm of the forms tends to 0~---~this is a consequence of the second part of Theorem~\ref{thm:second}. \par
	
	Therefore, it suffices to prove that such a set of forms must be star-shaped about the origin. The result thus follow from Lemmata~\ref{lem:curvature}, \ref{lem:tame_TL}, and~\ref{lem:dM-to-dTL} below. For convenience of computation, these lemmata are stated for vector fields on $L$ instead of forms, but this is equivalent given the musical isomorphisms of $g|_L$.
\end{pr*}

Note that Proposition~\ref{prop:path_connect_sasaki} extends to the metric completion of $\mathscr{L}^\star_k$. \par

\begin{cor} \label{cor:path_connect_sasaki-comp}
	Let $L$ and $g$ be as in Proposition~\ref{prop:path_connect_sasaki}. The Lagrangian $L$ also possesses a system of contractible neighbourhoods in the metric completion $\widehat{\mathscr{L}}^\star_k$ in $d$.
\end{cor}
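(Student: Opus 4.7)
The plan is to lift the contraction constructed in Proposition \ref{prop:path_connect_sasaki} from $\mathscr{L}^\star_k$ to the completion $\widehat{\mathscr{L}}^\star_k$ by continuity and density. Fix a contractible open neighbourhood $U_\epsilon\subseteq\mathscr{L}^\star_k$ of $L$ furnished by that proposition, together with its contraction $H\colon [0,1]\times U_\epsilon\to U_\epsilon$ given by $(t,\graph\sigma)\mapsto\graph(t\sigma)$; here $\sigma$ is a small exact $1$-form on $L$ representing the nearby Lagrangian via a fixed Weinstein chart. Denote by $\widehat{U_\epsilon}$ the closure of $U_\epsilon$ inside $\widehat{\mathscr{L}}^\star_k$; varying $\epsilon$ will then produce the required neighbourhood system.

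The first step is to identify elements of $\widehat{U_\epsilon}$ with $C^{1,\alpha}$-small exact $1$-forms on $L$. Every $\hat L\in\widehat{U_\epsilon}$ is the $d$-limit, hence (by Proposition \ref{prop:homeo_post}) the Hausdorff-limit, of some sequence $\{L_i=\graph\sigma_i\}\subseteq U_\epsilon$; by Theorem \ref{thm:second}, $\hat L$ is an embedded $C^{1,\alpha}$-Lagrangian, realized as $\graph\sigma$ for some $C^{1,\alpha}$-closed $1$-form $\sigma$, with $\sigma_i\to\sigma$ in $C^{1,\alpha'}$. Exactness of $\sigma$ follows from exactness of the Weinstein chart when $\star=e$, from Lemma \ref{lem:apply_thm2} applied to the approximating sequence when $\star=m(\rho)$, and from Proposition \ref{prop:apply_thm2_graphs} when $\star=\Gamma$.

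The second step is to define $\widehat H\colon[0,1]\times\widehat{U_\epsilon}\to\widehat{U_\epsilon}$ by $\widehat H(t,\graph\sigma):=\graph(t\sigma)$ and to verify continuity. Note that $\graph(t\sigma)$ is the Hausdorff-limit of the smooth graphs $\graph(t\sigma_i)$, each of which still lies in $U_\epsilon$ thanks to the star-shaped-ness established inside the proof of Proposition \ref{prop:path_connect_sasaki}; hence $\graph(t\sigma)\in\widehat{U_\epsilon}$. For joint continuity, whenever $(t_n,\hat L_n)\to(t_0,\hat L_0)$, the associated forms satisfy $\sigma_n\to\sigma_0$ in $C^{1,\alpha'}$ by the last assertion of Theorem \ref{thm:second}; consequently $t_n\sigma_n\to t_0\sigma_0$ in $C^{1,\alpha'}$, so that $\graph(t_n\sigma_n)\xrightarrow{\delta_H}\graph(t_0\sigma_0)$, and Proposition \ref{prop:homeo_post} converts this into convergence in $d$. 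Since $\widehat H(0,\cdot)\equiv L$ and $\widehat H(1,\cdot)=\Id$, the neighbourhood $\widehat{U_\epsilon}$ is contractible.

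The main obstacle is upgrading the convergence statement of Theorem \ref{thm:second} into a continuous dependence statement for the assignment $\hat L\mapsto\sigma$, so that the extension of $H$ is well-defined and continuous. This is essentially automatic given the uniqueness of the $C^{1,\alpha}$-graph representation inside the fixed Weinstein chart and the uniformity of the regularity bounds along Cauchy sequences in $\widehat{U_\epsilon}$, but should be written out carefully so as to obtain joint continuity in $(t,\hat L)$ rather than just sequential continuity in each variable.
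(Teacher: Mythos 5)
Your argument is correct and follows essentially the same route as the paper: identify points of the completed neighbourhood with graphs of $C^{1,\alpha}$-small exact forms via Theorem~\ref{thm:second}, extend the radial contraction $t\mapsto\graph(t\sigma)$ to the closure by taking Hausdorff limits of the smooth approximating graphs (which stay in the neighbourhood by the star-shapedness from Proposition~\ref{prop:path_connect_sasaki}), and convert back to $d$ with Proposition~\ref{prop:homeo_post}. The joint-continuity point you flag is treated even more tersely in the paper (it simply invokes continuity of scalar multiplication), and since only $C^0$-control of the sections is needed for Hausdorff continuity of $(t,\hat L)\mapsto\graph(t\sigma)$, your outline is if anything more careful than the original.
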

\begin{pr*}
	By the proof of Proposition~\ref{prop:path_connect_sasaki}, we know that if $L'=\operatorname{graph}dH$ is smooth and in a small enough neighbourhood, then $tL'=\operatorname{graph}tdH$ stays in that neighbourhood for all $t\in [0,1]$. \par
	
	Suppose now that $L'\in\widehat{\mathscr{L}}^\star_k$ is not smooth but in the (completion of the) same neighbourhood. By definition, there is thus a sequence $\{L_i\}\subseteq\mathscr{L}^\star_k$ in that neighbourhood such that $L_i\to L$ in the Hausdorff metric. But then, $tL_i$ stays in it for all $t\in [0,1]$. By continuity of multiplication by a scalar, we have that $tL_i\to tL'$ in the Hausdorff metric. Therefore, $tL'$ stays in the same neighbourhood as $L'$, which gives the result.
\end{pr*}

\subsubsection*{The two-dimensional case}
Even though we expect any $L$ in $\mathscr{L}^\star_k$ to have a system of contractible neighbourhoods for any metric $g$, the computations involved quickly become too complex to handle. An exception to this is when $\dim M=2$. Namely, we can prove the following, which corresponds to the second part of Theorem~\ref{thmout:local_cont}. \par

\begin{prop} \label{prop:contractible-nbhd_2d}
	Let $\dim M=2$ and $k\in\N$. Every $L\in \mathscr{L}^\star_k$ admits a system of contractible neighbourhoods.
\end{prop}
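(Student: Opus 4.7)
The plan is to follow the blueprint of Proposition~\ref{prop:path_connect_sasaki}: via Theorem~\ref{thm:second} (together with Lemma~\ref{lem:apply_thm2} or Proposition~\ref{prop:apply_thm2_graphs} in the monotone and graph cases), a sufficiently small Hausdorff neighbourhood of $L$ is identified with a $C^{1,\alpha'}$-neighbourhood $\mathcal{U}$ of $0$ in the space of exact 1-forms on $L$. Showing that $\mathcal{U}$ is star-shaped about $0$ immediately yields the contraction $(t,\sigma)\mapsto \Psi(\graph(t\sigma))$ of the corresponding neighbourhood in $\mathscr{L}^\star_k$ down to $\{L\}$.

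Fixing a Weinstein neighbourhood $\Psi: D^*_r L \to M$, the concrete task is to prove the following: for any exact 1-form $\sigma = dH$ with $\graph\sigma \in \mathscr{L}^\star_k$ and sufficiently small $C^{1,\alpha'}$-norm, each rescaling $L_t := \Psi(\graph(t\sigma))$, $t \in [0,1]$, still lies in $\mathscr{L}^\star_k$. Containment $L_t \subseteq \mathrm{Int}(W_k)$ is automatic once $\mathcal{U}$ is chosen small enough, since each $L_t$ stays inside the chosen Weinstein chart. The two nontrivial conditions to verify are the strict curvature bound $||B_{L_t}|| < k$ and the strict $(k+1)^{-1}$-tameness of $L_t$, both uniformly in $t$.

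The crucial simplification in dimension~$2$ is that $L$ is a closed curve, so the second fundamental form $B_{L_t}$ reduces to the scalar geodesic curvature of $L_t$, expressible in the Weinstein chart in terms of the metric coefficients and derivatives of $tH$ of order up to two. The goal is to show that, for $\sigma$ small enough in $C^{1,\alpha'}$, the map $(t,\sigma)\mapsto ||B_{L_t}||$ is continuous and approaches $||B_L||$ uniformly in $t$ as $\sigma \to 0$, so that the strict inequality $||B_L|| < k$ propagates along the full scaling; an analogous uniform continuity statement is needed for the tameness constant.

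The hard part is precisely establishing these uniform-in-$t$ Riemannian estimates in an \emph{arbitrary} metric on a surface, where one lacks the product structure that made the Sasaki case in Proposition~\ref{prop:path_connect_sasaki} tractable. For this I would invoke the two-dimensional Riemannian machinery developed in Subsection~\ref{subsec:geo_2d} of the appendix, which supplies the necessary curvature and injectivity-radius comparisons on surfaces. Once those are in place, shrinking $\mathcal{U}$ preserves both strict inequalities along the entire rescaling $t\in[0,1]$, and the star-shapedness of $\mathcal{U}$, together with the contraction $(t,\sigma)\mapsto\Psi(\graph(t\sigma))$, produces the desired system of contractible neighbourhoods of $L$ in $\mathscr{L}^\star_k$.
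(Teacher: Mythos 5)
There is a genuine gap, in two related places. First, your curvature step is not the right statement: you propose to show that $(t,\sigma)\mapsto ||B_{L_t}||$ \emph{approaches $||B_L||$ uniformly in $t$ as $\sigma\to 0$ in $C^{1,\alpha'}$}. This fails, because the geodesic curvature of a graph involves the second derivative of the graphing function, which is not controlled by $C^{1,\alpha'}$-smallness (already at $t=1$ the curvature is $||B_{L'}||$, which need not be near $||B_L||$). This is exactly the subtlety flagged in Remark~\ref{rem:curvature-vs-tame}: one cannot ``pass to the limit'' in the curvature; one must track its behaviour along the whole family. The correct statement, and the one the paper proves (Lemma~\ref{lem:curvature_2d}), is that along the interpolating family the curvature is bounded by $\max\{k',||B_{L'}||\}$ for $k'$ slightly above $||B_L||$, which suffices because both terms are $<k$; it is obtained from the explicit formula of Lemma~\ref{lem:computation-curvature-graph_2d} and a case analysis of the parabola $\alpha\mapsto(\alpha\xi''-\kappa)^2$, not from a continuity-in-$\sigma$ argument.

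Second, your interpolation itself is in the wrong chart for the two-dimensional machinery you want to invoke. Subsection~\ref{subsec:geo_2d} is set in Fermi coordinates $\phi(s,t)=\exp_{\gamma(s)}(tJ\dot{\gamma}(s))$, where the metric has the controlled form $|W|^2ds^2+dt^2$ but the symplectic form is $|W|\,ds\wedge dt$; in these coordinates the naive rescaling $\alpha\xi$ of a graph is \emph{not} exact (equation~(\ref{eqn:condn-exact_2d}) is not preserved), so your path would leave $\mathscr{L}^\star$ rather than $\mathscr{L}_k$. The paper's fix is Lemma~\ref{lem:good-c_2d}: replace $\alpha\xi$ by $\xi_\alpha=\alpha\xi+c(\alpha)$, where $c(\alpha)$ is the unique constant restoring exactness, with $|c(\alpha)|\leq\alpha||\xi||$ so the family stays $C^1$-small and Lemmata~\ref{lem:curvature_2d} and~\ref{lem:tameness_2d} apply to it. Conversely, your rescaling $t\mapsto\Psi(\graph(t\,dH))$ in a Weinstein chart does preserve exactness, but then the ambient metric in that chart is arbitrary and none of the appendix estimates of Subsection~\ref{subsec:geo_2d} apply to that family; establishing the curvature and tameness bounds for it is precisely the ``general metric'' problem the paper says it cannot handle (whence the weaker Proposition~\ref{prop:contractible_nbhd_general}). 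So the exactness-correcting constant $c(\alpha)$, and the switch to Fermi coordinates that makes it necessary, is the essential missing idea in your proposal.
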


To do so, we employ a similar approach to the Sasaki case. We can do this because, in dimension 2, a tubular neighbourhood of $L$ admits some fairly nice coordinates given by
\begin{center}
	\begin{tikzcd}[row sep=0pt,column sep=1pc]
		\phi\colon (0,\ell)\times (-r,r) \arrow{r} & M \\
		{\hphantom{\phi\colon{}}} (s,t) \arrow[mapsto]{r} & \exp_{\gamma(s)}(tJ\dot{\gamma}(s))
	\end{tikzcd},
\end{center}
where $\gamma:[0,\ell)\to L$ is a parametrization such that $|\dot{\gamma}|\equiv 1$. Note that
\begin{align*}
	\phi^*g &=|W|^2ds^2+dt^2, \\
	\phi^*J &= |W|\frac{\del}{\del t}\otimes ds-\frac{1}{|W|}\frac{\del}{\del s}\otimes dt, \\
	 \text{and}\qquad \phi^*\omega &= |W|ds\wedge dt=d\left(\left(-\int_0^t|W|d\tau\right)ds\right),
\end{align*}
where $W(s,t)$ is the value at time $t$ of the unique Jacobi field along the geodesic $t\mapsto\phi(s,t)$ such that $W(s,0)=\dot{\gamma}(s)$ and $\dot{W}(s,0)=-\kappa(s)\dot{\gamma}(s)$. Here, $\kappa$ is the (signed) geodesic curvature of $L$, which is defined via the relation $\ddot{\gamma}:=\nabla_{\dot{\gamma}}\dot{\gamma}=\kappa J\dot{\gamma}$. \par

In these coordinates, a graph $L'=\{(s,\xi(s))\ |\ s\in [0,\ell)\}=:\operatorname{graph}\xi$ is in the same Hamiltonian isotopy class in $S^1\times (-r,r)$ as $L=\operatorname{graph} 0$ if and only if $s\mapsto \int_0^{\xi(s)}|W|dt$ admits a primitive, which in turn is equivalent to
\begin{align} \label{eqn:condn-exact_2d}
	\int_0^\ell\int_0^{\xi(s)}|W|dtds=0.
\end{align}
In particular, even if $\operatorname{graph}\xi$ is exact, one should not expect $\operatorname{graph}\alpha\xi$ to be for $\alpha\in (0,1)$. We can however circumvent this problem by slightly adjusting our approach. \par

\begin{lem} \label{lem:good-c_2d}
	Suppose that $||\xi||:=\max|\xi|<\frac{r}{2}$. Then, for any $\alpha\in [0,1]$, there exists a unique real number $c(\alpha)$ in $(-\frac{r}{2},\frac{r}{2})$ such that $\xi_\alpha:=\alpha\xi+c(\alpha)$ defines an exact graph. Furthermore, $c$ depends continuously on $\alpha$ and $|c(\alpha)|\leq \alpha ||\xi||$. In particular, the path $\alpha\mapsto \operatorname{graph}\xi_\alpha$ is Hausdorff-continuous and stays in the image of $\phi$. \par
	
	Moreover, if $||\xi||<\frac{r}{3}$, then we have that
	\begin{align*}
		|c(\alpha)-c(\alpha')|\leq ||\xi||\ |\alpha-\alpha'|.
	\end{align*}
\end{lem}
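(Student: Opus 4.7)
The plan is to treat the existence and uniqueness of $c(\alpha)$ as a root-finding problem for the auxiliary function
\[
F(\alpha,c) := \int_0^\ell \int_0^{\alpha\xi(s)+c} |W(s,t)|\,dt\,ds,
\]
which, by~\eqref{eqn:condn-exact_2d}, vanishes exactly when $\graph\xi_\alpha$ is exact. Since $|W(s,0)|=|\dot{\gamma}(s)|=1$, after possibly shrinking $r$ we may assume $|W|$ is bounded below by a positive constant on $[0,\ell]\times[-r,r]$; then $F$ is $C^1$ on $\{(\alpha,c):\alpha\|\xi\|+|c|<r\}$ with
\[
\partial_c F(\alpha,c) = \int_0^\ell |W(s,\alpha\xi(s)+c)|\,ds > 0,
\]
so $c\mapsto F(\alpha,c)$ is strictly increasing for every fixed $\alpha$.

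For existence together with the bound $|c(\alpha)|\leq\alpha\|\xi\|$, I would evaluate at the endpoints $c=\pm\alpha\|\xi\|$. At $c=-\alpha\|\xi\|$ we have $\alpha\xi(s)+c\leq 0$ for all $s$ and hence $F\leq 0$; similarly $F\geq 0$ at $c=\alpha\|\xi\|$. The hypothesis $\|\xi\|<r/2$ guarantees $2\alpha\|\xi\|<r$, so both endpoints lie in the domain of $F$, and the intermediate value theorem together with strict monotonicity produces a unique $c(\alpha)\in[-\alpha\|\xi\|,\alpha\|\xi\|]$ with $F(\alpha,c(\alpha))=0$. The resulting path then satisfies $|\xi_\alpha(s)|\leq 2\alpha\|\xi\|<r$, so $\graph\xi_\alpha$ stays inside the image of $\phi$ and, because $\xi_\alpha$ varies in $C^0$ with $\alpha$, it varies Hausdorff-continuously.

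For the continuity of $c$ itself and the Lipschitz estimate, I would invoke the implicit function theorem applied to $F$: since $\partial_c F>0$, $c$ is $C^1$ in $\alpha$, with
\[
c'(\alpha) = -\frac{\partial_\alpha F(\alpha,c(\alpha))}{\partial_c F(\alpha,c(\alpha))} = -\frac{\int_0^\ell \xi(s)|W(s,\alpha\xi(s)+c(\alpha))|\,ds}{\int_0^\ell |W(s,\alpha\xi(s)+c(\alpha))|\,ds}.
\]
The triangle inequality immediately yields $|c'(\alpha)|\leq\|\xi\|$, and integration in $\alpha$ gives $|c(\alpha)-c(\alpha')|\leq\|\xi\|\,|\alpha-\alpha'|$.

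The main obstacle here is not any one estimate but rather the book-keeping needed to ensure the differentiability argument is valid globally along the path, i.e.\ that the family $\{\xi_\alpha\}_{\alpha\in[0,1]}$ stays inside a single open set on which $|W|$ is uniformly bounded below and smooth. This is precisely where the strengthened hypothesis $\|\xi\|<r/3$ becomes convenient: it provides a uniform buffer ensuring $\alpha\xi(s)+c(\alpha)$ remains well inside $(-r,r)$ for every $\alpha\in[0,1]$, so that $\partial_\alpha F$ and $\partial_c F$ admit uniform bounds and the implicit function theorem applies on a single neighborhood containing the entire path $\alpha\mapsto(\alpha,c(\alpha))$.
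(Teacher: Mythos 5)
Your argument is correct, and its first half is essentially the paper's: both proofs reduce exactness to the vanishing of $F(\alpha,c)=\int_0^\ell\int_0^{\alpha\xi(s)+c}|W|\,dt\,ds$ via (\ref{eqn:condn-exact_2d}), use positivity of $|W|$ to get strict monotonicity in $c$, and locate the unique zero in $[-\alpha\|\xi\|,\alpha\|\xi\|]$ by checking signs at the endpoints. Where you diverge is the continuity and Lipschitz statements: the paper stays entirely order-theoretic, getting continuity of $c$ from continuity of the integral plus monotonicity, and obtaining the Lipschitz bound by running the same sign argument on a shifted auxiliary function (this is where the hypothesis $\|\xi\|<\frac{r}{3}$ enters, to keep the shifted upper limits inside $(-r,r)$). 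You instead differentiate $F$ and apply the implicit function theorem, so that $c$ is $C^1$ with $|c'(\alpha)|\leq\|\xi\|$, and the Lipschitz estimate follows by integration. Your route costs a little regularity bookkeeping (Leibniz rule, joint continuity of $|W|$) but buys a cleaner quantitative statement: it gives the Lipschitz bound, and indeed smoothness of $c$, already under $\|\xi\|<\frac{r}{2}$, whereas the paper's shifted-function trick genuinely uses the stronger $\frac{r}{3}$ hypothesis.

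One caveat: the phrase ``after possibly shrinking $r$'' should be removed, since the lemma is stated for the given chart radius $r$ and both hypotheses and conclusions are phrased in terms of it; shrinking $r$ would prove a different statement. Fortunately you never need it: $|W|>0$ on the whole chart because $\phi^*g=|W|^2ds^2+dt^2$ is a Riemannian metric, and every argument $\alpha\xi(s)+c$ that actually occurs in your proof lies in the compact set $\{|t|\leq 2\|\xi\|\}\subset(-r,r)$, on which $|W|$ is automatically bounded below; this also supplies the uniform bounds you invoke for the implicit function theorem along the whole path, without any appeal to the $\frac{r}{3}$ buffer.
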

\begin{pr*}
	Fix $\alpha\in [0,1]$. For each $s\in [0,\ell)$, the map $\tau\mapsto \int_0^{\alpha\xi(s)+\tau}|W|dt$ is increasing on $(-\frac{r}{2},\frac{r}{2})$. Indeed, $|W|>0$ for all $(s,t)\in [0,\ell)\times (-r,r)$ since $\phi$ is a chart, and $|\alpha\xi(s)+\tau|\leq ||\xi||+\frac{r}{2}<r$. Furthermore, the map is positive if $\tau>\alpha||\xi||$ and negative if $\tau<-\alpha||\xi||$ for the same reason. Therefore, the same holds for the function $\tau\mapsto \int_0^\ell\int_0^{\alpha\xi(s)+\tau}|W|dtds$. In particular, there is a unique solution $\tau=c(\alpha)$ in $(-\frac{r}{2},\frac{r}{2})$ to (\ref{eqn:condn-exact_2d}), that is such that
	\begin{align*}
		\int_0^\ell\int_0^{\alpha\xi(s)+c(\alpha)}|W|dtds=0.
	\end{align*}

	The continuity of $c$ follows directly from the fact that $\int_0^\ell\int_0^{\alpha\xi(s)+\tau}|W|dtds$ depends continuously on $\alpha$. The estimate on the value of $c$ follows from the fact that this integral is positive if $\tau>\alpha||\xi||$ and negative if $\tau<-\alpha||\xi||$.
	
	The final estimate follows from applying the same logic as above to the function
	\begin{align*} 
		\tau\mapsto \int_0^\ell\int_0^{(\alpha-\alpha')\xi(s)+\tau-c(\alpha')}|W|dt
	\end{align*}
	and noting that $\tau=c(\alpha)$ must be its unique zero in $(-\frac{r}{3},\frac{r}{3})$.
\end{pr*}

Therefore, we are precisely in the setting of Subsection~\ref{subsec:geo_2d}, and Proposition~\ref{prop:contractible-nbhd_2d} follows directly from applying Lemmata~\ref{lem:curvature_2d} and~\ref{lem:tameness_2d} to the path $\alpha\mapsto \xi_\alpha$.

\subsubsection*{The general case}
We now partially extend the local path-connectedness result to other metrics than the Sasaki ones. The results are of course weaker in this context, but they still point in the same direction as the locally Sasaki case. More precisely, we get the following. \par

\begin{prop} \label{prop:contractible_nbhd_general}
	For every $k>0$ and $L\in\mathscr{L}^\star_k$, there are $a\geq 1$ and $b\geq 0$ with the following property. The Lagrangian submanifold $L$ possesses a (system of) neighbourhood $U$ in $\mathscr{L}^\star_{k}$ such that the inclusion $U\hookrightarrow \mathscr{L}^\star_{ak+b}$ is nullhomotopic.
\end{prop}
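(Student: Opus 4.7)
The idea is to reduce to the Sasaki setting of Proposition~\ref{prop:path_connect_sasaki} via the comparison results of Subsection~\ref{subsec:geo_comparison}. Fix $L\in\mathscr{L}^\star_k$ and a Weinstein neighbourhood $\Psi:D^*_rL\to M$. Theorem~\ref{thm:second}, together with Lemma~\ref{lem:apply_thm2} in the monotone case (or Proposition~\ref{prop:apply_thm2_graphs} for graphs), yields a neighbourhood $U$ of $L$ in $\mathscr{L}^\star_k$ whose members all take the form $L_{df}:=\Psi(\operatorname{graph}df)$ for a $C^{1,\alpha'}$-small exact $1$-form $df$. The candidate nullhomotopy is
\begin{align*}
	H:U\times[0,1]\to\mathscr{L}^\star_\infty,\qquad H(L_{df},t):=\Psi(\operatorname{graph}((1-t)df)),
\end{align*}
which at $t=0$ is the inclusion and at $t=1$ is constant equal to $L$. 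Continuity of $H$ in $d$ is immediate from Theorem~\ref{thm:second} and Proposition~\ref{prop:homeo_post}, so the substance of the proposition is a uniform Riemannian bound on the image of $H$.

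The next step is to work on $D^*_rL$ with the Sasaki metric $g_S$ induced by $g|_L$, and to run the analysis underlying Proposition~\ref{prop:path_connect_sasaki} in this auxiliary metric. Lemmata~\ref{lem:curvature} and~\ref{lem:tame_TL} bound the second fundamental form and the tameness constant of each $\operatorname{graph}(s\,df)$, $s\in[0,1]$, in terms of $||B_L||_{g|_L}$ and $||df||_{C^{1,\alpha'}}$. On a sufficiently small $U$, the latter can be made as small as desired, while the former is controlled affinely by $k$ through the comparison between $g|_L$ and the ambient $g$. Each intermediate graph therefore lies in an $\mathscr{L}_{k_S}$ computed with respect to $g_S$, for some $k_S$ depending affinely on $k$.

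Finally, I would transfer these Sasaki bounds back to $(M,g)$. Since $\overline{\Psi(D^*_rL)}$ is compact in $M$, the metrics $g_S$ and $\Psi^*g$ and their Levi--Civita connections are uniformly comparable on that neighbourhood. The comparison results of Subsection~\ref{subsec:geo_comparison} then provide constants $a\geq 1$ and $b\geq 0$, depending on $L$ and $\Psi$ but not on $df$, such that $L_{sdf}\in\mathscr{L}^\star_{ak+b}$ for every $s\in[0,1]$; containment in some $W_N$ is automatic from the same compactness, which only enlarges $b$. The main obstacle is the tameness transfer: tameness is a ratio of distances, hence a nonlocal quantity, so one must check that it survives both the comparison between $g_S$ and $\Psi^*g$ and the rescaling $df\mapsto s\,df$. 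This is precisely what the comparison machinery of the appendix is designed to handle, and is where the technical work of the argument concentrates.
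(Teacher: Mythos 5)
Your proposal is correct and takes essentially the same route as the paper, whose proof is a one-liner: combine the Sasaki case (Proposition~\ref{prop:path_connect_sasaki}) with the comparison results of Lemmata~\ref{lem:curvature_comparision} and~\ref{lem:tameness_comparision}. Your write-up merely makes explicit the radial nullhomotopy and the two transfers of the curvature and tameness bounds between the ambient metric and the auxiliary (locally Sasaki) one, which is exactly what the paper's citation of those lemmata is meant to encode.
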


This follows directly from combining the Sasaki case (Proposition~\ref{prop:path_connect_sasaki}) with the comparison results Lemmata~\ref{lem:curvature_comparision} and~\ref{lem:tameness_comparision} below.  \par

\subsection{Local geodesics} \label{subsec:geodesics}
We now turn our attention to the geodesics in the Hausdorff metric in a neighbourhood of a Lagrangian submanifold $L\in\mathscr{L}^\star_k$. \par

We first recall the definition of a geodesic in a general metric space.

\begin{defn*}
	Let $(X,d)$ be a metric space. The \emph{length} of curve $c:[a,b]\to X$ is given by
	\begin{align*}
		\ell(c):=\sup_{a=t_0<\dots<t_\ell=b}\sum_{i=1}^\ell d\left(c(t_{i-1}),c(t_i)\right)\in [0,\infty].
	\end{align*}
	We say that $c$ has \emph{constant speed} if there exist $\lambda\geq 0$ such that $\ell(c|_{[t,s]})=\lambda|t-s|$ for all $a\leq t\leq s\leq b$. In that case, we call $\lambda$ its \emph{speed}. \par
	
	A \emph{geodesic} is a curve $c:[a,b]\to X$ which has constant speed and is locally minimizing in $d$, i.e.\ for every $t_0\in [a,b]$, there is some $\epsilon>0$ such that whenever $t\leq s\in [a,b]\cap(t_0-\epsilon,t_0+\epsilon)$, then
	\begin{align*}
		\ell\left(c|_{[t,s]}\right)=d(c(t),c(s)).
	\end{align*}
	If the above equality holds for all $t,s\in [a,b]$, then we call $c$ a \emph{minimizing geodesic}.
\end{defn*}

We begin with a description of certain geodesics in the Hausdorff distance in a small enough neighbourhood of any submanifold.
\begin{prop} \label{prop:radial_distance}
	Let $N$ be a submanifold of a complete Riemannian manifold $M$ with tubular neighbourhood $U$, i.e.\ there is a neighbourhood $V$ of the zero-section of $TN^\perp$ such that the exponential gives a diffeomorphism $V\xrightarrow{\sim} U$. Suppose that $V=B_\epsilon(N)$ for some $\epsilon>0$, that is, $V$ is the $\epsilon$-neighbourhood of the zero-section in $TN^\perp$. If $N'\subseteq U$ is a submanifold such that $N'=\exp\sigma(N)$ for some section $\sigma$ of $TN^\perp$, then
	\begin{align*}
		\delta_H(tN',sN')=|t-s|\max|\sigma|
	\end{align*}
	for all $t,s\in [0,1]$, where $tN':=\exp t\sigma(N)$.
\end{prop}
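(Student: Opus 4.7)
The plan is to prove both inequalities $\delta_H(tN',sN')\leq|t-s|\max|\sigma|$ and $\delta_H(tN',sN')\geq|t-s|\max|\sigma|$ separately.

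For the upper bound, I would exploit the fact that the normal exponential sends radial rays to geodesics at the correct speed. Concretely, for each fixed $x\in N$, the curve $\tau\mapsto\exp_x(\tau\sigma(x))$ is a geodesic with constant speed $|\sigma(x)|$, so
\begin{align*}
d_M\bigl(\exp_x(t\sigma(x)),\exp_x(s\sigma(x))\bigr)\leq |t-s||\sigma(x)|\leq |t-s|\max|\sigma|.
\end{align*}
Since every point of $tN'$ is of the form $\exp_x(t\sigma(x))$ for some $x\in N$, this gives $tN'\subseteq B_{|t-s|\max|\sigma|}(sN')$, and symmetrically the other inclusion, yielding the upper bound on $\delta_H$.

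For the lower bound, assume without loss of generality that $t\geq s$ and pick $x_0\in N$ with $|\sigma(x_0)|=\max|\sigma|$. Set $p=\exp_{x_0}(t\sigma(x_0))\in tN'$. The key point is that, since $V=B_\epsilon(N)$ is a genuine tubular neighborhood of radius $\epsilon$, the distance to $N$ inside $U$ is realized by the radial normal geodesic, so
\begin{align*}
d_M\bigl(\exp_y(w),N\bigr)=|w|\qquad\text{for every }(y,w)\in V.
\end{align*}
In particular $d_M(p,N)=t\max|\sigma|$, and for any $q=\exp_y(s\sigma(y))\in sN'$ one has $d_M(q,N)=s|\sigma(y)|\leq s\max|\sigma|$. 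The reverse triangle inequality (1-Lipschitzness of $d_M(\cdot,N)$) then yields
\begin{align*}
d_M(p,q)\geq d_M(p,N)-d_M(q,N)\geq (t-s)\max|\sigma|.
\end{align*}
Taking the infimum over $q\in sN'$ and then the supremum over points of $tN'$ gives $\delta_H(tN',sN')\geq(t-s)\max|\sigma|$, completing the proof.

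The main obstacle is the identity $d_M(\exp_y(w),N)=|w|$ used in the lower bound. While the upper bound is immediate from the Riemannian structure, this identity requires that radial normal geodesics inside $V$ realize the distance to $N$, which is not automatic from the bare fact that $\exp$ is a diffeomorphism on $V$. I would invoke it as the standard property of a (metric) tubular neighborhood of radius $\epsilon$: a first-variation argument shows that any rectifiable path from $\exp_y(w)$ to $N$ either exits $V$ and thus has length at least $\epsilon\geq|w|$, or stays in $V$, in which case pulling it back via $\exp^{-1}$ and projecting onto the radial direction produces a competitor of length at least $|w|$. This is implicit in the hypothesis $V=B_\epsilon(N)$ as phrased in the statement.
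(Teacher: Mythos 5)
Your proposal is correct and, at its core, runs on the same mechanism as the paper's proof: the upper bound via the radial geodesics is identical, and your lower bound (reverse triangle inequality for $d_M(\cdot,N)$ at a point where $|\sigma|$ is maximal) is a repackaging of the paper's chain of inequalities, with all the real content concentrated in the identity $d_M(\exp_y(w),N)=|w|$ for $(y,w)\in V$. Where you differ is in how that identity is justified, and here the paper's route is cleaner than your sketch: since $M$ is complete and $N$ is compact, a minimizing geodesic from $N$ to $p=\exp_y(w)$ exists; by first variation it meets $N$ orthogonally, so it is a normal geodesic $\exp_z(\tau v)$ whose initial vector has length $d_M(p,N)\leq |w|<\epsilon$ and hence lies in $V$; injectivity of $\exp|_V$ then forces this vector to equal $w$, giving the identity (the paper runs this as a contradiction argument at the maximizing point). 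Your path-based sketch is workable but not, as you put it, ``implicit in the hypothesis $V=B_\epsilon(N)$'': the step ``pulling back via $\exp^{-1}$ and projecting onto the radial direction'' is length-nonincreasing only by virtue of the generalized Gauss lemma for the normal exponential (equivalently, that the radial function is $1$-Lipschitz on $U$), and the claim that a path exiting $V$ has length at least $\epsilon$ again requires either that lemma or the minimizing-geodesic argument above, since \emph{a priori} a diffeomorphic image of $B_\epsilon(N)$ says nothing metric about its boundary. So the identity is true and your use of it is fine, but if you want a self-contained argument you should close it with the completeness/first-variation/injectivity argument rather than treating it as part of the hypothesis.
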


We get directly from this a characterization of the radial Hausdorff-geodesics.
\begin{cor} \label{cor:radial_geodesics}
	If $N$ and $N'$ are as above, the path $t\mapsto tN'$ is a minimal geodesic in the Hausdorff metric. In particular, if the Riemannian metric corresponds to the Sasaki metric on a Weinstein neighbourhood of $L\in\mathscr{L}^\star_k$, then $L$ possesses a system of geodesically-starshaped neighbourhoods in $(\mathscr{L}^\star_k,\delta_H)$.
\end{cor}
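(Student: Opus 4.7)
The plan is to reduce the corollary almost entirely to Proposition~\ref{prop:radial_distance}, which already does all the metric computation; the rest is definition-unwinding.

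For the first assertion, set $c(t) := tN'$ and fix $0 \leq t \leq s \leq 1$. I would apply Proposition~\ref{prop:radial_distance} twice. On the one hand, for any partition $t = \tau_0 < \dots < \tau_\ell = s$, the sum
\begin{align*}
    \sum_{i=1}^\ell \delta_H(\tau_{i-1}N', \tau_i N') = \sum_{i=1}^\ell (\tau_i - \tau_{i-1})\max|\sigma| = (s-t)\max|\sigma|
\end{align*}
telescopes exactly, so $\ell(c|_{[t,s]}) = (s-t)\max|\sigma|$. On the other hand, Proposition~\ref{prop:radial_distance} gives $\delta_H(c(t), c(s)) = (s-t)\max|\sigma|$ as well. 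Thus $c$ has constant speed $\max|\sigma|$ and satisfies $\ell(c|_{[t,s]}) = \delta_H(c(t), c(s))$ for \emph{all} $t \leq s$, which is the definition of a minimizing geodesic.

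For the second assertion, the key observation is that under the Sasaki metric on a Weinstein neighbourhood of $L$, the geodesics emanating orthogonally from the zero section are exactly the straight fibers $t \mapsto tv$; equivalently, the vertical fibers are totally geodesic and meet the zero section orthogonally. Identifying $T^*L$ with $TL$ via the musical isomorphism of $g|_L$, this means that for any section $\sigma$ of $TL^\perp \cong TL$ (i.e.\ of the normal bundle of the zero section), the graph $\operatorname{graph}(t\sigma)$ coincides with $\exp(t\sigma)(L)$ in the sense of Proposition~\ref{prop:radial_distance}. Combined with Proposition~\ref{prop:path_connect_sasaki}, every sufficiently small neighbourhood $U$ of $L$ in $\mathscr{L}^\star_k$ arising in that proposition has the property that for each $L' = \operatorname{graph}\sigma \in U$, the path $t \mapsto tL'$ stays in $U$. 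By the first part, this path is a minimal Hausdorff geodesic from $L$ to $L'$ contained in $U$, which is precisely the statement that $U$ is geodesically star-shaped about $L$.

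The only step that requires genuine verification — as opposed to a direct invocation of earlier results — is the identification $t\cdot\operatorname{graph}\sigma = \exp(t\sigma)(L)$ for the Sasaki metric, but this is standard and follows from the fact that the horizontal/vertical splitting of the Sasaki connection makes the fibers totally geodesic with induced flat Euclidean structure. Everything else is a formal consequence of Propositions~\ref{prop:radial_distance} and~\ref{prop:path_connect_sasaki}.
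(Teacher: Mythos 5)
Your proof is correct and follows essentially the same route as the paper: the first assertion is exactly the telescoping length computation via Proposition~\ref{prop:radial_distance}, and the second follows from Proposition~\ref{prop:path_connect_sasaki} keeping $tL'$ in the neighbourhood. The only difference is that you spell out the identification $t\cdot\operatorname{graph}\sigma=\exp(t\sigma)(L)$ coming from the totally geodesic flat fibers of the Sasaki metric, a point the paper leaves implicit.
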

\begin{pr*}
	Denote by $c$ the path, and take $0\leq a<b\leq 1$. Then, the length of $c|_{[a,b]}$ is given by
	\begin{align*}
		\ell\left(c|_{[a,b]}\right)
		&=\sup_{a=t_0<\dots<t_\ell=b}\sum_{i=1}^\ell \delta_H(c(t_{i-1}),c(t_i)) \\
		&=\sup_{a=t_0<\dots<t_\ell=b}\sum_{i=1}^\ell (t_i-t_{i-1})\max|\sigma| \\
		&=(b-a)\max|\sigma| \\
		&=\delta_H(c(a),c(b)),
	\end{align*}
which proves the first part of the result. \par

The statement on geodesically-starshaped then follows directly, knowing that $tL'$ stays in $\mathscr{L}^\star_k$ by Proposition~\ref{prop:path_connect_sasaki}.
\end{pr*}

\begin{proof}[Proof of Proposition~\ref{prop:radial_distance}]
	We first note that for every $t,s\in [0,1]$
	\begin{align} \label{eqn:s-tN-sN}
		s(tN';sN') &=\max_{x\in N}d_M(\exp t\sigma(x),sN') \\ \nonumber
		&\leq \max_{x\in N}d_M(\exp t\sigma(x),\exp s\sigma(x))= |t-s|\max |\sigma|,
	\end{align}
	since the exponential on $V$ is a radial isometry. In particular, we have that $\delta_H(tN',sN')\leq|t-s|\max|\sigma|$. \par
	
	Suppose that $s<t$, and let $x_0\in N$ be such that $|\sigma(x_0)|=\max|\sigma|$. Suppose that there exists $y\in N$ such that $d_M(\exp t\sigma(x_0),\exp s\sigma(y))<d_M(\exp t\sigma(x_0),\exp s\sigma(x_0))=(t-s)|\sigma(x_0)|$. Then, we have that
	\begin{align*}
		t|\sigma(x_0)|
		&= (t-s)|\sigma(x_0)|+s|\sigma(x_0)| \\
		&\geq d_M(\exp t\sigma(x_0),\exp s\sigma(x_0))+s|\sigma(y)| \\
		&> d_M(\exp t\sigma(x_0),\exp s\sigma(y))+d_M(y,\exp s\sigma(y)) \\
		&\geq d_M(\exp t\sigma(x_0),y)
	\end{align*}
	This means that
	\begin{align*}
		d_M(\exp t\sigma(x_0),N)\leq d_M(\exp t\sigma(x_0),y)< t|\sigma(x_0)|.
	\end{align*}
	Let $\gamma:[0,1]\to M$ be a minimal geodesic from $N$ to $\exp t\sigma(x_0)$. Since $N$ is closed, $\gamma'(0)\in TN^\perp$, so that $\gamma(t)=\exp(t\gamma'(0))$. But then, $|\gamma'(0)|=d_M(\exp t\sigma(x_0),N)<t|\sigma(x_0)|<\epsilon$. Therefore, $\gamma'(0)$ and $t\sigma(x_0)$ are two vectors in $V$ whose image under the exponential map is $\exp t\sigma(x_0)$, which is a contradiction with the hypothesis that $\exp|_V$ be a diffeomorphism onto its image. The inequality (\ref{eqn:s-tN-sN}) is thus in fact an equality, which proves the lemma.
\end{proof}

In the two-dimensional case, things are not as straightforward. Indeed, it is easy to see that the above proof gives that
\begin{align*}
	\delta_H(\graph\xi_\alpha,\graph\xi_{\alpha'})=\max |\xi_\alpha-\xi_{\alpha'}|,
\end{align*}
which means that we should not expect $\alpha\mapsto \xi_\alpha$ to be a Hausdorff-geodesic in general. However, the above equality together with the estimate on the Lipschitz constant of $c(\alpha)$ in Lemma~\ref{lem:good-c_2d} gives the following.

\begin{lem} \label{lem:radial-distance_2D}
	Suppose that $\dim M=2$. Every $L\in\mathscr{L}^\star_k$ has a neighbourhood $U$ in $\mathscr{L}^\star_k$ such that
	\begin{align*}
		\delta_H(\graph\xi_\alpha,\graph\xi_{\alpha'})\leq 2|\alpha-\alpha'|\max |\xi|=2|\alpha-\alpha'|\delta_H(L,\graph\xi)
	\end{align*}
	whenever $\graph\xi\in U$.
\end{lem}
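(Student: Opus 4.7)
\begin{pr*}
The plan is to choose $U$ small enough that Lemma~\ref{lem:good-c_2d} applies with its improved Lipschitz estimate on $c$, and then to reduce the statement to a direct estimate on the difference $\xi_\alpha-\xi_{\alpha'}$ by reusing the identity for the Hausdorff distance between graphs already observed just before the statement.

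Fix $L\in\mathscr{L}^\star_k$ together with its tubular chart $\phi\colon(0,\ell)\times(-r,r)\to M$, and let $U\subseteq\mathscr{L}^\star_k$ be the neighbourhood of $L$ consisting of those $L'$ which are of the form $\graph\xi$ in this chart with $||\xi||<r/3$; the existence of such a $U$ follows from Theorem~\ref{thm:second} together with the fact that $C^{1,\alpha'}$-small sections are, in particular, $C^0$-small. With this choice, the hypothesis $||\xi||<r/3$ of the refined estimate in Lemma~\ref{lem:good-c_2d} is satisfied, so that
\begin{align*}
	|c(\alpha)-c(\alpha')|\leq ||\xi||\,|\alpha-\alpha'|
\end{align*}
for all $\alpha,\alpha'\in[0,1]$.

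Next, I would recall that, in the chart $\phi$, the lines $t\mapsto\phi(s,t)$ are unit-speed normal geodesics emanating from $L$, so the argument giving Proposition~\ref{prop:radial_distance} applies verbatim to the two graphs $\graph\xi_\alpha$ and $\graph\xi_{\alpha'}$ inside $\phi\big((0,\ell)\times(-r,r)\big)$, and yields the identity
\begin{align*}
	\delta_H\big(\graph\xi_\alpha,\graph\xi_{\alpha'}\big)=\max|\xi_\alpha-\xi_{\alpha'}|
\end{align*}
highlighted just before the statement. Expanding $\xi_\alpha-\xi_{\alpha'}=(\alpha-\alpha')\xi+(c(\alpha)-c(\alpha'))$ and applying the Lipschitz bound on $c$ gives
\begin{align*}
	\max|\xi_\alpha-\xi_{\alpha'}|\leq |\alpha-\alpha'|\,\max|\xi|+|c(\alpha)-c(\alpha')|\leq 2|\alpha-\alpha'|\,\max|\xi|,
\end{align*}
which is the inequality in the statement.

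Finally, the equality $\max|\xi|=\delta_H(L,\graph\xi)$ is obtained by applying Proposition~\ref{prop:radial_distance} one more time, this time to the pair $L$ and $\graph\xi=\graph\xi_1$ (with $\alpha=1$, $\alpha'=0$ and $c(0)=0$), after having verified that $B_{r}(L)$ contains $\phi\big((0,\ell)\times(-r,r)\big)$ if $r$ is chosen sufficiently small, so that the chart $\phi$ does correspond to the exponential of a normal tube of the required form. There is no real obstacle here: the only mildly delicate point is confirming that Proposition~\ref{prop:radial_distance} applies in the present coordinates, but this is immediate from the explicit form $\phi^*g=|W|^2ds^2+dt^2$, which shows that $t\mapsto\phi(s,t)$ is a unit-speed normal geodesic to $L=\{t=0\}$.
\end{pr*}
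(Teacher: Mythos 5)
Your proof is correct and follows essentially the same route as the paper: combine the identity $\delta_H(\graph\xi_\alpha,\graph\xi_{\alpha'})=\max|\xi_\alpha-\xi_{\alpha'}|$ coming from (the proof of) Proposition~\ref{prop:radial_distance} with the refined Lipschitz estimate $|c(\alpha)-c(\alpha')|\leq \|\xi\|\,|\alpha-\alpha'|$ of Lemma~\ref{lem:good-c_2d}, after shrinking the neighbourhood so that $\|\xi\|<r/3$. The only additional material is your explicit verification that the coordinates $\phi$ give a normal tube, which the paper leaves implicit.
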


\subsubsection*{Variations on the Hausdorff metric}
Following a result of Sosov~\cite{Sosov2001}, the Hausdorff metric between $L$ and $L'$ is given by the infinimum over all $\delta_H$-continuous paths of closed subsets from $L$ to $L'$. In fact, this infimum is even realized by a geodesic. This is because we have chosen the Riemannian metric on $M$ so that $(M,d_M)$ is a complete, geodesic metric space. Therefore, his definition of a geodesic corresponds to ours. \par

In this context, it is thus natural to consider what happens when we take the infimum over paths in a smaller set. More precisely, we are interested in the two following variants of the usual Hausdorff metric on $\mathscr{L}^\star_k$:
\begin{align*}
	\delta_H^\mathrm{Man}(L_0,L_1)
	&:=\inf\left\{\ell(c)\ |\ c(i)=L_i,\ c(t)\text{ is a n-dimensional manifold }\forall t\in [0,1]\right\}, \\
	\delta_H^{(\star,k)}(L_0,L_1)
	&:=\inf\left\{\ell(c)\ |\ c(i)=L_i,\ c(t)\in\mathscr{L}^\star_k\ \forall t\in [0,1]\right\}.
\end{align*}
Here, all $c$'s are $\delta_H$-continuous, and all manifolds are smooth, closed, and connected. Note that
\begin{align*}
	\delta_H\leq \delta_H^\mathrm{Man}\leq \delta_H^{(\star,k)}.
\end{align*}
The first part of this subsection shows that, at least locally, these inequalities are equalities in good cases. \par

\begin{prop} \label{prop:var-Hausdorff_local}
	Every $L\in\mathscr{L}^\star_k$ has a neighbourhood $U$ in $\mathscr{L}^\star_k$ such that for all $L'\in U$, the following holds.
	\begin{enumerate}[label=(\roman*)]
		\item $\delta_H^\mathrm{Man}(L,L')=\delta_H(L,L')$.
		\item If the Riemannian metric of $M$ corresponds to the Sasaki metric on a Weinstein neighbourhood of $L$, then $\delta_H^{(\star,k)}(L,L')=\delta_H(L,L')$.
		\item If $\dim M=2$, then $\delta_H^{(\star,k)}(L,L')\leq 2\delta_H(L,L')$.
	\end{enumerate}
\end{prop}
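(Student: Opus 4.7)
The plan for all three bounds follows a single template: construct an explicit candidate path from $L$ to $L'$ of the required type and estimate its length using Proposition~\ref{prop:radial_distance} or Lemma~\ref{lem:radial-distance_2D}. The inequalities $\delta_H\leq\delta_H^\mathrm{Man}\leq\delta_H^{(\star,k)}$ are tautological, so in each part only an upper bound on the candidate's length is needed; we will always arrange for the neighbourhood $U$ to be small enough that the candidate path is well defined and lies in the prescribed class.

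For (i), I would shrink $U$ so that every $L'\in U$ lies in the normal-exponential tubular neighbourhood $\exp(B_\epsilon(L))$ of $L$, which is possible by compactness of $L$ and positivity of its normal injectivity radius. One then writes $L'=\exp\sigma(L)$ for a unique small section $\sigma$ of $TL^\perp$, and takes $c(t):=\exp(t\sigma)(L)$. This is a $\delta_H$-continuous path of smooth $n$-dimensional closed submanifolds, and Proposition~\ref{prop:radial_distance} gives $\delta_H(c(t),c(s))=|t-s|\max|\sigma|$, so that $\ell(c)=\max|\sigma|=\delta_H(L,L')$, as desired.

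For (ii), I would exploit the defining property of the Sasaki metric on a Weinstein neighbourhood $\Psi\colon D^*_r L\to M$: the Riemannian normal exponential from the zero-section coincides with scalar multiplication along the cotangent fibres. Writing $\Psi^{-1}(L')=\graph\eta$ via Theorem~\ref{thm:second}, with $\eta$ exact by Lemma~\ref{lem:apply_thm2} (or automatically so in the exact case), the path $c(t):=\Psi(\graph(t\eta))$ coincides with the normal-exponential path of part (i) under the Sasaki identification. Each $c(t)$ is Lagrangian since $t\eta$ is closed, and after further shrinking $U$ exactly as in the proof of Proposition~\ref{prop:path_connect_sasaki} the whole path stays inside $\mathscr{L}^\star_k$. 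Part (i) then yields $\delta_H^{(\star,k)}(L,L')\leq \ell(c)=\delta_H(L,L')$.

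For (iii), I would use the one-parameter family $\alpha\mapsto \graph\xi_\alpha$ produced by Lemma~\ref{lem:good-c_2d}, which by the proof of Proposition~\ref{prop:contractible-nbhd_2d} stays in $\mathscr{L}^\star_k$ provided $U$ is taken small enough. Lemma~\ref{lem:radial-distance_2D} gives $\delta_H(c(\alpha),c(\alpha'))\leq 2|\alpha-\alpha'|\max|\xi|$, so summing telescopically over any partition of $[0,1]$ produces $\ell(c)\leq 2\max|\xi|=2\delta_H(L,L')$, where the last equality comes from Proposition~\ref{prop:radial_distance} applied to the chart $\phi$, which in two dimensions is precisely the normal exponential of $L$.

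The step I expect to require the most care is the Sasaki identification in (ii): one must verify cleanly that, under the Weinstein chart, the Riemannian normal exponential of $L$ agrees with the fibrewise rescaling $\eta\mapsto t\eta$, and that the resulting family remains Lagrangian and inside the neighbourhood of $L$ on which Proposition~\ref{prop:path_connect_sasaki} applies. Once this identification is pinned down, all three parts reduce to length computations governed by the radial-distance estimates already in hand.
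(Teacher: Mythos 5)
Your argument is correct and follows essentially the same route as the paper: the paper's proof simply takes $U$ a small tubular neighbourhood in which every $L'$ is a graph by Theorem~\ref{thm:second}, and then invokes Corollary~\ref{cor:radial_geodesics} (which is exactly your radial computation via Proposition~\ref{prop:radial_distance} combined with Proposition~\ref{prop:path_connect_sasaki}) for (i)--(ii) and Lemma~\ref{lem:radial-distance_2D} (together with Proposition~\ref{prop:contractible-nbhd_2d}) for (iii). The only point to phrase a bit more carefully is that writing $L'=\exp\sigma(L)$ for a section $\sigma$ of $TL^\perp$ is not guaranteed by mere containment of $L'$ in the tube; it comes from the $C^{1,\alpha'}$-graphicality of nearby elements of $\mathscr{L}^\star_k$ provided by Theorem~\ref{thm:second}, which is how the paper sets up the neighbourhood $U$.
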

\begin{pr*}
	We take $U$ to be a tubular neighbourhood of $L$. By making $U$ smaller if necessary, we may suppose that all $L'\in\mathscr{L}^\star_k$ such that $L'\subseteq U$ are graphs by Theorem~\ref{thm:second}. Therefore, (i) and (ii) follow directly from Corollary~\ref{cor:radial_geodesics}. Likewise, (iii) follows from Lemma~\ref{lem:radial-distance_2D}. 
\end{pr*}

In particular, we get the following characterization of the topologies induced by the variations of the Hausdorff metric. \par

\begin{cor} \label{cor:var-Hausdorff_topologies}
	The metrics $\delta_H^\mathrm{Man}$ and $\delta_H$ induce the same topology on $\mathscr{L}^\star_k$. If $\dim M=2$, then the same holds for $\delta_H^{(\star,k)}$ and $\delta_H$.
\end{cor}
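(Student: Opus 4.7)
The plan is to exploit the chain of inequalities $\delta_H \leq \delta_H^\mathrm{Man} \leq \delta_H^{(\star,k)}$ noted just above Proposition~\ref{prop:var-Hausdorff_local}, together with the local equalities provided by that proposition; this essentially reduces the corollary to a routine topological comparison that the geometric content of Proposition~\ref{prop:var-Hausdorff_local} makes nearly automatic.

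First, the inequality $\delta_H \leq \delta_H^\mathrm{Man}$ immediately makes the identity map $(\mathscr{L}^\star_k,\delta_H^\mathrm{Man})\to (\mathscr{L}^\star_k,\delta_H)$ $1$-Lipschitz, hence continuous. Thus the topology induced by $\delta_H^\mathrm{Man}$ is a priori finer than the one induced by $\delta_H$, and likewise for $\delta_H^{(\star,k)}$ by the second inequality. So the real work is to show the reverse direction: every $\delta_H^\mathrm{Man}$-neighbourhood of any $L$ contains a $\delta_H$-neighbourhood of $L$ (and similarly for $\delta_H^{(\star,k)}$ when $\dim M = 2$).

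For this, I would fix $L\in \mathscr{L}^\star_k$ and invoke Proposition~\ref{prop:var-Hausdorff_local}(i) to obtain a neighbourhood $U$ of $L$ on which $\delta_H^\mathrm{Man}$ and $\delta_H$ coincide. Unwinding its proof, $U$ has the form $\{L'\in\mathscr{L}^\star_k\mid L'\subseteq V\}$ for some tubular neighbourhood $V$ of $L$ in $M$; since $V$ is open in $M$, $U$ is genuinely $\delta_H$-open (any $L'\subseteq V$ still has a small Hausdorff ball contained in $V$). Given $\epsilon>0$, I would then pick $\epsilon' \in (0,\epsilon]$ small enough that the $\delta_H$-ball of radius $\epsilon'$ around $L$ lies in $U$; then $\delta_H(L,L') < \epsilon'$ forces $L' \in U$ and hence $\delta_H^\mathrm{Man}(L,L') = \delta_H(L,L') < \epsilon$. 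This establishes continuity of $(\mathscr{L}^\star_k,\delta_H)\to (\mathscr{L}^\star_k,\delta_H^\mathrm{Man})$ at $L$, and combined with the easy direction above proves the first assertion.

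When $\dim M=2$, the same scheme applies using Proposition~\ref{prop:var-Hausdorff_local}(iii) instead: the estimate $\delta_H^{(\star,k)}(L,L') \leq 2\delta_H(L,L')$ on the analogous neighbourhood $U$ shows that choosing $\epsilon' \leq \epsilon/2$ in the argument above yields continuity of $(\mathscr{L}^\star_k,\delta_H)\to (\mathscr{L}^\star_k,\delta_H^{(\star,k)})$. I do not anticipate any real obstacle here, since Proposition~\ref{prop:var-Hausdorff_local} does all the geometric work; the only detail to check carefully is that the neighbourhood it produces is in fact $\delta_H$-open, which follows directly from its construction as the collection of Lagrangians contained in an open tube of $M$.
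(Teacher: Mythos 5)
Your proposal is correct and matches the paper's (implicit) argument: the corollary is deduced exactly as you do, from the chain $\delta_H\leq\delta_H^\mathrm{Man}\leq\delta_H^{(\star,k)}$ for one direction and the local comparison of Proposition~\ref{prop:var-Hausdorff_local} on a neighbourhood of each $L$ for the other, with the factor $2$ absorbed by taking $\epsilon'\leq\epsilon/2$ in dimension two. Your extra check that $U$ is $\delta_H$-open is harmless but unnecessary, since a neighbourhood of $L$ already contains a $\delta_H$-ball about $L$.
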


\section{Symplectic properties of the geometrically bounded spaces} \label{sec:symp}
We now show some symplectic properties of the Lagrangian submanifolds in $\mathscr{L}^\star_k$ which derive from the topological and metric properties proved above. More precisely, we first prove in Subsection~\ref{subsec:ham_classes} Theorem~\ref{thmout:finite_classes}. The rest of the subsections are then dedicated to proving the many corollaries following the first principle appearing in the introduction: one per section and in the same order. \par

\subsection{Hamiltonian isotopy classes} \label{subsec:ham_classes}
We explain how the connected components of $\mathscr{L}^\star_k$ are related to the isotopy classes of the Lagrangian submanifold therein. \par

\begin{prop} \label{prop:isotopy_classes+connected_components}
	For each $k>0$, there exists $A>0$ with the following property. If $L,L'\in\mathscr{L}^\star_k$ are not Hamiltonian isotopic, then
	\begin{align*}
		d(L,L')\geq A.
	\end{align*}
\end{prop}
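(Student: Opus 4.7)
My plan is to argue by contradiction using the compactness of $\widehat{\mathscr{L}}^\star_k$ established above. Suppose no such $A$ exists; then there are sequences $\{L_n\}, \{L'_n\}\subseteq \mathscr{L}^\star_k$ with $L_n$ and $L'_n$ not Hamiltonian isotopic for every $n$ but with $d(L_n,L'_n)\to 0$. By compactness of $\widehat{\mathscr{L}}^\star_k$ (Corollary~\ref{cor:cpt}), together with the homeomorphism with the Hausdorff completion (Proposition~\ref{prop:homeo_post}), I can pass to a subsequence so that $L_n\to L_\infty$ in both $d$ and $\delta_H$, and since $d(L_n,L'_n)\to 0$, also $L'_n\to L_\infty$.

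Next I will fix some index $N$ large and pick $L_0:=L_N\in\mathscr{L}^\star_k$ as a \emph{smooth} basepoint. Applying Theorem~\ref{thm:second} to $L_0$ yields constants $R_2,C_2>0$; choosing $N$ even larger, I may assume that for all $n\geq N$, both $L_n$ and $L'_n$ lie within Hausdorff distance $R_2$ of $L_0$. Theorem~\ref{thm:second} then exhibits $L_n=\operatorname{graph}\sigma_n$ and $L'_n=\operatorname{graph}\sigma'_n$ as graphs of $C^{1,\alpha'}$-small closed 1-forms in a fixed Weinstein neighbourhood of $L_0$.

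The crux is then to promote these closed 1-forms to \emph{exact} ones, after which the linear paths $t\mapsto \operatorname{graph}(t\sigma_n)$ and $t\mapsto \operatorname{graph}(t\sigma'_n)$ give Hamiltonian isotopies from $L_0$ to $L_n$ and $L'_n$ respectively, contradicting the assumption that $L_n$ and $L'_n$ lie in different Hamiltonian isotopy classes. In the exact case ($\star=e$), this is immediate: the Weinstein embedding can be arranged so that $\Psi^*\lambda=\lambda_0+dF$, and together with the $\lambda$-exactness of $L_0$ and $L_n$ this forces $\sigma_n$ to be exact. In the monotone case ($\star=m(\rho)$), this is precisely the content of Lemma~\ref{lem:apply_thm2}. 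In the graph case ($\star=\Gamma$), Proposition~\ref{prop:apply_thm2_graphs} directly writes $L_n=\operatorname{graph}(df_n)$ with $f_n$ small in $C^{2,\alpha'}$.

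I do not expect any substantive obstacle: once the compactness of the completion and the graphical neighbourhood Theorem~\ref{thm:second} are available, everything reduces to a subsequence extraction combined with the exactness lemmas of Subsection~\ref{subsec:apply_thm2}, which were designed to make this kind of contradiction run. The only mildly delicate point is checking exactness of $\sigma_n$ in the $\mathscr{L}^e_k$ setting, but this is a direct comparison using the tautological form.
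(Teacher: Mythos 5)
Your overall strategy (compactness of $\widehat{\mathscr{L}}^\star_k$, graphing both sequences over a basepoint, exactness of the resulting $1$-forms, then the linear isotopies $t\mapsto\operatorname{graph}(t\sigma_n)$) is the right circle of ideas, but there is a genuine gap at the step where you choose the basepoint. The constants $R_2,C_2$ of Theorem~\ref{thm:second} are stated \emph{per Lagrangian}: ``for all $L$ in $\mathscr{L}_k$ there exist $C_2,R_2>0$'', with no uniformity over $\mathscr{L}_k$ claimed (contrast with Theorem~\ref{thm:first}, whose constants depend only on $k$). Your basepoint $L_0=L_N$ is itself a member of the sequence, so ``choosing $N$ even larger'' changes $L_0$ and hence $R_2$; what you actually need is an index $N$ with $\delta_H(L_N,L_\infty)<R_2(L_N)$, and nothing stated in the paper prevents $R_2(L_N)$ from shrinking to $0$ faster than $\delta_H(L_N,L_\infty)$ does. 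You also cannot take $L_\infty$ itself as basepoint, since the Hausdorff limit is only an embedded $C^{1,\alpha}$-submanifold, to which Theorem~\ref{thm:second} and the Weinstein-neighbourhood setup do not directly apply. So, as written, the reduction to ``both $L_n$ and $L'_n$ are graphs of closed $1$-forms over a fixed smooth $L_0$'' does not close.

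The paper's own proof avoids exactly this point: it notes that sequences with $d(L_n,L'_n)\to 0$ are equivalent in $d$, hence, by Proposition~\ref{prop:homeo_post}, equivalent in the Hofer metric $d_H$; in particular $d_H(L_n,L'_n)$ is finite for $n$ large, which already forces $L_n$ and $L'_n$ to be Hamiltonian isotopic. The delicate step is thus delegated to the proof of Proposition~\ref{prop:homeo_post}, where the sequence is first perturbed by $C^1$- and Hofer-small Hamiltonians so that it Hausdorff-converges to a \emph{smooth} auxiliary Lagrangian $N'$, and Theorem~\ref{thm:second} is then applied with the \emph{fixed} basepoint $N'$, whose constants do not move with $n$. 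If you replace your basepoint $L_N$ by such a perturbed smooth limit (or simply invoke Proposition~\ref{prop:homeo_post} as the paper does), the remainder of your argument~---~exactness via Subsection~\ref{subsec:apply_thm2} in each case $\star=e,\,m(\rho),\,\Gamma$ and the resulting Hamiltonian isotopies~---~is correct and essentially the paper's mechanism.
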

\begin{pr*}
	Let $\{L_i\}$ and $\{L'_i\}$ be sequences in  $\mathscr{L}^\star_k$ such that $d(L_i,L'_i)$ tends to zero, that is, they are equivalent in $d$. By Proposition~\ref{prop:homeo_post}, they must then be also equivalent in $d_H$. In particular, $d(L_i,L'_i)$ is finite for $i$ large, which implies that they are Hamiltonian isotopic for $i$ large. Thus, such an $A>0$ must exist.
\end{pr*}

Finally, we can similarly get a fairly powerful result on the possible Hamiltonian isotopy classes in $\mathscr{L}^\star_k$. \par

\begin{prop} \label{prop:finitely_isotopy_classes}
	There are finitely many Hamiltonian isotopy classes in $\mathscr{L}^\star_k$.
\end{prop}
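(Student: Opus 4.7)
The plan is to combine the total boundedness of $\mathscr{L}^\star_k$ in $d$ (Corollary~\ref{cor:totally_bnd}, which follows from Theorem~\ref{thmout:compact-comp}) with the uniform lower bound on $d$ between non-Hamiltonian-isotopic Lagrangians from Proposition~\ref{prop:isotopy_classes+connected_components}. These two facts almost immediately force finiteness by a pigeonhole argument.

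First I would fix $A = A(k) > 0$ as provided by Proposition~\ref{prop:isotopy_classes+connected_components}, so that any $L, L' \in \mathscr{L}^\star_k$ with $d(L, L') < A$ must be Hamiltonian isotopic. Next, applying Corollary~\ref{cor:totally_bnd} to the radius $\epsilon = A/3$, I extract a finite cover of $\mathscr{L}^\star_k$ by $d$-balls $B_1, \dots, B_N$ of radius $A/3$. For any two $L, L' \in B_j$, the triangle inequality gives $d(L, L') \leq 2A/3 < A$, so $L$ and $L'$ lie in the same Hamiltonian isotopy class. Therefore each $B_j$ intersects at most one Hamiltonian isotopy class, and since the balls cover $\mathscr{L}^\star_k$, the total number of Hamiltonian isotopy classes in $\mathscr{L}^\star_k$ is at most $N$, which is finite.

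There is no real obstacle here: the statement is essentially a corollary of the two previously established results. The only subtlety is making sure that the metric $d$ being used here is the same as in Proposition~\ref{prop:isotopy_classes+connected_components} and that the total boundedness from Corollary~\ref{cor:totally_bnd} is with respect to this $d$, both of which are true by construction. One might also note, as a sanity check, that this matches the intuition from the proof of Proposition~\ref{prop:isotopy_classes+connected_components}: sequences $d$-converging within $\mathscr{L}^\star_k$ eventually become Hausdorff-close, hence are Hamiltonian isotopic for large indices by Theorem~\ref{thm:second} (and Proposition~\ref{prop:apply_thm2_graphs} in the graph case), so an infinite collection of pairwise non-isotopic Lagrangians in $\mathscr{L}^\star_k$ would contradict compactness of $\widehat{\mathscr{L}}^\star_k$.
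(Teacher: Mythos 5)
Your proof is correct and uses essentially the same ingredients as the paper: the uniform separation constant $A$ from Proposition~\ref{prop:isotopy_classes+connected_components} combined with the compactness statement of Theorem~\ref{thmout:compact-comp} (via Corollaries~\ref{cor:cpt} and~\ref{cor:totally_bnd}). The only difference is cosmetic --- you run a direct pigeonhole argument with a finite cover by $d$-balls of radius $A/3$, whereas the paper argues by contradiction, extracting a convergent subsequence from a hypothetical infinite family of pairwise non-isotopic Lagrangians; both are immediate consequences of the same two facts.
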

\begin{pr*}
	Suppose the contrary. Then, there exists a sequence $\{L_i\}\subseteq \mathscr{L}^\star_k$ with $L_i$ not Hamiltonian isotopic to $L_j$ if $i\neq j$. By Corollary~\ref{cor:cpt}, we may pass to a converging subsequence. But by Proposition~\ref{prop:isotopy_classes+connected_components} above, we will eventually get $d(L_i,L_j)<A$, so that $L_i$ must be Hamiltonian isotopic to $L_j$ for $i$ and $j$ large, and we have a contradiction.
\end{pr*}

We close this section with a simple, but important observation: it is necessary to fix a Liouville form $\lambda$ when $\star=e$ or a monotonicity constant $\rho$ for $\star=m(\rho)$. Likewise, we truly need the ``bounding enough disks'' condition for our results to hold. Indeed, in each case when one of these conditions is broken, we get a counterexample to Proposition~\ref{prop:finitely_isotopy_classes}.
\begin{itemize}
	\item On the flat cylinder $T^*S^1$, each parallel is a totally geodesic 1-tame Lagrangian submanifold which is exact for some primitive of the usual symplectic form. However, that primitive is different for each parallel, i.e.\ only one of them can belong to $\mathscr{L}^e(T^*S^1)$. \par
	\item These parallels can also be seen as monotone Lagrangian submanifold for any $\rho\geq 0$. However, they bound no disk at all, and thus do not respect the condition of bounding enough disks, i.e.\ they never belong to $\mathscr{L}^{m(\rho)}(T^*S^1)$.
	\item In $\R^2$ with its usual structure, the Hamiltonian isotopy class of a circle is determined by the area it encloses. Clearly, for any $k>0$, there is a continuum of possible areas enclosed by a circle in $\mathscr{L}_k(\R^2)$. Furthermore, each of these circles are monotone. However, they are all so for different monotonicity constants, i.e.\ only one class can belong to any $\mathscr{L}^{m(\rho)}(\R^2)$.
\end{itemize}

\begin{rem} \label{rem:isotopy classes+connected_components}
	In dimension 2, Proposition~\ref{prop:finitely_isotopy_classes} follows directly from Proposition~\ref{prop:isotopy_classes+connected_components} and Corollary~\ref{cor:cpt}, since we then know that $\widehat{\mathscr{L}}^\star_k$ is locally path connected by Proposition~\ref{prop:contractible-nbhd_2d}. Indeed, this ensures that the connected components of $\widehat{\mathscr{L}}^\star_k$ are open, and they are thus in finite number, by compactness of the space. But by Proposition~\ref{prop:isotopy_classes+connected_components}, each connected component is contained in a unique Hamiltonian class, so that the latter must also be in finite number.
\end{rem}

\subsection{Boundedness of $d$}
We now prove the corollary on the boundedness of $d$ when it is restricted to one Hamiltonian orbit. \par

\begin{cor} \label{cor:viterbo_conj}
	For every $k\geq 1$, there is some $B>0$ with the following property. Let $L,L'\in\mathscr{L}^\star_k$. Suppose that either $L$ and $L'$ are Hamiltonian isotopic or that $M=T^*N$, $\star=e$, and $d=\gamma$. Then,
	\begin{align*}
		d(L,L')\leq B.
	\end{align*}
\end{cor}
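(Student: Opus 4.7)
The plan is to derive the corollary as a direct combination of total boundedness (Corollary~\ref{cor:totally_bnd}) and the finiteness of Hamiltonian isotopy classes (Proposition~\ref{prop:finitely_isotopy_classes}). The general principle I want to use is that whenever $d$ is \emph{finite-valued} on a totally bounded subset $S\subseteq \mathscr{L}^\star_k$, then $S$ has finite $d$-diameter: an $\epsilon$-net $L_1,\dots,L_n$ for $S$ gives, by the triangle inequality, the bound $\mathrm{diam}_d(S)\leq 2\epsilon+\max_{i,j}d(L_i,L_j)$, and the maximum is finite since it is taken over finitely many finite numbers.

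For the first case, I would enumerate the Hamiltonian isotopy classes $\mathscr{L}^{L_1}_k,\dots,\mathscr{L}^{L_m}_k$ inside $\mathscr{L}^\star_k$, which is a finite list by Proposition~\ref{prop:finitely_isotopy_classes}. On each class $\mathscr{L}^{L_i}_k$, the metric $d$ is finite-valued: any two elements are Hamiltonian isotopic, so $d_H$ between them is finite, and the hypothesis $d\leq Cd_H$ transfers this to $d$. Fix $\epsilon>0$ and let $\{L_1,\dots,L_n\}$ be an $\epsilon$-net for $\mathscr{L}^\star_k$ provided by Corollary~\ref{cor:totally_bnd}. For those indices $j$ with $B_\epsilon(L_j)\cap \mathscr{L}^{L_i}_k\neq\emptyset$, pick a representative $L'_j$ in this intersection; these representatives form a $2\epsilon$-net for $\mathscr{L}^{L_i}_k$, and they all lie in a common Hamiltonian isotopy class, hence are at finite pairwise $d$-distance. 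The finiteness principle above then gives $\mathrm{diam}_d(\mathscr{L}^{L_i}_k)\leq B_i<\infty$, and I would set $B:=\max_i B_i$.

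For the second case, I would simply observe that the spectral metric $\gamma$, through the extension due to Fukaya--Seidel--Smith, Abouzaid, and Kragh referenced in the introduction, is a genuine (finite-valued) metric on all of $\mathscr{L}^e_\infty(T^*N)$; there is no restriction to a single Hamiltonian isotopy class needed. So $\gamma$ is finite-valued on the totally bounded space $\mathscr{L}^e_k(T^*N)$, and the same finiteness principle directly yields a bound $B$ depending only on $k$ and $N$.

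The only step that requires some thought is verifying that within a fixed isotopy class, one may assemble an $\epsilon$-net out of the ambient $\epsilon$-net by replacing each covering center by a nearby point in that class; this is the mild twist needed because Corollary~\ref{cor:totally_bnd} is a statement about $\mathscr{L}^\star_k$ as a whole, on which $d$ may take the value $+\infty$. Once this is done the remainder is just the triangle inequality, and the rest of the proof presents no real obstacle.
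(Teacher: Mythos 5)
Your argument is correct and follows essentially the same route as the paper: finiteness of $d$ on a Hamiltonian isotopy class via $d\leq Cd_H$ (respectively finiteness of $\gamma$ on all of $\mathscr{L}^e_k(T^*N)$), combined with the total boundedness from Corollary~\ref{cor:totally_bnd} and the triangle inequality over a finite net. Your additional appeal to Proposition~\ref{prop:finitely_isotopy_classes} to take a maximum over the finitely many classes just makes explicit why the constant $B$ can be chosen depending only on $k$, a point the paper's proof leaves implicit.
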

\begin{pr*}
	If $L,L'\in\mathscr{L}^{L_0}_k$ for some $L_0\in\mathscr{L}^{\star}_\infty$, then $d_H(L,L')<\infty$ by definition. Since $d$ is dominated by $d_H$, we thus also have that $d(L,L')<\infty$. Therefore, total boundedness of $\mathscr{L}^{L_0}_k$ (as proved in Corollary~\ref{cor:totally_bnd}) implies boundedness. \par
	
	When $M=T^*N$, $\star=e$, and $d=\gamma$, the same argument works because we then have $d(L,L')<\infty$ for all $L,L'\in\mathscr{L}^\star_k$.
\end{pr*}

\begin{rem} \label{rem:viterbo_conj-improvements}
	The improvement here, compared to the version of the Viterbo conjecture appearing in~\cite{Chasse2022}, is that the bound on $\gamma(L)$ stands for all exact Lagrangian submanifolds in the unit codisk bundle, not just in a codisk bundle of small enough radius. However, the constant $A$ now explicitly depends on $k$. We have however not simply rescaled the previous estimate: the present bound applies to Lagrangian submanifolds which are not graphs, which was not the case previously. 
\end{rem}

\subsection{Graphs and Ostrover's example} \label{subsec:graphs}
In~\cite{Ostrover2003}, Ostrover constructs, for every closed symplectic manifold $M$ such that $\pi_2(M)=0$ and any $c>0$ small enough, a sequence of Hamiltonian diffeomorphisms $\{\phi^c_i\}\subseteq\Ham(M)$ such that
\begin{enumerate}[label=(\roman*)]
	\item $||\phi^c_i||_H\xrightarrow{i\to\infty} \infty$;
	\item $d_H(\Delta,\graph\phi^c_i)\equiv c$,
\end{enumerate}
where $\Delta\subseteq M\times M$ is the diagonal and $d_H$ is the Lagrangian Hofer metric of $M\times M$. In particular, if we set $\phi_i:=\phi^{1/i}_i$, we get a sequence of Hamiltonian diffeomorphisms which Hofer-converges to infinity, but whose graphs Lagrangian--Hofer-converges to the diagonal. In this subsection, we want to show that such a phenomenon is impossible in the world of geometrically bounded Lagrangian graphs. That is, we show the second corollary in the introduction.  \par

Note that contrary to all other subsections of this paper, we \emph{do not} require that our monotone Lagrangian submanifolds bound enough disks. We recall that
\begin{align*}
	\mathscr{L}^\Gamma_k=\left\{L\in \mathscr{L}_k(M\times M)\ \middle|\ L=\graph\phi,\ \phi\in\Ham(M)\right\},
\end{align*}
where $M$ is equipped with some Riemannian metric, and $M\times M$, with the resulting product metric. \par

From Proposition~\ref{prop:apply_thm2_graphs}, Theorem~\ref{thm:second} applies to $\mathscr{L}^\Gamma_k$ equipped with the metric $d'_H$ induced by the Hofer norm, i.e.\ defined by
\begin{align*}
	d'_H(\graph\phi_1,\graph\phi_2):=||\phi_1\phi^{-1}_2||_H,
\end{align*}
where $||\cdot ||_H$ is the Hofer norm of $M$. Since $d'_H\geq d_H$, Theorem~\ref{thm:first} also trivially applies. In particular, we can make use of Proposition~\ref{cor:totally_bnd}. We thus get the following, since $\mathscr{L}^\Gamma_k$ contains a unique Hamiltonian isotopy class.

\begin{cor} \label{cor:ostrover_ex}
	On $\mathscr{L}^\Gamma_k$, $d_H$ and $d'_H$ induce the same topology, Furthermore, $d'_H$ is bounded. In particular, an example {\normalfont à la} Ostrover does not exist in $\mathscr{L}^\Gamma_k$.
\end{cor}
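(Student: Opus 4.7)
The plan is to leverage the two ingredients made available just before the statement: Proposition~\ref{prop:apply_thm2_graphs} supplies Theorem~\ref{thm:second} on $(\mathscr{L}^\Gamma_k, d'_H)$, while the trivial inequality $d'_H \geq d_H$ automatically supplies Theorem~\ref{thm:first} (since the conclusion $\delta_H \leq C_1\sqrt{d_H}$ for $d_H$ dominates the corresponding conclusion for $d'_H$). These are exactly the two inputs driving every result of Section~\ref{sec:topo}, so the strategy is to transfer the conclusions from $(\mathscr{L}^\star_k, d)$ to $(\mathscr{L}^\Gamma_k, d'_H)$ essentially verbatim.

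For the equivalence of topologies, the inclusion of the $d'_H$-topology in the $d_H$-topology is immediate from $d'_H \geq d_H$. For the reverse, suppose $\{L_i\} \subseteq \mathscr{L}^\Gamma_k$ satisfies $d_H(L_i, L) \to 0$ for some $L \in \mathscr{L}^\Gamma_k$. Theorem~\ref{thm:first} applied with $d = d_H$ yields $\delta_H(L_i, L) \to 0$, and then the last statement of Proposition~\ref{prop:apply_thm2_graphs} upgrades Hausdorff-convergence of graphs to $d'_H$-convergence, using crucially that the limit $L$ is itself a graph.

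For boundedness, I would repeat the proofs of Proposition~\ref{prop:homeo_post} and Corollary~\ref{cor:cpt} with $d$ replaced by $d'_H$ and $\mathscr{L}^\star_k$ replaced by $\mathscr{L}^\Gamma_k$: since both Theorem~\ref{thm:first} and Theorem~\ref{thm:second} are available in this setting, the $d'_H$-completion of $\mathscr{L}^\Gamma_k$ coincides with its $\delta_H$-completion and is compact, so $\mathscr{L}^\Gamma_k$ is totally bounded in $d'_H$. Since every element of $\mathscr{L}^\Gamma_k$ is Hamiltonian isotopic to the diagonal~---~the product isotopy $\Id \times \phi_t$, which is Hamiltonian on $(M\times M, -\omega\oplus\omega)$ with generating function $(x,y)\mapsto \widetilde{H}(y)$, connects $\Delta$ to $\graph \phi$ whenever $\phi_t$ is a Hamiltonian isotopy from $\Id$ to $\phi$~---~the metric $d'_H$ is finite on $\mathscr{L}^\Gamma_k$, and total boundedness upgrades to outright boundedness. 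The Ostrover consequence is then immediate: a sequence $\{\phi_i\}\subseteq\Ham(M)$ with $||\phi_i||_H \to \infty$ and $\graph\phi_i \in \mathscr{L}^\Gamma_k$ would satisfy $d'_H(\graph\phi_i, \Delta) = ||\phi_i||_H \to \infty$, contradicting the finite diameter just established.

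The only genuinely nontrivial ingredient was settled upstream in Proposition~\ref{prop:apply_thm2_graphs}, whose flux-theoretic argument (invoking Ono's resolution of the flux conjecture) forces nearby graphs in $\mathscr{L}^\Gamma_k$ to be exact graphs and thus makes the graph setting compatible with the Hausdorff-to-metric comparison of Theorem~\ref{thm:second}. With that in hand, the present corollary is essentially a re-reading of Corollary~\ref{cor:viterbo_conj} in the graph setting, and there is no major obstacle to overcome.
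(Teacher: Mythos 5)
Your proposal is correct and follows essentially the same route as the paper: Proposition~\ref{prop:apply_thm2_graphs} makes Theorem~\ref{thm:second} available for $(\mathscr{L}^\Gamma_k,d'_H)$, the inequality $d'_H\geq d_H$ makes Theorem~\ref{thm:first} available, the Section~\ref{sec:topo} machinery then gives total boundedness (Corollary~\ref{cor:totally_bnd}), and the unique Hamiltonian isotopy class of $\mathscr{L}^\Gamma_k$ upgrades this to boundedness of $d'_H$ and to the equivalence of topologies. The only quibble is a wording slip in the easy direction (since $d'_H\geq d_H$, it is the $d_H$-topology that is contained in the $d'_H$-topology, not the reverse), but your actual argument for the nontrivial direction via $\delta_H$ and the last statement of Proposition~\ref{prop:apply_thm2_graphs} is exactly right.
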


Note that we have a defined notion of $C^1$-distance between graphs and of $C^1$-bounds on them through the diffeomorphisms that define them. We suspect that the spaces resulting from these bounds also obey a result analogous to Corollary~\ref{cor:ostrover_ex} above. \par

However, working with curvature bounds of the graphs allows the limit in the completion to be represented by Lagrangian submanifolds of $M\times M$ which are not graphs. In particular, we get the following. \par

\begin{cor} \label{cor:limit_graphs}
	There are elements in the metric completion of $(\Ham(M),||\cdot||_H)$ which can uniquely be represented by nongraphical ($C^{1,\alpha}$) Lagrangian submanifolds of $M\times M$. 
\end{cor}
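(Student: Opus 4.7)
The plan is two-fold: first, to establish that a Hofer-Cauchy sequence in $\Ham(M)$ with uniformly geometrically bounded graphs defines a well-defined $C^{1,\alpha}$ Lagrangian representative in $\widehat{\mathscr{L}}^\Gamma_k$; second, to construct an explicit such sequence whose representative is nongraphical. For the first part, I would combine Theorem~\ref{thm:first} (applied to $d=d_H \leq d'_H$) with Proposition~\ref{prop:apply_thm2_graphs}: together these imply that a Hofer-Cauchy sequence $\{\phi_n\}$ with $\{\graph\phi_n\}\subseteq\mathscr{L}^\Gamma_k$ is automatically Hausdorff-Cauchy, and its Hausdorff limit $L$ is a $C^{1,\alpha}$ embedded Lagrangian submanifold of $M\times M$ depending only on the Hofer-equivalence class of $\{\phi_n\}$. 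This gives the uniqueness of the Lagrangian representation asserted in the statement, since two Hofer-equivalent sequences have $d'_H$-distance tending to $0$, hence $\delta_H$-distance tending to $0$ by Theorem~\ref{thm:first}, and therefore the same Hausdorff limit.

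The construction of a nongraphical example rests on the following geometric observation: a $C^{1,\alpha}$ embedded Lagrangian $L\subseteq M\times M$ is the graph of some (possibly non-smooth) function if and only if the first-factor projection $\pi_1\colon L\to M$ is bijective, and for this to fail in a $C^{1,\alpha}$ limit it suffices that at some point $(x_0,y_0)\in L$ the tangent space $T_{(x_0,y_0)}L$ contains the full vertical direction $\{0\}\oplus T_{y_0}M$. Crucially, bounded second fundamental form on $\graph\phi_n$ only controls the \emph{rate of variation} of tangent planes along the submanifold, not their absolute direction, so such a vertical-tangent limit is not excluded by the geometric bound. To realize it, I would take Hamiltonians $H_n$ generating nearly-affine shears of increasing strength, supported in shrinking neighbourhoods of a fixed point $x_0\in M$, balanced so that (i) the nearly-affine nature of the shear, combined with the comparison results of Subsection~\ref{subsec:geo_comparison}, keeps $\{\graph\phi_n\}$ in a single $\mathscr{L}^\Gamma_k$ (since affine symplectic maps on model flat spaces yield totally geodesic graphs); (ii) the total Hofer oscillations $\int_0^1\mathrm{osc}(H_n^t)\,dt$ are summable in $n$, making $\{\phi_n\}$ Hofer-Cauchy; and (iii) the shear direction at $x_0$ tilts increasingly towards the vertical, so that the Hausdorff limit acquires a purely vertical tangent at $(x_0,\lim\phi_n(x_0))$ and is thus nongraphical.

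The main obstacle is precisely the simultaneous verification of (i)-(iii), which pull in competing directions: increasing the verticality of $d\phi_n|_{x_0}$ naively causes additional bending elsewhere in $\graph\phi_n$, threatening (i), while shrinking the support threatens the limit in (iii). The resolution is to choose the bump profile and shear parameters so that, within each shrinking support, the diffeomorphism is close enough to an affine symplectic transformation of a flat model domain for Lemmata~\ref{lem:curvature} and~\ref{lem:tame_TL} (and their comparison versions) to yield uniform control on curvature and tameness of $\graph\phi_n$ in the product metric on $M\times M$, while scaling the shear parameter so that the tangent plane at $x_0$ does tilt to the vertical in the limit. Once such a sequence is constructed, the paragraph above produces a unique $C^{1,\alpha}$ Lagrangian representative of the resulting element of $\widehat{\Ham(M)}$, and this representative is nongraphical by design, completing the proof.
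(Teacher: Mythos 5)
Your first paragraph is exactly the paper's route: since $d_H\leq d'_H$, Theorem~\ref{thm:first} makes a Hofer--Cauchy sequence with graphs in a fixed $\mathscr{L}^\Gamma_k$ Hausdorff--Cauchy, Theorem~\ref{thm:second} (via Proposition~\ref{prop:apply_thm2_graphs}) identifies the limit as an embedded $C^{1,\alpha}$ Lagrangian, and equivalence of Cauchy sequences passes from $d'_H$ to $\delta_H$, giving uniqueness of the representative. The paper proves nothing more than this; the existence of a bounded-geometry, Hofer--Cauchy sequence of graphs whose Hausdorff limit is not a graph is taken as clear (a steepening fold in a Darboux chart), so the bulk of your proposal is an attempt to supply a construction the paper omits.

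That attempt, however, has a genuine gap at its key step (i): the uniform control of $\|B_{\graph\phi_n}\|$ and of the tameness constant is \emph{not} obtainable from Lemmata~\ref{lem:curvature} and~\ref{lem:tame_TL}. Those lemmata concern the rescaling family $t\xi$, $t\in[0,1]$, of a gradient section $\xi$ of $TL$ in the Sasaki metric, and they explicitly require $|\xi|$ and $|\nabla\xi|$ to be small; they say nothing about graphs of diffeomorphisms whose differentials blow up. Worse, the smallness regime is unavailable in principle for your purpose: a Lagrangian that is $C^1$-close to the diagonal has tangent planes close to $\{(v,v)\}$ and hence is automatically graphical, so any nongraphical limit must be reached through graphs whose tangent planes tilt far from the diagonal --- precisely where those lemmata (and the ``affine shears are totally geodesic'' remark, which ignores the cut-off regions where the curvature of the composed maps actually concentrates) give no control. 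The needed bound must instead come from a direct computation in a local model (e.g.\ a shear generated by a function of one variable, whose graph splits off a flat factor and reduces to a planar curve of uniformly bounded curvature that steepens to a vertical tangent), and none of the results you cite substitute for it. A second, related problem is your design of supports shrinking to the single point $x_0$: then the limit coincides with a graph away from $x_0$, so at best it acquires one vertical tangent --- which rules out being the graph of a $C^1$ map but, contrary to your opening criterion, does not make $\pi_1$ non-bijective (compare $\{q=p^3\}\subseteq\R^2$) --- and verifying simultaneously the curvature bound, uniform tameness, and summable Hofer increments in that concentrated regime is exactly the content you would have to prove; the proposal only asserts that the parameters ``can be chosen''. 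So the uniqueness half is correct and agrees with the paper, but the existence half as written does not go through.
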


It would be quite interesting to be able to detect which elements of the completion have this property. \par

\begin{rem} \label{rem:spectral_graphs}
	One could ask the same question as above but with the Hofer norm replaced by the spectral one. However, this is a trivial question: it is known~\cite{LeclercqZapolsky2018} that in the monotone setting, the spectral norm of a Hamiltonian diffeomorphism in $M$ is equal to the spectral distance of its graph to the diagonal in $M\times M$.
\end{rem}

\subsection{Order of a symplectomorphism and categorical entropy} \label{subsec:entropy}
We now move on to the first corollary of Theorem~\ref{thmout:finite_classes}. That result (or more precisely, Proposition~\ref{prop:finitely_isotopy_classes}) directly implies the following. \par

\begin{cor} \label{cor:order_symp}
	Let $L\in\mathscr{L}^\star_k$, and let $\psi$ be a symplectomorphism of $M$. If there exist $k\geq 1$ such that $\psi^\nu(L)\in\mathscr{L}^\star_k$ for all $\nu\geq 1$, then there exist $N$ such that $\psi^N(L)$ is Hamiltonian isotopic to $L$.
\end{cor}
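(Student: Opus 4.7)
The plan is to combine the finiteness of Hamiltonian isotopy classes in $\mathscr{L}^\star_k$ (Proposition~\ref{prop:finitely_isotopy_classes}) with a simple pigeonhole argument and a conjugation trick. Since by hypothesis the full orbit $\{\psi^\nu(L)\}_{\nu\geq 1}$ lies in $\mathscr{L}^\star_k$, and since Proposition~\ref{prop:finitely_isotopy_classes} tells us that this space only contains finitely many Hamiltonian isotopy classes, there must exist integers $1\leq i<j$ such that $\psi^i(L)$ and $\psi^j(L)$ belong to the same Hamiltonian isotopy class. The goal is then to ``push back'' this isotopy by $\psi^{-i}$ to produce a Hamiltonian isotopy between $L$ and $\psi^{j-i}(L)$, so that $N:=j-i$ does the job.

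The one subtle point, and the step most worth stating explicitly, is that $\psi$ is only assumed to be a symplectomorphism, not a Hamiltonian diffeomorphism. So a priori $\psi^{-i}$ does not preserve Hamiltonian isotopy classes. However, if $\phi\in\Ham(M)$ satisfies $\phi(\psi^i(L))=\psi^j(L)$, then conjugation yields $\psi^{j-i}(L)=(\psi^{-i}\phi\psi^i)(L)$, and the key observation is that $\psi^{-i}\phi\psi^i$ is itself Hamiltonian: if $\phi$ is generated by $H_t$, then its conjugate by a symplectomorphism $\psi^i$ is generated by $H_t\circ\psi^i$, which follows directly from $i_{(\psi^i)^*X_t}\omega=(\psi^i)^*(i_{X_t}\omega)=(\psi^i)^*dH_t=d(H_t\circ\psi^i)$ using $(\psi^i)^*\omega=\omega$. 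Thus $\psi^N(L)$ is Hamiltonian isotopic to $L$.

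I do not anticipate any genuine obstacle beyond making sure the conjugation observation is applied correctly; the argument is essentially a pigeonhole on top of the highly nontrivial Proposition~\ref{prop:finitely_isotopy_classes}, where all the difficulty is concentrated.
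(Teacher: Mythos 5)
Your argument is correct and is exactly the intended one: the paper derives this corollary from Proposition~\ref{prop:finitely_isotopy_classes} without spelling out details, and the pigeonhole on the finitely many Hamiltonian isotopy classes in $\mathscr{L}^\star_k$ plus the observation that conjugating a Hamiltonian isotopy by the symplectomorphism $\psi^{i}$ stays Hamiltonian (generated by $H_t\circ\psi^{i}$) is precisely what makes the implication ``direct''. Your handling of the subtle point that $\psi$ is merely symplectic, not Hamiltonian, is the right one and nothing further is needed.
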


In fact, we can make the above statement somewhat quantitative through the various notions of entropy. First of all, we note that we have a criterion for the vanishing of barcode entropy~---~we refer the reader to~\cite{CineliGinzburgGurel2022} and~\cite{Dawid2023} for the definitions. \par

\begin{cor} \label{cor:relative-barcode-entropy}
	Let $L$ and $\psi$ be as in Corollary~\ref{cor:order_symp}. If $\psi$ is Hamiltonian and $L'$ is another exact or monotone Lagrangian submanifold, then the relative barcode entropy $\hbar(\psi;L,L')$ vanishes. If $L$, $L'$, and $\psi$ are all exact, then the same holds for the slow relative barcode entropy $\hbar^{\mathscr{sl}}(\psi;L,L')$.
\end{cor}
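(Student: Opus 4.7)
The plan is to exploit the geometric boundedness of the orbit $\{\psi^\nu(L)\}\subseteq\mathscr{L}^\star_k$ to obtain a uniform upper bound on the intersection numbers $|\psi^\nu(L)\cap L'|$, from which vanishing of the barcode entropy follows. The starting point is that $\widehat{\mathscr{L}}^\star_k$ is compact by Corollary~\ref{cor:cpt}, and every Hausdorff limit of a sequence in $\mathscr{L}^\star_k$ is an embedded $C^{1,\alpha}$-Lagrangian submanifold by Theorem~\ref{thm:second}. After a small Hamiltonian perturbation of $L'$, which leaves the isomorphism class of the Floer persistence module unchanged, I would arrange that $L'$ is transverse to every Hausdorff limit of $\{\psi^\nu(L)\}$. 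Because the reparametrizations in Theorem~\ref{thm:second} converge in $C^{1,\alpha'}$, transverse intersections with $L'$ persist under small Hausdorff perturbations and the intersection count is locally constant on a neighbourhood in $\widehat{\mathscr{L}}^\star_k$. Covering the compact closure of the orbit by finitely many such neighbourhoods yields the desired uniform bound $|\psi^\nu(L)\cap L'|\leq C$ independent of $\nu$.

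With this bound in hand, the next step invokes the standard Floer-theoretic fact that, after a generic Hamiltonian perturbation ensuring transversality, the total number of bars in the barcode $B(\psi^\nu(L),L')$ is at most $|\psi^\nu(L)\cap L'|$. Hence for every $\epsilon>0$,
\[
b_\epsilon\bigl(\psi^\nu(L),L'\bigr)\leq \bigl|\psi^\nu(L)\cap L'\bigr|\leq C
\]
uniformly in $\nu$. Since by definition
\[
\hbar(\psi;L,L')=\lim_{\epsilon\to 0^+}\limsup_{\nu\to\infty}\frac{1}{\nu}\log b_\epsilon\bigl(\psi^\nu(L),L'\bigr),
\]
the uniform bound forces $\tfrac{1}{\nu}\log b_\epsilon\leq \tfrac{\log C}{\nu}\to 0$, so every $\epsilon$-level vanishes and $\hbar(\psi;L,L')=0$. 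The slow relative entropy, which replaces $\nu$ by $\log\nu$ in the denominator, vanishes by the same computation, as any bounded sequence has vanishing slow exponential growth rate.

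The main obstacle, where most of the care will be required, is making the uniform intersection bound rigorous: the intersection number is not continuous on $\widehat{\mathscr{L}}^\star_k$ in general, and the Hausdorff limits are only of class $C^{1,\alpha}$ rather than smooth. The cleanest route is probably to combine a parametric transversality argument to find a Hamiltonian perturbation of $L'$ transverse to every limit point of $\widehat{\mathscr{L}}^\star_k$, with the local constancy of the intersection count under the $C^{1,\alpha'}$-convergence supplied by Theorem~\ref{thm:second} and the total boundedness of $\mathscr{L}^\star_k$ asserted in Corollary~\ref{cor:totally_bnd}. An alternative, perhaps cleaner in the exact case, would be to combine the uniform Hofer bound $d_H(L,\psi^\nu(L))\leq B$ furnished by Corollary~\ref{cor:viterbo_conj} with bottleneck stability of Lagrangian barcodes to handle bars of length greater than $2B$, and to invoke the intersection estimate only for the short-bar regime.
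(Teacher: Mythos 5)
Your overall strategy (compactness of $\widehat{\mathscr{L}}^\star_k$ plus the $C^{1,\alpha'}$-convergence of Theorem~\ref{thm:second} to get a uniform bound on bar counts) is reasonable, but the central step as you formulate it has a genuine gap. You ask for a \emph{single} Hamiltonian perturbation of $L'$ transverse to \emph{every} Hausdorff limit of $\{\psi^\nu(L)\}$. The limit set is a compact subset of $\widehat{\mathscr{L}}^\star_k$ that may well be uncountable and is not a finite-dimensional family, so neither Sard-type genericity nor parametric transversality produces such a perturbation: transversality to each individual limit is open and dense, but an intersection of open dense conditions over a compact \emph{infinite} family need not be dense, or even nonempty (think of a family of curves whose tangent directions at a point fill out all directions). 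Two further inaccuracies compound this. First, a Hamiltonian perturbation of $L'$ does \emph{not} leave the isomorphism class of the Floer persistence module unchanged; it changes it by an interleaving controlled by the Hofer norm of the perturbation, so you must quote barcode stability rather than invariance. Second, your fallback misquotes Corollary~\ref{cor:viterbo_conj}: it bounds the Chekanov-type metric $d$ (which satisfies $d\leq Cd_H$), not $d_H$ itself, and in any case a bottleneck bound by a fixed constant $B$ only controls bars of length $>2B$, which is useless in the $\epsilon\to 0$ limit defining $\hbar$. Your argument can be repaired, but only by abandoning the single global perturbation: cover the compact closure of the orbit by finitely many neighbourhoods as in Theorem~\ref{thm:second}, choose for each a perturbation of $L'$ of Hofer norm $<\epsilon/4$ transverse to the corresponding limit Lagrangian, use $C^1$-stability of transverse intersections to get a constant intersection count on each neighbourhood, and then use stability of barcodes to convert this into a uniform bound on $b_\epsilon(\psi^\nu(L),L')$ for each fixed $\epsilon$; none of this is in your write-up.

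For comparison, the paper's proof is much shorter and sidesteps the transversality issue entirely: membership in $\mathscr{L}^\star_k$ gives a \emph{uniform volume bound} on the $\psi^\nu(L)$ (Proposition~4 of~\cite{Chasse2022}), hence zero volume growth, and the proof of Theorem~2.4 of~\cite{CineliGinzburgGurel2022} (a Crofton-type inequality bounding the \emph{average} intersection number over a finite-dimensional family of perturbations by the volume) then yields $\hbar(\psi;L,L')=0$; the slow entropy statement follows likewise from the proof of Theorem~A of~\cite{Dawid2023}. The point you miss is that the Crofton averaging is precisely the device that replaces a uniform intersection bound with a fixed $L'$~---~which, for a fixed auxiliary Lagrangian, is exactly the delicate point your proposal leaves open.
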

\begin{pr*}
	By Proposition 4 of~\cite{Chasse2022}, $\psi^\nu(L)\in\mathscr{L}^\star_k$ for all $\nu$ implies a universal bound on the volume of the $\psi^\nu(L)$. The result on the usual barcode entropy then follows directly from the proof of Theorem~2.4 of~\cite{CineliGinzburgGurel2022}. The result on slow barcode entropy follows instead from the proof of Theorem~A of~\cite{Dawid2023}.
\end{pr*}

Perhaps more interestingly however, we gather from Corollary~\ref{cor:order_symp} a geometrical criterion for the vanishing of the so-called categorical entropy of a symplectomorphism, which we define below. We make use of the definition using multiple generators of~\cite{BaeChoaJeongKarabasLee2022}, instead of the original definition using a single split generator~\cite{DimitrovHaidenKatzarkovKontsevich2014}, but it is shown in the former paper that these are equivalent. The reason for this is that we want to work with actual Lagrangian submanifolds, not abstract twisted complexes or modules over the Fukaya category. \par

We first introduce the following notation. Let $\mathscr{C}$ be a (non-graded) triangulated category. For a morphism $f:A\to B$ in $\mathscr{C}$, we denote by $\mathrm{Cone}(f)$ its cone, i.e. the unique-up-to-isomorphism object turning $A\to B\to \mathrm{Cone}(f)\to A$ into a distinguished triangle of $\mathscr{C}$. More generally, we define by induction $\mathrm{Cone}(f_1,\dots,f_m)$ to be the cone of the map $f_m:A_m\to \mathrm{Cone}(f_1,\dots,f_{m-1})$. \par

\begin{defn*}
	Let $\mathscr{C}$ be a non-graded triangulated category, and let $A,G_1,\dots,G_\ell$ be objects of $\mathscr{C}$. The \emph{complexity} of $A$ with respect to $G_1,\dots G_\ell$ is given by
	\begin{align*}
		&\delta(G_1,\dots, G_\ell;A) \\
		&\quad :=\inf\left\{m\ \middle|\ A\oplus A'=\mathrm{Cone}(f_1,\dots,f_m),\ A'\in\mathrm{Ob}(\mathscr{C}),\ \operatorname{dom} f_i\in\{G_1,\dots,G_\ell\}\right\}.
	\end{align*} 
	Furthermore, if $G_1,\dots G_\ell$ split-generate $\mathscr{C}$ and $\Phi$ is an endofunctor of $\mathscr{C}$, we define its \emph{categorical entropy} to be
	\begin{align*}
		h_\mathrm{cat}(\Phi):=\lim_{\nu\to\infty}\frac{\delta(G_1,\dots, G_\ell;\Phi^\nu(G_1\oplus\dots\oplus G_\ell))}{\nu}\in [0,+\infty].
	\end{align*}
\end{defn*}

In other words, complexity measures how many iterated cones are needed to get $A$ from $G_1,\dots, G_\ell$ up to some splitting, whilst categorical entropy measures how much $\Phi$ ``complexifies'' the generators of $\mathscr{C}$. As the notation suggests, the definition of categorical entropy is independent of the choice of split-generators. \par

\begin{rem} \label{rem:split_vs_nonsplit-generators}
	In the definition of complexity above, we could replace the $A\oplus A'=\mathrm{Cone}(f_1,$ $\dots,f_m)$ condition by simply $A=\mathrm{Cone}(f_1,\dots,f_m)$ and then work with generators to define categorical entropy. This is perfectly valid but leads to a number which is~---~in general~---~larger than what we have defined here. Since we are interested in a criterion for the vanishing of entropy, it is a more general approach to work with split-generation.
\end{rem}

In the symplectic context, we take $\mathscr{C}$ to be the derived Fukaya category $\mathrm{DFuk}^\star(M)$ generated by $\mathscr{L}^\star_\infty$~---~this is well defined in both the exact~\cite{Seidel2008} and monotone~\cite{Sheridan2016} settings. Then, any symplectomorphism $\psi$ preserving $\mathscr{L}^\star_\infty$ will induce an endofunctor of $\mathrm{DFuk}^\star(M)$~---~we will call the categorical entropy of that functor the categorical entropy of $\psi$. The following result follows directly from Corollary~\ref{cor:order_symp} since Hamiltonian isotopic Lagrangian submanifolds induce isomorphic objects in the derived Fukaya category. \par

\begin{cor} \label{cor:cat-entropy}
	Suppose that $\psi$ is a symplectomorphism of $M$ preserving $\mathscr{L}^\star_\infty$ such that, for a set of generator $L_1,\dots L_\ell$ of $\mathrm{DFuk}^\star(M)$, $\psi^\nu(L_i)\in\mathscr{L}^\star_k$ for some $k$, for every $\nu$ and every $i$. Then, $h_\mathrm{cat}(\psi)=0$.
\end{cor}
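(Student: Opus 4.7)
The plan is to reduce the statement to a periodicity argument in $\mathrm{DFuk}^\star(M)$: I will show that the sequence $\delta_\nu := \delta(L_1, \ldots, L_\ell; \psi^\nu(G))$, where $G := L_1 \oplus \cdots \oplus L_\ell$, is bounded uniformly in $\nu$. Dividing by $\nu$ and passing to the limit will then immediately yield $h_\mathrm{cat}(\psi) = 0$.

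First, I would apply Corollary~\ref{cor:order_symp} to each generator $L_i$ separately. The hypothesis provides a common $k$ with $\{\psi^\nu(L_i)\}_{\nu \geq 1} \subseteq \mathscr{L}^\star_k$ for every $i$, so the corollary yields integers $N_i \geq 1$ such that $\psi^{N_i}(L_i)$ is Hamiltonian isotopic to $L_i$. Setting $N := N_1 N_2 \cdots N_\ell$, one has simultaneously that $\psi^N(L_i)$ is Hamiltonian isotopic to $L_i$ for \emph{every} $i = 1, \ldots, \ell$. Since Hamiltonian isotopic Lagrangian submanifolds represent isomorphic objects in $\mathrm{DFuk}^\star(M)$ (a standard feature of the construction in both the exact and monotone settings, as cited in the excerpt), this gives $\psi^N(L_i) \cong L_i$ in $\mathrm{DFuk}^\star(M)$ for all $i$, and hence $\psi^N(G) \cong G$.

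Iterating, $\psi^{mN}(G) \cong G$ for every $m \geq 0$. Writing any $\nu \in \N$ as $\nu = mN + r$ with $0 \leq r < N$, we thus have $\psi^\nu(G) = \psi^r(\psi^{mN}(G)) \cong \psi^r(G)$. Because $L_1, \ldots, L_\ell$ split-generate $\mathrm{DFuk}^\star(M)$, the complexities $d_r := \delta(L_1, \ldots, L_\ell; \psi^r(G))$ are finite for each $r \in \{0, 1, \ldots, N-1\}$; and because complexity (as defined via iterated cones up to direct summands) depends only on the isomorphism class of its target, it follows that
\begin{align*}
\delta_\nu \;\leq\; \max_{0 \leq r < N} d_r \;=:\; D \;<\; \infty
\end{align*}
for all $\nu$. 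Therefore $\delta_\nu/\nu \leq D/\nu \to 0$, and the limit defining $h_\mathrm{cat}(\psi)$ is zero.

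The argument is essentially formal once Corollary~\ref{cor:order_symp} is in hand, so I do not anticipate a significant obstacle. The main conceptual step is the passage from the geometric periodicity produced by that corollary (``$\psi^{N_i}(L_i)$ is Hamiltonian isotopic to $L_i$'') to the categorical periodicity $\psi^N(G) \cong G$ inside $\mathrm{DFuk}^\star(M)$, after which boundedness of $\delta_\nu$ and the vanishing of the entropy are automatic.
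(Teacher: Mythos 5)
Your argument is correct and follows essentially the same route as the paper, which deduces the corollary directly from Corollary~\ref{cor:order_symp} together with the fact that Hamiltonian isotopic Lagrangian submanifolds give isomorphic objects of $\mathrm{DFuk}^\star(M)$. The periodicity-implies-bounded-complexity step you spell out (common period $N$, then $\delta_\nu\leq\max_{0\leq r<N}d_r$) is exactly the formal content the paper leaves implicit.
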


In other words, if $h_\mathrm{cat}(\psi)>0$, then there is some Lagrangian submanifold $L$ which is a factor of a split-generator $G$ of $\mathrm{DFuk}^\star(M)$ such that the sequence $\{\psi^\nu(L)\}$ is not contained in any $\mathscr{L}^\star_k$. Note that we may suppose that such a Lagrangian submanifold $L$ induces a nontrivial object in $\mathrm{DFuk}^\star(M)$. Thus, $\psi$ must then deform some symplectically-important Lagrangian submanifold. \par

\begin{rem} \label{rem:entropy-with-weights}
	There is currently work in progress from Ambrosioni, Biran, and Cornea which defines weighted versions of categorical entropy coming from the triangulated persistence structure of the derived Fukaya category~\cite{BiranCorneaZhang2021}, which associates to cones an associated weight. Because our spaces are all totally bounded, we expect that their notion of entropy is also well behaved in our setting, so that we can expect similar results as above.
\end{rem}

\begin{rem} \label{rem:geo-entropy}
	Lemmata~\ref{lem:curvature_comparision} and~\ref{lem:tameness_comparision} below imply that, for any symplectomorphism $\psi$ preserving $\mathscr{L}^\star_\infty$ and any $L\in\mathscr{L}^\star_\infty$, the quantity
	\begin{align*}
		\eta(\psi;L):=\limsup_{\nu\to\infty}\frac{\log^+\left(\underline{k}(\psi^\nu(L))\right)}{\nu}\in [0,+\infty],
	\end{align*}
	where $\underline{k}(L):=\inf\{k\ |\ L\in\mathscr{L}^\star_k\}$ and $\log^+(x):=\max\{0,\log(x)\}$, is independent on the Riemannian metric $g$ on $M$ or on the choice of compacts $W_k$. However, its relevance is nebulous to us. \par
	
	For example, it is well-known that the quantity 
	\begin{align*}
		\Gamma(\psi;L):=\limsup_{\nu\to\infty}\frac{\log^+\left(\mathrm{Vol}( \psi^\nu(L))\right)}{\nu}\in [0,+\infty]
	\end{align*}
	is a lower bound to topological entropy~\cite{Yomdin1987} and an upper bound to barcode entropy with any $L'\in\mathscr{L}^\star_\infty$~\cite{CineliGinzburgGurel2022}. Furthermore, we have shown (Proposition 4 of~\cite{Chasse2022}) that being in $\mathscr{L}_k$ implies respecting a volume bound. However, that volume bound is generally not polynomial in $k$, so that there is no obvious link between $\eta$ and $\Gamma$~---~and thus between $\eta$ and entropy. \par
	
	Even in the case when $n=1$, in which case the volume bound reduces to
	\begin{align} \label{eq:bound-volume_dim1}
		\mathrm{Vol}(B_{(k+1)^{-1}}(W_k))\geq 2\left\lfloor\mathrm{Diam}(L)\right\rfloor\min\left\{\mathrm{Diam}(L),(k+1)^{-1}\right\},
	\end{align}
	with $\mathrm{Vol}(L)=2\ \mathrm{Diam}(L)$, this only implies that
	\begin{align} \label{eq:geo-to-vol-entropy}
		\eta(\psi;L)\geq \Gamma(\psi;L)-\limsup_{\nu\to\infty}\frac{\log\left(\mathrm{Vol}( W_{\underline{k}(\psi^\nu(L)})\right)}{\nu}.
	\end{align}
	But the right-hand side simply vanishes. Indeed, we have the freedom to choose in (\ref{eq:bound-volume_dim1}) any $W_k$ containing $\psi^\nu(L)$. In particular, let $W_k$ be the ``smallest'' possible choice: the tubular neighbourhood of $\psi^\nu(L)$ of radius $r$ for $r$ small. Then, $\mathrm{Vol}(W_k)$ behaves like $\mathrm{Vol}(\psi^\nu(L))r$~---~see for example Theorem~9.23 of~\cite{Gray2004}~---~and the superior limit is simply $\Gamma(\psi;L)$.
\end{rem}

\subsection{Connected components of $\widehat{\mathscr{L}}^\star_k$}
We now move to the second corollary of Theorem~\ref{thmout:finite_classes}. In fact, we prove the following slightly stronger statement. \par

\begin{cor} \label{cor:nearby_conj}
	If $L$ and $L'$ are smooth Lagrangian submanifolds belonging to the same connected component of $\mathscr{L}^\star_k$ or of $\widehat{\mathscr{L}}^\star_k$, then they are Hamiltonian isotopic. \par
	
	In particular, $L$ and $L'$ are Hamiltonian isotopic if there is a $d$-continuous path in $\widehat{\mathscr{L}}^\star_k$ from $L$ to $L'$.
\end{cor}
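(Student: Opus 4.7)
The plan is to show that the Hamiltonian isotopy classes partition $\mathscr{L}^\star_k$ into finitely many clopen pieces, and that this picture survives passage to the completion at smooth points.

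First I would verify that each class $U_{[L_0]}:=(\mathrm{Ham}(M)\cdot L_0)\cap\mathscr{L}^\star_k$ is open in $(\mathscr{L}^\star_k,d)$. Given $L\in U_{[L_0]}$, Corollary~\ref{cor:homeo_pre} lets me translate between $d$ and $\delta_H$, and Theorem~\ref{thm:second} then exhibits a neighborhood of $L$ in which every member appears as $\graph\sigma$ for some small closed $1$-form $\sigma$ in a Weinstein chart over $L$. The form $\sigma$ is automatically exact in the exact case, by Lemma~\ref{lem:apply_thm2} in the monotone case, and by Proposition~\ref{prop:apply_thm2_graphs} in the graph case, so this entire neighborhood lies in $U_{[L_0]}$. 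Combined with Proposition~\ref{prop:finitely_isotopy_classes}, the $U_{[L_0]}$'s form a finite clopen partition of $\mathscr{L}^\star_k$; every connected component therefore lies in a single isotopy class, settling the non-completed case.

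For the completed case, let $\widehat{U}_{[L_0]}$ denote the $d$-closure of $U_{[L_0]}$ in $\widehat{\mathscr{L}}^\star_k$. The $A$-separation of Proposition~\ref{prop:isotopy_classes+connected_components} is inherited by closures, so distinct $\widehat{U}_{[L_0]}$'s remain pairwise at $d$-distance at least $A$; in particular they are disjoint. Density of $\mathscr{L}^\star_k$ and the finiteness of Proposition~\ref{prop:finitely_isotopy_classes} give
\begin{align*}
\widehat{\mathscr{L}}^\star_k=\bigsqcup_{[L_0]}\widehat{U}_{[L_0]},
\end{align*}
where each summand is clopen. Hence any connected component of $\widehat{\mathscr{L}}^\star_k$ is contained in a unique $\widehat{U}_{[L_0]}$.

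The main technical step, and the only real obstacle, is to promote a smooth representative $L\in\widehat{U}_{[L_0]}$ to an element of $[L_0]$ itself. Pick a sequence $\{L_i\}\subseteq U_{[L_0]}$ with $L_i\xrightarrow{d}L$, hence $L_i\xrightarrow{\delta_H}L$ by Theorem~\ref{thm:first}. Smoothness of $L$ lets the strict inequalities defining $\mathscr{L}_k$ pass to the limit with a unit of slack, giving $L\in\mathscr{L}_{k+1}$. Theorem~\ref{thm:second} applied at $L$ then identifies $L_i=\graph\sigma_i$ for $i$ large, and the resulting Lagrangian isotopy from $L$ to $L_i$ through the graphs of $t\sigma_i$ transports the $\star$-condition (monotonicity, minimal Maslov number, and the ``bounds enough disks'' hypothesis are all Lagrangian-isotopy invariants) from $L_i$ to $L$, so $L\in\mathscr{L}^\star_{k+1}$. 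Lemma~\ref{lem:apply_thm2} then forces $\sigma_i$ to be exact, making $L_i$ Hamiltonian isotopic to $L$; since $L_i\in[L_0]$, so is $L$. The same reasoning applied to $L'$ concludes the connected-component statement, and the ``in particular'' claim follows because a $d$-continuous path forces its endpoints to lie in the same connected component of $\widehat{\mathscr{L}}^\star_k$.
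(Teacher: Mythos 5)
Your overall architecture (Hamiltonian classes give a clopen decomposition, which survives to the completion, plus a special argument at smooth points) is close in spirit to the paper, but the paper gets the clopenness in one stroke from the separation constant of Proposition~\ref{prop:isotopy_classes+connected_components}: the class $(\Ham(M)\cdot L_0)\cap\mathscr{L}^\star_k$ is closed because any of its limit points in $\mathscr{L}^\star_k$ is at distance $<A$ from it, and open because it is exactly the union of the $d$-balls of radius $\frac{A}{2}$ about its points. No finiteness of classes, no Weinstein charts, and no exactness lemmas are needed for this; your openness-via-Theorem~\ref{thm:second} route and your use of Proposition~\ref{prop:finitely_isotopy_classes} for closedness are correct but considerably heavier, and the finiteness is in fact dispensable. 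Your treatment of the completion by taking closures and noting that the $A$-separation persists is fine.

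The genuine gaps are in what you call the main technical step. First, the claim that smoothness of $L$ lets the strict inequalities defining $\mathscr{L}_k$ ``pass to the limit with a unit of slack'' is not justified: Theorem~\ref{thm:second} only provides $C^{1,\alpha'}$-convergence of the diffeomorphisms $f_i$, which gives no control whatsoever on $\|B_L\|$, so $L\in\mathscr{L}_{k+1}$ does not follow (a semicontinuity argument for second fundamental forms under $C^1$-limits would be needed). Fortunately this step is also unnecessary: the last clause of Theorem~\ref{thm:second} applies directly to the sequence $\{L_i\}\subseteq\mathscr{L}^\star_k$ with smooth Hausdorff limit $L$ and already exhibits the $L_i$, for $i$ large, as graphs of $C^1$-small $1$-forms $\sigma_i$ in a Weinstein neighbourhood of $L$. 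Second, and more seriously, $\lambda$-exactness and $\rho$-monotonicity are \emph{not} Lagrangian-isotopy invariants: an isotopy with nonzero flux changes the symplectic-area class while leaving the Maslov class untouched (think of the parallel circles in $T^*S^1$), so you cannot transport the $\star$-condition from $L_i$ to $L$ along the isotopy through $\graph(t\sigma_i)$ by ``invariance''; doing so before knowing $\sigma_i$ is exact is moreover circular with your subsequent appeal to Lemma~\ref{lem:apply_thm2}, since exactness of $\sigma_i$ is precisely what that lemma is meant to deliver. The step can be repaired by a limiting argument (since $\sigma_i\to 0$ in $C^0$, the periods of $\lambda$, respectively the symplectic areas of disk classes, on $L$ are limits of the corresponding quantities on $L_i$, so $L$ does satisfy $\star$), or avoided altogether when, as in the introduction, $L$ is assumed to lie in $\mathscr{L}^\star_\infty$: then $L\in\mathscr{L}^\star_{k'}$ for some $k'$, both $L$ and $L_i$ lie in $\mathscr{L}^\star_{K}$ with $K=\max\{k,k'\}$, and Proposition~\ref{prop:isotopy_classes+connected_components} at level $K$ forces $L$ to be Hamiltonian isotopic to $L_i$ as soon as $d(L_i,L)<A(K)$. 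As written, however, the invariance claim is a gap.
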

\begin{pr*}
	Note that the set $\mathscr{L}^{L_0}_k=(\Ham(M)\cdot L_0)\cdot \mathscr{L}^\star_k$ must be a clopen of $\mathscr{L}^\star_k$. Indeed, the fact that $d(L,L')\geq A>0$ whenever $L'\in\mathscr{L}^\star_k$ is not Hamiltonian isotopic to $L$ implies that $\mathscr{L}^{L_0}_k$ contains all of its limit points in $\mathscr{L}^\star_k$, i.e.\ $\mathscr{L}^{L_0}_k$ is closed. But that fact also implies that $\mathscr{L}^{L_0}_k$ is equal to the union of the metric balls of radius $\frac{A}{2}$ centered at points on $\mathscr{L}^L_k$, so that it must also be open. The conclusion then follows from the fact that a clopen always fully contains the connected components of its points. \par
	
	For the last statement, simply note that the path-connected component of a point is always contained in its connected component.
\end{pr*}

\begin{rem} \label{rem:exact-iso_C0}
	Obviously, every exact Lagrangian isotopy $\{L_t\}_{t\in [0,1]}$ respects the hypotheses of Corollary~\ref{cor:nearby_conj}. However, these hypotheses are strictly more general. For example, it is proven in~\cite{Jannaud2021} that if $H^1(N;\R)=0$, then the Floer barcode is $C^0$-continuous. That is, if $t\mapsto \phi_t$ is a $C^0$-continuous path in the $C^0$-completion of the group of symplectomorphisms of $T^*N$ and $L,L'\subseteq T^*N$ are exact, then the Floer barcode $\mathcal{B}(\phi_t(L),L')$ depends continuously of $t$ in the bottleneck distance. In particular, this means that such a $C^0$-continuous path $t\mapsto \phi_t(L)$ through Lagrangian submanifolds of $\mathscr{L}^e_k$ respects the hypotheses of Corollary~\ref{cor:nearby_conj}.
\end{rem}

\subsection{Hofer geodesics} \label{subsec:hofer-geo}
We now finally move on to the corollary of Theorem~\ref{thmout:local_cont} in the introduction. \par

We recall that it has been proven by Milinkovi\'c~\cite{Milinkovic2001} that the Hofer and spectral distances between two graphs in $T^*L$ are both given by
\begin{align} \label{eqn:hofer-distance_graphs}
	d_H(\operatorname{graph}df,\operatorname{graph}dg)=\gamma(\operatorname{graph}df,\operatorname{graph}dg)=\max |f-g|-\min|f-g|.
\end{align}
In particular, if $L'$ is a graph, then the path $t\mapsto tL'$ is a minimizing geodesic from $L$ to $L'$ in the Hofer metric of $T^*L$. \par

However, when $T^*L$ is embedded in a symplectic manifold $M$ via a Weinstein neighbourhood $\Psi$ of some Lagrangian submanifold $L$, (\ref{eqn:hofer-distance_graphs}) is reduced to a simple bound in the Hofer metric of $M$. In particular, we are no longer guaranteed that $t\mapsto \Psi(tL')$ is minimizing when $L'\subseteq T^*L$ is a graph. Note that it is, however, still a geodesic since the path can be generated by an autonomous Hamiltonian (see~\cite{IriyehOtofuji2007}). Therefore, it is always \emph{locally} minimizing. \par

In the case when $L$ is either exact or monotone, we can use the results of Subsections~\ref{subsec:connect} and~\ref{subsec:geodesics} to give a lower estimate on how much the path $t\mapsto \Psi(tL')$ is far away from being minimizing. More precisely, we prove the following.

\begin{cor} \label{cor:lower-bound_hofer-geo}
	Let $L$ be either exact or monotone Lagrangian submanifold of $M$, and let $g$ be a metric on $M$ which corresponds to the Sasaki metric of $L$ on a Weinstein neighbourhood $\Psi:D^*_r L\to M$. For every $k\geq 1$, there are constants $C>0$ and $r'\in(0,r]$ with the following property. Whenever $f:L\to\R$ is such that $\Psi(\operatorname{graph} df)\in\mathscr{L}_k$ and $|df|\leq r'$, we have that
	\begin{align*}
		d_H(\Psi(t\operatorname{graph} df),\Psi(s\operatorname{graph} df))\geq C (t-s)^2 \max|df|^2
	\end{align*}
	for every $t,s\in [0,1]$, where $d_H$ is the Hofer distance in $M$.
\end{cor}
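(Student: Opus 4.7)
The strategy is to read Theorem~\ref{thm:first} backwards as $d_H \geq \delta_H^2/C_1^2$ and to pair it with the explicit computation of the Hausdorff distance along a radial path in a Sasaki Weinstein chart provided by Corollary~\ref{cor:radial_geodesics}. The key observation is that the Lagrangian Hofer metric itself is of Chekanov type, so Theorem~\ref{thm:first} applies directly with $d = d_H$ with its own constants $C_1,R_1 > 0$ depending on $k$.

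First, I would fix $k$ and choose $r' \in (0,r]$ small enough so that, whenever $|df| \leq r'$ and $\Psi(\operatorname{graph} df) \in \mathscr{L}_k$, the whole radial family $\{\Psi(t \operatorname{graph} df)\}_{t\in[0,1]}$ sits inside the star-shaped neighbourhood $U$ of $L$ inside $\mathscr{L}^\star_k$ given by Proposition~\ref{prop:path_connect_sasaki}. Since $df$ is exact, each $\Psi(t\operatorname{graph} df)$ is exact (directly, if $L$ is exact, via $\Psi^*\lambda = \lambda_0 + dF$) or monotone (using Lemma~\ref{lem:apply_thm2} to conclude that the defining form must be exact), so this containment is not broken by any symplectic-type obstruction. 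With the Sasaki identification of $TL^\perp$ with $T^*L$, Corollary~\ref{cor:radial_geodesics} (or directly Proposition~\ref{prop:radial_distance}) then gives, inside the chart,
\begin{align*}
    \delta_H(\Psi(t\operatorname{graph} df),\Psi(s\operatorname{graph} df)) = |t-s|\max|df|.
\end{align*}

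Second, I would further shrink $r'$ so that the pairwise $d_H$-distances along the path stay strictly below the threshold $R_1$ of Theorem~\ref{thm:first} applied with $d = d_H$. This is achievable because, along $U$, Theorem~\ref{thm:second} gives a linear bound $d_H \leq C_2 \delta_H$ in the exact-form regime (Lemma~\ref{lem:apply_thm2} making the hypothesis exact-form valid in the monotone case as well), and $\delta_H \leq r'$ on the path by the previous display. Once this is arranged, inverting Theorem~\ref{thm:first} yields
\begin{align*}
    d_H\bigl(\Psi(t\operatorname{graph} df),\Psi(s\operatorname{graph} df)\bigr) \;\geq\; \frac{1}{C_1^2}\,\delta_H\bigl(\Psi(t\operatorname{graph} df),\Psi(s\operatorname{graph} df)\bigr)^2 \;=\; \frac{(t-s)^2 \max|df|^2}{C_1^2},
\end{align*}
which is the stated inequality with $C = 1/C_1^2$.

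The main obstacle I anticipate is the double duty placed on $r'$: it must simultaneously (a) keep the entire radial path $\{\Psi(t\operatorname{graph} df)\}$ inside a single Riemannian-bounded class $\mathscr{L}^\star_k$ so that Theorem~\ref{thm:first} is even applicable, and (b) keep all pairwise $d_H$-distances along the path within the radius $R_1$ where Theorem~\ref{thm:first} actually gives the square-root estimate. Both are genuinely quantitative, but (a) is handled cleanly by the Sasaki star-shapedness of Proposition~\ref{prop:path_connect_sasaki} — which is where the exact/monotone hypothesis on $L$ enters through Lemma~\ref{lem:apply_thm2} — and (b) reduces to the linear upper bound $d_H \lesssim \delta_H$ of Theorem~\ref{thm:second}. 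Nothing in the argument is specifically two-dimensional, so the $\dim M = 2$ variant of the intro statement should follow by the same template with Corollary~\ref{cor:radial_geodesics} replaced by Lemma~\ref{lem:radial-distance_2D}, at the cost of an extra factor of $4$ in the final constant.
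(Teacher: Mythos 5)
Your proposal is correct and takes essentially the same route as the paper: Proposition~\ref{prop:path_connect_sasaki} keeps the radial family $\Psi(t\operatorname{graph}df)$ inside $\mathscr{L}_k$, Proposition~\ref{prop:radial_distance} (via Corollary~\ref{cor:radial_geodesics}) gives $\delta_H=|t-s|\max|df|$, and Theorem~\ref{thm:first} applied with $d=d_H$ is inverted to produce the quadratic lower bound. Your extra step of shrinking $r'$ via Theorem~\ref{thm:second} to stay below the threshold $R_1$ is a fine precaution the paper leaves implicit; it can alternatively be absorbed into the constant, since $(t-s)^2\max|df|^2\leq (r')^2$ makes the inequality trivial (after shrinking $C$) whenever $d_H\geq R_1$.
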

\begin{pr*}
	Note that $\Psi(t\operatorname{graph} df)\in\mathscr{L}_k$ for all $t\in [0,1]$ by Proposition~\ref{prop:path_connect_sasaki}. Therefore, the bound follows directly from Theorem~\ref{thm:first} by using Proposition~\ref{prop:radial_distance} to compute $\delta_H(\Psi(t\operatorname{graph} df),\Psi(s\operatorname{graph} df))$.
\end{pr*}

\begin{rem} \label{rem:lower-bound_hofer-geo}
	In \cite{Chasse2022}, the precise $C$ is computed in terms of $k$ and the sectional curvature and injectivity radius of $M$. In fact, by choosing $r'$ small enough, $C$ can be made to only depend on the values of these invariants on $\Psi(D^*_r L)$. However, those values still depend on more than just $k$~---~except when $L$ is flat. For example, the sectional curvature and the injectivity radius in the Sasaki metric are not uniformly bounded in $T^*L$ when $L$ is not flat (c.f.~\cite{Kowalski1971}), so that $C$ must depend heavily on $r$. On the other hand, we can replace the dependency of $C$ on $k$ for one depending on $||d\phi||$, where $\phi$ is the Hamiltonian diffeomorphism generated by $-\pi^*f$ and $\pi:T^*L\to L$ is the natural projection (see~\cite{ChasseLeclercq2023}).
\end{rem}

\section{Properties in the limit} \label{sec:limit}
There is a natural question of whether the properties of the $\mathscr{L}^\star_k$'s survive in $\mathscr{L}^\star_\infty$. This is however not a simple matter to see which properties can be transported to the limit, as the sequence $\{L_i\}\subseteq \mathscr{L}^e_\infty(T^*S^1)$ in Figure~\ref{fig:bad-sequence_hofer} exemplifies. \par

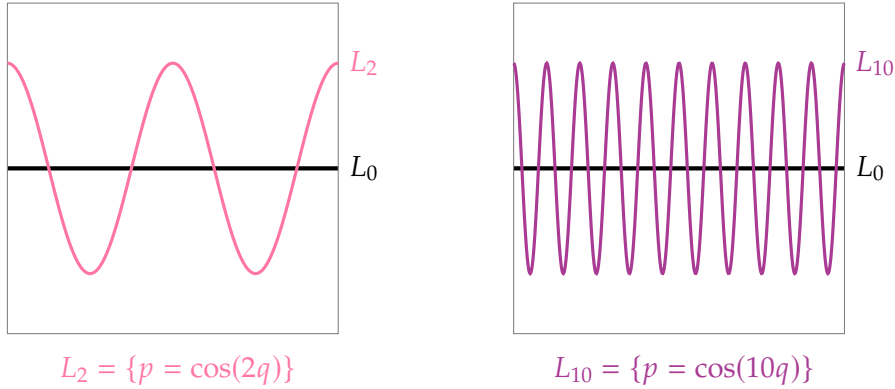
\begin{figure}[ht]
	\centering
	\begin{tikzpicture}[scale=0.7]
		\draw[-,ultra thick] (-pi, 0) -- (pi, 0) node[right] {$L_0$};
		\draw[gray,thin] (-pi,-pi) rectangle (pi,pi);
		\draw[domain=-pi:pi,samples=300,FrenchPink,very thick] plot(\x,{2*cos(2*\x r)}) node[right] {$L_2$};
		\node[FrenchPink] at (0.1,-pi-0.7) {$L_2=\{p=\cos (2q)\}$};
	\end{tikzpicture}
	\qquad\qquad
	\begin{tikzpicture}[scale=0.7]
		\draw[-,ultra thick] (-pi, 0) -- (pi, 0) node[right] {$L_0$};
		\draw[gray,thin] (-pi,-pi) rectangle (pi,pi);
		\draw[domain=-pi:pi,samples=500,Mulberry,very thick] plot(\x,{2*cos(10*\x r)}) node[right] {$L_{10}$};
		\node[Mulberry] at (0.1,-pi-0.7) {$L_{10}=\{p=\cos (10q)\}$};
	\end{tikzpicture}
	\caption{The sequence $\{L_i\}$ and its Hofer limit $L_0$}
	\label{fig:bad-sequence_hofer}
\end{figure}

Indeed, one can easily convince oneself that such a sequence Hofer-converges but does not Hausdorff-converge to $L_0$ (see~\cite{Chasse2023} for a more detailed analysis of this example). This sequence suggests that $\mathscr{L}^\star_k$ is a fairly pathological subspace of $\mathscr{L}^\star_\infty$, since it indicates that every open subset of $\mathscr{L}^\star_\infty$ intersect all $\mathscr{L}^\star_k$ with large $k$. In particular, $\{\mathscr{L}^\star_k\}_{k\geq 1}$ is far from being an exhaustion of $\mathscr{L}^\star_\infty$ by compact sets, which complicates things. \par

Nonetheless, we make here an attempt to extract properties. More precisely, Subsections~\ref{subsec:limit_topo-prop} and~\ref{subsec:limit_classes} are dedicated to proving the two corollaries in the introduction which follow the second principle. That is, in the first subsection, we show that $\mathscr{L}^\star_\infty$ is separable and, in the second one, that it contains at most countably many Hamiltonian isotopy classes. We end this part with Subsection~\ref{subsec:other-limit}, which is a study of another possible limit space of the $\mathscr{L}^\star_k$'s. Even though that space is better suited to the $\mathscr{L}^\star_k$ spaces, we show that it basically has the $C^2$ topology, and thus is far too rigid from the symplectic point of view. \par

\subsection{Topological and metric properties} \label{subsec:limit_topo-prop}
We investigate the implications of Section~\ref{sec:topo} to the space $\mathscr{L}^\star_\infty$ of all Lagrangian submanifolds in $M$ respecting $\star$ when that space is equipped with the metric $d$. This corresponds to the second corollary of Theorem~\ref{thmout:compact-comp} in the introduction. \par

\begin{prop} \label{prop:L_infty-separable}
	The metric space $(\mathscr{L}^\star_\infty,d)$~---~and thus its completion~---~is separable.
\end{prop}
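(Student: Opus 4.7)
The plan is to exploit Corollary~\ref{cor:totally_bnd}, which gives that each $\mathscr{L}^\star_k$ is totally bounded in $d$, together with two elementary facts: a totally bounded metric space is separable, and a countable union of separable spaces is separable. Finally, separability passes to the metric completion.

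First, I would argue that each $\mathscr{L}^\star_k$ is itself separable. By Corollary~\ref{cor:totally_bnd}, for every integer $n \geq 1$ the space $\mathscr{L}^\star_k$ admits a finite cover by $d$-balls of radius $1/n$; picking one center from each such ball and taking the union of these finite sets over $n$ produces a countable subset $D_k \subseteq \mathscr{L}^\star_k$ which is $d$-dense in $\mathscr{L}^\star_k$. Then I would set $D := \bigcup_{k \in \N} D_k$, which is countable as a countable union of countable sets. Using the set-theoretic identity $\mathscr{L}^\star_\infty = \bigcup_{k} \mathscr{L}^\star_k$, any $L \in \mathscr{L}^\star_\infty$ lies in some $\mathscr{L}^\star_k$, so it can be $d$-approximated by elements of $D_k \subseteq D$; hence $D$ is dense in $(\mathscr{L}^\star_\infty, d)$. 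The statement about the completion is then automatic: since $\mathscr{L}^\star_\infty$ is by definition dense in its completion, the same countable set $D$ remains dense there.

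The argument is short enough that ``main obstacle'' is perhaps too strong a phrase. The one conceptual point worth flagging is the warning the author issues right before the proposition: by Figure~\ref{fig:bad-sequence_hofer}, every $d$-neighbourhood in $\mathscr{L}^\star_\infty$ of a given Lagrangian meets $\mathscr{L}^\star_k$ for arbitrarily large $k$, so $\{\mathscr{L}^\star_k\}$ is far from being a topological exhaustion by compacts. One could worry that ``good'' properties of the $\mathscr{L}^\star_k$'s do not transfer cleanly to $\mathscr{L}^\star_\infty$. For separability, however, only the set-theoretic union and the metric on each piece are needed, so this pathology is invisible to the proof. In particular, no appeal to Proposition~\ref{prop:homeo_post} or to Hausdorff control is required here — the quantitative gap between $d$ and $\delta_H$ away from individual $\mathscr{L}^\star_k$'s plays no role in the separability statement.
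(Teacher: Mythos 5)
Your argument is correct and coincides with the paper's proof: both deduce separability of each $\mathscr{L}^\star_k$ from its total boundedness (Corollary~\ref{cor:totally_bnd}) by taking centers of finite covers by balls of radius $1/m$, and then take the countable union of these dense sets over $k$, using $\mathscr{L}^\star_\infty=\cup_k\mathscr{L}^\star_k$. The remark that density passes to the completion and that the pathology of the $\mathscr{L}^\star_k$'s inside $\mathscr{L}^\star_\infty$ is irrelevant here is accurate and matches the paper's framing.
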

\begin{pr*}
	We first note that every totally bounded metric space $X$ is separable. Indeed, for every $m\geq 1$, we can cover $X$ by a finite number of balls of radius $\frac{1}{m}$. Let $x_1,\dots, x_{N_m}$ be the center of these balls. By construction,
	\begin{align*}
		\mathcal{B}:=\bigcup_{m=1}^\infty \{x_1,\dots, x_{N_m}\}
	\end{align*}
	is then a countable dense subset of $X$. \par
	
	In particular, $\mathscr{L}^\star_k$ admits a countable dense subset $\mathcal{B}_k$ for all $k\geq 1$ by Corollary~\ref{cor:totally_bnd}. Therefore, $\cup_k \mathcal{B}_k$ is the required countable dense subset of $\mathscr{L}^\star_\infty=\cup_k\mathscr{L}^\star_k$ 
\end{pr*}

Owing to the equivalence of many topological properties on metric spaces, we directly get the following. \par

\begin{cor} \label{cor:L_infty-separable-consequences}
	The metric space $(\mathscr{L}^\star_\infty,d)$ and its completion are second countable, paracompact, and hereditarily Lindel\"of, i.e.\ for every subspace $A$, an open cover of $A$ admits a countable subcover.
\end{cor}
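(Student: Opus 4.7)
The plan is to derive each of the three properties from the separability established in Proposition~\ref{prop:L_infty-separable} by invoking standard topological implications valid in any metric space. All three are consequences of the fact that, in a metric space, separability and second countability coincide.

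First I would prove second countability. Given a countable dense subset $\mathcal{B}=\{L_n\}$ of $(\mathscr{L}^\star_\infty,d)$ provided by Proposition~\ref{prop:L_infty-separable}, the collection of metric balls $\{B_{1/m}(L_n)\ |\ n,m\in\N\}$ is a countable family of open sets. A direct argument shows that it forms a basis for the metric topology: given any open set $U$ and any $L\in U$, choose $m$ so that $B_{2/m}(L)\subseteq U$ and then, by density, some $L_n\in B_{1/m}(L)$, whence $L\in B_{1/m}(L_n)\subseteq U$. The same argument applies to the completion, which is itself separable (the countable dense subset is inherited).

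Next I would derive the hereditary Lindel\"of property. Every second countable space is hereditarily Lindel\"of: any subspace $A$ inherits a countable basis, and given an open cover of $A$, one selects, for each basis element of $A$ contained in some member of the cover, a single such member; this furnishes a countable subcover. Hence both $(\mathscr{L}^\star_\infty,d)$ and its completion are hereditarily Lindel\"of.

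Finally, paracompactness follows from Stone's theorem, which asserts that every metrizable space is paracompact; alternatively, one may invoke the fact that a regular Lindel\"of space is paracompact (due to Morita), combined with the fact that metric spaces are regular (in fact, normal). Either route gives the result for both $(\mathscr{L}^\star_\infty,d)$ and its completion, since the latter is itself a metric space. No step here is substantial: the only genuine content lies in Proposition~\ref{prop:L_infty-separable}, and once separability is in hand, the remaining implications are purely formal topological facts.
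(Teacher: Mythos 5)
Your proposal is correct and matches the paper's intent: the corollary is stated there as a direct consequence of Proposition~\ref{prop:L_infty-separable} via the standard equivalences for metric spaces (separability gives second countability, hence the hereditary Lindel\"of property, and paracompactness holds by Stone's theorem), which is precisely the chain of implications you spell out. The only difference is that you make these routine steps explicit while the paper leaves them implicit.
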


\begin{rem} \label{rem:separable_new}
	It was pointed out to us by Vincent Humili\`ere and Egor Shelukhin that the Hamiltonian orbit of any Lagrangian $L$ is always separable in the Lagrangian Hofer metric $d_H$. Indeed, this follows from the fact that $C^\infty_c([0,1]\times M)$ is separable in the $C^1$-norm and that $d_H(\phi^1_H(L),\phi^1_G(L))\leq ||\overline{H}\# G||_H\leq C||H-G||_{C^1}$. Therefore, Proposition~\ref{prop:L_infty-separable} is more of a statement on the behaviour of the Hamiltonian isotopy classes of $\mathscr{L}^\star_\infty$. We will explore them more in depth below.
\end{rem}

\subsection{Symplectic properties} \label{subsec:limit_classes}
We now explore the possible Hamiltonian isotopy classes of $\mathscr{L}^\star_\infty$. This corresponds to the third corollary of Theorem~\ref{thmout:finite_classes} in the introduction. \par

\begin{prop} \label{prop:countably_isotopy_classes}
	There are at most countably many Hamiltonian isotopy classes in $\mathscr{L}^\star_\infty$.
\end{prop}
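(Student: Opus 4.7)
The plan is to reduce this statement directly to Proposition~\ref{prop:finitely_isotopy_classes} via a simple countable union argument. By definition, $\mathscr{L}^\star_\infty = \bigcup_{k\geq 1}\mathscr{L}^\star_k$, so every $L\in\mathscr{L}^\star_\infty$ lies in some $\mathscr{L}^\star_k$, and consequently the Hamiltonian isotopy class of $L$ (viewed as a subset of $\mathscr{L}^\star_\infty$) contains at least one representative sitting in $\mathscr{L}^\star_k$.

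First I would let $\mathcal{H}_k$ denote the set of Hamiltonian isotopy classes in $\mathscr{L}^\star_\infty$ that admit a representative in $\mathscr{L}^\star_k$, and $\mathcal{H}_\infty$ the set of all Hamiltonian isotopy classes in $\mathscr{L}^\star_\infty$. The observation above gives
\begin{equation*}
\mathcal{H}_\infty = \bigcup_{k\geq 1} \mathcal{H}_k.
\end{equation*}
By Proposition~\ref{prop:finitely_isotopy_classes}, each $\mathscr{L}^\star_k$ contains only finitely many Hamiltonian isotopy classes, and hence $|\mathcal{H}_k|<\infty$ for every $k$. A countable union of finite sets being countable, we conclude $|\mathcal{H}_\infty|\leq\aleph_0$.

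There is essentially no obstacle here: the only point that requires care is the tautology that $\mathscr{L}^\star_\infty=\bigcup_k\mathscr{L}^\star_k$, which has already been built into the definition in Subsection~\ref{subsec:definitions}. The substance of the statement was carried entirely by Theorem~\ref{thmout:finite_classes}, whose proof in turn used the compactness of $\widehat{\mathscr{L}}^\star_k$ (Corollary~\ref{cor:cpt}) combined with the quantitative gap $d(L,L')\geq A(k)$ between non-Hamiltonian-isotopic Lagrangians in $\mathscr{L}^\star_k$ (Proposition~\ref{prop:isotopy_classes+connected_components}). I would close by briefly remarking, as the authors already do after the statement, that this bound is sharp, since there exist Liouville manifolds (e.g.\ the plumbing of two copies of $T^*S^1$) and monotone manifolds (e.g.\ $\C^3$ with $\rho$-monotone tori) realizing infinitely many Hamiltonian isotopy classes, showing that one cannot hope to improve countable to finite without further hypotheses.
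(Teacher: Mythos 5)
Your proof is correct and follows essentially the same route as the paper: both reduce to Proposition~\ref{prop:finitely_isotopy_classes} via the decomposition $\mathscr{L}^\star_\infty=\bigcup_k\mathscr{L}^\star_k$, the paper merely phrasing the countable-union-of-finite-sets argument as an explicit inductive enumeration of representatives. No gaps to report.
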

\begin{pr*}
	We construct a sequence which enumerates all Hamiltonian isotopy classes as follows. For $k=1$, let $\{L_1,\dots,L_{N_1}\}$ be a collection of Lagrangian submanifolds such that $L_i$ and $L_j$ are not Hamiltonian isotopic if $i\neq j$ and such that any $L\in\mathscr{L}^\star_1$ is Hamiltonian isotopic to one of the $L_i$. By Proposition~\ref{prop:finitely_isotopy_classes}, such a $N_1<\infty$ exists. Then, $\{L_1,\dots,L_{N_{k+1}}\}$ is built from $\{L_1,\dots,L_{N_k}\}$ by adding representatives of the Hamiltonian isotopy classes of $\mathscr{L}^\star_{k+1}-\mathscr{L}^\star_k$ is a similar fashion. Since every $L\in\mathscr{L}^\star_\infty$ must be contained in $\mathscr{L}^\star_k$ for some $k$, it is clear that $\{L_1,L_2,\dots\}$ is in bijection with the Hamiltonian isotopy classes in $\mathscr{L}^\star_\infty$.
\end{pr*}

Since every Hamiltonian isotopy class is contained in a single path-connected component of $\mathscr{L}^\star_\infty$, we also get the following.

\begin{cor} \label{cor:countably_comp}
	The space $(\mathscr{L}^\star_\infty,d)$ has at most countably many path-connected components.
\end{cor}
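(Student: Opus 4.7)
The plan is to construct a surjective map from the set of Hamiltonian isotopy classes of $\mathscr{L}^\star_\infty$ onto the set of path-connected components of $(\mathscr{L}^\star_\infty, d)$, and then invoke Proposition~\ref{prop:countably_isotopy_classes}. Since that proposition already gives at most countably many isotopy classes, surjectivity of this map immediately forces at most countably many path-components.

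First I would verify that each Hamiltonian isotopy class is contained in a single $d$-path-component, which is the hint suggested immediately before the corollary. Given two Hamiltonian isotopic $L, L' \in \mathscr{L}^\star_\infty$, there is a Hamiltonian $H_t$ whose time-one flow sends $L$ to $L'$; the associated path $t \mapsto \phi^t_H(L)$ is continuous in the Lagrangian Hofer metric $d_H$ essentially by definition of the latter. The paper's running assumption that $d \leq C d_H$ for some $C > 0$ then guarantees that this path is $d$-continuous as well, so $L$ and $L'$ lie in the same $d$-path-component. Consequently, the assignment $[L] \mapsto (\text{path-component of } L)$ is a well-defined function from Hamiltonian isotopy classes to path-components.

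Second, surjectivity is immediate: given any path-component $C$, pick any $L_0 \in C$; then the class $[L_0]$ gets sent to the component containing $L_0$, which is precisely $C$. Composing surjectivity with the countability of Hamiltonian isotopy classes yields the claim. The main obstacle one might worry about is the continuity of a Hamiltonian isotopy in the metric $d$, but this is entirely absorbed into the domination $d \leq C d_H$ that is built into the framework of the paper, so the proof reduces to the two formal observations above. No deeper input is expected to be required.
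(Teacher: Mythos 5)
Your argument is correct and is essentially the paper's own: the corollary is deduced directly from Proposition~\ref{prop:countably_isotopy_classes} together with the observation that each Hamiltonian isotopy class lies in a single $d$-path-component, which follows from the $d_H$-continuity of a Hamiltonian isotopy and the standing domination $d\leq Cd_H$. Your write-up merely makes the surjection onto path-components explicit, so there is nothing substantive to add.
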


\subsection{Another limit space} \label{subsec:other-limit}
As noted above, we sadly lose many properties of the $\mathscr{L}^\star_k$ spaces when we go to the limit space $\mathscr{L}^\star_\infty$. This is ultimately because the $\mathscr{L}^\star_k$ spaces are quite pathological in $\mathscr{L}^\star_\infty$. There is however another natural topology on the set $\cup_k \mathscr{L}^\star_k$ which circumvents this issue: the limit $\varinjlim \mathscr{L}^\star_k$ of the inductive system $\mathscr{L}^\star_1\subseteq\mathscr{L}^\star_2\subseteq\dots$. In other words, $\varinjlim \mathscr{L}^\star_k=\cup_k \mathscr{L}^\star_k$ as a set, and a subset $U\subseteq \cup_k \mathscr{L}^\star_k$ is open if and only if $U\cap\mathscr{L}^\star_k$ is open in $\mathscr{L}^\star_k$ for all $k$. In particular, this means that $\varinjlim \mathscr{L}^\star_k=\mathscr{L}^\star_\infty$ as sets, but the topology on the metric spaces $(\mathscr{L}^\star_\infty,d)$ and $(\mathscr{L}^\star_\infty,\delta_H)$ is coarser than that of $\varinjlim \mathscr{L}^\star_k$. Note that by Lemmata~\ref{lem:curvature_comparision} and~\ref{lem:tameness_comparision} below, the topology on $\varinjlim \mathscr{L}^\star_k$ is independent of the choice of Riemannian metric. \par

To exemplify how this topology is better behaved in some regards, we show that its connected components are much simpler than those of $\mathscr{L}^\star_\infty$. \par

\begin{prop} \label{prop:comp_lim-L}
	The connected components and path-connected components of $\varinjlim \mathscr{L}^\star_k$ agree, and they are precisely the Hamiltonian isotopy classes.
\end{prop}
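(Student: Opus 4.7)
The plan is to establish two things in the inductive limit topology on $\varinjlim \mathscr{L}^\star_k$: (i) each Hamiltonian isotopy class is clopen, and (ii) each Hamiltonian isotopy class is path-connected. Together these force the Hamiltonian isotopy classes to be precisely the connected components---since they are clopen and connected (being path-connected), they must coincide with the components of the partition into isotopy classes---and since they are also path-connected, they likewise agree with the path-connected components.

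For (i), I would invoke the characteristic property of the inductive limit topology: a set $U \subseteq \bigcup_k \mathscr{L}^\star_k$ is clopen in $\varinjlim \mathscr{L}^\star_k$ if and only if $U \cap \mathscr{L}^\star_k$ is clopen in $\mathscr{L}^\star_k$ for every $k$. Fixing $L_0 \in \mathscr{L}^\star_\infty$ and setting $\mathscr{L}^{L_0}_\infty := (\Ham(M) \cdot L_0) \cap \mathscr{L}^\star_\infty$, we have $\mathscr{L}^{L_0}_\infty \cap \mathscr{L}^\star_k = \mathscr{L}^{L_0}_k$, and this was already shown to be clopen in $\mathscr{L}^\star_k$ during the proof of Corollary~\ref{cor:nearby_conj}, using the $d$-gap $A(k)$ provided by Proposition~\ref{prop:isotopy_classes+connected_components} to realize $\mathscr{L}^{L_0}_k$ as both a union of $(A(k)/2)$-balls and a set containing all its limit points. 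Hence $\mathscr{L}^{L_0}_\infty$ is clopen in the inductive limit.

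For (ii), given $L, L' \in \mathscr{L}^{L_0}_\infty$, I would pick a Hamiltonian isotopy $\{\phi_t\}_{t \in [0,1]}$ with $\phi_0 = \Id$ and $\phi_1(L) = L'$, and set $L_t := \phi_t(L)$. Since the isotopy is smooth on the compact interval $[0,1]$, the second fundamental form, the tameness modulus, and the ambient confining compact of $L_t$ depend continuously on $t$ and are thus uniformly controlled, so there exists $k$ with $L_t \in \mathscr{L}^\star_k$ for all $t \in [0,1]$. The path $t \mapsto L_t$ is continuous in the Lagrangian Hofer metric, hence in $d \leq C d_H$, hence in $\delta_H$ on $\mathscr{L}^\star_k$ by Corollary~\ref{cor:homeo_pre}; as the inclusion $\mathscr{L}^\star_k \hookrightarrow \varinjlim \mathscr{L}^\star_k$ is continuous by definition of the inductive limit topology, this path is continuous into the limit as well, joining $L$ to $L'$.

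The main technical point to verify carefully is the uniform $k$-boundedness of the family $\{L_t\}_{t \in [0,1]}$: one must confirm that the strict $(k+1)^{-1}$-tameness condition is sufficiently robust that, by smoothness of the isotopy and compactness of the parameter interval, a single $k$ works for all $t$. This reduces to showing that $t \mapsto \inf_{x \neq y} d_M(x,y)/\min\{1, d_{L_t}(x,y)\}$ is bounded below away from zero on $[0,1]$, which follows from the uniform $C^\infty$-convergence $L_t \to L_{t_0}$ as $t \to t_0$ afforded by the smoothness of $\phi_t$. Everything else is an immediate consequence of the results already established in Section~\ref{sec:symp}.
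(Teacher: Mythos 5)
Your proposal is correct and follows essentially the same route as the paper: the isotopy class is shown path-connected by running a smooth Hamiltonian isotopy inside a single $\mathscr{L}^\star_k$ (where Hofer-, $d$-, and Hausdorff-continuity coincide), and it is shown clopen in $\varinjlim\mathscr{L}^\star_k$ via the clopen-ness of $\mathscr{L}^{L_0}_k$ in each $\mathscr{L}^\star_k$ established in Corollary~\ref{cor:nearby_conj}. The only difference is organizational—you package the two facts as ``clopen $+$ path-connected $\Rightarrow$ equals both components,'' whereas the paper chains the inclusions isotopy class $\subseteq$ path-component $\subseteq$ connected component $\subseteq$ isotopy class—but the ingredients and conclusions are identical.
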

\begin{pr*}
	First note that a given Hamiltonian isotopy class is always contained in a single path-connected component of $\varinjlim \mathscr{L}^\star_k$. To see this, suppose that $L, L'\in\varinjlim \mathscr{L}^\star_k$ are Hamiltonian isotopic, and take a Hamiltonian isotopy $\{\phi_t\}$ such that $\phi_1(L)=L'$. By smoothness of the isotopy, the path $c(t)=\phi_t(L)$ is fully contained in $\mathscr{L}^\star_k$ for some $k$ and is Hausdorff-continuous. Therefore, $L$ and $L'$ are in the same path-connected component of $\mathscr{L}^\star_k$, and thus of $\varinjlim \mathscr{L}^\star_k$. \par
	
	On the other hand, given $L\in\varinjlim L_k$, its connected component in $\varinjlim L_k$ must contain the Hamiltonian isotopy class $\mathscr{L}_\infty^L$ of $L$. Indeed, in Corollary~\ref{cor:nearby_conj}, we have shown that $\mathscr{L}^L_k=\mathscr{L}_\infty^L\cap\mathscr{L}^\star_k$ is clopen in $\mathscr{L}^\star_k$ for all $k$, so that $\mathscr{L}_\infty^L$ must also be clopen in $\varinjlim L_k$. But a clopen must contain the connected component of its elements, which proves the inclusion. \par
	
	Since each path-connected component is contained in a single connected component, this proves the result.
\end{pr*}

We now compare the limit topology with other ones to better understand it. The following example shows that these topologies are strictly coarser than the limit topology. Note that the example is for $M=T^*S^1$, but it can easily be generalized to any symplectic manifold by using a Darboux chart adapted to a given Lagrangian submanifold. \par

\begin{ex*}
	On $M=T^*S^1$, consider the 1-parameter family of Hamiltonian $\{H^s\}_{s>0}$ defined via
	\begin{align*}
		H^s(q,p)=s^{3/2}\beta(q)\sin\left(\frac{q}{s}\right),
	\end{align*}
	where we have identified $S^1$ with $\R/\Z$ and $\beta:[0,1]\to [0,1]$ is zero near $\{0,1\}$ and takes value 1 on $[\frac{1}{4},\frac{3}{4}]$. Set $L^0=\{p=0\}$ and $L^s=\phi^{H^s}_1(L^0)=\graph (-dH^s)$. \par
	
	It is easy to see that $s\mapsto L^s$, $s\in [0,1]$, defines a path which is continuous with respect to both $\delta_H$ and $d_H$ (and thus also $d$). In particular, the set $\{L^s\}_{s\in [0,1]}\subseteq \mathscr{L}^e_\infty$ is compact in all these metrics. On the other hand, it is not contained in any $\mathscr{L}^e_k$, since $\lim_{s\to 0}||B_{L^s}||=\infty$. Therefore, it cannot be compact in the limit topology (see Lemma~\ref{lem:limit_compacts} below).
\end{ex*}

\begin{lem} \label{lem:limit_compacts}
	Let $\{X_k\subseteq X\}$ be an increasing sequence of compact subspaces of a Hausdorff space $X$. The space $\varinjlim X_k$ is Hausdorff. Moreover, a subset $A$ of $\varinjlim X_k$ is compact if and only if it is closed and $A\subseteq X_k$ for some $k$.
\end{lem}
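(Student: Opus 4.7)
The plan is to handle the two claims separately, relying on the fact that by construction a subset $A\subseteq \cup_k X_k$ is open (resp.\ closed) in $\varinjlim X_k$ if and only if $A\cap X_n$ is open (resp.\ closed) in $X_n$ for every $n$, together with the fact that each $X_n$ is Hausdorff (being a subspace of $X$) and compact.

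For Hausdorffness, given distinct $x,y\in\varinjlim X_k$, I would use that both points lie in $X$ to get disjoint open $U,V\subseteq X$ with $x\in U$ and $y\in V$. Then $\tilde U:=U\cap\cup_k X_k$ and $\tilde V:=V\cap\cup_k X_k$ are disjoint, and $\tilde U\cap X_n=U\cap X_n$ is open in $X_n$ because $X_n$ carries the subspace topology from $X$; the same holds for $\tilde V$. This gives disjoint open neighbourhoods in $\varinjlim X_k$, as required. For the easy direction of the compactness characterization, if $A$ is closed in $\varinjlim X_k$ and $A\subseteq X_k$, then $A=A\cap X_k$ is closed in $X_k$ by the definition of the inductive topology, hence compact there by compactness of $X_k$; the continuous inclusion $X_k\hookrightarrow\varinjlim X_k$ then shows $A$ is compact in the limit.

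The only real content is the converse, which I would do by contradiction. Suppose $A$ is compact but not contained in any $X_k$. Then I can inductively construct a strictly increasing sequence of indices $n_1<n_2<\dots$ and points $a_j\in A$ with $a_j\in X_{n_j}\setminus X_{n_{j-1}}$, by picking $a_j\in A\setminus X_{n_{j-1}}$ (which is nonempty by hypothesis) and taking $n_j$ large enough that $a_j\in X_{n_j}$. The crucial observation is then that for every $n$, the set $\{a_j\}_{j\geq 1}\cap X_n$ is finite: choosing $j_0$ with $n_{j_0}\geq n$, it is contained in $\{a_1,\dots,a_{j_0}\}$. Since $X_n$ is Hausdorff, every finite subset is closed in $X_n$, so $\{a_j\}_{j\geq 1}$ is closed in $\varinjlim X_k$; the same argument applied to $\{a_j\}_{j\neq j_0}$ shows that each singleton $\{a_{j_0}\}$ is open in $\{a_j\}_{j\geq 1}$, i.e.\ $\{a_j\}_{j\geq 1}$ is discrete. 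But then $\{a_j\}_{j\geq 1}$ is a closed, hence compact, infinite discrete subspace of the compact set $A$, which is impossible.

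The main obstacle is precisely this last step: extracting a contradiction from a hypothetical ``escape sequence''. Everything hinges on choosing the indices $n_j$ so that each finite-stage intersection $\{a_j\}\cap X_n$ becomes finite, which is what lets us leverage Hausdorffness at each stage to conclude that the sequence is closed and discrete in the limit. Hausdorffness of $\varinjlim X_k$ (already proved) is implicitly used to turn compactness of $A$ into closedness, though here it is used only through the weaker fact that points of the discrete sequence are closed.
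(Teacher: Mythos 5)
Your proof is correct and follows essentially the same route as the paper: the identical Hausdorffness argument via restrictions of separating opens in $X$, and the same escape-sequence contradiction, using finiteness of the intersections with each $X_n$ to conclude the sequence is closed and discrete, which is incompatible with compactness of $A$. The only differences are cosmetic (you spell out the easy direction, which the paper calls obvious, and you index the escaping points slightly more carefully).
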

\begin{pr*}
	We first prove the Hausdorffness. If $x\neq y\in \varinjlim X_k=\cup_k X_k$, then there are open subsets $U$ and $V$ of $X$ such that $x\in U$ and $y\in V$, but $U\cap V=\emptyset$. But then, the restrictions $U\cap X_k$ and $V\cap X_k$ are open for each $k$, so that they are also open in $\varinjlim X_k$. Thus, $x$ and $y$ are also separated in $\varinjlim X_k$, and $\varinjlim X_k$ is indeed Hausdorff. \par
	
	We now prove the equivalence. One direction is obvious. Let thus $A$ be compact in $\varinjlim X_k$. Since the limit space is Hausdorff, $A$ must be closed. Suppose however that $A$ is not contained in any $X_k$. Then, for every $k$, there is some $x_k\in A-X_k$. In particular, the set $S=\{x_k\}\subseteq A$ is such that $S\cap X_k$ is finite for all $k$. Therefore, that intersection is closed, since $X_k$ is Hausdorff. By definition of the limit topology, this thus means that $S$ itself is closed. In fact, this logic shows that every subset of $S$ is closed, i.e.\ $S$ is a closed infinite discrete subset of $A$. But this is impossible if $A$ is compact, hence the contradiction.
\end{pr*}

The above lemma allows us to completely characterize the limit topology. \par

\begin{prop} \label{prop:charac_converging-sequences_limit}
	A sequence $\{L_i\}\subseteq\varinjlim\mathscr{L}^\star_k$ converges to some $L\in \varinjlim\mathscr{L}^\star_k$ if and only if there exists some $k\in\N$ such that $\{L_i\}\subseteq\mathscr{L}^\star_k$ and $L_i\to L$ in the Hausdorff topology~---~or in any of the many equivalent topology on $\mathscr{L}^\star_k$.
\end{prop}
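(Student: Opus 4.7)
The plan is to use Lemma~\ref{lem:limit_compacts} to reduce $\varinjlim$-convergence to convergence inside a single $\mathscr{L}^\star_k$, after which the equivalences from Corollary~\ref{cor:homeo_pre} and Proposition~\ref{prop:homeo_post} handle the passage between the Hausdorff topology and the other topologies on $\mathscr{L}^\star_k$. The ``if'' direction is straightforward: suppose $\{L_i\}\cup\{L\}\subseteq\mathscr{L}^\star_k$ and $L_i\to L$ in the Hausdorff topology. For any open neighbourhood $U$ of $L$ in $\varinjlim\mathscr{L}^\star_k$, the set $U\cap\mathscr{L}^\star_k$ is open in $\mathscr{L}^\star_k$ by the very definition of the inductive limit topology, so $L_i\in U\cap\mathscr{L}^\star_k\subseteq U$ eventually; hence $L_i\to L$ in $\varinjlim\mathscr{L}^\star_k$.

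For the ``only if'' direction, suppose $L_i\to L$ in $\varinjlim\mathscr{L}^\star_k$. Then $A:=\{L_i\}_{i\geq 1}\cup\{L\}$ is compact in $\varinjlim\mathscr{L}^\star_k$ by the standard argument for convergent sequences in a Hausdorff space: any open cover of $A$ must contain some open set around $L$ that swallows all but finitely many of the $L_i$, and these finitely many exceptions are then covered by finitely many further sets from the cover. Applying Lemma~\ref{lem:limit_compacts} yields some $k\in\N$ with $A\subseteq\mathscr{L}^\star_k$, so that all $L_i$ and $L$ lie in the same $\mathscr{L}^\star_k$.

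The only remaining step is to upgrade $\varinjlim$-convergence inside $\mathscr{L}^\star_k$ to Hausdorff convergence in $\mathscr{L}^\star_k$; this is the main obstacle, because a priori the topology on $\mathscr{L}^\star_k$ inherited as a subspace of $\varinjlim\mathscr{L}^\star_k$ may be strictly coarser than the Hausdorff topology, so the latter convergence does not formally follow from the former. I would handle this by contradiction. If $L_i\not\to L$ in $\delta_H$, then some subsequence $\{L_{i_j}\}$ satisfies $\delta_H(L_{i_j},L)\geq\epsilon$ for some $\epsilon>0$. Consider
\[
N:=\left\{L'\in\textstyle\bigcup_m\mathscr{L}^\star_m\ \middle|\ \delta_H(L',L)\geq\epsilon\right\}.
\]
For each $m$, the intersection $N\cap\mathscr{L}^\star_m$ is $\delta_H$-closed in $\mathscr{L}^\star_m$ by continuity of $\delta_H(\cdot,L)$, and hence $N$ is closed in $\varinjlim\mathscr{L}^\star_k$ by definition of the limit topology. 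Since $\{L_{i_j}\}\subseteq N$ yet $L_{i_j}\to L\notin N$ in $\varinjlim\mathscr{L}^\star_k$, this contradicts the elementary fact that a convergent sequence cannot escape a closed set, completing the proof.
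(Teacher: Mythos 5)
Your proof is correct, and it shares the paper's key step: both you and the paper obtain the containment in a single $\mathscr{L}^\star_k$ by observing that a convergent sequence together with its limit is compact in $\varinjlim\mathscr{L}^\star_k$ and then invoking Lemma~\ref{lem:limit_compacts}. Where you diverge is in the remaining point-set bookkeeping. The paper packages it into Lemma~\ref{lem:converging-sequences_compact}, characterizing convergence via compactness of $\{x_i\}\cup\{x\}$ in a Hausdorff space; that lemma carries the hypothesis that no element of the sequence is a limit point, so using it in both directions requires a little care with degenerate sequences. You instead argue directly from the definition of the inductive limit topology: for the ``if'' direction you simply intersect a limit-open neighbourhood of $L$ with $\mathscr{L}^\star_k$, and for the upgrade from $\varinjlim$-convergence to $\delta_H$-convergence you show that the complement of a $\delta_H$-ball around $L$ is closed in the limit topology, since its trace on each $\mathscr{L}^\star_m$ is $\delta_H$-closed and the topology on $\mathscr{L}^\star_m$ is (by Corollary~\ref{cor:homeo_pre}) the Hausdorff one. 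This second argument in fact establishes the general statement that the $\delta_H$-topology on $\mathscr{L}^\star_\infty$ is coarser than the limit topology (a fact the paper records just before the proposition), so you could also have quoted it and skipped the contradiction; either way, your route is self-contained, avoids the auxiliary lemma and its limit-point hypothesis, and costs only a few extra lines, while the paper's route isolates a reusable point-set fact. No gaps.
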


Indeed, the result is a direct consequence of Lemma~\ref{lem:limit_compacts} just above and of the following simple fact from point-set topology. \par

\begin{lem} \label{lem:converging-sequences_compact}
	Let $\{x_i\}_{i\in\N}$ be a sequence in a Hausdorff space $X$ such that none of its elements is a limit point. The sequence converges to some $x\in X$ if and only if the subspace $\overline{S}=\{x_i\}_{i\in\N}\cup\{x\}$ is compact.
\end{lem}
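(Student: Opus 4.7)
The plan is to prove each implication separately, with the forward direction an immediate consequence of the definition of convergence and the backward direction relying on the classical fact that compact subsets of Hausdorff (in fact $T_1$) spaces are limit-point compact: every infinite subset admits a limit point inside the compact.

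For the forward direction ($\Rightarrow$), assume $x_i \to x$. Given an open cover $\{U_\alpha\}$ of $\overline{S}$, I would pick some $U_{\alpha_0}$ containing $x$; by convergence, $x_i \in U_{\alpha_0}$ for every $i$ beyond some $N\in\N$. The remaining finitely many points $x_0,\dots,x_{N-1}$ can each be covered by some $U_{\alpha_i}$, producing the desired finite subcover. Note that the hypothesis on $x_i$ not being limit points is not even used here.

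For the backward direction ($\Leftarrow$), suppose instead that $\overline{S}$ is compact but $x_i \not\to x$. Then some neighbourhood $U$ of $x$ misses infinitely many terms of the sequence; denote the corresponding infinite set by $T \subseteq \overline{S}$. By limit-point compactness, $T$ admits a limit point $p \in \overline{S}$. Since any limit point of $T \subseteq \{x_j\}$ is also a limit point of the ambient set $\{x_j\}$, the hypothesis that no $x_j$ is a limit point of the sequence forces $p = x$. But $U \cap T = \varnothing$ exhibits a neighbourhood of $x$ not meeting $T$, contradicting that $x$ is a limit point of $T$; hence $x_i \to x$.

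The only technical step is the invocation of limit-point compactness for compact Hausdorff spaces, which I would prove inline by a short argument: if an infinite subset $B$ of a compact space $A$ had no limit point in $A$, then each $a \in A$ would admit an open neighbourhood meeting $B$ in at most $\{a\}$, and a finite subcover of $A$ by such neighbourhoods would force $B$ to be finite. Beyond this lemma, everything reduces to bookkeeping about the definition of convergence, so no significant obstacles are anticipated.
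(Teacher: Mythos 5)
Your proof is correct, but the nontrivial (backward) implication runs along a genuinely different route than the paper's. The paper argues directly: since no $x_i$ is a limit point, each $x_i$ admits an open $U_i$ with $U_i\cap\overline{S}=\{x_i\}$ (Hausdorffness is used to shrink away the other finitely many points and $x$); adding an arbitrary neighbourhood $U_0$ of $x$ gives an open cover of $\overline{S}$, and extracting a finite subcover shows at once that all but finitely many terms lie in $U_0$, i.e.\ $x_i\to x$. You instead argue by contradiction through the Bolzano--Weierstrass property: compact spaces are limit-point compact (which you correctly reprove inline, and which needs no separation axiom), the infinite set $T$ of terms avoiding a bad neighbourhood $U$ of $x$ must accumulate somewhere in $\overline{S}$, the hypothesis rules out every $x_j$ as the accumulation point, and $x$ itself is excluded by $U\cap T=\emptyset$. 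The paper's covering argument is constructive and makes explicit where Hausdorffness enters; yours outsources the compactness work to a standard lemma at the price of a proof by contradiction. The one step you should flesh out is the assertion that $T$ is an \emph{infinite set of points}: failure of convergence only gives infinitely many \emph{indices} $i$ with $x_i\notin U$, and to conclude the corresponding set of points is infinite you need that no value is repeated infinitely often, which is exactly what the hypothesis that no element is a limit (cluster) point of the sequence guarantees. This is the same index-versus-point elision the paper itself makes in the opposite direction (finitely many uncovered points versus finitely many indices), so it is a remark to add rather than a flaw in the approach.
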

\begin{pr*}
	Suppose that $x_i\to x$, and let $\{U_a\}_{a\in A}$ be an open cover of $\overline{S}$. Then, there is some $a_0\in A$ such that $x\in U_{a_0}$. But by convergence, there is some $N\in\N$ such that $x_i\in U_{a_0}$ for all $i> N$. It then suffices to pick $a_i$ such that $x_i\in U_{a_i}$ for each $i\leq N$ to get a finite subcover $\{U_{a_i}\}_{i=0}^N$.
	
	Suppose now that $\overline{S}$ is compact, and let $U_0$ be an open neighbourhood of $x$. Note that for each $x_i$, there is some open $U_i$ such that $U_i\cap\overline{S}$ is finite. Indeed, otherwise, $x_i$ would be a limit point of the sequence, which would be a contradiction with the hypothesis on $\{x_i\}$. Since $X$ is Hausdorff, we may suppose that $U_i\cap\overline{S}=\{x_i\}$. Therefore, there must be only a finite number of $i$ such that $x_i\notin U_0$, otherwise $\{U_i\}_{i=0}^\infty$ would be an open cover of $\overline{S}$ with no finite subcover.
\end{pr*}

Given Proposition~\ref{prop:charac_converging-sequences_limit}, we can see where the limit topology sits with regard to the various $C^k$-topologies. \par

\begin{cor} \label{cor:limit-topo_versus_C^k}
	The limit topology is (strictly) finer than the $C^{1,\alpha}$-topology, for any $0<\alpha<1$, but coarser than the $C^2$-topology.
\end{cor}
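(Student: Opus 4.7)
\begin{pr*}
The plan is to verify the two inclusions on the level of open sets and then produce an explicit sequence witnessing strictness. Both comparisons hinge on translating between the Hausdorff neighbourhoods of $\varinjlim\mathscr{L}^\star_k$ and analytic norms via Theorem~\ref{thm:second}, combined with the fact that the second fundamental form of a graph $\graph\sigma$ in a Weinstein neighbourhood is controlled to leading order by $\nabla^2\sigma$ rather than by $\nabla\sigma$.

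For the refinement of $C^{1,\alpha}$, I would fix a $C^{1,\alpha}$-open set $U$ and $L\in U\cap\mathscr{L}^\star_k$; it suffices to exhibit a Hausdorff-neighbourhood of $L$ in $\mathscr{L}^\star_k$ contained in $U$. Applied to Hausdorff-convergent sequences at $L$, Theorem~\ref{thm:second} describes every sufficiently Hausdorff-close $L'\in\mathscr{L}^\star_k$ as $\graph\sigma$ for a form with $||\sigma||_{C^{1,\alpha'}}$ as small as desired, for any $\alpha'\in(\alpha,1)$. The standard inclusion $C^{1,\alpha'}\hookrightarrow C^{1,\alpha}$ then places this Hausdorff ball inside the given $C^{1,\alpha}$-ball, hence in $U$. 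For the coarsening by $C^2$, I would take $U$ limit-open with $L\in U\cap\mathscr{L}^\star_k$. Then $U\cap\mathscr{L}^\star_{k+1}$ is open in $\mathscr{L}^\star_{k+1}$ and contains some Hausdorff ball around $L$. A $C^2$-small graph $\graph\sigma$ over $L$ satisfies $||B_{\graph\sigma}||<k+1$, remains tame, and sits inside $W_{k+1}$, so it belongs to $\mathscr{L}^\star_{k+1}$; since $C^2$-smallness implies Hausdorff-smallness, shrinking the $C^2$-ball fits it inside $U$.

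The main obstacle is strictness, which requires exploiting the mismatch between $C^{1,\alpha}$ regularity and the geometric $\mathscr{L}^\star_k$ control. In $T^*S^1$ with $\star=e$, I would consider the exact Lagrangians $L_n:=\graph df_n=\{(q,n^{-a}\sin(nq))\}$, where $f_n(q):=-n^{-(1+a)}\cos(nq)$ and $a:=(3+\alpha)/2\in(1+\alpha,2)$. A direct calculation yields $||\sigma_n||_{C^{1,\alpha}}=O(n^{1-a+\alpha})\to 0$ by balancing the two natural Hölder bounds $n^{2-a}|q-q'|^{1-\alpha}$ and $n^{1-a}|q-q'|^{-\alpha}$ at the scale $|q-q'|\sim n^{-1}$, while the geodesic curvature of $L_n$ is of order $|f_n''|=n^{2-a}\to\infty$. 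Hence no single $\mathscr{L}^e_k$ contains the whole sequence, and Proposition~\ref{prop:charac_converging-sequences_limit} forces $L_n\not\to L_0:=\{p=0\}$ in the limit topology. Setting $V:=\varinjlim\mathscr{L}^e_k\setminus\{L_n\mid n\geq 1\}$, each intersection $V\cap\mathscr{L}^e_k$ is the complement of a finite set in a Hausdorff metric space, hence open; thus $V$ is limit-open. Yet $L_0\in V$ has no $C^{1,\alpha}$-neighbourhood in $V$ since $L_n\to L_0$ in $C^{1,\alpha}$, proving strictness.
\end{pr*}
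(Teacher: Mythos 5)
Your two comparison directions are essentially the paper's argument, just phrased with open sets instead of sequences: finer than $C^{1,\alpha}$ comes from Theorem~\ref{thm:second} (Hausdorff-closeness in a fixed $\mathscr{L}^\star_k$ forces $C^{1,\alpha'}$-smallness of the graphing form), and coarser than $C^2$ comes from the fact that $C^2$-small perturbations keep the second fundamental form bounded, preserve tameness, and are Hausdorff-small, so they land in a fixed $\mathscr{L}^\star_{k+1}$ where Proposition~\ref{prop:charac_converging-sequences_limit} (or openness of $U\cap\mathscr{L}^\star_{k+1}$) applies; working with $U\cap\mathscr{L}^\star_k$ directly is in fact slightly cleaner than the paper's sequential phrasing, since continuity out of the inductive limit is exactly continuity on each stratum. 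Where you genuinely diverge is strictness. The paper recycles its earlier family $H^s$ with $s^{3/2}$ replaced by $s^{2+\alpha}$, whereas you build a discrete sequence $L_n=\graph df_n$ with exponent $a=(3+\alpha)/2\in(1+\alpha,2)$ and then exhibit an explicit limit-open set $V$ whose trace on each $\mathscr{L}^e_k$ is cofinite, which is not $C^{1,\alpha}$-open at $L_0$. Your exponent choice is actually the sharper one: at the paper's borderline exponent $2+\alpha$ the H\"older-$\alpha$ seminorm of the second derivative only stays bounded (convergence holds in $C^{1,\alpha'}$ for $\alpha'<\alpha$), while your scaling makes the $C^{1,\alpha}$-norm itself tend to zero, and your curvature blow-up $n^{2-a}$ correctly rules out containment in any single $\mathscr{L}^e_k$. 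Two small points: the quantity governing the geodesic curvature of $L_n=\{p=f_n'(q)\}$ is $|f_n'''|$, not $|f_n''|$ (your stated order $n^{2-a}$ is the right one, so this is only a notational slip); and, like the paper's example, your strictness witness lives in $T^*S^1$ with $\star=e$, so for a general $(M,\star)$ one should add the one-line remark that the same oscillating exact graphs can be implanted in a Weinstein neighbourhood of any $L\in\mathscr{L}^\star_\infty$ via a cutoff, the resulting Lagrangians being exact deformations of $L$ and hence again in $\mathscr{L}^\star_\infty$.
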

\begin{pr*}
	By Theorem~\ref{thm:second}, the $C^{1,\alpha}$-topology is equivalent to the Hausdorff topology on $\mathscr{L}^\star_k$ for any $0<\alpha<1$. Therefore, every convergent sequence in the limit topology is also convergent (with the same limit) in the $C^{1,\alpha}$-topology. To see that the inclusion of topology is strict, just use the example above, but replace $s^{3/2}$ by $s^{2+\alpha}$ in the definition of $H^s$.
	
	The $C^2$-topology is coarser because every $C^2$-converging sequence has uniformly bounded second fundamental form (direct computation), are uniformly $\epsilon$-tame (this is the idea of Lemma~\ref{lem:tameness_2d}), and obviously also converge in the Hausdorff topology to the same limit.
\end{pr*}

\vfill

\pagebreak
\appendix
\renewcommand{\thesection}{{A\Alph{section}}}
\renewcommand{\thesubsection}{A\Alph{section}.\arabic{subsection}}
\titleformat{\section}[block]{\large\scshape}{Appendix \Alph{section}.}{0.5em}{}
\titleformat{\subsection}[block]{\scshape}{\Alph{section}.\arabic{subsection}.}{0.5em}{}

\section{Some results in Riemannian geometry} \label{sec:riem-geo}
This appendix compiles all the results in Riemannian geometry which were required throughout the paper, but that the author could not find in the literature. We suspect that many of these results are known to expect, but this is not the case for all of them: at least Lemma~\ref{lem:complete_sasaki} has appeared as a conjecture in a paper of Albuquerque~\cite{Albuquerque2019}. Below, Subsection~\ref{subsec:geo_sasaki} compiles the results on the Sasaki metric, Subsection~\ref{subsec:geo_2d} on Riemannian surfaces, and Subsection~\ref{subsec:geo_comparison} on comparison results between Riemannian invariants of Lagrangian submanifolds. \par

\subsection{Results on the Sasaki metric} \label{subsec:geo_sasaki}
\subsubsection*{Behavior along graphs}
We begin by proving some useful results on the behaviour of some Riemannian invariants of graphs in $TL$ of vector fields under the transformation $(x,v)\mapsto (x,tv)$. More precisely, we show that the norm of the second fundamental form and the tameness constant must the nondecreasing in $t$ if the vector field is (locally) a gradient of a $C^2$-small function. \par

We begin by studying the norm of the curvature. The proof is elementary but still subtle. \par

\begin{lem} \label{lem:curvature}
	Equip $L$ with a Riemannian metric $g=\langle\cdot,\cdot\rangle$ and $TL$ with its associated Sasaki metric. Let $\xi=\operatorname{grad}H\in\mathfrak{X}(L)$. If $|\xi|$ and $|\nabla\xi|$ are sufficiently small, then the function $t\mapsto ||B_{t\xi}||$ is nondecreasing for $t\in [0,1]$.
\end{lem}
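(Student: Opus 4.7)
My plan is to compute $B_{t\xi}$ explicitly in the horizontal--vertical decomposition of $T(TL)$ provided by the Sasaki construction, and to extract a factorization $||B_{t\xi}(V,W)||^2 = t^2\Phi_{V,W}(t)$ whose derivative can be shown nonnegative once $|\xi|$ and $|\nabla\xi|$ are small enough. The key structural input is that $\xi=\operatorname{grad} H$ makes $S:=\nabla\xi=\operatorname{Hess} H$ a $g$-symmetric endomorphism on each $T_xL$, which in turn yields the clean normal-bundle parametrization $\{(-tSw,w):w\in T_xL\}$ as $g_{\mathrm{Sasaki}}$-orthogonal complement of the tangent space $\{(v,tSv):v\in T_xL\}$ to $\operatorname{graph}(t\xi)$ at $(x,t\xi(x))$.

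Extending orthonormal data $V_L,W_L\in T_xL$ to local vector fields on $L$ with vanishing covariant derivative at $x$ and lifting them to vector fields $V=V_L^h+(tSV_L)^v$ and $W=W_L^h+(tSW_L)^v$ on $TL$, the Kowalski--Dombrowski formulas for the Levi-Civita connection of the Sasaki metric give, after a term-by-term calculation at $(x,t\xi(x))$, a horizontal part $\frac{t^2}{2}[R(\xi,SW_L)V_L+R(\xi,SV_L)W_L]$ and a vertical part $t(\nabla_{V_L}S)W_L-\frac{t}{2}R(V_L,W_L)\xi$ for $\tilde\nabla_V W$. Solving the normal-space equation $(a,tSa)+(-tSb,b)=(P,Q)$ for $b$ gives $b=(I+t^2S^2)^{-1}(Q-tSP)$, whence
\begin{equation*}
	||B_{t\xi}(V_L,W_L)||^2=\bigl\langle (I+t^2S^2)^{-1}(Q-tSP),\,Q-tSP\bigr\rangle=t^2\,\Phi_{V_L,W_L}(t,x),
\end{equation*}
with $\Phi_{V_L,W_L}$ smooth in $t$ and satisfying $\Phi_{V_L,W_L}(0,x)=\bigl|(\nabla_{V_L}S)W_L-\tfrac{1}{2}R(V_L,W_L)\xi(x)\bigr|^2$.

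Differentiating yields $\frac{d}{dt}||B_{t\xi}(V_L,W_L)||^2=t(2\Phi+t\,\partial_t\Phi)$, so monotonicity is equivalent to the pointwise estimate $2\Phi+t\,\partial_t\Phi\geq 0$ on $[0,1]$. Expanding $(I+t^2S^2)^{-1}$, $Q-tSP$, and the induced-metric normalization factors $(1+t^2|SV_L|^2)(1+t^2|SW_L|^2)$ coming from rescaling $V,W$ to unit length in $g_t=\sigma_t^*g_{\mathrm{Sasaki}}$, one checks that $|\partial_t\Phi_{V_L,W_L}(t,x)|$ is bounded on $[0,1]$ by $\Phi_{V_L,W_L}(0,x)$ times a polynomial in $|\xi|, |\nabla\xi|, |R|$ whose smallest-order terms are quadratic in $(|\xi|,|\nabla\xi|)$. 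A threshold $\epsilon>0$ depending only on $|R|$ can thus be extracted below which $2\Phi+t\,\partial_t\Phi\geq 0$ holds uniformly in $(t,x,V_L,W_L)$, and because the supremum of nondecreasing functions is nondecreasing, passing to the sup over unit $V_L,W_L\in T_xL$ and over $x\in L$ preserves monotonicity in $t$.

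The main technical obstacle is the term-by-term manipulation of the Sasaki-connection formulas, particularly the identification and bookkeeping of the curvature-of-$L$ contributions $R(\xi,\,\cdot\,)\,\cdot$ and $R(\cdot,\cdot)\xi$ arising from $\tilde\nabla_{X^v}Y^h$ and $\tilde\nabla_{X^h}Y^h$ evaluated at $u=t\xi$. These couple the Hessian $S$ with the ambient curvature and with $\xi$ itself, and they must be absorbed without disturbing the leading $t^2$-order behavior driving the monotonicity. The $g$-symmetry of $S$ coming from the gradient assumption is crucial throughout: without it, a nonvanishing antisymmetric contribution to the vertical part of $\tilde\nabla_VW$ would survive, producing a linear-in-$t$ correction to $\Phi(t)$ whose sign cannot be controlled by smallness of $(|\xi|,|\nabla\xi|)$.
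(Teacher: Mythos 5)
Your setup is essentially the paper's: the same horizontal/vertical description of the tangent and normal spaces of $\operatorname{graph}(t\xi)$ (using the $g$-symmetry of $S=\nabla\xi$), the same Sasaki connection formulas, the same reduction to monotonicity of an explicit scalar function of $t$ for each fixed pair of directions, and the same final step of taking the supremum over the unit sphere. Your computation of the horizontal part $\tfrac{t^2}{2}[R(\xi,SW_L)V_L+R(\xi,SV_L)W_L]$ and vertical part $t(\nabla_{V_L}S)W_L-\tfrac{t}{2}R(V_L,W_L)\xi$, and the projection formula $b=(I+t^2S^2)^{-1}(Q-tSP)$, are correct and equivalent to the paper's expression for $B_\xi$.

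The gap is in the pivotal estimate. Write $Q-tSP=t\bigl(A-\tfrac{t^2}{2}SC\bigr)$ with $A=(\nabla_{V_L}S)W_L-\tfrac12R(V_L,W_L)\xi$ and $C=R(\xi,SW_L)V_L+R(\xi,SV_L)W_L$. Your claim that $|\partial_t\Phi|\le\Phi(0)\cdot p(|\xi|,|\nabla\xi|,|R|)$ fails whenever $A=0$ but $SC\ne0$: then $\Phi(0)=0$ while $\Phi(t)=\tfrac{t^4}{4}\langle(I+t^2S^2)^{-1}SC,SC\rangle/D(t)$ has nonvanishing $t$-derivative. Worse, in the intermediate regime where $A$ is nonzero but comparable to $SC$ (model: $A=\lambda SC$ with $0<\lambda<\tfrac12$), the function $t^2\bigl|A-\tfrac{t^2}{2}SC\bigr|^2=\tfrac{|SC|^2}{4}\,t^2(2\lambda-t^2)^2$ is genuinely decreasing for $t^2\in(2\lambda/3,2\lambda)$, and the correction factors $(I+t^2S^2)^{-1}$ and $(1+t^2|SV_L|^2)^{-1}(1+t^2|SW_L|^2)^{-1}$, being $1+\mathcal{O}(|\nabla\xi|^2)$, cannot restore monotonicity. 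Smallness of $|\xi|$ and $|\nabla\xi|$ does not exclude this regime, because $A$ involves second derivatives of $\xi$ and admits no lower bound; hence no threshold ``depending only on $|R|$'' can force $2\Phi+t\,\partial_t\Phi\ge0$ uniformly in $(x,V_L,W_L)$, and your direction-by-direction monotonicity claim does not follow as stated. The paper confronts exactly this point: it splits into the cases where the $t$-independent coefficient ($\alpha$ in its notation) vanishes or not, uses $|\beta|\le\|R\|\,|\xi|\,|\nabla\xi|^2$ only to absorb the correction relative to a fixed nonzero leading term, and acknowledges in Remark~\ref{rem:curvature_small} that the smallness threshold depends a priori on the directions, recovering uniformity from compactness of the unit sphere. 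To close your argument you need a comparable case analysis for degenerate directions (or an argument carried out at the level of the operator norm rather than pointwise in $(V_L,W_L)$); as written, the step from the formula for $\Phi$ to $2\Phi+t\,\partial_t\Phi\ge0$ is not justified.
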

\begin{pr*}
	For $X\in T_xL$, we denote by $X^h$ and $X^v$ its horizontal and vertical lifts in $T_{(x,y)}TL$, respectively. Then, we have that
	\begin{align}
		T_{\xi(x)}\xi(L)&=\left\{\widetilde{X}=X^h+(\nabla_X\xi)^v\ \middle|\ X\in T_xL\right\} \label{eqn:tgt}
		\intertext{and}
		T_{\xi(x)}^\perp\xi(L) &=\left\{\widetilde{Z}=Z^v-((\nabla\xi)^*Z)^h\ \middle|\ Z\in T_xL\right\}, \label{eqn:perp}
	\end{align}
	where $\langle (\nabla\xi)^*Z,Y\rangle=\langle Z,\nabla_Y\xi\rangle$ for all $Y\in T_xL$ (see for example~\cite{AbbassiYampolsky2004}). Denoting by $\widetilde{\nabla}$ the Levi-Civita connection on $TL$, by $\nabla$ the Levi-Civita connection on $L$, and by $R$ the Riemann curvature tensor on $L$, we get that
	\begin{align*}
		\widetilde{\nabla}_{X^h}Y^h &=(\nabla_XY)^h-\frac{1}{2}(R(X,Y)\xi)^v, \\
		\widetilde{\nabla}_{X^h}Y^v &=(\nabla_XY)^v+\frac{1}{2}(R(\xi,Y)X)^h, \\
		\widetilde{\nabla}_{X^v}Y^h &=\frac{1}{2}(R(\xi,X)Y)^h, \\
		\widetilde{\nabla}_{X^v}Y^v &=0, \\
	\end{align*}
	for all $X,Y\in\mathfrak{X}(L)$. Therefore, the expression for the second fundamental form of $\xi(L)$ is
	\begin{align*}
		B_\xi\left(\widetilde{X},\widetilde{X},\widetilde{Z}\right)=\overbrace{\left\langle \nabla^2_X\xi-\nabla_{\nabla_X X}\xi,Z\right\rangle}^\alpha-\overbrace{\left\langle \nabla_{R(\xi,\nabla_X \xi)X}\xi,Z\right\rangle}^\beta,
	\end{align*}
	for all $\widetilde{X}\in T_{\xi(x)}\xi(L)$ and all $\widetilde{Z}\in T_{\xi(x)}^\perp\xi(L)$. \par
	
	From (\ref{eqn:tgt}) and the definition of the Sasaki metric, we have that $|\widetilde{X}|^2=|X|^2+|\nabla_X\xi|^2$. Likewise, from (\ref{eqn:perp}), we have that $|\widetilde{Z}|^2=|Z|^2+|\nabla_Z\xi|^2$; this is because
	\begin{align*}
		\left|(\nabla\xi)^*Z\right|^2
		=\operatorname{Hess}H\left(Z,(\nabla\xi)^*Z\right)
		=\left\langle(\nabla\xi)^*Z,\nabla_Z\xi\right\rangle
		=\operatorname{Hess}H\left(Z,\nabla_Z\xi\right)
		=\left|\nabla_Z\xi\right|^2,
	\end{align*}
	since the Hessian of a function is symmetric. Therefore, for every $t\in [0,1]$, the map
	\begin{center}
		\begin{tikzcd}[row sep=0pt,column sep=1pc]
			TTL \arrow{r} & TTL \\
			{\left((x,y),\widetilde{Y}\right)} \arrow[mapsto]{r} & {\left((x,ty),(1+(t^2-1)|\nabla_Y\xi|^2)^{-1/2}\widetilde{Y}\right)}
		\end{tikzcd}
	\end{center}
	sends $\xi(x)$ to $t\xi(x)$ and sends diffeomorphically the unit sphere of $T_{\xi(x)}\xi(L)$, respectively of $T_{\xi(x)}^\perp\xi(L)$, onto the one of $T_{t\xi(x)}t\xi(L)$, respectively of $T_{t\xi(x)}^\perp t\xi(L)$. Here, $Y$ denotes the sum of the projections of $\widetilde{Y}$ onto the horizontal and vertical distributions, after their identification with $TL$. Note that, on these spheres, $|\widetilde{Y}|^2=|Y|^2+|\nabla_Y\xi|^2=1$ so that $|Y|\leq 1$ and $|\nabla_Y\xi|<1$. In particular, $s_Y(t):=(1+(t^2-1)|\nabla_Y\xi|^2)^{-1/2}$ is well defined for $t\in [0,1]$. \par
	
	Therefore, it does suffice to prove that the map
	\begin{equation} \label{eqn:fcn_to_min}
		\begin{tikzcd}[row sep=0pt,column sep=1pc]
			t \arrow[mapsto]{r} & \left|\left(B_{t\xi}(s_X(t)\widetilde{X},s_X(t)\widetilde{X},s_Z(t)\widetilde{Z})\right)\right|=s_X^2s_Z t|\alpha-t^2\beta|
		\end{tikzcd}
	\end{equation}
	is nondecreasing for $t\in [0,1]$, for all $\widetilde{X}\in T_{\xi(x)}\xi(L)$ and all $\widetilde{Z}\in T_{\xi(x)}^\perp\xi(L)$ such that $|\widetilde{X}|=|\widetilde{Z}|=1$. Indeed, by the previous discussion, this will imply that the map $t\mapsto |B_{t\xi}|^{op,sym}_x$~---~sending $t$ to the operator norm of $B_{t\xi}$ on the subspace of $T_{\xi(x)}t\xi(L)\otimes T_{\xi(x)}t\xi(L)\otimes T_{\xi(x)}^\perp t\xi(L)$ generated by elements of the form $\widetilde{X}\otimes\widetilde{X}\otimes\widetilde{Z}$~---~is nondecreasing. Since $B_{t\xi}$ is symmetric in its first two entries, this is just the operator norm on the whole space, and we will get the result. \par
	
	If $\alpha=0$, we may suppose that $\beta\neq 0$, otherwise (\ref{eqn:fcn_to_min}) is just the zero function, and the statement is trivial. In that case, (\ref{eqn:fcn_to_min}) looks like $6|\beta|(1-|\nabla_X\xi|^2)^{-1}(1-|\nabla_Z\xi|^2)^{-1/2}t^3+\mathcal{O}(t^4)$ near $t=0$. In particular, it is increasing near $t=0$. But (\ref{eqn:fcn_to_min}) only possibly has critical points at $t=0$ and $t=\pm\sqrt{\frac{3(1-|\nabla_X\xi|^2)(1-|\nabla_Z\xi|^2)}{|\nabla_X\xi|^2|\nabla_Z\xi|^2-|\nabla_X\xi|^2|-2|\nabla_Z\xi|^2}}$. For $|\nabla\xi|$ small enough, the latter values are not real, and thus (\ref{eqn:fcn_to_min}) is increasing. \par
	
	Suppose now that $\alpha\neq 0$. By changing the sign of $\widetilde{Z}$ if necessary, we may assume that $\alpha>0$. Since $|\beta|\leq ||R||\ |\xi||\nabla\xi|^2$ and $\alpha$ depends only on derivatives of $\xi$, we thus have that $|\alpha-t^2\beta|=\alpha-t^2\beta$ for all $t\in [0,1]$ if $|\xi|$ is small enough. But the function $t\mapsto s_X^2s_Z t(\alpha-t^2\beta)$ converges with all derivatives to the function $t\mapsto t\alpha$ as $|\nabla\xi|\to 0$. Since that function is increasing, the derivative of (\ref{eqn:fcn_to_min}) is positive for all $t\in [0,1]$ for $|\nabla\xi|$ small enough. Therefore, the function is increasing over the interval for $|\xi|$ and $|\nabla\xi|$ small enough.
\end{pr*}

\begin{rem} \label{rem:curvature_small}
	Given the proof of Lemma~\ref{lem:curvature}, it appears that how small we must take $|\xi|$ and $|\nabla\xi|$ depends on $\widetilde{X}$ and $\widetilde{Z}$. However, since the infimum to get the operator norm $||B_{t\xi}||$ is taken over the unit sphere, which is compact, it is in fact a minimum. Therefore, how small we take $|\xi|$ and $|\nabla\xi|$ can be made independent of $\widetilde{X}$ and $\widetilde{Z}$.
\end{rem}

We now move on to studying the tameness constant. This time, the proof is fairly straightforward. \par

\begin{lem} \label{lem:tame_TL}
	Let $\xi\in\mathfrak{X}(L)$, and equip $TL$ with a Sasaki metric. Denote
	\begin{align*}
		\epsilon_\xi:=\inf_{x\neq y\in \xi(L)}\frac{d_{TL}(x,y)}{\min\{1,d_{\xi}(x,y)\}}\in (0,1],
	\end{align*}
	where $d_\xi$ denotes the intrinsic distance in $\xi(L)$. Then,
	\begin{align*}
		\lim_{|\nabla\xi|\to 0}\epsilon_\xi=1.
	\end{align*}
	In particular, $\epsilon_\xi> (k+1)^{-1}$ for $|\nabla\xi|$ small enough.
\end{lem}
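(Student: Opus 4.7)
My plan is to show the slightly stronger quantitative estimate
\[
    \epsilon_\xi \;\geq\; \frac{1}{\sqrt{1+|\nabla\xi|^2_{op,\infty}}},
\]
from which the limit statement follows immediately by letting $|\nabla\xi|\to 0$, and the final claim about the $(k+1)^{-1}$ bound is then automatic. I would proceed in two steps: an \emph{upper bound on the intrinsic distance} $d_\xi$ in terms of $d_L$, and a \emph{lower bound on the ambient distance} $d_{TL}$ in terms of $d_L$.

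For the upper bound on $d_\xi$, the key input is the expression~(\ref{eqn:tgt}) for the tangent space to $\xi(L)$: given any smooth curve $\gamma\colon[0,T]\to L$, its lift $\tilde\gamma(t):=\xi(\gamma(t))$ is a curve in $\xi(L)$ whose velocity has Sasaki-norm
\[
    |\dot{\tilde\gamma}|^2 \;=\; |\dot\gamma|^2+|\nabla_{\dot\gamma}\xi|^2 \;\leq\; \bigl(1+|\nabla\xi|^2_{op,\infty}\bigr)|\dot\gamma|^2.
\]
Integrating and taking the infimum over curves $\gamma$ joining $x$ to $y$ yields $d_\xi(\xi(x),\xi(y))\leq \sqrt{1+|\nabla\xi|^2_{op,\infty}}\,d_L(x,y)$. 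For the lower bound on $d_{TL}$, I would use that the natural projection $\pi\colon TL\to L$ is a Riemannian submersion for the Sasaki metric (this is standard and follows from the fact that horizontal vectors are $\pi$-related to their projections while vertical vectors are $\pi$-annihilated and orthogonal to the horizontal distribution). In particular $\pi$ is $1$-Lipschitz, so $d_{TL}(\xi(x),\xi(y))\geq d_L(x,y)$.

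Combining these, the ratio in the definition of $\epsilon_\xi$ becomes
\[
    \frac{d_{TL}(\xi(x),\xi(y))}{\min\{1,d_\xi(\xi(x),\xi(y))\}}
    \;\geq\; \frac{d_L(x,y)}{\min\{1,\sqrt{1+|\nabla\xi|^2_{op,\infty}}\,d_L(x,y)\}}.
\]
I would split into two regimes. If $\sqrt{1+|\nabla\xi|^2_{op,\infty}}\,d_L(x,y)\leq 1$, the denominator equals $\sqrt{1+|\nabla\xi|^2_{op,\infty}}\,d_L(x,y)$ and the ratio is bounded below by $1/\sqrt{1+|\nabla\xi|^2_{op,\infty}}$. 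Otherwise the denominator equals $1$, so the ratio is at least $d_L(x,y)> 1/\sqrt{1+|\nabla\xi|^2_{op,\infty}}$. Either way we obtain the uniform lower bound claimed above, which tends to $1$ as $|\nabla\xi|_{op,\infty}\to 0$. Since every ratio is also trivially $\leq 1$ (take $x,y$ nearby and use $d_{TL}\leq d_\xi$), the limit is exactly $1$.

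There is no substantial obstacle here; the only mildly delicate point is being careful that the curve-lifting argument gives the inequality $d_\xi\leq\sqrt{1+|\nabla\xi|^2}\,d_L$ globally (not just locally), which is the case because any curve in $L$ lifts to $\xi(L)$ via $\xi$, so no cut-locus considerations are required. The Riemannian submersion property of $\pi$ can be invoked as a standard fact, or derived in one line from the description of the Sasaki metric in terms of the horizontal-vertical splitting.
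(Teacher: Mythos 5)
Your proposal is correct and follows essentially the same route as the paper: the lift $\xi\circ\gamma$ of curves in $L$ gives $d_\xi\leq\sqrt{1+|\nabla\xi|^2}\,d_L$, while the bound $d_{TL}\geq d_L$ is exactly what the paper derives by decomposing velocities of arbitrary paths in $TL$ into horizontal and vertical parts (i.e.\ the statement that $\pi$ is a Riemannian submersion, hence $1$-Lipschitz). Your explicit case analysis yielding the quantitative bound $\epsilon_\xi\geq(1+|\nabla\xi|^2)^{-1/2}$ is just a slightly more spelled-out version of the paper's final step.
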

\begin{pr*}
	Consider a path $\gamma:[0,\ell]\to L$ such that $|\dot{\gamma}|\equiv 1$. Denote $\widetilde{\gamma}:=\xi\circ\gamma$. Then, $|\dot{\widetilde{\gamma}}|^2=1+|\nabla_{\dot{\gamma}}\xi|^2$, so that
	\begin{align*}
		\ell\leq \int_0^\ell \left|\dot{\widetilde{\gamma}}\right|dt\leq \ell\sqrt{1+|\nabla\xi|^2}.
	\end{align*}
	Taking the infimum of the above inequality over all paths $\gamma$ such that $\gamma(0)=x$ and $\gamma(\ell)=y$ for given $x,y\in L$, we get that
	\begin{align} \label{eqn:d_xi-ineq}
		d_L(x,y)\leq d_\xi(\xi(x),\xi(y))\leq \sqrt{1+|\nabla\xi|^2}d_L(x,y),
	\end{align}
	since all smooth paths $\widetilde{\gamma}$ in $\xi(L)$ may be parametrized to be of the form  $\widetilde{\gamma}=\xi\circ\gamma$ with $|\dot{\gamma}|\equiv 1$. \par
	
	On the other hand, take a path $\widetilde{\gamma}:[0,\widetilde{\ell}]\to TL$ such that $|\dot{\widetilde{\gamma}}|\equiv 1$, $\gamma(0)=\xi(x)$, and $\gamma(L)=\xi(y)$ for given $x,y\in L$. Then, we may write $\dot{\widetilde{\gamma}}=\dot{\gamma}^h+Y^v$ for $\gamma:=\pi\circ\widetilde{\gamma}$ and a vector field $Y$ of $L$ along $\gamma$, where $\pi:TL\to L$ is the canonical projection. We thus get
	\begin{align*}
		\widetilde{\ell}=\int_0^{\widetilde{\ell}}\sqrt{\left|\dot{\gamma}\right|^2+|Y|^2}dt\geq \int_0^{\widetilde{\ell}}\left|\dot{\gamma}\right|dt.
	\end{align*}
	Taking the infimum over all possible $\widetilde{\gamma}$, we get that
	\begin{align} \label{eqn:d_TL-ineq_left}
		d_{TL}(\xi(x),\xi(y))\geq d_L(x,y),
	\end{align}
	since every path in $L$ from $x$ to $y$ admits a lift to $TL$ from $\xi(x)$ to $\xi(y)$ (e.g.\ $\xi\circ\gamma$). \par
	
	Putting (\ref{eqn:d_xi-ineq}) and (\ref{eqn:d_TL-ineq_left}) together, we thus get
	\begin{align*}
		\inf_{x\neq y}\frac{d_L(x,y)}{\min\{1,d_L(x,y)\sqrt{1+|\nabla\xi|^2}\}}\leq\epsilon_\xi\leq 1,
	\end{align*}
	which implies the result.
\end{pr*}

\begin{rem} \label{rem:curvature-vs-tame}
	The approaches in the proofs of Lemmata~\ref{lem:curvature} and~\ref{lem:tame_TL} are different, because $||B_\xi||$ depends on higher derivatives of $\xi$, while $\epsilon_\xi$ does not. Therefore, just taking a limit in the expression for $||B_{\xi}||$ would not lead to 0, and we could not conclude anything. We need to actually understand the behavior of $||B_{\xi}||$ in a $C^{1,\alpha'}$-neighbourhood of 0, not just in the limit $|\xi|\to 0$.
\end{rem}

\subsubsection*{Behavior of the geodesics}
To adapt Lemma~\ref{lem:tame_TL} to a metric which is only locally Sasaki~---~which is required in Subsection~\ref{subsec:connect}~---~we will need the following technical results. As far as we know, these results have not appeared in the literature. This is also why Lemma~\ref{lem:complete_sasaki} is established in such generality: it has appeared before as a conjecture of Albuquerque~\cite{Albuquerque2019} and could be of general interest to the Riemannian geometry community. \par

\begin{lem} \label{lem:complete_sasaki}
	The space $TN$ is complete in the Sasaki metric associated with $(N,g)$ if and only if $(N,g)$ is complete.
\end{lem}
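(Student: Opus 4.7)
The plan is to prove the two implications separately.

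For the easy direction ($TN$ complete $\Rightarrow$ $(N,g)$ complete), I would use that the zero section $s_0 : N \hookrightarrow TN$ is an isometric embedding with closed image. Indeed, $\pi: TN \to N$ is a Riemannian submersion, hence $1$-Lipschitz, so $d_N(x,y) \leq d_{TN}(s_0(x), s_0(y))$; conversely, the horizontal lift of any curve $\gamma$ in $N$ with vanishing vertical part is a curve along $s_0(N)$ whose Sasaki length equals the $g$-length of $\gamma$, giving the reverse inequality. A Cauchy sequence in $(N, d_N)$ thus lifts to a Cauchy sequence in the closed subset $s_0(N)\subseteq TN$; by completeness it converges in $TN$, and the limit necessarily lies in $s_0(N)$, hence projects back to a limit in $N$.

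For the hard direction ($(N,g)$ complete $\Rightarrow$ $TN$ complete), I would invoke Hopf--Rinow and prove geodesic completeness of the Sasaki metric. Let $c(t)=(\gamma(t),V(t))$ be a Sasaki geodesic defined on a maximal interval $[0,T)$ with $T<\infty$; I aim to extend it past $T$, contradicting maximality. Recall that the velocity decomposes as $\dot c=(\dot\gamma)^h+(\nabla_{\dot\gamma}V)^v$, so constant-speed parametrization gives the conserved quantity
\begin{align*}
|\dot\gamma(t)|^2+|\nabla_{\dot\gamma}V(t)|^2\equiv\text{const},
\end{align*}
whence $|\dot\gamma|$ and $|\nabla_{\dot\gamma}V|$ are uniformly bounded by some $C>0$ on $[0,T)$. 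The Sasaki geodesic equations (Dombrowski, Kowalski) moreover have the schematic form
\begin{align*}
\nabla_{\dot\gamma}\dot\gamma &= P\bigl(R(\gamma);V,\nabla_{\dot\gamma}V,\dot\gamma\bigr), \\
\nabla_{\dot\gamma}(\nabla_{\dot\gamma}V) &= Q\bigl(R(\gamma);V,\nabla_{\dot\gamma}V,\dot\gamma\bigr),
\end{align*}
with $P$ and $Q$ tensorial in the listed arguments.

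From the bound on $|\dot\gamma|$, the curve $\gamma$ is $C$-Lipschitz into $(N,d_N)$, hence by completeness extends continuously to $\gamma(T)=x_\infty\in N$. The bound on $|\nabla_{\dot\gamma}V|$ gives the linear growth estimate $|V(t)|\leq |V(0)|+CT$, and expanding $V$ in a local orthonormal frame $\{e_i\}$ around $x_\infty$ shows that each component $V^i$ has a derivative bounded in terms of $|\nabla_{\dot\gamma}V|$ and $|V|$, so $V^i$ is Lipschitz and $V(t)\to V_\infty\in T_{x_\infty}N$. With $V$, $\dot\gamma$, and $\nabla_{\dot\gamma}V$ all bounded, the right-hand sides of the geodesic equations are bounded too, and the same local-frame argument shows that $\dot\gamma(t)$ and $\nabla_{\dot\gamma}V(t)$ extend continuously to $t=T$. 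Local ODE existence applied to the initial data $\bigl(\gamma(T),V(T),\dot\gamma(T),\nabla_{\dot\gamma}V(T)\bigr)\in TTN$ then produces the desired extension, completing the proof by contradiction.

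The main obstacle is the coupling between $V$ and $\dot\gamma$ in the coupled second-order system: a priori $|V(t)|$ could be unbounded on $[0,T)$, and if it were, the curvature-involving right-hand sides of the geodesic equations could blow up and wreck the extension argument. What saves the day is the conserved quantity coming from constant-speed parametrization, which gives the bound on $|\nabla_{\dot\gamma}V|$ directly, thereby controlling $|V|$ linearly over $[0,T)$ and making the coordinate-component Lipschitz estimates tractable.
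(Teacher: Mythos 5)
Your proof is correct, but it reaches the conclusion by a genuinely different route than the paper. For the easy direction you argue metrically, via the fact that the zero section is a closed, distance-preserving copy of $N$ in $TN$ (projection $1$-Lipschitz, horizontal curves along the zero section of equal length), so Cauchy sequences in $N$ converge; the paper instead uses that the zero section is totally geodesic and deduces geodesic completeness of $N$ directly. For the hard direction you prove geodesic completeness of the Sasaki metric and invoke Hopf--Rinow: the splitting $\dot c=(\dot\gamma)^h+(\nabla_{\dot\gamma}V)^v$ and constancy of speed bound $|\dot\gamma|$ and $|\nabla_{\dot\gamma}V|$, hence $|V|$ grows at most linearly, the curve stays (after the continuous extension of $\gamma$ furnished by completeness of $N$) in a compact region of $TN$ where the curvature terms in the Dombrowski--Sasaki geodesic equations are bounded, and the local frame/ODE argument extends the geodesic past a putative finite maximal time. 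In fact, since the second Sasaki geodesic equation is exactly $\nabla^2_{\dot\gamma}V=0$ (as recorded in the proof of Lemma~\ref{lem:geo_sasaki}), $|\nabla_{\dot\gamma}V|$ is literally constant, and once you know $c([0,T))$ lies in the compact set $D_{|V(0)|+CT}N$ restricted to a closed metric ball of $N$, the standard escape lemma for the geodesic flow finishes things without extracting frame-wise limits of $\dot\gamma$ and $\nabla_{\dot\gamma}V$. The paper avoids the geodesic equations altogether: it uses the metric-space form of Hopf--Rinow (an exhaustion $K_i=D_iN|_{B^N_i(y)}$ such that sequences escaping it go to metric infinity), handling base escape by the $1$-Lipschitz projection and fiber escape by parallel transport along minimal geodesics, whose horizontal lifts have the same length and which acts isometrically on fibers. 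Your approach buys a self-contained blow-up analysis that generalizes to other bundle metrics with a horizontal/vertical splitting and known spray; the paper's buys a softer argument needing only length-preservation of horizontal lifts and control of the fiber norm by the Sasaki distance.
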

\begin{pr*}
	One direction is obvious: if $TN$ is complete and $v\in T_x N$, the exponential of $tv\in T_x N\subseteq T_x TN$ exists in $TN$ for all $t\in\R$. However, $N$ is totally geodesic in $TN$~\cite{Sasaki1958}, so that the geodesic $t\mapsto\exp_x^{TN}(tv)$ must stay in $N$. It is thus the exponential of $tv$ in $N$, and $N$ must be complete. \par
	
	Suppose now that $N$ is complete. We recall that completeness of a Riemannian manifold $M$ is equivalent to it having an exhaustion by compact sets $\{K_i\}$ such that, if $\{x_i\}$ is a sequence with $x_i\notin K_i$, then $d(y,x_i)\to\infty$ for some point $y\in M$~---~this is part of the classical Hopf-Rinow theorem, see for example Theorem~7.2.8 in~\cite{doCarmo1992}. \par 
	
	Let $y\in N$ and $K_i=D_i N|_{B_i^N(y)}$, where $B_i^N(y)$ is the (closed) ball of radius $i$ in $N$ centered at $y$. Let $\{x_i\}$ such that $x_i\notin K_i$. We now study two possible types of subsequences of $\{x_i\}$.
	\begin{enumerate}[label=(\arabic*)]
		\item Suppose there is a subsequence, still denoted $\{x_i\}$, such that none of the subsequences of $\{\pi(x_i)\}$ are contained in any of the $B_i^N(y)$. Then, the sequence of natural numbers given by
		\begin{align*}
			n_i=\min\left\{j\ \middle|\ \pi(x_i)\in B_j^N(y)\right\}
		\end{align*}
		converges to infinity. By completeness of $N$, this must mean that $d(y,\pi(x_i))\to\infty$. But since $\pi$ is a Riemannian submersion, it is nonexpansive in $d$, so that
		\begin{align*}
			d(y,\pi(x_i))=d(\pi(y),\pi(x_i))\leq d(y,x_i).
		\end{align*}
		Therefore, $d(y,x_i)\to\infty$. \par
		
		\item Suppose there is a subsequence, still denoted $\{x_i\}$, and a $R>0$ such that $\{\pi(x_i)\}$ is contained in $B_R^N(y)$. Since $x_i\notin K_i$, this forces that $x_i\notin D_i N$ for large enough $i$. Then, let $\gamma_i$ be a minimal geodesic of $N$ from $\pi(x_i)$ to $y$. Let $x'_i:=P_{\gamma_i}(x_i)$ be the parallel transport of $x_i$ along $\gamma_i$. Note that the horizontal lift $\widetilde{\gamma}_i$ of $\gamma_i$ starting at $x_i$ ends at $x'_i$ by construction. Therefore, we have that
		\begin{align*}
			d(x_i,x'_i)\leq \ell(\widetilde{\gamma}_i)=\ell(\gamma_i)=d(\pi(x_i),y)\leq R.
		\end{align*}
		On the other hand, since parallel transport is an isometry on the fibres, the fact that $x_i\notin D_i N$ ensures that $x'_i\notin D_i N$. Since $x'_i$ is in the fibre over $y$, this thus implies that $d(y,x'_i)> i$. Therefore, the triangle inequality gives that
		\begin{align*}
			d(y,x_i)\geq d(y,x'_i)-d(x_i,x'_i)> i-R,
		\end{align*}
		and $d(y,x_i)\to\infty$.
	\end{enumerate}
	
	Since the original $\{x_i\}$ sequence can be written as the union of subsequences of either type, we conclude that $d(y,x_i)\to\infty$, so that $TN$ is complete.
\end{pr*}

\begin{lem} \label{lem:geo_sasaki}
	If $\alpha:[0,\ell]\to TL$ is a geodesic in the Sasaki metric, then the function $t\mapsto |\alpha(t)|^2$ is either constant or a (strictly) convex parabola. In particular, the disk bundle $D_rL$ of radius $r$ is geodesically convex.
\end{lem}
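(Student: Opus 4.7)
The plan is to write any Sasaki geodesic as $\alpha(t) = (\gamma(t), V(t))$, with $\gamma = \pi\circ\alpha$ and $V$ a vector field along $\gamma$, extract the pair of geodesic equations for the base/fibre components, and then differentiate $f(t) := |V(t)|^2$ twice.

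First I would write $\dot\alpha = \dot\gamma^h + (\nabla_{\dot\gamma}V)^v$ and feed this into the Kowalski formulas for the Sasaki Levi-Civita connection $\widetilde{\nabla}$ already recalled in the proof of Lemma~\ref{lem:curvature}. Expanding $\widetilde{\nabla}_{\dot\alpha}\dot\alpha$ by bilinearity, using that $R(\dot\gamma,\dot\gamma) = 0$ and that the two mixed contributions $\widetilde{\nabla}_{X^h}Y^v$ and $\widetilde{\nabla}_{Y^v}X^h$ yield the same half-term $\tfrac{1}{2}(R(V,\nabla_{\dot\gamma}V)\dot\gamma)^h$, the geodesic equation $\widetilde{\nabla}_{\dot\alpha}\dot\alpha = 0$ splits as
\[
\nabla_{\dot\gamma}\dot\gamma \;=\; R\bigl(\nabla_{\dot\gamma}V,\, V\bigr)\dot\gamma
\qquad\text{and}\qquad
\nabla^2_{\dot\gamma}V \;=\; 0.
\]
Only the second equation will be needed. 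It says that $\nabla_{\dot\gamma}V$ is parallel along $\gamma$, so that its norm $c := |\nabla_{\dot\gamma}V|$ is constant along $\alpha$.

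Then, by compatibility of $\nabla$ with $g$ on $L$, two successive differentiations give
\[
f'(t) \;=\; 2\langle V, \nabla_{\dot\gamma}V\rangle, \qquad
f''(t) \;=\; 2|\nabla_{\dot\gamma}V|^2 + 2\langle V,\nabla^2_{\dot\gamma}V\rangle \;=\; 2c^2.
\]
Hence $f(t) = c^2 t^2 + at + b$ for some constants $a,b$. If $c = 0$ then $V$ is parallel along $\gamma$ and $f$ is identically constant; otherwise $f$ is a strictly convex parabola. Geodesic convexity of $D_rL$ is then immediate, since a convex function on a compact interval attains its maximum at an endpoint: if $|\alpha(0)|,|\alpha(\ell)|\le r$ then $f(t)\le\max\{f(0),f(\ell)\}\le r^2$ for every $t \in [0,\ell]$.

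The only delicate step is the derivation of the geodesic equations, but the formulas for $\widetilde{\nabla}$ already at hand from Lemma~\ref{lem:curvature} reduce this to a bookkeeping exercise with horizontal and vertical components; once those two equations are in hand, the rest of the lemma is a one-line calculation of $f''$ together with the endpoint-maximum property of convex functions.
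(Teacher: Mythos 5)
Your proof is correct and follows essentially the same route as the paper: split the geodesic equation into the two Sasaki equations $\nabla_{\dot\gamma}\dot\gamma = R(\nabla_{\dot\gamma}V,V)\dot\gamma$ and $\nabla^2_{\dot\gamma}V=0$, then differentiate $|V|^2$ twice to get a constant nonnegative second derivative. The only difference is cosmetic: the paper simply cites Sasaki's original work for the two equations, whereas you rederive them from the connection formulas already recorded in the proof of Lemma~\ref{lem:curvature}, which is a fine (and correct) bookkeeping step.
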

\begin{pr*}
	See $\alpha$ as a vector field $Y$ along the path $x:=\pi\circ\alpha:[0,\ell]\to L$. Note that $|\alpha|=|Y|$. Then, the geodesic on $TL$ is equivalent to two equations on $L$~\cite{Sasaki1958}:
	\begin{align*}
		\begin{cases}
			\nabla_{\dot{x}}\dot{x}+R(Y,\nabla_{\dot{x}}Y)\dot{x}=0; \\
			\nabla_{\dot{x}}^2Y=0.
		\end{cases}
	\end{align*}
	But then, this means that
	\begin{align*}
		\frac{d}{dt}|Y|^2&=2\langle \nabla_{\dot{x}} Y,Y\rangle; \\
		\frac{d^2}{dt^2}|Y|^2&=2\langle \nabla_{\dot{x}}^2 Y,Y\rangle+2|\nabla_{\dot{x}}Y|^2=2|\nabla_{\dot{x}}Y|^2; \\
		\frac{d}{dt}|\nabla_{\dot{x}}Y|^2&=2\langle\nabla_{\dot{x}}^2 Y,\nabla_{\dot{x}} Y\rangle=0.
	\end{align*}
	From the last equation, we get that $|\nabla_{\dot{x}}Y|$ is independent of time. If $|\nabla_{\dot{x}}Y|\equiv 0$, then $\nabla_{\dot{x}}Y\equiv 0$, so that the first equation implies that $|Y|$ is constant. If $|\nabla_{\dot{x}}Y|>0$, then the middle equation implies that the second time derivative of $|Y|^2$ is a positive constant, thus giving that it is a strictly convex parabola.
\end{pr*}

As mentioned above, we need to also study metrics which are only locally Sasaki. By this, we mean that $(M,\omega)$ is a symplectic manifold with a compatible almost complex structure $J$ and that $g=\omega(\cdot, J\cdot)$. We also suppose that $L$ is a Lagrangian submanifold of $M$. By locally Sasaki, we mean that there is a diffeomorphism $\Psi$ from a neighbourhood of $L$ in $TL$ to a neighbourhood of $L$ in $M$ such that $\Psi^*g$ is the Sasaki metric. This makes sense since $J$ identifies $TL$ with $TL^\perp\subseteq TM$. \par

In this case, Lemma~\ref{lem:curvature} obviously still applies, but Lemma~\ref{lem:tame_TL} needs to be adapted, as we could have $d_{TL}\neq d_{M}$. In other words, we have to deal with the fact that the minimal geodesic in $M$ between two points of $L'=\xi(L)$ might not be entirely contained in the neighbourhood of $L$ where $g$ is equal to a Sasaki metric. In particular, that minimal geodesic could be shorter than one would expect in $TL$ so that the ratio $d_M/d_L$ might no longer tend to 1 as $|\nabla\xi|\to 0$. We prove that this in fact cannot happen. \par

\begin{lem} \label{lem:dM-to-dTL}
	Let $M$, $L$, and $g$ as above. On a small enough neighbourhood of $L$, we have that $d_{M}=d_{TL}$.
\end{lem}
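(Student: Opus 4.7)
The plan is to produce $r' \in (0, R]$ such that $d_M(\Psi(v), \Psi(w)) = d_{TL}(v,w)$ whenever $v, w \in D_{r'} L$, where $\Psi: D_R L \to U_R := \Psi(D_R L)$ is the Sasaki-isometric diffeomorphism provided by hypothesis. Since $\Psi$ is length-preserving, the inequality $d_M \leq d_{TL}$ is automatic, so the content is to rule out shortcuts by $M$-paths that leave $U_R$.

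The key auxiliary step I would establish first is a radial length estimate: for any piecewise $C^1$ path $\beta: [0,T] \to TL$,
\[
|\beta(T)| - |\beta(0)| \leq \ell_{\mathrm{Sas}}(\beta).
\]
Writing $\beta(t) = Y(t)$ as a field along $x(t) := \pi \circ \beta(t)$, the horizontal-vertical splitting gives $|\dot\beta|_{\mathrm{Sas}}^2 = |\dot x|^2 + |\nabla_{\dot x}Y|^2$ and $\tfrac{d}{dt}|Y|^2 = 2\langle Y, \nabla_{\dot x}Y\rangle$; Cauchy--Schwarz followed by integration then yield the claim. Combining this estimate with completeness of the Sasaki metric (Lemma~\ref{lem:complete_sasaki}) and geodesic convexity of $D_R L$ (Lemma~\ref{lem:geo_sasaki}) also gives $d_{TL}(v,w) = d_{D_R L}(v,w)$ for $v,w\in D_R L$, i.e.\ the $TL$-minimizing geodesic can be taken inside $D_R L$ and corresponds under $\Psi$ to a minimizing path for the intrinsic distance $d_{U_R}$.

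Next I would fix $r' \in (0, R/4)$ and restrict to $v, w \in D_{r'}L$ whose base projections satisfy $d_L(\pi v, \pi w) \leq R/2$ (easily arranged by shrinking the neighbourhood). Concatenating a vertical segment down to the zero-section, a minimizing segment in $L$, and a vertical segment back up produces a path in $U_R$ from $p := \Psi(v)$ to $q := \Psi(w)$ of length at most $2r' + R/2$, so in particular $d_{U_R}(p,q) \leq 2r' + R/2$. Conversely, if $\gamma:[0,1]\to M$ is a continuous path from $p$ to $q$ that leaves $U_R$, let $t_0$ be its first exit time at $\partial U_R$ and $t_1$ its last re-entry; both $\Psi^{-1}\circ\gamma|_{[0,t_0]}$ and $\Psi^{-1}\circ\gamma|_{[t_1,1]}$ are paths in $\overline{D_R L}$ joining a point of norm $\leq r'$ to a point of norm $R$, hence have Sasaki-length $\geq R - r'$ by the radial estimate, so $\ell(\gamma) \geq 2(R - r')$. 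For $r'$ small enough that $2(R - r') > 2r' + R/2$, no $\gamma$ leaving $U_R$ can be minimizing, and therefore $d_M(p,q) = d_{U_R}(p,q) = d_{TL}(v,w)$, as required.

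The main obstacle is the radial length estimate: it is what prevents $\gamma$ from making cheap excursions to $\partial U_R$ and back through the possibly non-Sasaki region of $M$. Everything else reduces to the geodesic convexity of small disk bundles already packaged in Lemma~\ref{lem:geo_sasaki}, together with the obvious triangle-type upper bound on the intrinsic distance within $U_R$.
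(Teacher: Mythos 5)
Your radial length estimate is correct, and combined with Lemma~\ref{lem:complete_sasaki} and the geodesic convexity of the disk bundles (Lemma~\ref{lem:geo_sasaki}) it does give a clean proof that $d_M(\Psi(v),\Psi(w))=d_{TL}(v,w)$ for pairs whose base points are close, say $d_L(\pi v,\pi w)\leq R/2$. The genuine gap is the reduction to such pairs: this restriction is \emph{not} ``easily arranged by shrinking the neighbourhood''. Any neighbourhood of $L$ contains all of $L$, hence pairs whose base projections are up to $\operatorname{diam}(L)$ apart, and $\operatorname{diam}(L)$ bears no relation to $R$; shrinking $r'$ only controls the fibre coordinate, not the base distance. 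Moreover the lemma is used in this global form (for instance to transfer the tameness constant of Lemma~\ref{lem:tame_TL}, an infimum over \emph{all} pairs of points of $\xi(L)$), so the base-distant pairs cannot be discarded.

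For those pairs your length counting says nothing: a path leaving $U_R$ costs only about $2(R-r')$, whereas $d_{TL}(v,w)$ can be of order $\operatorname{diam}(L)\gg R$, so the comparison you need goes the wrong way, and no estimate of these two quantities against each other can exclude shortcuts through the region of $M$ where the metric is not Sasaki. This is precisely the case the paper's proof is devoted to, and it proceeds by an entirely different mechanism: it works with minimizing geodesics rather than with lengths. If the Sasaki geodesic $\alpha$ (contained in $L$, resp.\ in a disk bundle, by Lemma~\ref{lem:geo_sasaki}) were not $M$-minimizing, one passes to its cut point and the competing minimizing geodesic, and derives a contradiction from the convexity of $t\mapsto|\gamma(t)|^2$ along Sasaki geodesics in the conjugate/one-parameter-family case, from $L$ being totally geodesic in the geodesic-loop case, and via a limiting argument for points close to but not on $L$. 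So your proposal establishes, by a simpler route, a genuinely local version of the statement (nearby points with nearby base points), but it does not prove the lemma as stated: the far-apart case, which is where all the difficulty lies, is untouched.
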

\begin{pr*}
	We first prove that $d_M(x,y)=d_{TL}(x,y)$ whenever $x,y\in L$. Let thus $x$ and $y$ be in $L$. Let $\alpha:[0,\ell]\to TL$ be a minimal geodesic in $TL$ from $x$ to $y$. Note that it follows from Lemma~\ref{lem:geo_sasaki} that $\alpha$ is fully contained in $L$, so that $d_{TL}(x,y)=d_L(x,y)$. In particular, $\alpha$ is also a geodesic of $M$. Therefore, it is locally minimizing in $M$, and we can take
	\begin{align*}
		\ell':=\sup\{t\in [0,\ell]\ |\ d_M(x,\alpha(s))=s\ \forall s<t\}\in (0,\ell].
	\end{align*}
	Suppose that $\alpha$ is not minimizing, i.e.\ $\ell'<\ell$, and let $\gamma:[0,\ell']\to M$ a minimal geodesic in $M$ from $x$ to $y':=\alpha(\ell')$ which is different from $\alpha$~---~nonminimality of $\alpha$ ensures that it exists. \par
	
	From classical facts from Riemannian geometry (see Proposition~13.2.12 of~\cite{doCarmo1992} for example), exactly one of two things can happen: either there is a 1-parameter family $\gamma_s$ of geodesics from $x$ to $y'$ with $\gamma_0=\alpha$ and $\gamma_1=\gamma$, or $\gamma$ and $\alpha$ are the only two minimizing geodesic from $x$ and $y'$ and $\gamma'(\ell')=-\alpha'(\ell')$. In the first case, note that all $\gamma_s$ have the same length since geodesics are critical points of the length functional. Therefore, for $s$ small enough, $\gamma_s$ is a minimal geodesic not contained in $L$, but fully contained in the neighbourhood of $L$ where $g$ is the Sasaki metric. This is of course a contradiction with Lemma~\ref{lem:geo_sasaki}, since 0 and $\ell$ would then both have to be strict minima of $t\mapsto |\gamma_s(t)|^2$. In the second case, $\gamma$ is then tangent to $L$ at $t=\ell'$. But since $L$ is totally geodesic, $\gamma$ must then be fully contained in $L$, and we again get a contradiction. Therefore, the result holds on $L$. \par
	
	We now consider $x$ and $y$ close to $L$ in $M$. Let $\alpha$, $\gamma$, $\ell'$, and $y'$ be defined analogously as above. From Lemma~\ref{lem:complete_sasaki}, $\alpha$ exists and, if we take $x$ and $y$ to be in a neighbourhood of the form $\Psi(D_r L)$, then $\alpha$ stays in that neighbourhood by Lemma~\ref{lem:geo_sasaki}. Again, we have two possibilities for how $\gamma$ and $\alpha|_{[0,\ell']}$ connect. If $\gamma_s$ is a 1-parameter family, then we still get a contradiction for some $s$: since $\gamma_1$ leaves the Weinstein neighbourhood and $\gamma_s$ always has the same endpoints, there is some $s$ such that $\gamma_s$ is still in that neighbourhood, but such that $t\mapsto |\gamma_s(t)|$ has a maximum (in contradiction with Lemma~\ref{lem:geo_sasaki}). \par 
	
	However, the second possibility~---~that $\alpha$ and $\gamma$ form a geodesic loop in $M$~---~does not \textit{a priori} lead to a contradiction as things are. Thus take sequences $\{x_i\}$ and $\{y_i\}$ such that $\lim_i d_M(x_i,L)=\lim_i d_M(y_i,L)= 0$, but such that $d_M(x_i,y_i)<d_{TL}(x_i,y_i)$ for all $i$. Define $\alpha_i$, $\gamma_i$, $\ell'_i$, and $y'_i$ analogously as before. In particular, if $v_i$ is the unit vector such that $\exp_{x_i}(tv_i)=\alpha(t)$ for $t\in [0,\ell_i]$, then $t\mapsto \exp_{x_i}(tv_i)$, $t\in [2,\ell'_i]$, is the geodesic loop $\alpha_i\#\overline{\gamma_i}$. Since $\{x_i\}$, $\{y_i\}$, and $\{v_i\}$ are all contained in a compact, we may pass to a subsequence, so that $\lim_i x_i= x\in L$, $\lim y_i=y\in L$, and $\lim v_i=v\in T_x M$. But then, $t\mapsto \exp_x(tv)$, $t\in [0,2\ell']$ is a geodesic loop in $M$ which is fully contained in $L$ over $[0,\ell]$, but that eventually leaves it. Therefore, we get a last contradiction, and we must have $d_M(x,y)=d_{TL}(x,y)$ whenever $x$ and $y$ are close to $L$.
\end{pr*}

\subsection{Result on Riemann surfaces} \label{subsec:geo_2d}
We suppose that $(M,g,J,\omega)$ is a Riemann surface. Then, a tubular neighbourhood of a curve $L$ admits some fairly nice coordinates given by
\begin{center}
	\begin{tikzcd}[row sep=0pt,column sep=1pc]
		\phi\colon (0,\ell)\times (-r,r) \arrow{r} & M \\
		{\hphantom{\phi\colon{}}} (s,t) \arrow[mapsto]{r} & \exp_{\gamma(s)}(tJ\dot{\gamma}(s))
	\end{tikzcd},
\end{center}
where $\gamma:[0,\ell)\to L$ is a parametrization such that $|\dot{\gamma}|\equiv 1$. Note that
\begin{align*}
	\phi^*g &=|W|^2ds^2+dt^2 \\
	\text{and}\qquad \phi^*J &= |W|\frac{\del}{\del t}\otimes ds-\frac{1}{|W|}\frac{\del}{\del s}\otimes dt,
\end{align*}
where $W(s,t)$ is the value at time $t$ of the unique Jacobi field along the geodesic $t\mapsto\phi(s,t)$ such that $W(s,0)=\dot{\gamma}(s)$ and $\dot{W}(s,0)=-\kappa(s)\dot{\gamma}(s)$. Here, $\kappa$ is the (signed) geodesic curvature of $L$, which is defined via the relation $\ddot{\gamma}:=\nabla_{\dot{\gamma}}\dot{\gamma}=\kappa J\dot{\gamma}$. \par

In these coordinates, we will call $L'=\{(s,\xi(s))\ |\ s\in [0,\ell)\}=:\operatorname{graph}\xi$ the graph of $\xi:[0,\ell]\to\R$. Because our applications are aimed towards Subsection~\ref{subsec:connect}, we will be interest in 1-parameter families of the form $\{\xi_\alpha=\alpha\xi+c(\alpha)\}_{\alpha\in [0,1]}$, where $||\xi||<\frac{r}{2}$ and $c$ is such that
\begin{enumerate}[label=(\roman*)]
	\item $c(0)=c(1)=0$;
	\item $|c(\alpha)|\leq \alpha||\xi||$.
\end{enumerate}
In particular, $\operatorname{graph}\xi_\alpha$ stays in the chart defined by $\phi$ for all $\alpha\in [0,1]$.

Before moving on with the results on $\xi_\alpha$, we prove the following lemma on the behaviour of $|W|$ for small values of $|t|$, which will be quite useful later on. \par

\begin{lem} \label{lem:taylor_jacobi_2d}
	Let $K:[0,\ell)\to\R$ be the pullback of the Gaussian curvature of $M$ along $\gamma$. We have that
	\begin{align*}
		|W|^2=1-2\kappa t+(\kappa^2-K)t^2+\mathcal{O}(t^3).
	\end{align*}
\end{lem}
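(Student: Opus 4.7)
The plan is to simply Taylor-expand the function $f(t) := |W(s,t)|^2$ at $t=0$ up to second order, using the Jacobi equation and the prescribed initial conditions for $W$.

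First I would note that the geodesic $c(t) := \phi(s,t)$ has initial velocity $\dot c(0) = J\dot\gamma(s)$, and $W(s,\cdot)$ is the Jacobi field along $c$, so it satisfies $\ddot W + R(W,\dot c)\dot c = 0$. Differentiating $f$ and using that the Levi-Civita connection is metric,
\begin{align*}
	f(0) &= |\dot\gamma|^2 = 1, \\
	f'(0) &= 2\langle \dot W(s,0), W(s,0)\rangle = 2\langle -\kappa\dot\gamma,\dot\gamma\rangle = -2\kappa, \\
	f''(0) &= 2|\dot W(s,0)|^2 - 2\langle R(W,\dot c)\dot c,W\rangle\big|_{t=0} = 2\kappa^2 - 2\langle R(\dot\gamma,J\dot\gamma)J\dot\gamma,\dot\gamma\rangle,
\end{align*}
where I used $\dot W(s,0) = -\kappa\dot\gamma$ so $|\dot W(s,0)|^2 = \kappa^2$.

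The key step is to identify $\langle R(\dot\gamma,J\dot\gamma)J\dot\gamma,\dot\gamma\rangle$ with the Gaussian curvature $K(\gamma(s))$. Since $g = g_J = \omega(\cdot,J\cdot)$ and $J$ is $\omega$-compatible, $J$ is a $g$-isometry; in particular $|J\dot\gamma| = |\dot\gamma| = 1$, and $g(\dot\gamma, J\dot\gamma) = \omega(\dot\gamma, J^2\dot\gamma) = -\omega(\dot\gamma,\dot\gamma) = 0$. Therefore $\{\dot\gamma, J\dot\gamma\}$ is an orthonormal frame of $T_{\gamma(s)}M$, and since $M$ is two-dimensional the sectional curvature of this (unique) tangent plane is exactly the Gaussian curvature:
\begin{align*}
	\langle R(\dot\gamma,J\dot\gamma)J\dot\gamma,\dot\gamma\rangle = K(\gamma(s)).
\end{align*}
Plugging this into $f''(0)$ gives $f''(0) = 2(\kappa^2 - K)$, and the Taylor expansion
\begin{align*}
	|W|^2 = f(0) + f'(0)t + \tfrac{1}{2}f''(0)t^2 + \mathcal O(t^3) = 1 - 2\kappa t + (\kappa^2 - K)t^2 + \mathcal O(t^3)
\end{align*}
follows immediately. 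There is no real obstacle; the only subtlety worth writing out is the orthonormality of $\{\dot\gamma,J\dot\gamma\}$, which reduces the curvature-tensor expression to the scalar $K$.
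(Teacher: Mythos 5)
Your proof is correct and is essentially the paper's own argument: a direct second-order Taylor expansion of $|W|^2$ at $t=0$ using the prescribed initial conditions $W(s,0)=\dot\gamma$, $\dot W(s,0)=-\kappa\dot\gamma$ and the Jacobi equation, with the curvature term reduced to $K$ via the orthonormality of $\{\dot\gamma,J\dot\gamma\}$. The only difference is that you spell out the compatibility argument for orthonormality and the sign bookkeeping in the curvature term, which the paper leaves implicit.
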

\begin{pr*}
	As noted above, we have that $W(s,0)=\dot{\gamma}(s)$ and $\dot{W}(s,0)=-\kappa(s)\dot{\gamma}(s)$. The lemma then follows directly from the following computations:
	\begin{align*}
		\left.|W|^2\ \right|_{t=0} &=1 \\[5pt]
		\left.\frac{\del}{\del t}|W|^2\ \right|_{t=0} &=\left.2\langle \dot{W},W\rangle\right|_{t=0}= -2\kappa \\[5pt]
		\left.\frac{\del^2}{\del t^2}|W|^2\ \right|_{t=0} &=2\left.|\dot{W}|^2\ \right|_{t=0}+\left.2\langle \ddot{W},W\rangle\right|_{t=0} \\
		&=2\kappa^2-2\langle R(J\dot{\gamma},\dot{\gamma})J\dot{\gamma},\dot{\gamma}\rangle = 2(\kappa^2-K),
	\end{align*}
	where the last line follows from the fact that $W$ satisfies the Jacobi equation.
\end{pr*}

With this in hand, we can estimate the geodesic curvature of $\operatorname{graph}\xi_\alpha$ in terms of that of $L$ and $\operatorname{graph}\xi$. More precisely, we want to prove the following. \par

\begin{lem} \label{lem:curvature_2d}
	For every $k\geq 0$ and every $k'>k$, there exists $\delta> 0$ with the following property. If $||B_L||\leq k$ and $L'=\operatorname{graph}\xi$ with $||\xi||,||\xi'||<\delta$, then $||B_\alpha||:=||B_{\operatorname{graph}\xi_\alpha}||\leq \max\{k',||B_{L'}||\}$ for all $\alpha\in [0,1]$.
\end{lem}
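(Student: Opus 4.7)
The plan is to compute the geodesic curvature of $\operatorname{graph}\eta$ explicitly in the Fermi coordinates $\phi$, Taylor-expand using Lemma~\ref{lem:taylor_jacobi_2d}, and finally specialize to $\eta=\xi_\alpha$. First I would compute the Christoffel symbols of the metric $\phi^*g=w^2\,ds^2+dt^2$ (with $w=|W|$) and show via a direct calculation that the geodesic curvature of the graph of a function $\eta$ is
\[
\kappa_{\operatorname{graph}\eta}(s)=\frac{w\eta''-\eta'w_s-(2(\eta')^2+w^2)w_t}{(w^2+(\eta')^2)^{3/2}},
\]
with $w$ and its derivatives evaluated at $(s,\eta(s))$. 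Setting $\eta\equiv 0$ recovers $\kappa_L(s)=\kappa(s)$, as a sanity check.

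Next I would invoke Lemma~\ref{lem:taylor_jacobi_2d} and its differentiated counterparts to write $w(s,t)=1-\kappa(s)t+O(t^2)$, $w_t(s,t)=-\kappa(s)+O(t)$, and $w_s(s,t)=O(t)$, with constants depending only on $k$ and the ambient geometry near $L$. Substituting $\eta=\xi_\alpha$, using $|\xi_\alpha|\leq 2\alpha||\xi||\leq 2\delta$, $\xi_\alpha'=\alpha\xi'$, and $\xi_\alpha''=\alpha\xi''$, and exploiting the same expansion at $\alpha=1$ to eliminate $\xi''$ in favor of $\kappa_{L'}-\kappa$, I would obtain a decomposition of the form
\[
\kappa_\alpha(s)=(1-\alpha)\kappa(s)+\alpha\kappa_{L'}(s)+E(\alpha,s),
\]
where $E(\alpha,s)$ vanishes at $\alpha=0$ and $\alpha=1$ and is controlled by $\delta$. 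The elementary bound $|(1-\alpha)\kappa(s)+\alpha\kappa_{L'}(s)|\leq\max\{k,||B_{L'}||\}$, combined with the smallness of $E$, then yields the desired inequality once $\delta$ is taken small enough that $E$ fits into the strict gap $k'-k$.

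The main obstacle will be the bookkeeping needed to bound $E(\alpha,s)$ uniformly in $\alpha\in[0,1]$, especially when $||B_{L'}||$ is much larger than $k'$. In that regime $|\xi''(s)|$ itself is of order $||B_{L'}||+k$, so terms like $\xi_\alpha\cdot\xi''$ have naive size $\delta\cdot||B_{L'}||$, which does not obviously fit into a gap depending only on $k,k'$. The saving grace is that $E$ must vanish at both $\alpha=0$ and $\alpha=1$, since $\kappa_0=\kappa$ and $\kappa_1=\kappa_{L'}$ hold exactly; consequently the error factorizes with an $\alpha(1-\alpha)$ prefactor, giving an estimate of the form $|E(\alpha,s)|\leq \alpha(1-\alpha)\cdot C\delta\cdot(1+k+||B_{L'}||)$. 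Combined with the convex-combination bound and treated in the two cases $||B_{L'}||\leq k'$ and $||B_{L'}||> k'$ separately, this produces the bound $||B_\alpha||\leq\max\{k',||B_{L'}||\}$ uniformly, which is what we want.
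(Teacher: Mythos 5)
Your overall route is viable and it is genuinely different from the paper's. Your signed curvature formula for $\operatorname{graph}\eta$ is correct (it follows from $\Gamma^t_{ss}=-ww_t$ for the metric $w^2ds^2+dt^2$; incidentally, the formula displayed in Lemma~\ref{lem:computation-curvature-graph_2d} carries the opposite sign on the standalone $\tfrac12\partial_t|W|^2$ term, a harmless slip there since only $|\kappa|\leq k$ is ever used). Where you diverge is in the treatment of the $\alpha$-dependence: the paper never forms the convex combination $(1-\alpha)\kappa+\alpha\kappa_{L'}$. Instead it writes $|B_\alpha|^2=(\alpha\xi''-\kappa)^2+\alpha\cdot o_\delta(1)$ and runs a pointwise case analysis on the model parabola $\alpha\mapsto(\alpha\xi''(s)-\kappa(s))^2$ (small $\xi''$; parabola increasing on $[0,1]$, where it is compared with its value at $\alpha=1$, i.e.\ with $|B_{L'}|$, using a lower bound on the $\alpha$-derivative; and a mixed case). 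Your interpolation bound $|E|\leq\alpha(1-\alpha)C\delta(1+k+\|B_{L'}\|)$ replaces that case analysis by the elementary convex-combination bound plus the comparison of $\|B_{L'}\|$ with $k'$, and I confirm that this closes: in the regime $\|B_{L'}\|>k'$ the $(1-\alpha)$ factor is exactly what lets the error be absorbed into $(1-\alpha)(\|B_{L'}\|-k)$, and in the other regime one needs $k'-k\geq\alpha C\delta(1+k+k')$, which fixes $\delta$ in terms of $k,k'$ and the geometry near $L$.

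The one step that is not justified as written is precisely the crucial one: ``$E$ vanishes at $\alpha=0,1$, consequently it factorizes with an $\alpha(1-\alpha)$ prefactor of size $C\delta(1+k+\|B_{L'}\|)$'' is a non sequitur --- endpoint vanishing says nothing about the size of the factor, and that size is exactly where your dangerous $\delta\cdot\|B_{L'}\|$ terms must be beaten. Nor can you invoke $|E|\leq\tfrac{\alpha(1-\alpha)}{2}\sup|\partial^2_\alpha\kappa_\alpha|$ off the shelf, since Lemma~\ref{lem:good-c_2d} gives only continuity and a Lipschitz bound for $c(\alpha)$, not two derivatives. The estimate is nonetheless true and provable with what you have: write $E=(1-\alpha)(\kappa_\alpha-\kappa_0)+\alpha(\kappa_\alpha-\kappa_1)$ and treat the terms of the formula separately. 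The only term of size $\|\xi''\|$ is $\alpha\xi''P(\alpha)$ with $P(\alpha)=w_\alpha(w_\alpha^2+\alpha^2\xi'^2)^{-3/2}$ evaluated at $t=\xi_\alpha(s)$; it vanishes at $\alpha=0$, and $|P(\alpha)-P(1)|\leq C(1-\alpha)\delta$ because $(1-\alpha^2)\xi'^2\leq2(1-\alpha)\delta^2$ and $|\xi_\alpha-\xi|\leq2(1-\alpha)\|\xi\|$; note that this last inequality uses the Lipschitz estimate of Lemma~\ref{lem:good-c_2d} at the endpoint $\alpha=1$, i.e.\ $|c(\alpha)|=|c(\alpha)-c(1)|\leq(1-\alpha)\|\xi\|$, which is \emph{not} among the standing hypotheses (i)--(ii) on $c$ and is different from the bound $|c(\alpha)|\leq\alpha\|\xi\|$ you quote. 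The remaining terms are (up to the prefactor $\alpha\xi'=O(\delta)$) uniformly smooth functions of $(\alpha^2\xi'^2,\xi_\alpha)$, hence within $C\alpha\delta$ of their value at $\alpha=0$ and within $C(1-\alpha)\delta$ of their value at $\alpha=1$; finally Lemma~\ref{lem:taylor_jacobi_2d} and the $\alpha=1$ instance of your formula give $\|\xi''\|\leq C(1+k+\|B_{L'}\|)$. With these three inputs your bound on $E$, and hence your argument, goes through.
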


The lemma itself relies on the following computation. \par

\begin{lem} \label{lem:computation-curvature-graph_2d}
	The geodesic curvature of $\operatorname{graph}\xi$ at a point is given by
	\begin{align*}
		|B|=\frac{|W|}{\left(|W|^2+|\xi'|^2\right)^{\frac{3}{2}}}\left|\xi''+\frac{1}{2}\frac{\del}{\del t}|W|^2-\frac{\xi'}{|W|^2}\left(\frac{1}{2}\frac{\del}{\del s}|W|^2+\xi'\frac{\del}{\del t}|W|^2\right)\right|,
	\end{align*}
	where $|W|(s):=|W(s,\xi(s))|$.
\end{lem}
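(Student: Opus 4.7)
The proof is a direct computation using the explicit forms of $\phi^*g = |W|^2 ds^2 + dt^2$ and $\phi^*J = |W|\partial_t \otimes ds - |W|^{-1}\partial_s \otimes dt$. I would first parametrize $\operatorname{graph}\xi$ by $s \mapsto (s,\xi(s))$, producing velocity $V = \partial_s + \xi'(s)\partial_t$ with speed-squared $|V|^2 = |W|^2 + (\xi')^2$. Since $J$ is $g$-compatible, $JV$ is $g$-orthogonal to $V$ and of the same norm; a brief arc-length reparametrization argument then gives
\begin{equation*}
\kappa = \frac{\langle \nabla_V V,\,JV\rangle}{|V|^3},
\end{equation*}
so that computing $|B| = |\kappa|$ reduces to evaluating a single inner product.

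The vector $JV$ can be read off immediately from the given formula: $JV = |W|\partial_t - (\xi'/|W|)\partial_s$. For $\nabla_V V$, I would compute the Christoffel symbols of the diagonal metric $|W|^2 ds^2 + dt^2$; only $\Gamma^s_{ss}$, $\Gamma^s_{st}=\Gamma^s_{ts}$, and $\Gamma^t_{ss}$ are nonzero, each being an explicit multiple of $\partial_s|W|^2$ or $\partial_t|W|^2$. Substituting $V^s = 1$ and $V^t = \xi'(s)$ (which is independent of $t$ along the graph, so that the directional term $V^j\partial_j V^t$ contributes exactly $\xi''(s)$) yields expressions of the shape
\begin{align*}
(\nabla_V V)^s &= \frac{\partial_s|W|^2}{2|W|^2} + \frac{\xi'}{|W|^2}\partial_t|W|^2, \\
(\nabla_V V)^t &= \xi''(s) + \tfrac{1}{2}\partial_t|W|^2,
\end{align*}
where the overall sign of the second line is fixed by $\Gamma^t_{ss}$ together with the orientation convention for $J$.

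Finally, the diagonal form of the metric gives
\begin{equation*}
\langle \nabla_V V,\,JV\rangle = |W|\bigl[(\nabla_V V)^t - \xi'(\nabla_V V)^s\bigr],
\end{equation*}
and substituting the expressions above, dividing by $|V|^3 = (|W|^2 + (\xi')^2)^{3/2}$, and taking absolute values produces the formula in the statement. There is no real obstacle to overcome: the argument is entirely routine bookkeeping. The one place care is needed is in correctly combining the several occurrences of $\partial_t|W|^2$ that enter simultaneously through the Christoffel symbols and through the coefficients of $JV$, and in making sure the overall sign conventions for $\kappa$ and $J$ match the normalization used elsewhere in the paper before passing to the absolute value.
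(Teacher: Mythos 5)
Your proposal is correct and follows essentially the same route as the paper's proof: parametrize the graph by $s\mapsto(s,\xi(s))$, compute the three nonzero Christoffel symbols of the diagonal metric $|W|^2ds^2+dt^2$, and evaluate $|B|=\bigl|\langle\nabla_V V,JV\rangle\bigr|/|V|^3$ with $JV=|W|\partial_t-(\xi'/|W|)\partial_s$, which assembles into the stated formula. The only delicate point, which you flag yourself, is the sign entering through $\Gamma^t_{ss}$; since the final expression sits inside an absolute value and is only used later through $\partial_t|W|^2=-2\kappa+\mathcal{O}(t)$ with the bound $|\kappa|\leq k$, this convention issue is harmless, exactly as in the paper.
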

\begin{pr*}
	This is a direct computation, but we give here the important steps. The only nonzero Christoffel symbols of $\phi^*g$ are
	\begin{align*}
		\Gamma_{ss}^s=\frac{1}{2|W|^2}\frac{\del}{\del s}|W|^2, \quad \Gamma_{st}^s=\Gamma_{ts}^s=\frac{1}{2|W|^2}\frac{\del}{\del t}|W|^2, \quad \Gamma_{ss}^t=\frac{1}{2}\frac{\del}{\del t}|W|^2.
	\end{align*}
	Therefore, if we set $\Gamma(s):=(s,\xi(s))$, we get that
	\begin{align*}
		\dot{\Gamma}&=\frac{\del}{\del s}+\xi'\frac{\del}{\del t}, \\
		J\dot{\Gamma}&=|W|\frac{\del}{\del t}-\frac{\xi'}{|W|}\frac{\del}{\del s},
		\intertext{and}
		\ddot{\Gamma}&=\frac{1}{|W|^2}\left(\frac{1}{2}\frac{\del}{\del s}|W|^2+\xi'\frac{\del}{\del t}|W|^2\right)\frac{\del}{\del s}+\left(\xi''+\frac{1}{2}\frac{\del}{\del t}|W|^2\right)\frac{\del}{\del t}.
	\end{align*}
	Since the geodesic curvature is given by
	\begin{align*}
		|B|=\frac{\left|\left\langle\ddot{\Gamma},J\dot{\Gamma}\right\rangle\right|}{|\dot{\Gamma}|^3},
	\end{align*}
	this gives the above formula.	
\end{pr*}

\begin{proof}[Proof of Lemma~\ref{lem:curvature_2d}]
	The proof is somewhat tedious but quite elementary. We first combine Lemmata~\ref{lem:taylor_jacobi_2d} and~\ref{lem:computation-curvature-graph_2d} to get the identity
	\begin{align} \label{eqn:taylor_curvature_2d}
		|B|^2=(1+R_1)\left(\xi''-\kappa+R_2\right)^2,
	\end{align}
	where $R_1$ and $R_2$ are smooth functions of $s$. Furthermore, at a given $s$, they only depend on $L$, $M$, $g$, and the values $\xi(s)$ and $\xi'(s)$, and in such a manner that $R_i\to 0$ as $\xi(s),\xi'(s)\to 0$. \par
	
	Note that (\ref{eqn:taylor_curvature_2d}) implies that if $||B||\leq k$ and $||\xi||,||\xi'||\leq \frac{r}{2}$, then $||\xi''||$ is bounded by some constant $C$ depending only on $L$, $M$, $g$, and the constants $k$ and $r$. This allows us to write
	\begin{align} \label{eqn:taylor-better_curvature_2d}
		|B_\alpha|^2=\left(\alpha\xi''-\kappa\right)^2+\alpha\cdot\mathcal{O}(|\xi|,|\xi'|),
	\end{align}
	for $C^1$-close graphs with bounded $\xi''$-dependence in the error term. That is to say that $|B_{\operatorname{graph}\xi^i_\alpha}|^2-\left(\alpha(\xi^i)''-\kappa\right)^2$ tends uniformly to 0 if $\{\xi^i\}$ $C^1$-converges to 0. Here, we have made use of the fact that $\xi''_\alpha=\alpha\xi''$, $\xi'_\alpha=\alpha\xi'$, and $|\xi_\alpha|\leq 2\alpha|\xi|$~---~this is part of the hypotheses on $c$. The rest of the proof then consists of studying the behaviour of the parabola $\alpha\mapsto (\alpha\xi''-\kappa)^2$ under small perturbations. \par
	
	Fix $\epsilon\in (0,\frac{1}{2})$. We break down the analysis into a few subcases.
	\begin{enumerate}[label=(\Alph*)]
		\item ($|\xi''|\leq\epsilon$): Then, $(\alpha\xi''-\kappa)^2\leq (|\kappa|+\epsilon)^2\leq (k+\epsilon)^2$. By supposing $|\xi|$ and $|\xi'|$ small enough, we may suppose the error term in (\ref{eqn:taylor-better_curvature_2d}) to be smaller than $\epsilon$, so that
		\begin{align*}
			|B_\alpha|^2\leq (k+\epsilon)^2+\epsilon.
		\end{align*}
		\item ($|\xi''|>\epsilon$): There are three subcases (see Figure~\ref{fig:cases_parabola} for a visualization).
		\begin{enumerate}[label=(\alph*)]
			\item ($\frac{2\kappa}{\xi''}+\epsilon\geq 1$): In this case, $\frac{\kappa}{\xi''}>0$, and we have that the maximum of $\alpha\mapsto (\alpha\xi''-\kappa)^2$ on the interval $[0,\frac{2\kappa}{\xi''}]$ is reached at $\alpha=0$. Therefore, its maximum value on $[0,\frac{2\kappa}{\xi''}]$ is $\kappa^2\leq k^2$. \par
			
			On $[\frac{2\kappa}{\xi''},\frac{2\kappa}{\xi''}+\epsilon]$, the parabola is increasing with derivative
			\begin{align*}
				2(\alpha\xi''-\kappa)\xi''=2|\alpha\xi''-\kappa|\ |\xi''|\leq 2 \left(\alpha|\xi''|+|\kappa|\right)|\xi''|\leq 2C(C+k).
			\end{align*}
			Therefore, the maximum value of the parabola over that subinterval is bounded from above by $k^2+2C(C+k)\epsilon$. Thus, for $|\xi|$ and $|\xi'|$ as in (A), we get
			\begin{align*}
				|B_\alpha|^2\leq k^2+(2C(C+k)+1)\epsilon
			\end{align*}
			since $[0,1]\subseteq [0,\frac{2\kappa}{\xi''}+\epsilon]$.
			
			\item ($\frac{2\kappa}{\xi''}+\epsilon\leq 0$): In this case, the parabola is increasing over $[0,1]$ with derivative
			\begin{align*}
				2(\alpha\xi''-\kappa)\xi''=2|\alpha\xi''-\kappa|\ |\xi''|\geq  2|\kappa|\ |\xi''|\geq \epsilon|\xi''|^2 > \epsilon^3.
			\end{align*}
			By supposing $|\xi|$ and $|\xi'|$ small enough, we may suppose the $\alpha$-derivative of the error term in (\ref{eqn:taylor-better_curvature_2d}) to be smaller than $\frac{\epsilon^3}{2}$. Therefore, $\alpha\mapsto |B_\alpha|$ is still increasing, and we have that
			\begin{align*}
				|B_\alpha|\leq |B_1|=|B_{L'}|.
			\end{align*}
			
			\item ($0<\frac{2\kappa}{\xi''}+\epsilon< 1$): This case combines the approaches of (a) and (b). Indeed, the estimates of (a) still hold for $\alpha\in [0,\frac{2\kappa}{\xi''}+\epsilon]\subseteq [0,1]$. Likewise, over the interval $[\frac{2\kappa}{\xi''}+\epsilon,1]$, the parabola is increasing with derivative
			\begin{align*}
				2(\alpha\xi''-\kappa)\xi''=2|\alpha\xi''-\kappa|\ |\xi''|\geq 2|\epsilon\xi''+\kappa||\xi''|>\epsilon|\xi''|^2>\epsilon^3.
			\end{align*}
			Therefore, for $|\xi|$ and $|\xi'|$ as in (b), we have that
			\begin{align*}
				|B_\alpha|\leq \max\{k^2+(2C(C+k)+1)\epsilon,|B_{L'}|\}.
			\end{align*}
		\end{enumerate}
	\end{enumerate}
	
	\begin{figure}[ht]
		\centering
		\begin{tikzpicture}
			\node[anchor=south west,inner sep=0] (image) at (0,0) {\includegraphics[width=0.27\textwidth]{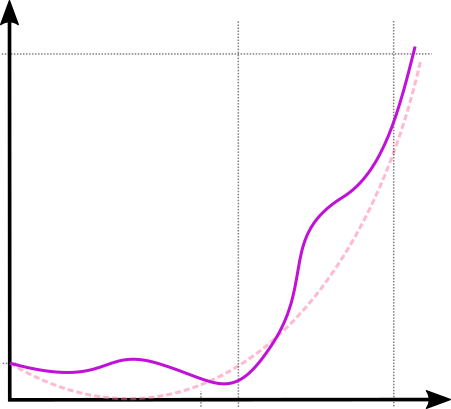}};
			\begin{scope}[x={(image.south east)},y={(image.north west)}]
				\node[] at (0.44,-0.06) {{\scriptsize $\frac{1}{2}$}};
				\node[] at (0.54,-0.06) {{\scriptsize $\frac{2\kappa}{\xi''}$}};
				\node[] at (0.875,-0.06) {{\scriptsize $1$}};
				\node[] at (-0.045,0.14) {{\scriptsize $\kappa^2$}};
				\node[] at (-0.045,0.87) {{\scriptsize $K$}};
				\node[] at (0.5,-0.3) {Case (a)};
			\end{scope}
		\end{tikzpicture}
		\quad
		\begin{tikzpicture}
			\node[anchor=south west,inner sep=0] (image) at (0,0) {\includegraphics[width=0.27\textwidth]{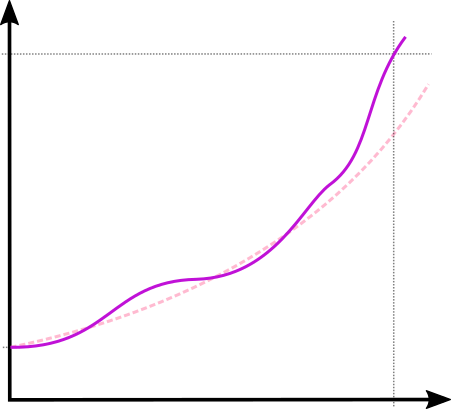}};
			\begin{scope}[x={(image.south east)},y={(image.north west)}]
				\node[] at (0.875,-0.06) {{\scriptsize $1$}};
				\node[] at (-0.045,0.18) {{\scriptsize $\kappa^2$}};
				\node[] at (-0.045,0.87) {{\scriptsize $K$}};
				\node[] at (0.5,-0.3) {Case (b)};
			\end{scope}
		\end{tikzpicture}
		\quad
		\begin{tikzpicture}
			\node[anchor=south west,inner sep=0] (image) at (0,0) {\includegraphics[width=0.27\textwidth]{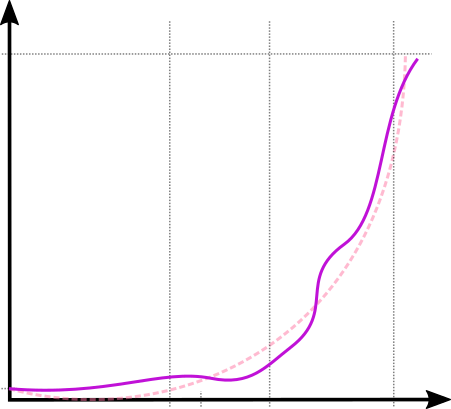}};
			\begin{scope}[x={(image.south east)},y={(image.north west)}]
				\node[] at (0.37,-0.06) {{\scriptsize $\frac{2\kappa}{\xi''}$}};
				\node[] at (0.455,-0.06) {{\scriptsize $\frac{1}{2}$}};
				\node[] at (0.63,-0.06) {{\scriptsize $\frac{2\kappa}{\xi''}+\epsilon$}};
				\node[] at (0.875,-0.06) {{\scriptsize $1$}};
				\node[] at (-0.045,0.08) {{\scriptsize $\kappa^2$}};
				\node[] at (-0.045,0.87) {{\scriptsize $K$}};
				\node[] at (0.5,-0.3) {Case (c)};
			\end{scope}
		\end{tikzpicture}
		\vspace*{8pt}
		\caption{Possible graph of $|B_\alpha|$ (full, purple) with idealized parabola (dashed, pink) and important values (pointed, grey). $K$ denotes whichever bound we get in the corresponding case. \label{fig:cases_parabola}}
	\end{figure}
	
	The result then follows by taking $\epsilon$ such that
	\begin{align*}
		\max\{(k+\epsilon)^2+\epsilon, k^2+(2C(C+k)+1)\epsilon\}\leq (k')^2.
	\end{align*}
\end{proof}

We now move on to the analysis of the tameness constant of $\operatorname{graph}\xi_\alpha$. The approach that we take here is similar in spirit to the one taken in the proof of Lemma~\ref{lem:tame_TL}. \par

\begin{lem} \label{lem:tameness_2d}
	Given $L'=\operatorname{graph}\xi\subseteq \phi([0,\ell]\times (-r,r))$, denote by $d_\xi$ the distance function on $\operatorname{graph}\xi$ induced by the Riemannian metric $\phi^*g|_{\operatorname{graph}\xi}$. Take
	\begin{align*}
		\epsilon_\xi:=\inf_{x\neq x'\in \operatorname{graph}\xi}\frac{d_M(x,x')}{\min\{1,d_\xi(x,x')\}}\in (0,1].
	\end{align*}
	We have that $\epsilon_\xi\to\epsilon_0$ as $\xi\xrightarrow{C^1}0$.
\end{lem}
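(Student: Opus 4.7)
The plan is to adapt the argument of Lemma~\ref{lem:tame_TL} using the natural projection $\pi:\operatorname{graph}\xi\to L$, $(s,\xi(s))\mapsto\gamma(s)$, noting that the target value $\epsilon_0\in (0,1]$ need not equal 1. First I would establish two key comparisons. From Lemma~\ref{lem:taylor_jacobi_2d} and $|W(s,0)|=|\dot\gamma(s)|=1$, we have $|W(s,\xi(s))|^2\to 1$ uniformly as $\|\xi\|_\infty\to 0$; together with $\|\xi'\|_\infty\to 0$, the intrinsic speed $\sqrt{|W|^2+|\xi'|^2}$ of $\operatorname{graph}\xi$ tends uniformly to 1. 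Integrating along shortest paths yields some $\delta=\delta(\xi)\to 0$ with
\[(1-\delta)\,d_L(\pi P,\pi P')\leq d_\xi(P,P')\leq (1+\delta)\,d_L(\pi P,\pi P')\qquad \forall\ P,P'\in\operatorname{graph}\xi.\]
Moreover, since $t\mapsto\phi(s,t)$ is a unit-speed geodesic, $d_M(P,\pi P)\leq\|\xi\|_\infty$, so by the triangle inequality $|d_M(P,P')-d_M(\pi P,\pi P')|\leq 2\|\xi\|_\infty$.

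The upper bound $\epsilon_\xi\leq \epsilon_0+o(1)$ then follows by lifting a near-extremal pair from $L$. Since the tameness ratio on $L\times L$ extends continuously to the diagonal with value 1, compactness of $L$ ensures either $\epsilon_0=1$ (in which case $\epsilon_\xi\leq 1$ is automatic) or $\epsilon_0$ is attained at some $(x_0,x_0')$ with $d_L(x_0,x_0')\geq\eta_0>0$. Lifting via $\pi^{-1}$ and plugging into the two comparisons above shows the ratio at the lifted pair tends to $\epsilon_0$.

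For the lower bound $\epsilon_\xi\geq \epsilon_0-o(1)$, pairs with $d_L(\pi P,\pi P')\geq\eta$ yield a ratio at least
\[\frac{\epsilon_0\min\{1,d_L\}-2\|\xi\|_\infty}{(1+\delta)\min\{1,d_L\}}\geq \frac{\epsilon_0}{1+\delta}-\frac{2\|\xi\|_\infty}{(1+\delta)\eta},\]
which tends uniformly to $\epsilon_0$. The hard part is close pairs $d_\xi(P,P')<(1+\delta)\eta$, where the additive error in $d_M$ risks swamping the numerator. For this I would invoke Lemma~\ref{lem:curvature_2d} to obtain a uniform bound on $\|B_{\operatorname{graph}\xi}\|$ for $\xi$ small in $C^1$, and then apply a standard Rauch-type (ambient--extrinsic) comparison for submanifolds of bounded second fundamental form to produce a constant $C>0$ with $d_M(P,P')\geq d_\xi(P,P')(1-C\,d_\xi(P,P')^2)$ for short arcs. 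Choosing first $\eta$ so that $1-2C\eta^2>\epsilon_0-\sigma$ for any prescribed $\sigma>0$ (possible since $\epsilon_0\leq 1$), and then $\xi$ sufficiently small in $C^1$, closes the argument.
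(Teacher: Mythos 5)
Your two comparisons ($d_\xi$ versus $d_L$ via the intrinsic speed $\sqrt{|W|^2+|\xi'|^2}$, and $|d_M(P,P')-d_M(\pi P,\pi P')|\leq 2\|\xi\|$), the far-pair estimate, and the lifting argument for the upper bound all match the paper's computation, which works with $\Gamma(s)=(s,\xi(s))$, $|\dot\Gamma|^2=|W|^2+|\xi'|^2$ and Lemma~\ref{lem:taylor_jacobi_2d} and then passes to the infimum; and you correctly isolate the delicate case, namely pairs with $d_L$ small, where the additive error $2\|\xi\|$ is not negligible against $\min\{1,d_L\}$. The gap is in how you dispose of those close pairs. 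Lemma~\ref{lem:curvature_2d} does not give ``a uniform bound on $\|B_{\operatorname{graph}\xi}\|$ for $\xi$ small in $C^1$'': its conclusion bounds $\|B_{\operatorname{graph}\xi_\alpha}\|$ by $\max\{k',\|B_{L'}\|\}$, i.e.\ it takes the curvature of $L'=\operatorname{graph}\xi$ as an \emph{input}. No such bound can follow from $C^1$-smallness, since $\|B_{\operatorname{graph}\xi}\|$ involves $\xi''$: for instance $\xi(s)=\epsilon\sin(s/\epsilon^{3/4})$ is $C^1$-small while $\|\xi''\|\to\infty$. The lemma you are proving carries no curvature hypothesis, so the constant $C$ in your chord--arc inequality $d_M\geq d_\xi(1-C\,d_\xi^2)$ is not uniform in $\xi$, and the quantifier order ``choose $\eta$ with $1-2C\eta^2>\epsilon_0-\sigma$, then shrink $\xi$'' collapses.

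The close pairs can be handled with no curvature input by staying inside the Fermi chart. The coordinate $t$ is $1$-Lipschitz for $\phi^*g=|W|^2ds^2+dt^2$, so a minimizing geodesic between $P=\Gamma(s_0)$ and $P'=\Gamma(s_1)$ with $d_M(P,P')$ small stays in the collar $\{|t|\leq\|\xi\|+d_M(P,P')\}$, where $|W|\geq 1-C(\|\xi\|+d_M(P,P'))$ by Lemma~\ref{lem:taylor_jacobi_2d} together with compactness; since the $s$-variation of any such path is at least $|s_1-s_0|$ (with the paper's normalization $0<s_1-s_0\leq\ell/2$), one gets $d_M(P,P')\geq(1-C(\|\xi\|+d_\xi))\,|s_1-s_0|\geq\frac{1-C(\|\xi\|+\eta)}{1+\delta}\,d_\xi(P,P')$ whenever $d_\xi(P,P')\leq\eta$, and because $\epsilon_0\leq 1$ this exceeds $\epsilon_0-\sigma$ after fixing $\eta$ small and then taking $\|\xi\|_{C^1}$ small. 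With that replacement your near/far scheme closes. Alternatively, in the setting where the lemma is actually applied (the family $\xi_\alpha$ with $L'\in\mathscr{L}^\star_k$ in Proposition~\ref{prop:contractible-nbhd_2d}) a curvature bound \emph{is} available and Lemma~\ref{lem:curvature_2d} then controls the interpolants, but that is an extra hypothesis absent from the statement you set out to prove; the paper's own argument uses only the coordinate comparisons.
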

\begin{pr*}
	Fix $x\neq x'\in \operatorname{graph}\xi$, and take $\Gamma(s)=(s,\xi(s))$. Without loss of generality, we may suppose that $x=\Gamma(s_0)$ and $x'=\Gamma(s_1)$ for $0<s_1-s_0\leq\frac{\ell}{2}$. Therefore, $d_\xi(x,x')$ is simply the length $\ell(\Gamma)$ of $\Gamma$ along the interval $[s_0,s_1]$. Note that
	\begin{align*}
		|\dot{\Gamma}|^2=|W|^2+|\xi'|^2.
	\end{align*}
	By Lemma~\ref{lem:taylor_jacobi_2d}, we thus have that
	\begin{align*}
		d_\xi(x,x')\leq \int_{s_0}^{s_1} \left(1+C(||\xi||+||\xi'||)\right)ds = \left(1+C(||\xi||+||\xi'||)\right)(s_1-s_0)
	\end{align*}
	for some constant $C\geq 0$ depending only on $M$, $L$, $g$, and $r$. \par
	
	On the other hand, the path $t\mapsto (s,t\xi(s))$, $t\in [0,1]$, has length $|\xi(s)|$ for any $s$. Therefore, we get that
	\begin{align*}
		d_M(x,x')\geq d_M(\gamma(s_0),\gamma(s_1))-d_M(\gamma(s_0),x)-d_M(x',\gamma(s_1))\geq d_M(\gamma(s_0),\gamma(s_1))-2||\xi||.
	\end{align*}
	
	Combining these two estimates, we get that
	\begin{align*}
		\frac{d_M(\Gamma(s_0),\Gamma(s_1))}{\min\{1,d_\xi(\Gamma(s_0),\Gamma(s_1))\}}\geq \frac{d_M(\gamma(s_0),\gamma(s_1))-2||\xi||}{\left(1+C(||\xi||+||\xi'||)\right)\min\{1,d_L(\gamma(s_0),\gamma(s_1))\}},
	\end{align*}
	since $s_1-s_0=d_L(\gamma(s_0),\gamma(s_1))$. We similarly get
	\begin{align*}
		\frac{d_M(\Gamma(s_0),\Gamma(s_1))}{\min\{1,d_\xi(\Gamma(s_0),\Gamma(s_1))\}}\leq \frac{d_M(\gamma(s_0),\gamma(s_1))+2||\xi||}{\left(1-C(||\xi||+||\xi'||)\right)\min\{1,d_L(\gamma(s_0),\gamma(s_1))\}}.
	\end{align*}
	However, note that both functions on the right hand side uniformly~---~in $s_0$ and $s_1$~---~converge to the ratio $d_M/\min\{1,d_L\}$. Therefore, the same holds for the left hand side. Since uniform limits and infima commute, the result ensues.
\end{pr*}

\subsection{Comparison between metrics} \label{subsec:geo_comparison}
To simplify our computations, we only prove our comparison theorems between metrics of the form $g=\omega(-,J-)$, where $J$ is an $\omega$-compatible almost complex structure and $g$ is complete. Thus, in what follows, $g$ and $g'$ are two such metrics coming from the same symplectic form $\omega$. We however expect analogous results to hold in full generality. \par

\begin{lem} \label{lem:curvature_comparision}
	Let $W\subseteq M$ be compact. There are $a\geq 1$ and $b\geq 0$ such that every Lagrangian $L\subseteq W$ with $||B_L||<k$ respects $||B'_L||'<ak+b$, where $||B'||'$ is the norm in $g'$ of the second fundamental form of $L$ in $g'$.
\end{lem}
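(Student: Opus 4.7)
The plan is to reduce the claim to a pointwise tensorial estimate, exploiting compactness of $W$ together with the fact that the difference of two Levi-Civita connections is a tensor (not a differential operator).

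First, I would invoke compactness of $W$ to obtain a quasi-isometry constant $C\geq 1$ with $C^{-1}g\leq g'\leq Cg$ on $W$; this gives $|v|_{g}\leq \sqrt{C}\,|v|_{g'}$ and $|v|_{g'}\leq \sqrt{C}\,|v|_{g}$ for every $v\in T_xM$, $x\in W$. Next, I would observe that the difference $S:=\nabla'-\nabla$ of the Levi-Civita connections of $g'$ and $g$ is a smooth $(1,2)$-tensor on $M$, and therefore bounded in $g'$-operator norm by some constant $D\geq 0$ on $W$, independently of any choice of $L$.

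Now fix a Lagrangian $L\subseteq W$ with $||B_L||<k$, a point $x\in L$, and $X,Y\in T_xL$ of unit $g'$-norm. Extend $Y$ locally as a section of $TL$. Then
\[
B'_L(X,Y) \;=\; \pi^{\perp'}(\nabla'_X Y) \;=\; \pi^{\perp'}(\nabla_X Y) + \pi^{\perp'}(S(X,Y)),
\]
where $\pi^{\perp'}\colon T_xM\to T_xL^{\perp'}$ is the $g'$-orthogonal projection. The crucial observation, which makes the comparison work cleanly, is that the $g$-tangential component $(\nabla_X Y)^{\top}$ lies in $T_xL$ and is therefore killed by $\pi^{\perp'}$; only the $g$-normal component $B_L(X,Y)=(\nabla_X Y)^{\perp}$ contributes to $\pi^{\perp'}(\nabla_X Y)$. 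Consequently
\[
B'_L(X,Y) \;=\; \pi^{\perp'}(B_L(X,Y)) + \pi^{\perp'}(S(X,Y)).
\]

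Finally, I would bound each term using the quasi-isometry: since projections are non-expansive in $g'$, $|B'_L(X,Y)|_{g'}\leq |B_L(X,Y)|_{g'}+|S(X,Y)|_{g'}$, and converting from $g'$ to $g$-norms yields $|B_L(X,Y)|_{g'}\leq C^{3/2}||B_L||\,|X|_{g'}|Y|_{g'}$, while $|S(X,Y)|_{g'}\leq D\,|X|_{g'}|Y|_{g'}$. Taking the supremum over unit $g'$-vectors gives $||B'_L||'\leq C^{3/2}||B_L|| + D<C^{3/2}k+D$, so one may set $a=C^{3/2}$ and $b=D$. I do not foresee any real obstacle: the whole argument rests on the tangential-component cancellation above, and the rest is routine bookkeeping with the quasi-isometry constant.
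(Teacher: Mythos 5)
Your proof is correct, and it takes a genuinely different route from the paper. The paper stays inside the symplectic setting: it introduces the endomorphism $A$ with $g(AV,W)=g'(V,W)$, notes $A=-JJ'$, invokes a formula (from Groman--Solomon) expressing $g'(\nabla'_VW,Z)$ in terms of $g(\nabla_VW,AZ)$ and derivatives of $A$, and uses that $A$ carries the $g$-normal bundle of a Lagrangian onto its $g'$-normal bundle; this yields the explicit estimate $||B'_L||'\leq ||J||'^{3/2}\bigl(||B_L||+\tfrac{3}{2}||\nabla A^{-1}||\bigr)$. Your argument instead uses only two soft facts: the quasi-isometry constant $C$ on the compact $W$, and tensoriality of $S=\nabla'-\nabla$, together with the key (and correct) observation that the $g'$-orthogonal projection onto $T_xL^{\perp'}$ annihilates $T_xL$, so only the $g$-normal part of $\nabla_XY$ survives. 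This buys more generality: it never uses the Lagrangian condition nor that $g,g'$ come from $\omega$-compatible almost complex structures, so it actually establishes the ``full generality'' the paper only says it expects at the start of the appendix subsection; it is also shorter and avoids the auxiliary identity for $\nabla'$. What the paper's computation buys in exchange is an explicit constant expressed through the symplectic data ($||J||'$ and $||\nabla A^{-1}||$) rather than an unspecified compactness constant, though nothing downstream in the paper (Proposition~\ref{prop:contractible_nbhd_general}, Remark~\ref{rem:geo-entropy}) needs more than the existence of $a$ and $b$, which your $a=C^{3/2}$, $b=D$ provide.
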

\begin{pr*}
	Let $A$ be the endomorphism of $TM$ such that $g(AV,W)=g'(V,W)$ for all $V,W\in\mathfrak{X}(M)$. We denote by $\nabla$ and $\nabla'$ the Levi-Civita connections of $g$ and $g'$, respectively. A fairly straightforward computation~---~see Lemma 3.1 of~\cite{GromanSolomon2016} for example~---~gives
	\begin{align*}
		2g'(\nabla'_V W,Z)
		&=2g(\nabla_V W,AZ)+g((\nabla_V A)Z,W) \\
		&\qquad +g((\nabla_W A)Z,V)-g((\nabla_Z A)V,W)
	\end{align*}
	for all $V,W,Z\in\mathfrak{X}(M)$. But it is easy to see that $A=-J J'$. Therefore, we get that
	\begin{align*}
		|V|^2
		&=g'(A^{-1}V,V)\leq |A^{-1}|'\cdot(|V|')^2=|J|'\cdot(|V|')^2
		\intertext{and}
		|AZ|^2
		&=|J'Z|^2\leq |J|'\cdot(|J'Z|')^2=|J|'\cdot(|Z|')^2
	\end{align*}
	for all $V,Z\in\mathfrak{X}(M)$. Here, the prime on the norm indicates that it is the norm associated with $g'$, not $g$. Finally, note also that $A$ sends the normal bundle of a Lagrangian submanifold $L$ in $g$ to the normal bundle of that Lagrangian submanifold in $g'$. \par
	
	Putting all this together, we finally get, for any Lagrangian submanifold $L$, 
	\begin{align}
		||B'_L||'
		&\leq ||J||'^{\frac{3}{2}}\left(||B_L||+\frac{3}{2}||\nabla A^{-1}||\right),
	\end{align}
	which implies the result. The norm symbols $||\cdot||$ and $||\cdot||'$ here indicate that it is the supremum of the norms associated with $g$ and $g'$, respectively, over all of $W$ or all of $L$.
\end{pr*}

\begin{lem} \label{lem:tameness_comparision}
	Let $W\subseteq M$ be compact. There is $a\geq 1$ such that every submanifold $N\subseteq W$ which is (strictly) $k^{-1}$-tame in $g$ is (strictly) $a^{-1}k^{-1}$-tame in $g'$.
\end{lem}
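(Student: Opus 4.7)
The plan is to exploit the fact that two smooth Riemannian metrics on a manifold are uniformly equivalent on any compact set, while being careful that $d_M$ involves an infimum over paths in all of $M$, not just in a compact subset. Concretely, I fix a compact neighborhood $W' \supseteq W$ of $W$; then smoothness of $g,g'$ together with compactness of $W'$ yield constants $C_1, C_2 > 0$ with $C_1 g \leq g' \leq C_2 g$ on $W'$. I also set $\delta_2 := \inf_{x \in W} d'_M(x, \partial W') > 0$ and $\delta_1 := 2\delta_2$.

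The key step is to establish a bound of the form $d'_M(x,y) \geq c_1\, d_M(x,y)$ for all $x,y \in N$. If $d'_M(x,y) < \delta_1$, then any path from $x$ to $y$ with $g'$-length below $\delta_1$ must remain inside $W'$, since escaping $W'$ and returning to $W$ would cost at least $2\delta_2 = \delta_1$ in $g'$-length; on such a path $\ell'(\gamma) \geq \sqrt{C_1}\,\ell(\gamma)$, so taking an infimum gives $d'_M(x,y) \geq \sqrt{C_1}\, d_M(x,y)$. Otherwise, $d'_M(x,y) \geq \delta_1 \geq (\delta_1/D)\, d_M(x,y)$, where $D$ denotes the supremum of $d_M$ over pairs of points of $W$ lying in a common connected component of $M$~---~this is finite because such pairs form a compact set on which $d_M$ is continuous. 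Hence $c_1 := \min\{\sqrt{C_1},\delta_1/D\}$ works, uniformly in $N$. For the intrinsic distance, any path in $N \subseteq W$ already lies in $W'$, so $d'_N(x,y) \leq \sqrt{C_2}\, d_N(x,y)$, and a short case distinction (depending on whether $d_N(x,y) \leq 1$) yields $\min\{1, d'_N(x,y)\} \leq c_2\, \min\{1, d_N(x,y)\}$ with $c_2 := \max\{1,\sqrt{C_2}\}$. Combining,
\[
\frac{d'_M(x,y)}{\min\{1, d'_N(x,y)\}} \geq \frac{c_1}{c_2}\cdot \frac{d_M(x,y)}{\min\{1, d_N(x,y)\}},
\]
so $a := \max\{1, c_2/c_1\}$ satisfies the conclusion of the lemma.

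The main obstacle is the one already flagged: a near-minimizer of $d'_M(x,y)$ between two close points of $N$ could in principle wander outside the compact neighborhood $W'$ where $g$ and $g'$ are uniformly comparable. The $g'$-buffer of width $\delta_2$ separating $W$ from $\partial W'$ rules this out for $d'_M(x,y) < 2\delta_2$, and the remaining large-distance regime is absorbed into a crude constant coming from the bounded $d_M$-diameter of $W$.
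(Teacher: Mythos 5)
Your argument is correct, and at its core it is the same strategy as the paper's: reduce everything to the uniform bilipschitz equivalence of $g$ and $g'$ on a compact region through which all relevant competitor paths can be forced to pass, and then absorb the $\min\{1,\cdot\}$ by a separate elementary estimate. The difference is in how the paths are confined. The paper takes $V$ to be the union of the closed half-diameter neighbourhoods of $W$ in both metrics and observes that \emph{minimal geodesics} between points of $W$ cannot leave $V$; this is slick but uses the existence of minimizing geodesics, i.e.\ completeness of both $g$ and $g'$ (which the appendix does assume). You instead work with an arbitrary compact neighbourhood $W'$, use the $g'$-buffer $\delta_2$ between $W$ and $\partial W'$ to trap all paths of $g'$-length below $2\delta_2$, and handle the complementary ``large $d'_M$'' regime by the crude bound $d_M\leq D$ on pairs of $W$ in a common component; this costs you a two-case analysis and a slightly worse constant, but it avoids any appeal to minimal geodesics or completeness, and your bookkeeping with $\sqrt{C_1},\sqrt{C_2}$ (lengths scale by the square root of the quadratic-form constants) is in fact more careful than the paper's. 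Both proofs yield a constant $a$ depending only on $W$, $g$, $g'$, uniformly in $N$ and $k$, as required.
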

\begin{pr*}
	Take $D:=\frac{1}{2}\mathrm{Diam}(W,g)$, $D'=\frac{1}{2}\mathrm{Diam}(W,g')$ and $V:=\overline{B_D(W)}\cup \overline{B'_{D'}(W)}$, i.e.\ $V$ is the union of the (closure of the) $D$-neighbourhood of $W$ in $g$ and $D'$-neighbourhood of $W$ in $g'$. Therefore, any path between points in $W$ leaving $V$ have length greater than $\mathrm{Diam}(W,g)$ in $g$ and than $\mathrm{Diam}(W,g')$ in $g'$. In particular, no such path is a minimal geodesic in either metric. Since $V$ is compact, there is $C\geq 1$ such that
	\begin{align*}
		C^{-1}g'\leq g\leq Cg'
	\end{align*}
	on $V$. \par
	
	Let $x,y\in W$, and let $\gamma$ be a minimal geodesic in $g'$ from $x$ to $y$. By the above paragraph, it stays in $V$. Therefore,
	\begin{align*}
		d'_M(x,y)=\ell'(\gamma)\geq C^{-1}\ell(\gamma)\geq C^{-1}d_M(x,y),
	\end{align*}
	where $d_M$ and $d'_M$ are the distance functions induced on $M$ by $g$ and $g'$, respectively, and $\ell(\gamma)$ and $\ell'(\gamma)$ denote the lengths of $\gamma$ in $g$ and $g'$, respectively. We analogously get that $d'_M\leq Cd_M$ on $W$. \par
	
	But note that whenever $N\subseteq W$, the same argument gives that $C^{-1}d'_N\leq d_N\leq d'_N$ if $d_N$ and $d'_N$ the distance functions induced on $N$ by $g|_{TN}$ and $g'|_{TN}$, respectively. Therefore, we have that
	\begin{align*}
		\frac{d'_M(x,y)}{\min\{1,d'_N(x,y)\}}\geq \frac{C^{-1}d_M(x,y)}{\min\{1,Cd_N(x,y)\}}\geq C^{-2} \frac{d_M(x,y)}{\min\{1,d_N(x,y)\}}
	\end{align*}
	for all $x\neq y\in N$, which gives the result.
\end{pr*}

\newpage
\bibliographystyle{alpha}
\bibliography{completion}

\newcommand{\etalchar}[1]{$^{#1}$}
\begin{thebibliography}{DHKK14}

\bibitem[Abo12]{Abouzaid2012}
M.~Abouzaid.
\newblock Nearby {L}agrangians with vanishing {M}aslov class are homotopy
  equivalent.
\newblock {\em Inventiones Mathematic\ae}, 189(2):251--313, 2012.

\bibitem[AHV24]{ArnaudHumiliereViterbo2024}
M.-C. Arnaud, V.~Humili{\`e}re, and C.~Viterbo.
\newblock Higher dimensional {B}irkhoff attractors.
\newblock 2024.
\newblock arXiv preprint arXiv:2404.00804.

\bibitem[Alb19]{Albuquerque2019}
R.~Albuquerque.
\newblock Notes on the {S}asaki metric.
\newblock {\em Expositiones Mathematicae}, 37(2):207--224, 2019.

\bibitem[Aur15]{Auroux2015}
D.~Auroux.
\newblock Infinitely many monotone {L}agrangian tori in ${\R}^6$.
\newblock {\em Inventiones mathematicae}, 201(3):909--924, 2015.

\bibitem[AY04]{AbbassiYampolsky2004}
M.T.K. Abbassi and A.~Yampolsky.
\newblock Transverse totally geodesic submanifolds of the tangent bundle.
\newblock {\em Publicationes Mathematicae}, 64(1-2):129--154, 2004.

\bibitem[BCJ{\etalchar{+}}22]{BaeChoaJeongKarabasLee2022}
H.~Bae, D.~Choa, W.~Jeong, D.~Karabas, and S.~Lee.
\newblock Categorical entropies on symplectic manifolds.
\newblock {\em arXiv preprint arXiv:2203.12205}, 2022.

\bibitem[BCS21]{BiranCorneaShelukhin2021}
P.~Biran, O.~Cornea, and E.~Shelukhin.
\newblock {L}agrangian shadows and triangulated categories.
\newblock {\em Astérisque}, 426:1--128, 2021.

\bibitem[BCZ23]{BiranCorneaZhang2021}
P.~Biran, O.~Cornea, and J.~Zhang.
\newblock Triangulation, persistence, and {F}ukaya categories.
\newblock arXiv preprint arXiv:2304.01785, 2023.

\bibitem[{\c{C}}GG22]{CineliGinzburgGurel2022}
E.~{\c{C}}ineli, V.~Ginzburg, and B.~G\"{u}rel.
\newblock On the growth of the {F}loer barcode.
\newblock arXiv preprint arXiv:2207.03613, 2022.

\bibitem[Cha23]{Chasse2023}
J.-P. Chass\'e.
\newblock Convergence and {R}iemannian bounds on {L}agrangian submanifolds.
\newblock {\em International Journal of Mathematics}, 23, 2023.

\bibitem[Cha24]{Chasse2022}
J.-P. Chass\'e.
\newblock {H}ausdorff limits of submanifolds of symplectic and contact
  manifolds.
\newblock {\em Differential Geometry and its Applications}, 94, 2024.

\bibitem[Che00]{Chekanov2000}
Y.V. Chekanov.
\newblock Invariant {F}insler metrics on the space of {L}agrangian embeddings.
\newblock {\em Mathematische Zeitschrift}, 234(3):605--619, 2000.

\bibitem[CL23]{ChasseLeclercq2023}
J.-P. Chass{\'e} and R.~Leclercq.
\newblock A {H}\"older-type inequality for the {H}ausdorff distance between
  {L}agrangians.
\newblock {\em arXiv preprint arXiv:2308.16695}, 2023.

\bibitem[CS19]{CorneaShelukhin2019}
O.~Cornea and E.~Shelukhin.
\newblock {L}agrangian cobordism and metric invariants.
\newblock {\em Journal of Differential Geometry}, 112(1):1--45, 2019.

\bibitem[Daw23]{Dawid2023}
A.~Dawid.
\newblock Floer barcode growth in the component of a {D}ehn-{S}eidel twist,
  2023.
\newblock Master's thesis.

\bibitem[dC92]{doCarmo1992}
M.~P. do~Carmo.
\newblock {\em Riemannian geometry}.
\newblock Mathematics: Theory \& Applications. Birkh\"auser, {E}nglish edition,
  1992.

\bibitem[DHKK14]{DimitrovHaidenKatzarkovKontsevich2014}
G.~Dimitrov, F.~Haiden, L.~Katzarkov, and M.~Kontsevich.
\newblock Dynamical systems and categories.
\newblock In L.~Katzarkov, E.~Lupercio, and F.~J. Turrubiates, editors, {\em
  The influence of {S}olomon {L}efschetz in geometry and topology: 50 years of
  mathematics at {CINVESTAV}}, volume 621 of {\em Contemporary Mathematics},
  pages 133--170. American Mathematical Society Providence, 2014.

\bibitem[FSS08a]{FukayaSeidelSmith2008i}
K.~Fukaya, P.~Seidel, and I.~Smith.
\newblock Exact {L}agrangian submanifolds in simply-connected cotangent
  bundles.
\newblock {\em Inventiones mathematicae}, 172(1):1--27, 2008.

\bibitem[FSS08b]{FukayaSeidelSmith2008ii}
K.~Fukaya, P.~Seidel, and I.~Smith.
\newblock The symplectic geometry of cotangent bundles from a categorical
  viewpoint.
\newblock In K.-G. Schlesinger, M.~Kreuzer, and A.~Kapustin, editors, {\em
  Homological mirror symmetry}, volume 757 of {\em Lecture Notes in Physics},
  pages 1--26. Springer, 2008.

\bibitem[GPS20]{GanatraPardonShende2020}
S.~Ganatra, J.~Pardon, and V.~Shende.
\newblock Covariantly functorial wrapped {F}loer theory on {L}iouville sectors.
\newblock {\em Publications math{\'e}matiques de l'IH{\'E}S}, 131(1):73--200,
  2020.

\bibitem[Gra04]{Gray2004}
A.~Gray.
\newblock {\em Tubes}, volume 221 of {\em Progress in Mathematics}.
\newblock Birkh\"auser, 2nd edition, 2004.

\bibitem[GS16]{GromanSolomon2016}
Y.~Groman and J.~P. Solomon.
\newblock ${J}$-holomorphic curves with boundary in bounded geometry.
\newblock {\em Journal of Symplectic Geometry}, 14(3):767--809, 2016.

\bibitem[GV22]{GuillermouVichery2022}
S.~Guillermou and N.~Vichery.
\newblock Viterbo's spectral bound conjecture for homogeneous spaces.
\newblock arXiv preprint arXiv:2203.13700, 2022.

\bibitem[IO07]{IriyehOtofuji2007}
H.~Iriyeh and T.~Otofuji.
\newblock Geodesics of {H}ofer’s metric on the space of {L}agrangian
  submanifolds.
\newblock {\em manuscripta mathematica}, 122:391--406, 2007.

\bibitem[Jan21]{Jannaud2021}
A.~Jannaud.
\newblock {D}ehn-{S}eidel twist, ${C}^0$ symplectic topology and barcodes.
\newblock arXiv preprint arXiv:2101.07878, 2021.

\bibitem[Kha09]{Khanevsky2009}
M.~Khanevsky.
\newblock Hofer's metric on the space of diameters.
\newblock {\em Journal of Topology and Analysis}, 1(04):407--416, 2009.

\bibitem[Kow71]{Kowalski1971}
O.~Kowalski.
\newblock Curvature of the induced {R}iemannian metric on the tangent bundle of
  a {R}iemannian manifold.
\newblock {\em Journal f\"ur die reine und angewandte Mathematik},
  1971(250):124--129, 1971.

\bibitem[Kra13]{Kragh2013}
T.~Kragh.
\newblock Parametrized ring-spectra and the nearby {L}agrangian conjecture.
\newblock {\em Geometry \& Topology}, 17(2):639--731, 2013.

\bibitem[KS22]{KislevShelukhin2022}
A.~Kislev and E.~Shelukhin.
\newblock Bounds on spectral norms and barcodes.
\newblock {\em Geometry \& Topology}, 25(7):3257--3350, 2022.

\bibitem[Lec08]{Leclercq2008}
R.~Leclercq.
\newblock Spectral invariants in {L}agrangian {F}loer theory.
\newblock {\em Journal of Modern Dynamics}, 2(2):249--286, 2008.

\bibitem[LZ18]{LeclercqZapolsky2018}
R.~Leclercq and F.~Zapolsky.
\newblock Spectral invariants for monotone {L}agrangians.
\newblock {\em Journal of Topology and Analysis}, 10(03):627--700, 2018.

\bibitem[Mil01]{Milinkovic2001}
D.~Milinkovi{\'c}.
\newblock Geodesics on the space of {L}agrangian submanifolds in cotangent
  bundles.
\newblock {\em Proceedings of the American Mathematical Society},
  129(6):1843--1851, 2001.

\bibitem[Ono06]{Ono2006}
K.~Ono.
\newblock {F}loer--{N}ovikov cohomology and the flux conjecture.
\newblock {\em Geometric \& Functional Analysis}, 16(5):981--1020, 2006.

\bibitem[Ost03]{Ostrover2003}
Y.~Ostrover.
\newblock A comparison of {H}ofer's metrics on {H}amiltonian diffeomorphisms
  and {L}agrangian submanifolds.
\newblock {\em Communications in Contemporary Mathematics}, 5(05):803--811,
  2003.

\bibitem[Sas58]{Sasaki1958}
S.~Sasaki.
\newblock On the differential geometry of tangent bundles of {R}iemannian
  manifolds.
\newblock {\em Tohoku Mathematical Journal, Second Series}, 10(3):338--354,
  1958.

\bibitem[Sei08]{Seidel2008}
P.~Seidel.
\newblock {\em {F}ukaya categories and {P}icard--{L}efschetz theory}, volume~10
  of {\em EMS Zurich Lectures in Advanced Mathematics}.
\newblock European Mathematical Society, 2008.

\bibitem[She16]{Sheridan2016}
N.~Sheridan.
\newblock On the {F}ukaya category of a {F}ano hypersurface in projective
  space.
\newblock {\em Publications math{\'e}matiques de l'IH{\'E}S}, 124(1):165--317,
  2016.

\bibitem[She22a]{Shelukhin2022}
E.~Shelukhin.
\newblock Symplectic cohomology and a conjecture of {V}iterbo.
\newblock {\em Geometric and Functional Analysis}, 32(6):1514--1543, 2022.

\bibitem[She22b]{Shelukhin2018}
E.~Shelukhin.
\newblock Viterbo conjecture for {Z}oll symmetric spaces.
\newblock {\em Inventiones Mathematic\ae}, 230:321--373, 2022.

\bibitem[Sos01]{Sosov2001}
E.N. Sosov.
\newblock On {H}ausdorff intrinsic metric.
\newblock {\em Lobachevskii Journal of Mathematics}, 8:185--189, 2001.

\bibitem[Vit92]{Viterbo1992}
C.~Viterbo.
\newblock Symplectic topology as the geometry of generating functions.
\newblock {\em Mathematische Annalen}, 292(4):685--710, 1992.

\bibitem[Vit22]{Viterbo2022ii}
Claude Viterbo.
\newblock Inverse reduction inequalities for spectral numbers and applications.
\newblock arXiv preprint arXiv:2203.13172, 2022.

\bibitem[Yom87]{Yomdin1987}
Y.~Yomdin.
\newblock Volume growth and entropy.
\newblock {\em Israel Journal of Mathematics}, 57:285--300, 1987.

\end{thebibliography}

\Addresses
\end{document}